\documentclass[fleqn,10pt]{article}
\usepackage{latexsym, graphicx, epsfig, amsmath, amssymb,amsfonts}
\usepackage{natbib,amsthm,version}
\usepackage{amsbsy,bm,multirow}
\usepackage[titletoc,page]{appendix}
\usepackage[mathscr]{eucal}
\usepackage{mathtools}
\usepackage{color}
\usepackage[utf8]{inputenc}
\usepackage[english]{babel}
\usepackage{amsthm}
\usepackage[hidelinks]{hyperref}
\usepackage{url}
\usepackage[lined,boxed,linesnumbered,ruled]{algorithm2e}
\usepackage{float}
\usepackage{arydshln}
\usepackage{dsfont,bbm}
\usepackage{courier}
\usepackage{caption,subcaption}
\usepackage{epstopdf}
\usepackage{mathabx}

\usepackage[shortlabels]{enumitem}
\setlist{nolistsep}

\usepackage{natbib}
\usepackage{pgfplots}
\pgfplotsset{compat=1.17}
\usetikzlibrary{arrows.meta}

\newcommand{\mbs}[1]{\mathbf{#1}}
\newcommand{\mbb}[1]{\mathbb{#1}}

\newtheorem{theorem}{Theorem}[section]

\newtheorem{remark}{Remark}[section]

\theoremstyle{definition}

\title{
  A Novel Method for Enforcing Exactly Dirichlet, Neumann and Robin Conditions
  on Curved Domain Boundaries for Physics Informed Machine Learning
} 
\author{
  Suchuan Dong\thanks{Author of correspondence. Email: sdong@purdue.edu},
  \  Yuchuan Zhang
  \\
  Center for Computational and Applied Mathematics \\
  Department of Mathematics \\
  Purdue University, USA 
 } 

\date{(\today)}

\begin{document}
\maketitle


\begin{abstract}

  We present a systematic method for exactly enforcing Dirichlet, Neumann,
  and Robin type conditions on general quadrilateral domains with arbitrary
  curved boundaries. Our method is built upon exact mappings between general
  quadrilateral domains and the standard domain, and employs a combination
  of TFC (theory of functional connections) constrained expressions and
  transfinite interpolations. 
  When Neumann or Robin boundaries are present,
  especially when two Neumann (or Robin) boundaries meet at a vertex,
  it is critical to enforce exactly the induced compatibility constraints at
  the intersection, in order to enforce exactly the imposed conditions on the
  joining boundaries. We analyze in detail and present constructions
  for handling the imposed boundary conditions and the induced compatibility
  constraints for two types of situations: (i) when Neumann (or Robin) boundary
  only intersects with Dirichlet boundaries, and (ii) when two Neumann (or Robin)
  boundaries intersect with each other. 
  We describe a four-step procedure to systematically formulate the general
  form of functions that exactly satisfy the imposed Dirichlet, Neumann, or Robin
  conditions on general quadrilateral domains.
  The method developed herein has been implemented together with the extreme learning
  machine (ELM) technique we have developed recently for scientific
  machine learning.
  Ample numerical experiments are presented with several linear/nonlinear
  stationary/dynamic problems on a variety of two-dimensional domains with complex boundary
  geometries. Simulation results demonstrate that the proposed method has
  enforced the Dirichlet, Neumann, and Robin conditions on curved domain boundaries
  exactly, with the numerical boundary-condition errors
  at the machine accuracy.

\end{abstract}


\vspace{0.05cm}
Keywords: {\em
  exact boundary condition enforcement,
  physics informed machine learning,
  scientific machine learning,
  transfinite interpolation,
  theory of functional connections,
  complex geometry
}



\section{Introduction}
\label{sec_intro}



Artificial neural networks (NN) have garnered remarkable success in diverse fields
of science and engineering~\cite{LeCun2015DP,GoodfellowBC2016}.
These advances have catalyzed the development and adoption of neural network-based techniques
for scientific computing. 
As universal function approximators, neural networks are natural for the
ansatz space for solving ordinary or partial differential equations (ODE/PDE).
This underpins their use in mathematical modeling and scientific computing,
and has fueled the advancement of scientific machine learning~\cite{Karniadakisetal2021}.


The use of NNs for computing ODEs/PDEs
dates back to the 1990s
(see~\cite{LeeK1990,MeadeF1994,MeadeF1994b,DissanayakeP1994,YentisZ1996,LagarisLF1998}).
Leveraging the universal approximation
property~\cite{Cybenko1989,HornikSW1989,HornikSW1990},
NN-based methods typically transform the PDE solution problem into
an optimization problem, thanks to the residual minimization theorem as
elaborated in~\cite{Jiang1998}.
The PDE and its boundary and initial conditions (BC/IC) are encoded
into a cost/loss function by penalizing their residual norms
on a set of sampling points~\cite{Eason1976}.
This general residual minimization technique~\cite{Jiang1998} is presently
often known as the physics-informed approach for solving PDEs.
The differential operators involved therein
are often computed via
automatic differentiation.
The optimization,
usually through gradient descent-type algorithms,
constitutes the core computations in NN-based PDE solvers,
commonly known as the network training.
After training, the NN  parameters effectively
encode the PDE solution.

Prominent advancements of this area in recent years include the development of
physics-informed neural network (PINN) method~\cite{RaissiPK2019}
and sister approaches such as the
deep Galerkin method (DGM)~\cite{SirignanoS2018} and deep Ritz method~\cite{EY2018},
as well as related methods such as the weak adversarial network~\cite{zang2020weak},
Galerkin neural network~\cite{AinsworthD2021},
deep Nitsche method~\cite{LiaoW2021}, deep mixed residual method~\cite{LyuZCC2022}, 
along with many variant techniques (see e.g.~\cite{LiTWL2020,JagtapKK2020,CyrGPPT2020,WangL2020,WangYP2020,lu2021deepxde,KrishnapriyanGZKM2021,DuZ2021,TangWL2021,GaoZW2022,Penwardenetal2023,McClennyB2023,ZhangZZZ2023,BrunaPV2024,WangL2024,AldiranyCLP2024,WangSP2024}, among others).
Another solution approach for PDEs, usually in high dimensions,
involves reformulating them using stochastic differential equations,
exemplified by the deep backward stochastic differential
equation (Deep BSDE)~\cite{EHJ2017,han2018solving}, the forward-backward
stochastic neural network method~\cite{Raissi2018},
and related techniques~\cite{ZengCZ2022,lu2021priori,weinan2021algorithms}.
The above methods have been applied across a wide range of fields.
Comprehensive reviews of these developments can be found
in~\cite{Karniadakisetal2021,Cuomoetal2022,BeckHJK2022}.


Enforcing boundary conditions is one of the central technical issues in
physics informed machine learning.
Unlike classical numerical methods, standard NN ansatzes are non-interpolatory
and do not automatically satisfy prescribed traces or fluxes on the boundary.
As a result, a substantial literature has developed on how to enforce
Dirichlet, Neumann, Robin, and periodic boundary conditions in PINN and
related neural PDE solvers. Existing approaches can be classified into four
broad categories: (i) soft or penalty-based enforcement, (ii) exact or hard
enforcement through trial function design, (iii) weak enforcement in variational
formulations, and (iv) multiplier-based constrained formulations.

The original PINN adopts soft enforcement, in which the PDE residual is minimized
together with BC residual terms sampled on the boundary \cite{RaissiPK2019}.
Dirichlet data are imposed through value penalties, while Neumann and Robin data
are enforced through penalties on normal derivatives or mixed value--flux expressions.
Periodic BCs are commonly handled in the same spirit, by penalizing  the
solution difference and, when required, selected derivatives on paired periodic boundaries.
This strategy is simple and broadly applicable, but BC satisfaction is only approximate
and training can be highly sensitive to the relative weights of the PDE and BC loss terms.
This difficulty has been analyzed from the perspective of gradient-flow
pathologies in PINNs \cite{Wang2021SISC};
see also \cite{RowanHMD2025} for a recent comparative study of BC-enforcement strategies
on  three-dimensional geometries.

A second major line of work seeks exact or hard enforcement, in which the neural
approximation is constructed to satisfy the BCs identically. This idea goes back
to the early works of Lagaris, McFall and collaborators,
who introduced neural trial functions of the
form $u_\theta=g+\phi N_\theta$, with $g$ satisfying the boundary data and
$\phi$ vanishing on the constrained boundary \cite{LagarisLF1998,LagarisLP2000,McFallM2009}.
In more recent literature, 
it has been shown that boundary and initial conditions
can be embedded directly into the ansatz, thereby eliminating BC penalty terms
from the loss in forward and inverse
settings \cite{ShengY2021,Lyu2021CSIAM,Lu2021SISC,Liuetal2022,RoyC2024,LaiSYYZ2025}.

Among exact-enforcement methods, the geometry-aware technique of
Sukumar and Srivastava is particularly influential \cite{Sukumar2022CMAME}.
Using approximate distance functions (ADF) and the theory of R-functions,
they constructed admissible neural trial spaces that can enforce Dirichlet,
Neumann, and Robin conditions exactly. 
A comparative study of penalty, output-modification, distance-function, and Nitsche-type approaches
has been conducted in \cite{Berrone2023Heliyon}, which concludes 
that exact output modification is generally superior to penalty-only
training for Dirichlet conditions. A number of follow-on works extend or specialize
this perspective; see e.g.~\cite{WangMIK2023} for exact Dirichlet
enforcement in solid mechanics and \cite{Li2024CAMWA} for hybrid hard/soft Fourier-based
treatment in advection--diffusion problems.

Although the distance-function framework of~\cite{Sukumar2022CMAME} is elegant, 
this comes with certain limitations.
This method is especially effective for Dirichlet conditions.
Its extension to Neumann and Robin conditions, however, is more delicate because
derivative boundary operators require higher regularity of the trial space and of
the underlying geometric representation. In particular, follow-on work has noted that
exact enforcement using approximate distance functions becomes more challenging for
higher-order PDEs, and recent studies have further pointed out that strong Neumann/Robin
constructions may become unstable when boundary segments are only piecewise $C^1$
rather than globally $C^1$ \cite{GladstoneNSS2025,GoschelGR2025,SukumarR2026}.
A further subtlety concerns vertices or corner points where two boundary segments meet.
The true Euclidean distance function is generally not differentiable at such points,
which is precisely why the method relies on approximate distance functions constructed
through R-functions and related smooth implicit representations.
Accordingly, differentiability of the resulting trial function at a vertex is not
automatic in a classical geometric sense; rather, it depends on the regularity built into
the approximate distance construction. For Dirichlet conditions this is often sufficient in
practice, since exact trace satisfaction is the primary requirement. For Neumann and Robin
conditions, however, the issue is more fundamental: at a corner where two Neumann
or Robin boundaries meet the outward normal is typically
not uniquely defined. So the boundary operator itself becomes ambiguous unless additional
compatibility constraints 
are introduced. Thus, while the method is a powerful hard-constraint strategy
its application to Neumann and Robin conditions
on nonsmooth geometries requires additional geometric and analytical care, and corner
singularities remain a genuine limitation rather than a purely technical
detail \cite{GoschelGR2025}.

Periodic BCs have motivated a distinct line of research for exact-enforcement techniques.
A key work here is~\cite{DongN2021}, which introduced periodic layers that can be embedded
into feed-forward networks to impose exactly $C^{\infty}$-periodic or $C^k$-periodic boundary
conditions. This construction plays  a role for periodic BCs analogous to that played
by distance functions for Dirichlet-type constraints: periodicity becomes an architectural
property of the ansatz rather than a penalty term in the loss. See also~\cite{Li2025CPC} for a
recent structure-preserving extension using embedded periodic boundary layers in
geometric-flow problems.

A related strategy for exact enforcement is based on the Theory of Functional
Connections (TFC)~\cite{Mortari2017,MortariL2019}.
In TFC, the boundary or initial
conditions are embedded analytically into a constrained expression, leaving the neural
network to represent only the free function \cite{Leake2020MAKE,Schiassietal2021}.
%
While TFC provides an effective mechanism for
BC enforcement by analytically embedding linear equality constraints,
its main limitation is geometric flexibility. In its 
multivariate form, TFC is most natural on tensor-product domains such as rectangles
and hyperrectangles, where the constrained expressions can be built in separable
coordinates \cite{MortariL2019,Leake2020MAKE}.
Non-rectangular domains generally require additional bijective
mappings to a rectangular domain, together with either an  inverse
map or an approximation thereof (see \cite{MortariA2020}).
In addition, the constrained expressions can become increasingly
cumbersome in higher dimensions, with the number of TFC terms growing
exponentially~\cite{WangD2024}.
Recent reduced-TFC work~\cite{ThiruthummalSK2024} explicitly motivates itself by
improved efficiency and the ability for more complex boundary
geometries.

A third category is weak enforcement in variational neural PDE techniques.
In the Deep Ritz method and in variational PINNs, the PDE is enforced through an
energy or weak residual rather than through strong-form
collocation \cite{E2018CMS,Kharazmi2021CMAME}. In this setting, Neumann conditions
often arise naturally through integration by parts, whereas Dirichlet conditions remain
essential constraints that must be imposed separately. The Nitsche's
method was adapted to this setting and  a Deep Nitsche method was developed for essential
BCs in \cite{LiaoW2021}.

A fourth  category consists of multiplier-based constrained
formulations. Rather than enforcing BCs by fixed penalties, these methods introduce
auxiliary Lagrange multipliers or saddle-point formulations. Makridakis et al.\ recently
proposed a Deep Uzawa approach for BC enforcement in PINNs and Deep Ritz
methods \cite{MakridakisPP2024}. For  Neumann conditions,
specialized architectural variants are also beginning to emerge;
see e.g.~\cite{StraubBMR2025} for a recent hard-constraint
treatment based on embedded Fourier features.



In the current paper we present a systematic method for enforcing exactly Dirichlet, Neumann
and Robin type conditions on general quadrilateral domains with arbitrary curved boundaries.
This method is based on exact mappings between general quadrilateral domains and
the standard domain, and leverages
a combination of TFC constrained expressions~\cite{Mortari2017}
and the transfinite interpolations developed by Gordon
and collaborators~\cite{Gordon1971,GordonH1973} for both the domain mapping and
the trial-function formulation.
The resultant trial ansatzs are in parametric forms, formulated in terms of
the standard domain.
%
%
The formulation for exactly enforcing Dirichlet BCs on general quadrilateral
domains is conceptually straightforward, once the exact domain mapping is achieved.
As a matter of fact, the mapping problem between a general quadrilateral domain and the standard
domain itself is treated as a problem involving solely Dirichlet boundary conditions,
and is formulated by a combination of TFC constrained expression
and transfinite interpolation.
%
When Neumann or Robin boundaries are present over the domain,
especially when two Neumann (or Robin) boundaries intersect with each other,
the formulation becomes considerably more challenging.
In this case, the TFC constrained expression and the transfinite interpolation
need to be modified in order to exactly enforce not only these conditions, but also
the compatibility constraints at the intersecting vertex induced by
these conditions. Enforcing exactly the induced compatibility constraints
at the intersection
is critical, because otherwise the Dirichlet, Neumann or Robin conditions on
the adjacent boundaries fail to be exactly satisfied. 
We analyze in detail and present formulations to handle the induced compatibility constraints
for two types of situations: (i) when Neumann (or Robin) boundaries only intersect
with Dirichlet boundaries, and (ii) when two Neumann (or Robin) boundaries intersect
with each other.
We present a four-step procedure for systematically formulating
the general form of trial functions to satisfy these conditions and their compatibility constraints.
When a combination of Dirichlet, Neumann, and Robin boundaries are
present on the general quadrilateral domain, the induced compatibility constraints
can be decomposed into those of the aforementioned cases
and the trial function can be formulated
analogously based on the four-step procedure.


The method proposed herein for exact BC enforcement
has been implemented with the extreme learning machine (ELM) technique
we have developed recently~\cite{DongL2021,DongL2021bip,DongY2022rm,NiD2023,DongW2023,WangD2024}.
ELM is a scientific machine learning approach based on randomized feedforward neural networks,
in which the hidden-layer coefficients are randomly assigned and fixed (non-trainable)
and only the output-layer coefficients are trained.
The ELM network is trained by the linear least squares method for linear
problems or by the nonlinear least squares method (Gauss-Newton method) for nonlinear
problems. There exists a sizeable volume of literature on ELM and variant techniques
(with different aliases). We refer the reader
to e.g.~\cite{PanghalK2020,DwivediS2020,CalabroFS2021,Schiassietal2021,FabianiCRS2021,ChenCEY2022,QuanH2023,FabianiGRS2023,SunDF2024,ZhangBJZ2024,FabianiKSY2025,FalcoSC2026} (among others), and the references therein,
for contributions from other researchers to this area.


Extensive numerical experiments have been conducted
using several linear/nonlinear stationary/dynamic PDEs on a variety of domains with
complex boundary geometries. Simulations demonstrate that
the ELM network together with the current method for BC enforcement has produced highly
accurate results. In particular, numerical results show that the current method has enforced
the Dirichlet, Neumann, and Robin boundary conditions on curved domain
boundaries to the machine accuracy.


The fundamental contribution of this work lies in the systematic method 
for formulating trial functions that exactly satisfy the Dirichlet, Neumann, and Robin type
conditions on general quadrilateral domains with arbitrary curved boundaries. 
The analyses and formulations for enforcing the induced compatibility constraints and the BCs,
especially when two Neumann (or Robin) boundaries intersect with each other, are particularly important.
Another contribution is the numerical demonstration of the effectiveness of
the proposed method for BC enforcement, with the numerical errors for Dirichlet, Neumann,
and Robin BCs on curved domain boundaries achieving machine accuracy.
We would like to emphasize that the method for BC enforcement presented here
can be used with other NN architectures and training algorithms (e.g.~PINNs), not limited to ELM or
randomized neural networks. Because the boundary conditions are enforced exactly with
the solution ansatz, it is agnostic to the neural network used for learning
the arbitrary free function therein.


The rest of this paper is organized as follows. In Section \ref{sec_method}
we first discuss how to map a general quadrilateral domain with arbitrary curved boundaries
to the standard domain, and then present a four-step procedure to systematically formulate
trial functions on general quadrilateral domains that exactly satisfy
the imposed Dirichlet, Neumann, and Robin boundary conditions.
The implementation of this method using the ELM technique for solving linear and
nonlinear PDEs has also been presented.
In Section~\ref{sec_tests} we present a set of numerical examples for  linear and nonlinear
boundary/initial value problems to demonstrate the effectiveness and the
performance of the proposed method for BC enforcement on a variety of domains involving
complex boundary geometries. Section~\ref{sec_summary} concludes
the presentation with a summary of key points and some further comments.
The Appendix (Section~\ref{sec_geom}) provides details about the geometric parameters
for all the computational domains used in the numerical experiments in Section~\ref{sec_tests}.


\section{Exact Enforcement of Boundary Conditions on General Quadrilateral
  Domains with Curved Boundaries}
\label{sec_method}

\subsection{Mapping General Quadrilateral Domains with Curved Boundaries}
\label{sec_21}

We consider a general quadrilateral (Quad) domain $\Omega=\overline{ABCD}$
as sketched in Figure~\ref{fg_1}(a),
whose boundaries can each be an arbitrary curve.
To represent a field function $u(\mbs x)$, $\mbs x=(x,y)\in\Omega$,
defined on this domain that exactly satisfy
the prescribed boundary conditions, it is necessary to
first consider the mapping of this domain to the standard
quadrilateral domain,
\begin{equation}\label{eq_1}
  (x,y) = \mbs x(\bm\xi) = \mbs{x}(\xi,\eta) = (x(\xi,\eta), y(\xi,\eta)),
\end{equation}
where $\bm\xi=(\xi,\eta)\in\Omega_{st}$, and
$\Omega_{st}=\overline{A'B'C'D'}=[-1,1]\times[-1,1]$ denotes the standard
 domain (see Figure~\ref{fg_1}(b)).

\begin{figure}
  \centerline{
    \begin{tabular}{ccc}
      \includegraphics[width=2.3in]{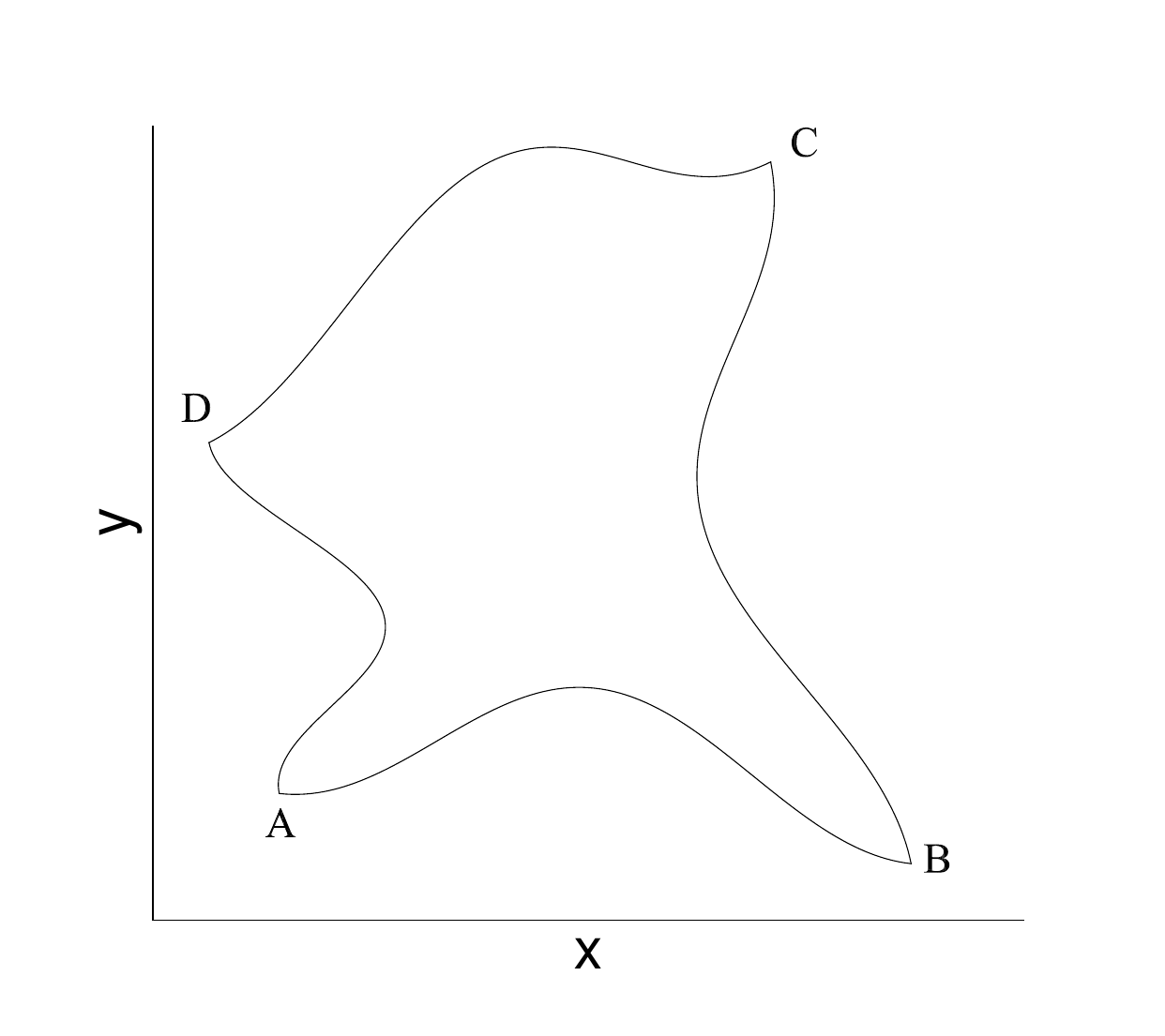}(a)
      &
      \raisebox{1in}{\Large $\xLeftrightarrow{\  \text{mapping}\ }$}
      &
      \includegraphics[width=2.3in]{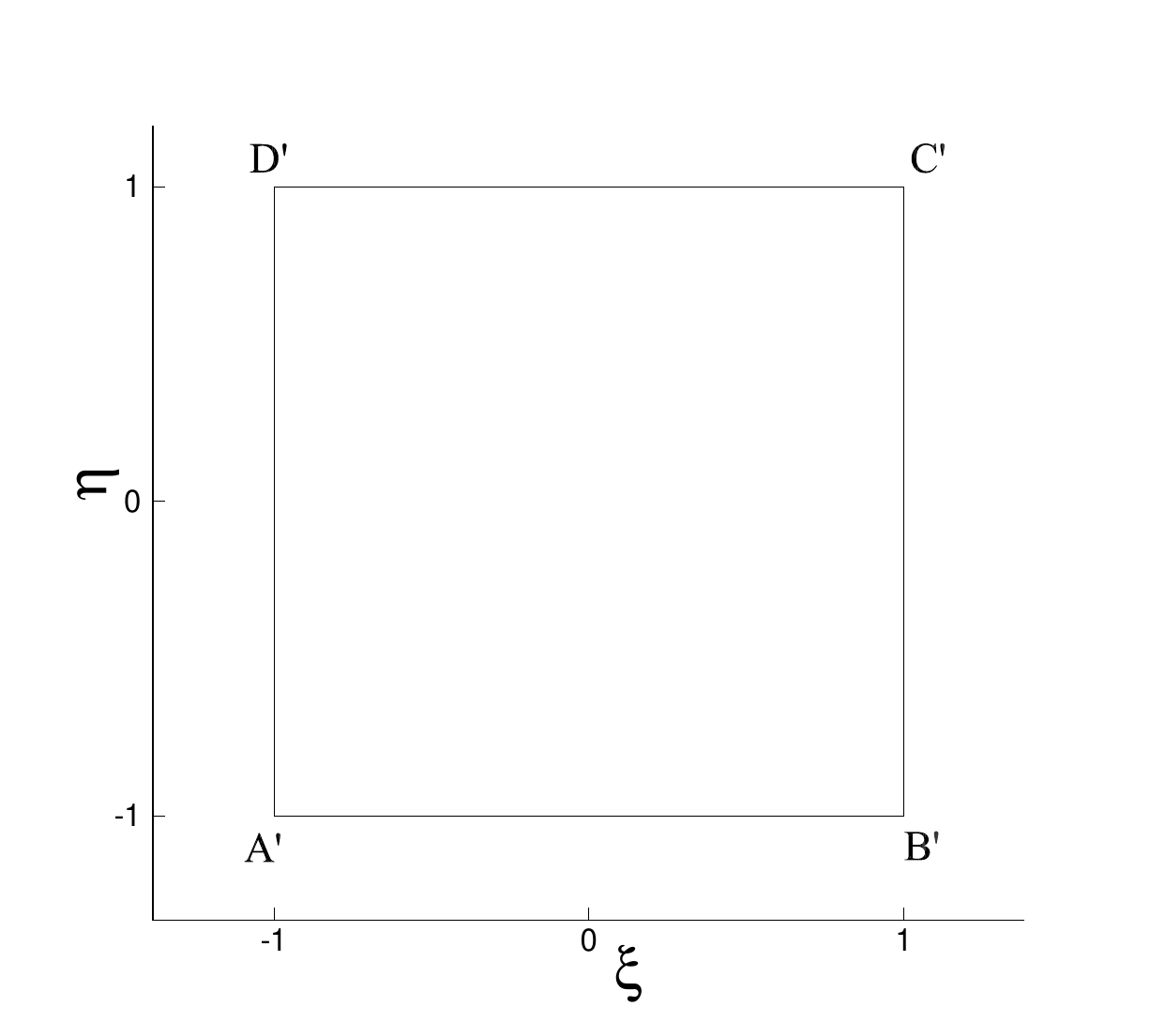}(b)
    \end{tabular}
  }
  \caption{Mapping between a general quadrilateral domain $\Omega$  and the standard
    quadrilateral domain $\Omega_{st}=[-1,1]^2$.
  }
  \label{fg_1}
\end{figure}

We assume that the map
$\mbs x(\xi,\eta)=(x(\xi,\eta),y(\xi,\eta))$ represents a regular
transformation~\cite{Fleming1977} (i.e.~of class at least $C^1$, univalent,
with non-singular Jacobian matrix).
It needs to satisfy
the following Dirichlet boundary conditions,
\begin{align}\label{eq_2}
  \left\{
  \begin{array}{ll}
  \mbs{x}(-1,\eta) = \mbs{x}_{AD}(\eta), & \quad \eta\in[-1,1], \\
  \mbs{x}(1,\eta) = \mbs{x}_{BC}(\eta), & \quad \eta\in[-1,1], \\
  \mbs{x}(\xi,-1) = \mbs{x}_{AB}(\xi), & \quad \xi\in[-1,1], \\
  \mbs{x}(\xi,1) = \mbs{x}_{CD}(\xi), & \quad \xi\in[-1,1],
  \end{array}
  \right.
\end{align}
where $\mbs{x}_{AB}(\xi)$, $\mbs{x}_{BC}(\eta)$, $\mbs{x}_{CD}(\xi)$,
and $\mbs{x}_{AD}(\eta)$ are 
the prescribed boundary curves for $\overline{AB}$, $\overline{BC}$, $\overline{CD}$
and $\overline{AD}$ in  parametric forms. The prescribed forms
should be compatible  at the vertices,
\begin{equation}
  \left\{
  \begin{array}{ll}
  \mbs x_{AB}(-1)=\mbs x_{AD}(-1)=\mbs x_A, &
  \mbs x_{AB}(1) = \mbs x_{BC}(-1) = \mbs x_B, \\
  \mbs x_{BC}(1) = \mbs x_{CD}(1) = \mbs x_C, &
  \mbs x_{CD}(-1) = \mbs x_{AD}(1) = \mbs x_{D},
  \end{array}
  \right.
\end{equation}
where $\mbs x_{A}$, $\mbs x_{B}$, $\mbs x_C$ and $\mbs x_D$ denote
the coordinates of the four vertices $A$, $B$, $C$ and $D$, respectively.

The general form of a vector-valued function that satisfies the
conditions in~\eqref{eq_2} is given by the TFC constrained expression,
\begin{equation}\label{eq_4}
  \mbs x(\xi,\eta) = \mbs g(\xi,\eta) - P\mbs g(\xi,\eta) + P\mbs X(\xi,\eta).
\end{equation}
Here $\mbs g(\xi,\eta)\in\mbb R^2$ is an arbitrary or free function (with sufficient
regularity), $P$ is
a 2D projection operator and
$P\mbs X(\xi,\eta)$ denotes the 2D transfinite interpolation~\cite{Gordon1971}
of $\mbs X(\xi,\eta)$,
where $\mbs X(\xi,\eta)$ is defined on the boundaries of $\Omega_{st}$ by
\begin{equation}
  \mbs X(-1,\eta) = \mbs x_{AD}(\eta), \
  \mbs X(1,\eta) = \mbs x_{BC}(\eta), \
  \mbs X(\xi,-1) = \mbs x_{AB}(\xi), \
  \mbs X(\xi,1) = \mbs x_{CD}(\xi).
\end{equation}
$P$ is given by the boolean sum of two 1D transfinite interpolation operators
$P_1$ and $P_2$ (see~\cite{Gordon1971} for details), for $\xi$ and $\eta$ directions
respectively, defined by 
\begin{equation}
  \left\{
  \begin{split}
    &
  P_1\mbs X(\xi,\eta) = \mbs X(-1,\eta)\phi_0(\xi) + \mbs X(1,\eta)\phi_1(\xi), \\
  &
  P_2\mbs X(\xi,\eta) = \mbs X(\xi,-1)\phi_0(\eta) + \mbs X(\xi,1)\phi_1(\eta),
  \end{split}
  \right.
\end{equation}
where $\phi_0(\xi)$ and $\phi_1(\xi)$ are the so-called
blending functions~\cite{Gordon1971}, also termed switching
functions in TFC, on $\xi\in[-1,1]$ as given by
\begin{equation}\label{eq_7}
  \phi_0(\xi) = \frac12(1-\xi), \quad
  \phi_1(\xi) = \frac12(1+\xi).
\end{equation}
$P\mbs X(\xi,\eta)$ is specifically given by,
\begin{equation}\label{eq_8}
  \begin{split}
  P\mbs X(\xi,\eta) =& \ (P_1\oplus P_2)\mbs X(\xi,\eta)
  = P_1\mbs X(\xi,\eta) + P_2\mbs X(\xi,\eta) - P_1P_2X(\xi,\eta) \\
  =& \ \mbs X(-1,\eta)\phi_0(\xi) + \mbs X(1,\eta)\phi_1(\xi)
  + \mbs X(\xi,-1)\phi_0(\eta) + \mbs X(\xi,1)\phi_1(\eta) \\
  & -\left[\mbs X(-1,-1)\phi_0(\eta) + \mbs X(-1,1)\phi_1(\eta) \right]\phi_0(\xi)
  - \left[\mbs X(1,-1)\phi_0(\eta) + \mbs X(1,1)\phi_1(\eta) \right]\phi_1(\xi),
  \end{split}
\end{equation}
where $\oplus$ denotes the boolean sum.
$P\mbs g(\xi,\eta)$ is defined in a fashion analogous to equation~\eqref{eq_8}.

The map given by~\eqref{eq_4}, for arbitrary $\mbs g(\xi,\eta)$,
satisfies the boundary conditions in~\eqref{eq_2} exactly.
For most numerical simulations in this paper, we employ simply
$\mbs g=0$ in~\eqref{eq_4}, leading to the map
\begin{equation}\label{eq_9}
  \mbs x(\xi,\eta) = P\mbs X(\xi,\eta).
\end{equation}
When the boundary curves are more complicated or highly
distorted, other choices for $\mbs g(\xi,\eta)$ in the mapping
can be more favorable. These choices will be specified
in the numerical simulations in later sections.


In practice, it is extremely difficult to ensure analytically the univalency
of the map in~\eqref{eq_4} or~\eqref{eq_9}.
In this work we employ the strategy as advocated in~\cite{GordonH1973}
for constructing univalent mappings, by combining
visualization with numerical simulation.
By visualizing representative grid lines in the domain,
one can effectively detect abnormalities in the domain mapping,
such as the ``overspill''
or the intersection of generalized grid lines
corresponding to different values of the same variable.
These anomalies can then be remedied via measures
such as boundary curve re-parameterization or incorporating
auxiliary constraints into the mapping. 

\begin{remark}\label{rem_1}
  The requirement for non-singularity in the Jacobian matrix for
  the mapping function~\eqref{eq_1} can be relaxed on those vertices
  where two intersecting boundaries involve either (i) both Dirichlet BCs,
  or (ii) one Dirichlet BC and one Neumann BC, or (iii) one
  Dirichlet BC and one Robin BC.
  In this work we allow these types of boundary pairs to intersect at a vertex
  smoothly, i.e. with the same tangent line, leading to
  a singular Jacobian matrix at that vertex.

\end{remark}


\subsection{Enforcing Dirichlet Boundary Conditions}
\label{sec_22}

We next systematically develop formulations for field functions
defined on the general quadrilateral domain as in Figure~\ref{fg_1}(a) that
exactly satisfy the  prescribed conditions on
its boundaries. We first consider Dirichlet boundary conditions (DBCs).

Specifically, we seek a scalar field function $u(\mbs x)$
($\mbs x\in\Omega=\overline{ABCD}$)
satisfying the following conditions,
  \begin{align}\label{eq_10}
    & \left.u(\mbs x)\right|_{\mbs x\in \overline{AB}} = u_{AB}(\mbs x), \quad
     \left.u(\mbs x)\right|_{\mbs x\in \overline{BC}} = u_{BC}(\mbs x), \quad
     \left.u(\mbs x)\right|_{\mbs x\in \overline{CD}} = u_{CD}(\mbs x), \quad
     \left.u(\mbs x)\right|_{\mbs x\in \overline{BC}} = u_{AD}(\mbs x),
  \end{align}
where $u_{AB}(\mbs x)$, $u_{BC}(\mbs x)$, $u_{CD}(\mbs x)$, and $u_{AD}(\mbs x)$
are prescribed Dirichlet data on the boundaries. These boundary
distributions should be compatible on the vertices,
\begin{subequations}\label{eq_11}
\begin{align}
  & u_{AB}(\mbs x_A)=u_{AD}(\mbs x_A)=u_A, \label{eq_11a} \\
  & u_{AB}(\mbs x_B)=u_{BC}(\mbs x_B)=u_B, \label{eq_11b} \\
  & u_{BC}(\mbs x_C)=u_{CD}(\mbs x_C)=u_C, \label{eq_11c} \\
  & u_{CD}(\mbs x_D)=u_{AD}(\mbs x_D)=u_D, \label{eq_11d}
\end{align}
\end{subequations}
where $\mbs x_A$, $\mbs x_B$, $\mbs x_C$ and $\mbs x_D$ are the vertex
coordinates, and $u_A$, $u_B$, $u_C$ and $u_D$ are their function values.
We aim to formulate the general form of $u(\mbs x)$ that
satisfies the DBCs in~\eqref{eq_10} exactly.

Employing the mapping $\mbs x(\xi,\eta)$ from Section~\ref{sec_21},
we  transform the field function into,
\begin{equation}\label{eq_12}
  u(\mbs x)=u(\mbs x(\xi,\eta))=V(\xi,\eta),
\end{equation}
and the boundary distributions in~\eqref{eq_10} into,
\begin{subequations}\label{eq_13}
  \begin{align}
    &
    u_{AB}(\mbs x) = u_{AB}(\mbs x(\xi,-1)) = F(\xi,-1),
    \quad \xi\in[-1,1]; \label{eq_13a} \\
    &
    u_{BC}(\mbs x) = u_{BC}(\mbs x(1,\eta)) = F(1,\eta),
    \quad \eta\in[-1,1]; \label{eq_13b} \\
    &
    u_{CD}(\mbs x) = u_{CD}(\mbs x(\xi,1)) = F(\xi,1),
    \quad \xi\in[-1,1]; \label{eq_13c}  \\
    &
    u_{AD}(\mbs x) = u_{AD}(\mbs x(-1,\eta)) = F(-1,\eta),
    \quad \eta\in[-1,1]. \label{eq_13d}
  \end{align}
\end{subequations}

The problem of seeking $u(\mbs x)$ is then transformed into the following.
Find $V(\xi,\eta)$, for $(\xi,\eta)\in\Omega_{st}$, such that
\begin{subequations}\label{eq_14}
  \begin{align}
    & V(\xi,-1) = F(\xi,-1), \quad \xi\in[-1,1]; \label{eq_14a} \\
    & V(1,\eta) = F(1,\eta), \quad \eta\in[-1,1]; \label{eq_14b} \\
    & V(\xi,1) = F(\xi,1), \quad \xi\in[-1,1]; \label{eq_14c} \\
    & V(-1,\eta) = F(-1,\eta), \quad \eta\in[-1,1], \label{eq_14d}
  \end{align}
\end{subequations}
where $F$ denotes the boundary distributions given in~\eqref{eq_13}.

The general form of $V(\xi,\eta)$ that satisfies the conditions in~\eqref{eq_14}
is given by the following TFC constrained expression,
\begin{equation}\label{eq_15}
  V(\xi,\eta) = g(\xi,\eta) - Pg(\xi,\eta) + PF(\xi,\eta), \quad
  (\xi,\eta)\in\Omega_{st},
\end{equation}
where $g(\xi,\eta)$ is a free (arbitrary) function, and
$P$ is the transfinite interpolation operator defined in~\eqref{eq_8}.
More specifically,
\begin{subequations} \label{eq_16}
\begin{align}
    P g(\xi,\eta) 
  =& \  g(-1,\eta)\phi_0(\xi) +  g(1,\eta)\phi_1(\xi)
  +  g(\xi,-1)\phi_0(\eta) +  g(\xi,1)\phi_1(\eta) \notag \\
  & -\left[ g(-1,-1)\phi_0(\eta) +  g(-1,1)\phi_1(\eta) \right]\phi_0(\xi)
  - \left[ g(1,-1)\phi_0(\eta) +  g(1,1)\phi_1(\eta) \right]\phi_1(\xi); \label{eq_16a} \\
    P F(\xi,\eta) 
  =& \  F(-1,\eta)\phi_0(\xi) +  F(1,\eta)\phi_1(\xi)
  +  F(\xi,-1)\phi_0(\eta) +  F(\xi,1)\phi_1(\eta) \notag \\
  & -\left[ F(-1,-1)\phi_0(\eta) +  F(-1,1)\phi_1(\eta) \right]\phi_0(\xi)
  - \left[ F(1,-1)\phi_0(\eta) +  F(1,1)\phi_1(\eta) \right]\phi_1(\xi). \label{eq_16b}
\end{align}
\end{subequations}
It is straightforward to verify that, for arbitrary $g(\xi,\eta)$,
the function $V(\xi,\eta)$ given by~\eqref{eq_15} satisfies
the boundary conditions in~\eqref{eq_14} exactly.

Therefore, employing the parametric form~\eqref{eq_15} for $u(\mbs x)$,
one can satisfy the DBCs in~\eqref{eq_10} exactly on the general
quadrilateral domain $\Omega$.
If $u(\mbs x)$ represents the unknown solution field to a given PDE,
one can restrict the free function $g(\xi,\eta)$ in~\eqref{eq_15}
to an appropriate function space or represent it by an artificial
neural network (NN), and then determine the expansion coefficients or the
NN trainable parameters based on the given PDE.
We will discuss how to combine the extreme learning machine (ELM) method
and the formulations developed
here for solving PDEs on general quadrilateral domains
later in Section~\ref{sec_25}.


\subsection{Enforcing Neumann Boundary Conditions}
\label{sec_23}

We next consider how to enforce Neumann boundary conditions (NBCs) exactly on
the general quadrilateral domain $\Omega$.
Since the boundary condition involves function derivatives,
the formulation for its exact enforcement
becomes much more intricate.

We consider a combination of Dirichlet and Neumann BCs
for the domain boundaries, and
assume that the domain involves at least one Neumann boundary
and one Dirichlet boundary, with the rest being either Dirichlet
or Neumann types.
We distinguish two
cases: (i) when the Neumann boundary only intersects with
Dirichlet boundaries (i.e. no two Neumann boundaries intersect), and
(ii) when two Neumann boundaries intersect with each other.
The formulations for the exact enforcement of NBCs/DBCs
of these cases are developed below separately.

\subsubsection{When Neumann Boundary Only Intersects with Dirichlet Boundary}
\label{sec_231}

This case occurs when the domain involves one Neumann boundary and three
Dirichlet boundaries, or when two Neumann conditions are imposed
on opposite sides of the domain.
In the discussions below we assume that the domain
involves a single Neumann boundary, and consider
the exact enforcement of NBC/DBCs. The formulation presented below,
with some modification that involves no essential difficulties,
can be used to enforce two Neumann conditions imposed on opposite
boundaries of the quadrilateral domain.

Let us assume, without loss of generality, that the single Neumann condition is
imposed on the boundary $\overline{BC}$.
Specifically, we seek a scalar field function $u(\mbs x)$, for $\mbs x\in\Omega$,
which satisfies the Neumann condition on $\overline{BC}$ and Dirichlet
conditions on the other boundaries, 
\begin{subequations}\label{eq_17}
  \begin{align}
    & \left.u\right|_{\mbs x\in \overline{AB}} = u_{AB}(\mbs x), \label{eq_17a} \\
    & \left.\mbs n\cdot \nabla u\right|_{\mbs x\in \overline{BC}} = u_{nBC}(\mbs x),
    \label{eq_17b}\\
    & \left.u\right|_{\mbs x\in \overline{CD}} = u_{CD}(\mbs x), \label{eq_17c} \\
    & \left.u\right|_{\mbs x\in \overline{AD}} = u_{AD}(\mbs x), \label{eq_17d}
  \end{align}
\end{subequations}
where $\mbs n$ denotes the outward-pointing unit normal vector, and
$u_{nBC}(\mbs x)$ is the prescribed Neumann boundary distribution on $\overline{BC}$.
The prescribed Dirichlet boundary functions $u_{AB}(\mbs x)$, $u_{AD}(\mbs x)$
and $u_{CD}(\mbs x)$ must be compatible at the vertices $A$ and $D$
(see~\eqref{eq_11a} and~\eqref{eq_11d}). They must also be compatible
with the Neumann boundary function $u_{nBC}(\mbs x)$ at the vertices
$B$ and $C$, which will be elaborated below.
%

In the following development we assume that the
boundaries $\overline{BC}$ and $\overline{AB}$ intersect
at an angle at vertex $B$ (i.e. no common tangent at $B$),
and that at vertex $C$ the boundaries
$\overline{BC}$ and $\overline{CD}$ also intersect at an angle,
thus leading to a nonsingular Jacobian matrix of the map
$\mbs x(\xi,\eta)$ at both vertices.
We will discuss how to handle a smooth domain boundary
at vertices $B$ or $C$, i.e.~with a common tangent at those locations
(singular Jacobian matrix for the mapping), 
in a remark at the end of this section.

Employing the mapping function $\mbs x(\xi,\eta)$, the function $u(\mbs x)$ and
the Dirichlet boundary functions in~\eqref{eq_17a} and~\eqref{eq_17c}--\eqref{eq_17d}
are transformed into~\eqref{eq_12},~\eqref{eq_13a} and~\eqref{eq_13c}--\eqref{eq_13d},
respectively.
The Neumann condition~\eqref{eq_17b} is transformed into,
\begin{subequations} \label{eq_18}
\begin{align}
  & V_{\xi}(1,\eta) + S_{BC}(\eta)V_{\eta}(1,\eta) = T_{BC}(\eta), \quad \eta\in[-1,1],
  \label{eq_18a} \\
    \text{or}\ &
    V_{\xi}(1,\eta) = T_{BC}(\eta) - S_{BC}(\eta)V_{\eta}(1,\eta) = F_{\xi}(1,\eta),
    \label{eq_18b}
\end{align}
\end{subequations}
where 
\begin{equation}\label{eq_19}
  \left\{
  \begin{split}
    &
    S_{BC}(\eta) = \frac{K_{yBC}(\eta)}{K_{xBC}(\eta)}, \quad
    T_{BC}(\eta) = \frac{F_{nBC}(\eta)}{K_{xBC}(\eta)}
    = \frac{u_{nBC}(\mbs x(1,\eta))}{K_{xBC}(\eta)}, \\
    &
    \mbs K_{BC}(\eta)=\begin{bmatrix}K_{xBC}(\eta) \\ K_{yBC}(\eta) \end{bmatrix}
  = \mbs J^{-1}(1,\eta)\begin{bmatrix}n_{xBC}(\eta) \\ n_{yBC}(\eta) \end{bmatrix}
  = \frac{1}{ \det{\mbs J(1,\eta)}}
  \begin{bmatrix}\|\mbs x_{\eta}(1,\eta) \| \\
    -\frac{\mbs x_{\xi}(1,\eta)\cdot\mbs x_{\eta}(1,\eta)}
    {\|\mbs x_{\eta}(1,\eta) \|} \end{bmatrix}, \\
  & \mbs J(\xi,\eta) = \begin{bmatrix} x_{\xi}(\xi,\eta) & x_{\eta}(\xi,\eta)\\
  y_{\xi}(\xi,\eta) & y_{\eta}(\xi,\eta)\end{bmatrix}.
  \end{split}
  \right.
\end{equation}
In the above equations $\mbs J(\xi,\eta)$ is the Jacobian matrix of the map
$\mbs x(\xi,\eta)$, and
we have used the relations
\begin{equation}\label{eq_20}
  \left\{
  \begin{split}
    &
    \mbs n\cdot\nabla u = \begin{bmatrix}u_x & u_y\end{bmatrix} \begin{bmatrix} n_x \\ n_y \end{bmatrix}
  = \begin{bmatrix}V_{\xi} & V_{\eta}\end{bmatrix}\mbs J^{-1}(\xi,\eta)\begin{bmatrix} n_x \\ n_y \end{bmatrix}
  = \begin{bmatrix}V_{\xi} & V_{\eta}\end{bmatrix} \begin{bmatrix} K_x \\ K_y \end{bmatrix}; \\
  &
  \begin{bmatrix} n_x \\ n_y \end{bmatrix} = \mbs n = \bm \sigma\bm\tau = \bm \sigma
  \begin{bmatrix} \tau_x \\ \tau_y \end{bmatrix}; \quad
  \bm \sigma = \begin{bmatrix} 0 & 1\\ -1 & 0 \end{bmatrix} \
  \text{on}\ \overline{AB}\ \text{or}\ \overline{BC}\ \text{and}\
  \begin{bmatrix} 0 & -1\\ 1 & 0 \end{bmatrix} \
  \text{on}\ \overline{CD}\ \text{or}\ \overline{AD},
  \end{split}
  \right.
\end{equation}
in which $\bm\tau=(\tau_x,\tau_y)$ is the unit tangent vector of the domain
boundary.

The requirement for continuity of $V(\xi,\eta)$ and $V_{\xi}(\xi,\eta)$
at the vertices $B$ and $C$ leads to,
\begin{subequations}\label{eq_21}
  \begin{align}
    &
    V(1,-1) = F(1,-1) = \lim_{\xi\rightarrow 1}F(\xi,-1), \label{eq_21a} \\
    &
    V(1,1) = F(1,1) = \lim_{\xi\rightarrow 1}F(\xi,1), \label{eq_21b} \\
    &
    V_{\xi}(1,-1) = F_{\xi}(1,-1) = \lim_{\xi\rightarrow 1}F_{\xi}(\xi,-1), \label{eq_21c} \\
    &
    V_{\xi}(1,1) = F_{\xi}(1,1) = \lim_{\xi\rightarrow 1} F_{\xi}(\xi,1), \label{eq_21d}
  \end{align}
\end{subequations}
where $F_{\xi}(\xi,-1)$ and $F_{\xi}(\xi,1)$ can be computed from the
Dirichlet boundary functions on $\overline{AB}$ and $\overline{CD}$
(see~\eqref{eq_13a} and~\eqref{eq_13c}).
Therefore, the  compatibility
between~\eqref{eq_18} and~\eqref{eq_14a} at vertex $B$
leads to, 
\begin{subequations}\label{eq_22}
  \begin{align}
    &V_{\eta}(1,-1) =  \frac{1}{S_{BC}(-1)}\left[ T_{BC}(-1) - F_{\xi}(1,-1) \right]
    =F_{\eta}^a(1,-1),
    &
     \text{if}\ \overline{AB}\notperp\overline{BC}\ \text{at}\ B, \label{eq_22a}  \\
  &F_{\xi}(1,-1) - T_{BC}(-1) = 0, & 
  \text{if}\ \overline{AB}\perp\overline{BC}\ \text{at}\ B, \label{eq_22b}
  \end{align}
\end{subequations}
where we have used~\eqref{eq_21c}.
The compatibility between~\eqref{eq_18} and~\eqref{eq_14c} at vertex $C$ leads to,
\begin{subequations}\label{eq_23}
  \begin{align}
    &V_{\eta}(1,1) =  \frac{1}{S_{BC}(1)}\left[ T_{BC}(1) - F_{\xi}(1,1) \right]
    =F_{\eta}^a(1,1),
    &
     \text{if}\ \overline{CD}\notperp\overline{BC}\ \text{at}\ C, \label{eq_23a} \\
  &F_{\xi}(1,1) - T_{BC}(1) = 0, & 
  \text{if}\ \overline{CD}\perp\overline{BC}\ \text{at}\ C. \label{eq_23b}
  \end{align}
\end{subequations}
The conditions~\eqref{eq_22b} and~\eqref{eq_23b} are constraints
on the prescribed Dirichlet and Neumann boundary functions when the boundaries
are orthogonal at these vertices, which we assume will always be satisfied
by the prescribed data.
The equations~\eqref{eq_22a} and~\eqref{eq_23a} are constraints
on $V_{\eta}(1,-1)$ and $V_{\eta}(1,1)$ for the unknown field function $V(\xi,\eta)$,
 imposed only when the boundary curves are not
orthogonal at vertices $B$ or $C$.

Our objective is to develop general forms of $V(\xi,\eta)$
that exactly
satisfy the conditions~\eqref{eq_18},~\eqref{eq_21a}--\eqref{eq_21b},
\eqref{eq_22a},~\eqref{eq_23a},
as well as~\eqref{eq_14a} and~\eqref{eq_14c}--\eqref{eq_14d}.
The primary challenge here is caused by~\eqref{eq_18},
in which the unknowns $V_{\xi}(1,\eta)$ and $V_{\eta}(1,\eta)$ are coupled together.
We will use~\eqref{eq_18b} for enforcing this condition,
by treating it as a constraint on $V_{\xi}(1,\eta)$,
where the imposed data $F_{\xi}(1,\eta)$ contains the unknown $V_{\eta}(1,\eta)$.


We will follow a four-step procedure to
formulate the general form of $V(\xi,\eta)$ that satisfies the above conditions:
\begin{itemize}
\item (step \#1) Identify the set of variables on which the boundary conditions, and
  the compatibility constraints 
  induced by these boundary conditions, are imposed.

\item (step \#2)
  Construct a transfinite interpolation for the types of identified variables,
  including the induced forms of constraints.

\item (step \#3)
  Formulate a preliminary TFC constrained expression based on this transfinite interpolation,
  thus giving rise to a preliminary form for $V(\xi,\eta)$ with a free function.

\item (step \#4)
  Update the terms in the transfinite interpolant that involve the unknown function
  $V(\xi,\eta)$, by replacing those $V$ terms with corresponding terms that
  result from the preliminary TFC form of the previous step.
  The updated TFC constrained expression
  provides the final form for $V(\xi,\eta)$.
  
\end{itemize}

\begin{table}
  \centering
  \begin{tabular}{c|cc|cc|cc|cc}
    \hline
    $\xi$ & $\varphi_0(\xi)$ & $\varphi_0'(\xi)$
    & $\varphi_1(\xi)$ & $\varphi_1'(\xi)$
    & $\psi_0(\xi)$ & $\psi_0'(\xi)$
    & $\psi_1(\xi)$ & $\psi_1'(\xi)$ \\ \hline
    $-1$ & 1 & 0 & 0 & 0 & 0 & 1 & 0 & 0 \\ \hline
    1 & 0 & 0 & 1 & 0 & 0 & 0 & 0 & 1 \\
    \hline
  \end{tabular}
  \caption{Interpolation properties of $C^1$ Hermite interpolation
    polynomials defined on $\xi\in[-1,1]$.
  }
  \label{tab_1}
\end{table}

To facilitate the subsequent discussions, let us recall the $C^1$ Hermite
interpolation polynomials $\varphi_0(\xi)$, $\varphi_1(\xi)$, $\psi_0(\xi)$
and $\psi_1(\xi)$ defined on $\xi\in[-1,1]$, which satisfy
the interpolation properties listed in Table~\ref{tab_1}.
These polynomials are given by
\begin{align}
  \begin{array}{ll}
    \varphi_0(\xi) = \phi_0^2(\xi)\left[1+2\phi_1(\xi) \right], &
    \varphi_1(\xi) = \phi_1^2(\xi)\left[1+2\phi_0(\xi) \right], \\
    \psi_0(\xi) = 2\phi_0^2(\xi)\phi_1(\xi), &
    \psi_1(\xi) = -2\phi_0(\xi)\phi_1^2(\xi),
    \end{array}
  \end{align}
where $\phi_0(\xi)$ and $\phi_1(\xi)$ are defined in~\eqref{eq_7}.
We define two constants $\lambda_B$ and $\lambda_C$
as flags on whether the boundary curves are
orthogonal at the vertices $B$ and $C$,
\begin{equation}\label{eq_25}
  \lambda_B = \left\{
  \begin{array}{ll}
    0, & \text{if}\ \overline{AB}\perp\overline{BC}\ \text{at}\ B, \\
    1, & \text{if}\ \overline{AB}\notperp\overline{BC}\ \text{at}\ B;
  \end{array}
  \right.
  \quad
  \lambda_C = \left\{
  \begin{array}{ll}
    0, & \text{if}\ \overline{CD}\perp\overline{BC}\ \text{at}\ C, \\
    1, & \text{if}\ \overline{CD}\notperp\overline{BC}\ \text{at}\ C.
  \end{array}
  \right.
\end{equation}

Following the aforementioned procedure, we first identify the set of variables on which
the conditions (or induced constraints) are imposed (step \#1).
The conditions for~\eqref{eq_18b},~\eqref{eq_14a}, and~\eqref{eq_14c}--\eqref{eq_14d}
are very clear. While the conditions~\eqref{eq_22a},~\eqref{eq_23a}, and
also~\eqref{eq_21a}--\eqref{eq_21b} are apparently about
the function values or derivatives on the
vertices $B$ and $C$, they are actually constraints on the
boundary distribution $V(1,\eta)$. $V(1,\eta)$ is not involved in
the original boundary conditions, but should ensure that these conditions
for the vertices be satisfied. Employing the $C^1$ Hermite interpolation
polynomials, the following distribution for $\overline{BC}$
satisfies the conditions~\eqref{eq_22a},~\eqref{eq_23a},
and~\eqref{eq_21a}--\eqref{eq_21b},
\begin{equation}\label{eq_26}
  F(1,\eta) = F(1,-1)\varphi_0(\eta) + F(1,1)\varphi_1(\eta)
   + \lambda_B F_{\eta}(1,-1)\psi_0(\eta) + \lambda_C F_{\eta}(1,1)\psi_1(\eta),
\end{equation}
Here,
\begin{align}\label{eq_n27}
  F_{\eta}(1,-1)=F_{\eta}^a(1,-1), \quad F_{\eta}(1,1)=F_{\eta}^a(1,1),
\end{align}
with $F_{\eta}^a(1,-1)$ and $F_{\eta}^a(1,1)$ defined in~\eqref{eq_22a}
and~\eqref{eq_23a}. The constants $\lambda_B$ and $\lambda_C$
ensure that the conditions~\eqref{eq_22a}
and~\eqref{eq_23a} are imposed only when the boundary curves are not orthogonal
at $B$ or $C$. The distribution~\eqref{eq_26} is compatible with
the Dirichlet boundary functions for $\overline{AB}$ and $\overline{CD}$ at
these vertices.
Our task is then reduced to the following: find $V(\xi,\eta)$ such that
\begin{subequations}\label{eq_27}
  \begin{align}
    &
    V(-1,\eta) = F(-1,\eta), \label{eq_27a} \\
    &
    V(1,\eta) = F(1,\eta)
    = F(1,-1)\varphi_0(\eta) + F(1,1)\varphi_1(\eta)
   + \lambda_B F_{\eta}(1,-1)\psi_0(\eta) + \lambda_C F_{\eta}(1,1)\psi_1(\eta), \label{eq_27b} \\
    &
    V_{\xi}(1,\eta) = F_{\xi}(1,\eta), \label{eq_27c} \\
    &
    V(\xi,-1) = F(\xi,-1), \label{eq_27d} \\
    &
    V(\xi,1) = F(\xi,1), \label{eq_27e}
  \end{align}
\end{subequations}
where equation~\eqref{eq_26} has been used,
the functions $F(-1,\eta)$, $F(\xi,-1)$ and $F(\xi,1)$ are known and
given in~\eqref{eq_13a},~\eqref{eq_13c}--\eqref{eq_13d},
and $F_{\xi}(1,\eta)$ is defined
in~\eqref{eq_18b}, which contains the unknown $V_{\eta}(1,\eta)$.

Next, we construct a transfinite interpolation for the
conditions in~\eqref{eq_27} (step \#2).
Let $P_1F$ and $P_2F$ denote two 1D transfinite interpolations along the
$\xi$ and $\eta$ directions, respectively defined by,
\begin{subequations}
  \begin{align}
    &
    P_1F(\xi,\eta) = F(-1,\eta)\varphi_0(\xi) + F(1,\eta)\varphi_1(\xi)
    + F_{\xi}(1,\eta)\psi_1(\xi), \\
    &
    P_2F(\xi,\eta) = F(\xi,-1)\varphi_0(\eta) + F(\xi,1)\varphi_1(\xi).
  \end{align}
\end{subequations}
Let $PF$ denote the  boolean sum
of $P_1F$ and $P_2F$, given by,
\begin{equation}\label{eq_30}
  \begin{split}
    PF(\xi,\eta) =&\ (P_1 \oplus P_2)F(\xi,\eta)
    = P_1F(\xi,\eta) + P_2F(\xi,\eta) - P_1P_2F(\xi,\eta) \\
    =&\  F(-1,\eta)\varphi_0(\xi) + F_{\xi}(1,\eta)\psi_1(\xi)
    + F(\xi,-1)\varphi_0(\eta) + F(\xi,1)\varphi_1(\eta) \\
    & -\left[F(-1,-1)\varphi_0(\eta) + F(-1,1)\varphi_1(\eta) \right]\varphi_0(\xi)
    - \left[F_{\xi}(1,-1)\varphi_0(\eta) + F_{\xi}(1,1)\varphi_1(\eta) \right]\psi_1(\xi) \\
    & + \left[\lambda_B F_{\eta}(1,-1)\psi_0(\eta)
      + \lambda_C F_{\eta}(1,1)\psi_1(\eta) \right]\varphi_1(\xi)
  \end{split}
\end{equation}
where we have used~\eqref{eq_26}.
It is straightforward to verify that $V=PF(\xi,\eta)$ satisfies
the conditions in~\eqref{eq_27a}, \eqref{eq_27c}--\eqref{eq_27e},
\eqref{eq_22a}, \eqref{eq_23a} and~\eqref{eq_21a}--\eqref{eq_21b},
by noting~\eqref{eq_n27}.

We can now formulate a preliminary general form of $V(\xi,\eta)$ for
the conditions~\eqref{eq_27} using the TFC constrained expression (step \#3),
\begin{align}\label{eq_31}
  & V(\xi,\eta) = g(\xi,\eta) - Pg(\xi,\eta) + PF(\xi,\eta),
\end{align}
where $g(\xi,\eta)$ is a free (arbitrary) function,
$PF$ is defined by~\eqref{eq_30}, and $Pg(\xi,\eta)$ is defined analogously
and specifically given by
\begin{equation}\label{eq_32}
  \begin{split}
    Pg(\xi,\eta) =&\  g(-1,\eta)\varphi_0(\xi) + g_{\xi}(1,\eta)\psi_1(\xi)
    + g(\xi,-1)\varphi_0(\eta) + g(\xi,1)\varphi_1(\eta) \\
    & -\left[g(-1,-1)\varphi_0(\eta) + g(-1,1)\varphi_1(\eta) \right]\varphi_0(\xi)
    - \left[g_{\xi}(1,-1)\varphi_0(\eta) + g_{\xi}(1,1)\varphi_1(\eta) \right]\psi_1(\xi) \\
    & + \left[\lambda_B g_{\eta}(1,-1)\psi_0(\eta)
      + \lambda_C g_{\eta}(1,1)\psi_1(\eta) \right]\varphi_1(\xi).
  \end{split}
\end{equation}
For any $g(\xi,\eta)$,
the $V(\xi,\eta)$ given by~\eqref{eq_31} satisfies~\eqref{eq_27a},
\eqref{eq_27c}--\eqref{eq_27e},
\eqref{eq_22a}, \eqref{eq_23a} and~\eqref{eq_21a}--\eqref{eq_21b},
by noting~\eqref{eq_n27}.
%
In light of~\eqref{eq_31},
$V_{\eta}(\xi,\eta)$ on $\overline{BC}$  is reduced to,
\begin{align}
  V_{\eta}(1,\eta) = g_{\eta}(1,\eta) &- \left[g(1,-1) - F(1,-1) \right]\varphi_0'(\eta)
  - \left[g(1,1) - F(1,1) \right]\varphi_1'(\eta) \notag \\
  &
  - \lambda_B\left[g_{\eta}(1,-1) - F_{\eta}(1,-1) \right]\psi_0'(\eta)
  - \lambda_C\left[g_{\eta}(1,1) - F_{\eta}(1,1) \right]\psi_1'(\eta). \label{eq_33}
\end{align}

Finally, we update the terms in the transfinite interpolant $PF(\xi,\eta)$
that involve the unknown function $V(\xi,\eta)$ (step \#4).
In~\eqref{eq_30} and~\eqref{eq_33}, we update $F_{\eta}(1,-1)$ and $F_{\eta}(1,1)$
using~\eqref{eq_n27}.
$F_{\xi}(1,\eta)$ in~\eqref{eq_30} is given by~\eqref{eq_18b}, in which we replace
$V_{\eta}(1,\eta)$ by the expression~\eqref{eq_33}.
We define the updated terms,
\begin{subequations}
  \begin{align}
    F_{\xi}^{g}(1,\eta) =&\ T_{BC}(\eta) - S_{BC}(\eta)\left\{
      g_{\eta}(1,\eta) - \left[g(1,-1) - F(1,-1) \right]\varphi_0'(\eta)
      - \left[g(1,1) - F(1,1) \right]\varphi_1'(\eta) \right. \notag \\
      &\qquad\qquad\qquad
      \left.
      - \lambda_B\left[g_{\eta}(1,-1) - F^a_{\eta}(1,-1) \right]\psi_0'(\eta)
      - \lambda_C\left[g_{\eta}(1,1) - F^a_{\eta}(1,1) \right]\psi_1'(\eta)
      \right\}, \label{eq_34a} \\
    PF^{g}(\xi,\eta) =&\
      F(-1,\eta)\varphi_0(\xi) + F^g_{\xi}(1,\eta)\psi_1(\xi)
    + F(\xi,-1)\varphi_0(\eta) + F(\xi,1)\varphi_1(\eta) \notag \\
    & -\left[F(-1,-1)\varphi_0(\eta) + F(-1,1)\varphi_1(\eta) \right]\varphi_0(\xi)
    - \left[F_{\xi}(1,-1)\varphi_0(\eta) + F_{\xi}(1,1)\varphi_1(\eta) \right]\psi_1(\xi) \notag \\
    & + \left[\lambda_B F^a_{\eta}(1,-1)\psi_0(\eta)
      + \lambda_C F^a_{\eta}(1,1)\psi_1(\eta) \right]\varphi_1(\xi). \label{eq_34b}
  \end{align}
\end{subequations}

The final form for $V(\xi,\eta)$ is then given by
\begin{align}\label{eq_35}
  & V(\xi,\eta) = g(\xi,\eta) - Pg(\xi,\eta) + PF^g(\xi,\eta).
\end{align}
Here $g(\xi,\eta)$ is the free function, $Pg(\xi,\eta)$ is given by~\eqref{eq_32},
and $PF^{g}(\xi,\eta)$ is defined by~\eqref{eq_34b}.
$F(-1,\eta)$, $F(\xi,-1)$ and $F(\xi,1)$ are given by~\eqref{eq_13a}
and~\eqref{eq_13c}--\eqref{eq_13d}.
$F(-1,-1)=u_A$ and $F(-1,1)=u_D$ in light of~\eqref{eq_13a},~\eqref{eq_13d},~\eqref{eq_11a}
and~\eqref{eq_11d}.
$F(1,-1)=\lim_{\xi\rightarrow 1}F(\xi,-1)=u_B$ and
$F(1,1)=\lim_{\xi\rightarrow 1}F(\xi,1)=u_C$ in light of~\eqref{eq_13a} and~\eqref{eq_13c}.
$F_{\xi}(1,-1)=\lim_{\xi\rightarrow 1}F_{\xi}(\xi,-1)$ and
$F_{\xi}(1,1)=\lim_{\xi\rightarrow 1}F_{\xi}(\xi,1)$.
$F_{\eta}^a(1,-1)$ and $F_{\eta}^a(1,1)$ are given by~\eqref{eq_22a} and~\eqref{eq_23a},
and $\lambda_B$ and $\lambda_C$ are defined in~\eqref{eq_25}.
$S_{BC}(\eta)$ and $T_{BC}(\eta)$ are defined in~\eqref{eq_19}.

\begin{theorem}
  The form $V(\xi,\eta)$ given by~\eqref{eq_35} satisfies the conditions~\eqref{eq_27a},
  \eqref{eq_27d},~\eqref{eq_27e} and~\eqref{eq_18a},
  for any $g(\xi,\eta)$ therein that is sufficiently differentiable.
\end{theorem}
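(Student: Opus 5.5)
The proof is a direct verification: restrict $V=g-Pg+PF^g$ from \eqref{eq_35} to each boundary of $\Omega_{st}$ and simplify using the endpoint values of the Hermite polynomials in Table~\ref{tab_1}. Specifically, I will use $\varphi_0(\pm1)$, $\varphi_1(\pm1)$, $\psi_0(\pm1)=\psi_1(\pm1)=0$ and the derivative values, in particular $\psi_0'(-1)=1$, $\psi_1'(1)=1$ and $\varphi_0'(\pm1)=\varphi_1'(\pm1)=0$. Throughout, $g$ is assumed smooth enough that $g_\xi$ and $g_\eta$ exist up to the boundary and the traces commute with differentiation along the edges.

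\textbf{Edge $\xi=-1$, i.e.\ \eqref{eq_27a}.} At $\xi=-1$ we have $\varphi_0=1$, $\varphi_1=\psi_1=0$. In \eqref{eq_32} the terms $g(\xi,-1)\varphi_0(\eta)+g(\xi,1)\varphi_1(\eta)$ then cancel against the bracket $[g(-1,-1)\varphi_0(\eta)+g(-1,1)\varphi_1(\eta)]$, leaving $Pg(-1,\eta)=g(-1,\eta)$. The same cancellation in \eqref{eq_34b} gives $PF^g(-1,\eta)=F(-1,\eta)$, since the vertex values $F(-1,\pm1)$ agree with $F(\xi,\pm1)$ at $\xi=-1$ by \eqref{eq_11a} and \eqref{eq_11d}. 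Hence $V(-1,\eta)=F(-1,\eta)$.

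\textbf{Edges $\eta=\pm1$, i.e.\ \eqref{eq_27d} and \eqref{eq_27e}.} Take $\eta=-1$, where $\varphi_0=1$, $\varphi_1=\psi_0=\psi_1=0$.
\begin{itemize}
\item The $g$ part collapses to $Pg(\xi,-1)=g(\xi,-1)$.
\item The $F$ part collapses to $PF^g(\xi,-1)=F(\xi,-1)+\bigl[F^g_\xi(1,-1)-F_\xi(1,-1)\bigr]\psi_1(\xi)$.
\end{itemize}
So the claim reduces to the vertex identity $F^g_\xi(1,-1)=F_\xi(1,-1)$, and this is the step I expect to be the only genuine subtlety. Evaluating \eqref{eq_34a} at $\eta=-1$ with the derivative table gives
\[
F^g_\xi(1,-1)=T_{BC}(-1)-S_{BC}(-1)\bigl[(1-\lambda_B)g_\eta(1,-1)+\lambda_B F^a_\eta(1,-1)\bigr].
\]
I split into two cases.
\begin{itemize}
\item If $\lambda_B=1$, the right-hand side equals $T_{BC}(-1)-S_{BC}(-1)F^a_\eta(1,-1)$, which is $F_\xi(1,-1)$ by the definition \eqref{eq_22a}.
\item If $\lambda_B=0$, the boundaries are orthogonal at $B$, so $\mbs x_\xi\cdot\mbs x_\eta=0$ there. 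Then $K_{yBC}(-1)=0$ by \eqref{eq_19}, so $S_{BC}(-1)=0$, and the identity reduces to the assumed data compatibility \eqref{eq_22b}.
\end{itemize}
The edge $\eta=1$ is handled identically, using $\psi_1'(1)=1$ together with \eqref{eq_23a} and \eqref{eq_23b}.

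\textbf{Edge $\xi=1$, i.e.\ \eqref{eq_18a}.} First differentiate in $\xi$ and set $\xi=1$, where $\varphi_0'=\varphi_1'=\psi_1=0$ and $\psi_1'=1$. Every term of $Pg$ except the $g_\xi$ contributions drops out, giving
\[
\partial_\xi Pg(1,\eta)=g_\xi(1,\eta)+g_\xi(1,-1)\varphi_0(\eta)+g_\xi(1,1)\varphi_1(\eta)-\bigl[g_\xi(1,-1)\varphi_0(\eta)+g_\xi(1,1)\varphi_1(\eta)\bigr]=g_\xi(1,\eta).
\]
The same computation gives $\partial_\xi PF^g(1,\eta)=F^g_\xi(1,\eta)$, so $V_\xi(1,\eta)=F^g_\xi(1,\eta)$.

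Second, evaluate $V$ itself at $\xi=1$, where $\varphi_1=1$ and $\varphi_0=\psi_1=0$:
\[
V(1,\eta)=g(1,\eta)-\bigl[g(1,-1)\varphi_0+g(1,1)\varphi_1+\lambda_Bg_\eta(1,-1)\psi_0+\lambda_Cg_\eta(1,1)\psi_1\bigr]+\bigl[F(1,-1)\varphi_0+F(1,1)\varphi_1+\lambda_BF^a_\eta(1,-1)\psi_0+\lambda_CF^a_\eta(1,1)\psi_1\bigr],
\]
with all blending functions evaluated at $\eta$. Differentiating in $\eta$ reproduces exactly the curly bracket in \eqref{eq_34a}; this is \eqref{eq_33} with $F_\eta$ replaced by $F^a_\eta$ via \eqref{eq_n27}. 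Therefore $F^g_\xi(1,\eta)=T_{BC}(\eta)-S_{BC}(\eta)V_\eta(1,\eta)$, which combined with $V_\xi(1,\eta)=F^g_\xi(1,\eta)$ is precisely \eqref{eq_18a}.
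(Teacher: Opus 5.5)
Your proposal is correct and follows the paper's own direct-verification argument: the Neumann condition is obtained exactly as in the paper, from $V_\xi(1,\eta)=F^g_\xi(1,\eta)$ together with the fact that $V_\eta(1,\eta)$ reproduces the curly bracket in \eqref{eq_34a}. You also carry out the Dirichlet checks that the paper calls straightforward, correctly isolating the vertex identity $F^g_\xi(1,\pm1)=F_\xi(1,\pm1)$ and settling it by \eqref{eq_22a} and \eqref{eq_23a} in the non-orthogonal case, and by $S_{BC}(\pm1)=0$ plus the data constraints \eqref{eq_22b} and \eqref{eq_23b} in the orthogonal case.
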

\begin{proof}
  We only verify the Neumann condition~\eqref{eq_18a} here. The verification
  of the other conditions is straightforward.
  From~\eqref{eq_35}, we have
  \begin{align*}
    &
    V_{\xi}(1,\eta) = F_{\xi}^g(1,\eta), \\
    &
    V_{\eta}(1,\eta) = g_{\eta}(1,\eta) - \left[g(1,-1) - F(1,-1) \right]\varphi_0'(\eta)
    - \left[g(1,1) - F(1,1) \right]\varphi_1'(\eta) \\
    &\qquad\qquad\qquad\qquad\quad
    - \lambda_B\left[g_{\eta}(1,-1) - F^a_{\eta}(1,-1) \right]\psi_0'(\eta)
      - \lambda_C\left[g_{\eta}(1,1) - F^a_{\eta}(1,1) \right]\psi_1'(\eta).
  \end{align*}
  In light of~\eqref{eq_34a}, we conclude that~\eqref{eq_18a} holds for any $g(\xi,\eta)$.
\end{proof}


\begin{remark}
  The use of $C^1$ Hermite interpolation polynomials $\varphi_0$, $\varphi_1$, $\psi_0$
  and $\psi_1$ in the construction of $V(\xi,\eta)$ is crucial, which enables one
  to de-couple $V_{\xi}(1,\eta)$ and $V_{\eta}(1,\eta)$ when handling
  the Neumann condition~\eqref{eq_18}.
\end{remark}


\begin{remark}
  If the boundaries $\overline{AB}$ and $\overline{BC}$ connect smoothly
  at vertex $B$, i.e. having a common tangent,
  the Jacobian matrix of the map $\mbs x(\xi,\eta)$ will be singular
  at $B$. Similarly, if the boundary curve is smooth at vertex $C$
  the Jacobian matrix will be singular there.
  Let us suppose the boundary is smooth at both $B$ and $C$, and
  we next comment on how to handle this situation.
  In this case the form~\eqref{eq_18} for the Neumann condition
  holds only for $\eta\in(-1,1)$.
  At these vertices the formulas~\eqref{eq_22a} and~\eqref{eq_23a} for
  computing $F_{\eta}^a(1,-1)$ and $F_{\eta}^a(1,1)$ are no longer valid.
  However, $F_{\eta}^a(1,-1)$ and $F_{\eta}^a(1,1)$
  can still be determined, based on the existence of a
  common tangent at these vertices.
  Specifically, let
  \begin{align}
    &
    \bm\tau_{BC}(\eta)=\frac{\mbs x_{\eta}(1,\eta)}{\|\mbs x_{\eta}(1,\eta) \|}, \quad
    \bm\tau_{AB}(\xi)=\frac{\mbs x_{\xi}(\xi,-1)}{\|\mbs x_{\xi}(\xi,-1) \|}, \quad
    \bm\tau_{CD}(\xi)=\frac{\mbs x_{\xi}(\xi,1)}{\|\mbs x_{\xi}(\xi,1) \|},
  \end{align}
  denote the unit tangent vectors on $\overline{BC}$, $\overline{AB}$ and $\overline{CD}$.
  The existence of a common tangent at $B$ and $C$ implies that
  \begin{align}\label{eq_37}
    & \bm \tau_{BC}(-1) = \bm\tau_{AB}(1), \quad
    \bm\tau_{BC}(1) = -\bm\tau_{CD}(1).
  \end{align}
  This leads to the relations
  \begin{align}\label{eq_38}
    & \frac{\mbs x_{\eta}(1,-1)}{\|\mbs x_{\eta}(1,-1) \|}
    = \frac{\mbs x_{\xi}(1,-1)}{\|\mbs x_{\xi}(1,-1) \|}, \quad
    \frac{\mbs x_{\eta}(1,1)}{\|\mbs x_{\eta}(1,1) \|}
    = - \frac{\mbs x_{\xi}(1,1)}{\|\mbs x_{\xi}(1,1) \|}.
  \end{align}
  At vertices $B$ and $C$, the relations~\eqref{eq_21c}--\eqref{eq_21d} are still valid
  due to the Dirichlet BCs on $\overline{AB}$ and $\overline{CD}$.
  Employing the chain rule, we have
  \begin{subequations}\label{eq_39}
  \begin{align}
    V_{\eta}(1,-1) =&\ \nabla u(\mbs x_B) \cdot\mbs x_{\eta}(1,-1)
    = \nabla u(\mbs x_B) \cdot\mbs x_{\xi}(1,-1)
    \frac{\|\mbs x_{\eta}(1,-1) \|}{\|\mbs x_{\xi}(1,-1) \|}
    = \frac{\|\mbs x_{\eta}(1,-1) \|}{\|\mbs x_{\xi}(1,-1) \|}V_{\xi}(1,-1) \notag \\
    =&\ \frac{\|\mbs x_{\eta}(1,-1) \|}{\|\mbs x_{\xi}(1,-1) \|}F_{\xi}(1,-1)
    =F_{\eta}^a(1,-1), \\
    V_{\eta}(1,1) =&\ \nabla u(\mbs x_B) \cdot\mbs x_{\eta}(1,1)
    = -\nabla u(\mbs x_B) \cdot\mbs x_{\xi}(1,1)
    \frac{\|\mbs x_{\eta}(1,1) \|}{\|\mbs x_{\xi}(1,1) \|}
    = -\frac{\|\mbs x_{\eta}(1,1) \|}{\|\mbs x_{\xi}(1,1) \|}V_{\xi}(1,1) \notag \\
    =&\ -\frac{\|\mbs x_{\eta}(1,1) \|}{\|\mbs x_{\xi}(1,1) \|}F_{\xi}(1,1)
    =F_{\eta}^a(1,1),
  \end{align}
  \end{subequations}
  where we have used~\eqref{eq_38} and~\eqref{eq_21c}--\eqref{eq_21d}.
  The $V(\xi,\eta)$ given by~\eqref{eq_35} is still valid, in which
  $F_{\eta}^a(1,-1)$ and $F_{\eta}^a(1,1)$ should now be computed using~\eqref{eq_39}.
  The gradients $\nabla u(\mbs x_B)$ and $\nabla u(\mbs x_C)$ can be
  determined by combining the Neumann condition~\eqref{eq_17b}, evaluated
  at these vertices, with
  the equations~\eqref{eq_21c}--\eqref{eq_21d}, and by applying the chain rule
  and using~\eqref{eq_37}.
  This leads to the result,
  \begin{subequations}
  \begin{align}
    &
    \nabla u(\mbs x_B) = \begin{bmatrix} F_{nBC}(-1) &
      \frac{1}{\|\mbs x_{\xi}(1,-1) \|}F_{\xi}(1,-1)
    \end{bmatrix} \begin{bmatrix}
      \tau_{yBC}(-1) & -\tau_{xBC}(-1) \\ \tau_{xBC}(-1) & \tau_{yBC}(-1)
    \end{bmatrix}, \\
    &
    \nabla u(\mbs x_C) = \begin{bmatrix} F_{nBC}(1) &
      -\frac{1}{\|\mbs x_{\xi}(1,1) \|}F_{\xi}(1,1)
    \end{bmatrix} \begin{bmatrix}
      \tau_{yBC}(1) & -\tau_{xBC}(1) \\ \tau_{xBC}(1) & \tau_{yBC}(1)
    \end{bmatrix},
  \end{align}
  \end{subequations}
  where $\bm\tau_{BC}(\eta)=(\tau_{xBC}, \tau_{yBC})$, and $F_{nBC}(\eta)$
  is defined in~\eqref{eq_19}.
   
\end{remark}


\begin{remark}
  When two Neumann conditions are imposed on opposite sides of
  the quadrilateral domain $\Omega$, with Dirichlet conditions on the
  other boundaries, the general form
  for $V(\xi,\eta)$ that exactly satisfies these boundary conditions
  can be developed analogously by following the four-step procedure as
  described above.
\end{remark}

\subsubsection{When Two Neumann Boundaries Intersect at a Vertex}
\label{sec_232}

For this case we focus on the setting with the Neumann conditions
imposed on two adjacent boundaries of the quadrilateral domain and with the rest
being Dirichlet boundaries.
The settings with more than two Neumann boundaries are discussed in
a remark at the end of this section.
Without loss of generality, we assume that the Neumann conditions
are imposed on the boundaries $\overline{BC}$ and $\overline{CD}$ and that
$\overline{AB}$ and $\overline{AD}$ are Dirichlet boundaries.
The Jacobian matrix of the map $\mbs x(\xi,\eta)$
is assumed to be non-singular everywhere in the domain.

Specifically, we seek a scalar field function $u(\mbs x)$,
for $\mbs x\in\Omega=\overline{ABCD}$ in Figure~\ref{fg_1}(a),
which satisfies the following boundary conditions,
\begin{subequations}\label{eq_41}
  \begin{align}
    & \left.u\right|_{\mbs x\in \overline{AB}} = u_{AB}(\mbs x), \label{eq_41a} \\
    & \left.\mbs n\cdot \nabla u\right|_{\mbs x\in \overline{BC}} = u_{nBC}(\mbs x),
    \label{eq_41b}\\
    & \left.\mbs n\cdot \nabla u\right|_{\mbs x\in \overline{CD}} = u_{nCD}(\mbs x),
     \label{eq_41c} \\
    & \left.u\right|_{\mbs x\in \overline{AD}} = u_{AD}(\mbs x), \label{eq_41d}
  \end{align}
\end{subequations}
where $u_{nCD}(\mbs x)$ is the prescribed Neumann boundary distribution
on $\overline{CD}$, and the other notations follow those in the previous sections.
The prescribed Dirichlet and Neumann boundary functions
must be compatible on the shared vertices.

Employing the map $\mbs x(\xi,\eta)$, we transform $u(\mbs x)$, $u_{AB}(\mbs x)$
and $u_{AD}(\mbs x)$ into $V(\xi,\eta)$, $F(\xi,-1)$ and $F(-1,\eta)$
according to equations~\eqref{eq_12},~\eqref{eq_13a} and~\eqref{eq_13d}.
The Neumann condition~\eqref{eq_41b} is accordingly transformed into~\eqref{eq_18}.
The Neumann condition~\eqref{eq_41c} becomes
\begin{subequations} \label{eq_42}
\begin{align}
  & V_{\eta}(\xi,1) + S_{CD}(\xi)V_{\xi}(\xi,1) = T_{CD}(\xi), \quad \xi\in[-1,1],
  \label{eq_42a} \\
    \text{or}\ &
    V_{\eta}(\xi,1) = T_{CD}(\xi) - S_{CD}(\xi)V_{\xi}(\xi,1) = F_{\eta}(\xi,1),
    \label{eq_42b}
\end{align}
\end{subequations}
where we have used~\eqref{eq_20} and
\begin{equation}\label{eq_43}
  \left\{
  \begin{split}
    &
    S_{CD}(\xi) = \frac{K_{xCD}(\xi)}{K_{yCD}(\xi)}, \quad
    T_{CD}(\xi) = \frac{F_{nCD}(\xi)}{K_{yCD}(\xi)}
    = \frac{u_{nCD}(\mbs x(\xi,1))}{K_{yCD}(\xi)}, \\
    &
    \mbs K_{CD}(\xi)=\begin{bmatrix}K_{xCD}(\xi) \\ K_{yCD}(\xi) \end{bmatrix}
  = \mbs J^{-1}(\xi,1)\begin{bmatrix}n_{xCD}(\xi) \\ n_{yCD}(\xi) \end{bmatrix}
  = \frac{1}{ \det{\mbs J(\xi,1)}}
  \begin{bmatrix}
    -\frac{\mbs x_{\xi}(\xi,1)\cdot\mbs x_{\eta}(\xi,1)}
    {\|\mbs x_{\xi}(\xi,1) \|} \\
    \|\mbs x_{\xi}(\xi,1) \|
  \end{bmatrix}.
  \end{split}
  \right.
\end{equation}


The Neumann boundary $\overline{BC}$ and the Dirichlet
boundary $\overline{AB}$ intersect at vertex $B$, inducing
the compatibility constraints~\eqref{eq_21a},~\eqref{eq_21c}, and~\eqref{eq_22}
at vertex $B$.
Similar compatibility conditions  exist at vertex $D$, where
the Neumann boundary $\overline{CD}$ and Dirichlet boundary $\overline{AD}$
intersect. These are
\begin{subequations}\label{eq_44}
  \begin{align} 
    & V(-1,1) = F(-1,1) = \lim_{\eta\rightarrow 1}F(-1,\eta), \label{eq_44a} \\
    &V_{\eta}(-1,1) = F_{\eta}(-1,1) = \lim_{\eta\rightarrow 1}F_{\eta}(-1,\eta), \label{eq_44b}
  \end{align}
\end{subequations}
and
\begin{subequations}\label{eq_45}
  \begin{align}
    &V_{\xi}(-1,1) =  \frac{1}{S_{CD}(-1)}\left[ T_{CD}(-1) - F_{\eta}(-1,1) \right]
    =F_{\xi}^a(-1,1),
    &
     \text{if}\ \overline{AD}\notperp\overline{CD}\ \text{at}\ D; \label{eq_45a}  \\
  &F_{\eta}(-1,1) - T_{CD}(-1) = 0, & 
  \text{if}\ \overline{AD}\perp\overline{CD}\ \text{at}\ D. \label{eq_45b}
  \end{align}
\end{subequations}


The Neumann conditions~\eqref{eq_18} and~\eqref{eq_42} must be compatible
at vertex $C$. Evaluating~\eqref{eq_18a} and~\eqref{eq_42a} at vertex $C$ and combining
them leads to
\begin{subequations} \label{eq_46}
  \begin{align}
    &
    V_{\xi}(1,1) = \frac{T_{BC}(1) - S_{BC}(1)T_{CD}(1)}{1-S_{BC}(1)S_{CD}(1)}
    = F_{\xi}^a(1,1), \label{eq_46a} \\
    &
    V_{\eta}(1,1) = \frac{T_{CD}(1) - S_{CD}(1)T_{BC}(1)}{1-S_{BC}(1)S_{CD}(1)}
    = F_{\eta}^a(1,1). \label{eq_46b}
  \end{align}
\end{subequations}
Note that $S_{BC}(1)S_{CD}(1)<1$ by Cauchy-Schwarz inequality for a non-singular Jacobian matrix
at $C$.
By differentiating~\eqref{eq_18b} with respect to (w.r.t.) $\eta$
and~\eqref{eq_42b} w.r.t.~$\xi$, and evaluating them at vertex $C$,
we get
\begin{subequations}\label{eq_47}
\begin{align}
  & V_{\xi\eta}(1,1) = T'_{BC}(1) - S'_{BC}(1)F_{\eta}^a(1,1) - S_{BC}(1)V_{\eta\eta}(1,1)
  =F_{\xi\eta}(1,1),
  \label{eq_47a} \\
  & V_{\eta\xi}(1,1) = T_{CD}'(1) - S_{CD}'(1)F_{\xi}^a(1,1) - S_{CD}(1)V_{\xi\xi}(1,1)
  = F_{\eta\xi}(1,1),
  \label{eq_47b}
\end{align}
\end{subequations}
where we have used~\eqref{eq_46}.
A combination of~\eqref{eq_47a} and \eqref{eq_47b}
(requiring $V_{\xi\eta}(1,1)=V_{\eta\xi}(1,1)$) leads to the following
compatibility constraints,
\begin{subequations}
  \begin{align}
    &
    V_{\xi\xi}(1,1)= \frac{S_{BC}(1)}{S_{CD}(1)}V_{\eta\eta}(1,1)
    + \frac{R_C}{S_{CD}(1)}
    = F_{\xi\xi}(1,1), & 
    \text{if}\ \overline{BC}\notperp\overline{CD}\ \text{at}\ C, \label{eq_48a} \\
    &
    T_{CD}'(1) - S_{CD}'(1)F_{\xi}^a(1,1) = T_{BC}'(1) - S_{BC}'(1)F_{\eta}^a(1,1),
    & 
    \text{if}\ \overline{BC}\perp\overline{CD}\ \text{at}\ C, \label{eq_48b}
  \end{align}
\end{subequations}
where
$R_C=\left[T_{CD}'(1) - S_{CD}'(1)F_{\xi}^a(1,1) \right]
- \left[T_{BC}'(1) - S_{BC}'(1)F_{\eta}^a(1,1) \right]$,
and we have used the fact that
$S_{BC}(1)=S_{CD}(1)=0$ when $\overline{BC}$ and $\overline{CD}$
are orthogonal at vertex $C$.
Equation~\eqref{eq_48b} imposes a constraint on the prescribed Neumann
boundary data when $\overline{BC}$ and $\overline{CD}$ are orthogonal at $C$,
and if they are not orthogonal,
equation~\eqref{eq_48a} imposes a constraint between $V_{\xi\xi}(1,1)$ and
$V_{\eta\eta}(1,1)$ at vertex $C$.


Our goal is to formulate the field function $V(\xi,\eta)$,
for $(\xi,\eta)\in\Omega_{st}$,
so that it exactly
satisfies the boundary
conditions~\eqref{eq_14a}, \eqref{eq_14d},
\eqref{eq_18}, and~\eqref{eq_42}, together with
the compatibility constraints~\eqref{eq_21a}, \eqref{eq_22a},
\eqref{eq_44a}, \eqref{eq_45a}, \eqref{eq_46}, \eqref{eq_48a}
and~\eqref{eq_47a}.

\begin{table}
  \centering
  \begin{tabular}{c|ccc|ccc|ccc}
    \hline
    $\xi$ & $\rho_0(\xi)$ & $\rho_0'(\xi)$ & $\rho_0''(\xi)$
    & $\rho_1(\xi)$ & $\rho_1'(\xi)$ & $\rho_1''(\xi)$
    & $\upsilon_0(\xi)$ & $\upsilon_0'(\xi)$ & $\upsilon_0''(\xi)$ \\ \hline
    $-1$ & 1 & 0 & 0 & 0 & 0 & 0 & 0 & 1 & 0 \\
    $1$ & 0 & 0 & 0 & 1 & 0 & 0 & 0 & 0 & 0 \\
    \hline\hline
    $\xi$ & $\upsilon_1(\xi)$ & $\upsilon_1'(\xi)$ & $\upsilon_1''(\xi)$
    & $\omega_0(\xi)$ & $\omega_0'(\xi)$ & $\omega_0''(\xi)$
    & $\omega_1(\xi)$ & $\omega_1'(\xi)$ & $\omega_1''(\xi)$ \\ \hline
    $-1$ & 0 & 0 & 0 & 0 & 0 & 1 & 0 & 0 & 0 \\
    $1$ & 0 & 1 & 0 & 0 & 0 & 0 & 0 & 0 & 1 \\
    \hline
  \end{tabular}
  \caption{Interpolation properties of $C^2$ Hermite interpolation
    polynomials defined on $\xi\in[-1,1]$.
  }
  \label{tab_2}
\end{table}

To facilitate the subsequent discussions, we recall the $C^2$ Hermite interpolation
polynomials $\rho_0(\xi)$, $\rho_1(\xi)$,
$\upsilon_0(\xi)$, $\upsilon_1(\xi)$, 
$\omega_0(\xi)$ and $\omega_1(\xi)$ defined on $\xi\in[-1,1]$
that satisfy the interpolation properties listed in Table~\ref{tab_2}.
These polynomials are given by
\begin{equation}
  \begin{array}{lll}
    \rho_0(\xi) = \phi_0^3(\xi)\left[1+3\phi_1(\xi)+6\phi_1^2(\xi) \right],
    & \upsilon_0(\xi) = 2\phi_0^3(\xi)\phi_1(\xi)\left[1+3\phi_1(\xi) \right],
    & \omega_0(\xi) = 2\phi_0^3(\xi)\phi_1^2(\xi), \\[2pt]
    \rho_1(\xi) = \phi_1^3(\xi)\left[1+3\phi_0(\xi)+6\phi_0^2(\xi) \right],
    & \upsilon_1(\xi) = -2\phi_1^3(\xi)\phi_0(\xi)\left[1+3\phi_0(\xi) \right],
    & \omega_1(\xi) = 2\phi_1^3(\xi)\phi_0^2(\xi),
  \end{array}
\end{equation}
where $\phi_0(\xi)$ and $\phi_1(\xi)$ are defined in~\eqref{eq_7}.
Besides the constants $\lambda_B$ and $\lambda_C$ defined in~\eqref{eq_25},
we define an additional constant $\lambda_D$ to flag whether the boundaries
$\overline{CD}$ and $\overline{AD}$ are orthogonal at vertex $D$,
\begin{equation}
  \lambda_D=\left\{ \begin{array}{ll}
    0, & \text{if}\ \overline{CD}\perp\overline{AD}\ \text{at}\ D, \\
    1, & \text{if}\ \overline{CD}\notperp\overline{AD}\ \text{at}\ D.
  \end{array}
  \right.
\end{equation}


We follow the four-step procedure as described in Section~\ref{sec_231}
to develop the general form for $V(\xi,\eta)$.
First, we note that the conditions~\eqref{eq_21a}, \eqref{eq_22a}
and~\eqref{eq_46b} are actually constraints on $V(1,\eta)$,
which can be satisfied by the following profile on $\overline{BC}$,
\begin{equation}\label{eq_51}
  F(1,\eta) = F(1,-1)\rho_0(\eta) + \lambda_B F_{\eta}(1,-1)\upsilon_0(\eta)
  + F_{\eta}(1,1)\upsilon_1(\eta),
\end{equation}
where the constant $\lambda_B$ ensures that the condition~\eqref{eq_22a}
is enforced only when $\overline{AB}$ and $\overline{BC}$
are not orthogonal at $B$, and
\begin{equation}\label{eq_52}
  F_{\eta}(1,-1)=F_{\eta}^a(1,-1), \quad
  F_{\eta}(1,1) = F_{\eta}^a(1,1),
\end{equation}
with $F_{\eta}^a(1,-1)$ and $F_{\eta}^a(1,1)$ given in~\eqref{eq_22a}
and~\eqref{eq_46b}.
The conditions~\eqref{eq_44a}, \eqref{eq_45a}, \eqref{eq_46a} and~\eqref{eq_48a}
are actually constraints on $V(\xi,1)$. They can be satisfied by
the following profile on $\overline{CD}$,
\begin{equation}\label{eq_53}
  F(\xi,1) = F(-1,1)\rho_0(\xi) + \lambda_D F_{\xi}(-1,1)\upsilon_0(\xi)
  + F_{\xi}(1,1)\upsilon_1(\xi) + \lambda_C F_{\xi\xi}(1,1)\omega_1(\xi),
\end{equation}
where the constants $\lambda_D$ and $\lambda_C$ ensure that
the conditions~\eqref{eq_45a} and \eqref{eq_48a} are only imposed
when the boundary curves are not orthogonal at $D$ or $C$,
$F_{\xi\xi}(1,1)$ is given in~\eqref{eq_48a}, and
\begin{equation}\label{eq_54}
  F_{\xi}(-1,1) = F_{\xi}^a(-1,1), \quad
  F_{\xi}(1,1) = F_{\xi}^a(1,1),
\end{equation}
with $F_{\xi}^a(-1,1)$ and $F_{\xi}^a(1,1)$ defined in~\eqref{eq_45a}
and \eqref{eq_46a}.
Note that the profile~\eqref{eq_51} for $\overline{BC}$ and
the profile~\eqref{eq_53} for $\overline{CD}$ are compatible
at vertex $C$, resulting in
$ 
  F(1,1) = 0.
$ 

With $F(1,\eta)$ and $F(\xi,1)$ introduced above, our task
is then reduced to: find $V(\xi,\eta)$ such that
\begin{subequations}\label{eq_56}
  \begin{align}
    & V(-1,\eta) = F(-1,\eta), \label{eq_56a} \\
    & V(1,\eta) = F(1,\eta) =
    F(1,-1)\rho_0(\eta) + \lambda_B F_{\eta}(1,-1)\upsilon_0(\eta)
    + F_{\eta}(1,1)\upsilon_1(\eta), \\
    & V_{\xi}(1,\eta) = F_{\xi}(1,\eta), \\
    & V(\xi,-1) = F(\xi,-1), \label{eq_56d} \\
    & V(\xi,1) = F(\xi,1) =
    F(-1,1)\rho_0(\xi) + \lambda_D F_{\xi}(-1,1)\upsilon_0(\xi)
    + F_{\xi}(1,1)\upsilon_1(\xi) + \lambda_C F_{\xi\xi}(1,1)\omega_1(\xi), \\
    & V_{\eta}(\xi,1) = F_{\eta}(\xi,1), \\
    & V_{\xi\eta}(1,1) = F_{\xi\eta}(1,1),
  \end{align}
\end{subequations}
where $F(-1,\eta)$ and $F(\xi,-1)$ are given in~\eqref{eq_13d} and~\eqref{eq_13a},
 $F_{\xi}(1,\eta)$ is given in~\eqref{eq_18b},
$F_{\eta}(\xi,1)$ is given in~\eqref{eq_42b},
and $F_{\xi\eta}(1,1)$ is given in~\eqref{eq_47a}, and
we have used~\eqref{eq_51} and \eqref{eq_53}.

The transfinite interpolation for the conditions in~\eqref{eq_56}
is given by
\begin{align}
  PF(\xi,\eta) =&\ (P_1\oplus P_2)F(\xi,\eta)
  = P_1F(\xi,\eta) + P_2F(\xi,\eta) - P_1P_2F(\xi,\eta) \notag \\
  =&\ F(-1,\eta)\rho_0(\xi) + F_{\xi}(1,\eta)\upsilon_1(\xi)
  + F(\xi,-1)\rho_0(\eta) + F_{\eta}(\xi,1)\upsilon_1(\eta) \notag \\
  & -\left[F(-1,-1)\rho_0(\eta) + F_{\eta}(-1,1)\upsilon_1(\eta) \right]\rho_0(\xi)
  - \left[F_{\xi}(1,-1)\rho_0(\eta)
    + F_{\xi\eta}(1,1)\upsilon_1(\eta) \right]\upsilon_1(\xi) \notag \\
  & + \lambda_B F_{\eta}(1,-1)\upsilon_0(\eta)\rho_1(\xi)
  + \left[\lambda_D F_{\xi}(-1,1)\upsilon_0(\xi)
    +\lambda_C F_{\xi\xi}(1,1)\omega_1(\xi) \right]\rho_1(\eta), \label{eq_57}
\end{align}
where
\begin{equation}
  \left\{
  \begin{array}{l}
    P_1F(\xi,\eta) = F(-1,\eta)\rho_0(\xi) + F_{\xi}(1,\eta)\upsilon_1(\xi)
    + F(1,\eta)\rho_1(\xi) \\
    \qquad\qquad\ = F(-1,\eta)\rho_0(\xi) + F_{\xi}(1,\eta)\upsilon_1(\xi) \\
    \qquad\qquad\quad\
    + \left[F(1,-1)\rho_0(\eta) + \lambda_B F_{\eta}(1,-1)\upsilon_0(\eta)
  + F_{\eta}(1,1)\upsilon_1(\eta) \right]\rho_1(\xi),
    \\
    P_2F(\xi,\eta) = F(\xi,-1)\rho_0(\eta) + F_{\eta}(\xi,1)\upsilon(\eta)
    + F(\xi,1)\rho_1(\eta) \\
    \qquad\qquad\ = F(\xi,-1)\rho_0(\eta) + F_{\eta}(\xi,1)\upsilon(\eta) \\
    \qquad\qquad\quad\
    + \left[F(-1,1)\rho_0(\xi) + \lambda_D F_{\xi}(-1,1)\upsilon_0(\xi)
  + F_{\xi}(1,1)\upsilon_1(\xi) + \lambda_C F_{\xi\xi}(1,1)\omega_1(\xi) \right] \rho_1(\eta).
  \end{array}
  \right.
\end{equation}

This leads to
the preliminary form $V(\xi,\eta)$ for the conditions in~\eqref{eq_56},
\begin{align}\label{eq_n58}
  & V(\xi,\eta) = g(\xi,\eta) - Pg(\xi,\eta) + PF(\xi,\eta),
\end{align}
where $g(\xi,\eta)$ is a free (arbitrary) function, and
\begin{align}
  Pg(\xi,\eta) =&\ g(-1,\eta)\rho_0(\xi) + g_{\xi}(1,\eta)\upsilon_1(\xi)
  + g(\xi,-1)\rho_0(\eta) + g_{\eta}(\xi,1)\upsilon_1(\eta) \notag \\
  & -\left[g(-1,-1)\rho_0(\eta) + g_{\eta}(-1,1)\upsilon_1(\eta) \right]\rho_0(\xi)
  - \left[g_{\xi}(1,-1)\rho_0(\eta)
    + g_{\xi\eta}(1,1)\upsilon_1(\eta) \right]\upsilon_1(\xi) \notag \\
  & + \lambda_B g_{\eta}(1,-1)\upsilon_0(\eta)\rho_1(\xi)
  + \left[\lambda_D g_{\xi}(-1,1)\upsilon_0(\xi)
    +\lambda_C g_{\xi\xi}(1,1)\omega_1(\xi) \right]\rho_1(\eta). \label{eq_60}
\end{align}
$V(\xi,\eta)$ from~\eqref{eq_n58} has the following properties,
\begin{subequations}\label{eq_61}
  \begin{align}
    & V_{\eta\eta}(1,1) = g_{\eta\eta}(1,1), \\
    & V_{\eta}(1,\eta) = g_{\eta}(1,\eta) - \left[g(1,-1)-F(1,-1) \right]\rho_0'(\eta)
    - \left[g_{\eta}(1,1) - F_{\eta}(1,1) \right]\upsilon_1'(\eta) \notag \\
    & \qquad\qquad\ 
    -\lambda_B\left[g_{\eta}(1,-1)-F_{\eta}(1,-1) \right]\upsilon_0'(\eta), \\
    & V_{\xi}(\xi,1) = g_{\xi}(\xi,1) - \left[g(-1,1) - F(-1,1) \right]\rho_0'(\xi)
    - \left[g_{\xi}(1,1) - F_{\xi}(1,1) \right]\upsilon_1'(\xi) \notag \\
    & \qquad\qquad\ 
    - \lambda_D\left[g_{\xi}(-1,1) - F_{\xi}(-1,1) \right]\upsilon_0'(\xi)
    - \lambda_C\left[g_{\xi\xi}(1,1) - F_{\xi\xi}(1,1) \right]\omega_1'(\xi).
  \end{align}
\end{subequations}

Employing~\eqref{eq_18b}, \eqref{eq_42b}, \eqref{eq_47a}, \eqref{eq_48a},
\eqref{eq_52}, \eqref{eq_54}, and~\eqref{eq_61}, we update
the transfinite interpolation $PF(\xi,\eta)$ in~\eqref{eq_57} by,
\begin{align}
  PF^g(\xi,\eta) =&\ F(-1,\eta)\rho_0(\xi) + F_{\xi}^g(1,\eta)\upsilon_1(\xi)
  + F(\xi,-1)\rho_0(\eta) + F_{\eta}^g(\xi,1)\upsilon_1(\eta) \notag \\
  & -\left[F(-1,-1)\rho_0(\eta) + F_{\eta}(-1,1)\upsilon_1(\eta) \right]\rho_0(\xi)
  - \left[F_{\xi}(1,-1)\rho_0(\eta)
    + F_{\xi\eta}^g(1,1)\upsilon_1(\eta) \right]\upsilon_1(\xi) \notag \\
  & + \lambda_B F_{\eta}^a(1,-1)\upsilon_0(\eta)\rho_1(\xi)
  + \left[\lambda_D F_{\xi}^a(-1,1)\upsilon_0(\xi)
    +\lambda_C F_{\xi\xi}^g(1,1)\omega_1(\xi) \right]\rho_1(\eta), \label{eq_62}
\end{align}
where
\begin{subequations}
  \begin{align}
    & F_{\xi\xi}^g(1,1) = \frac{S_{BC}(1)}{S_{CD}(1)}g_{\eta\eta}(1,1) + \frac{R_C}{S_{CD}(1)},
    \\
    & F_{\xi\eta}^g(1,1) = T_{BC}'(1)-S_{BC}'(1)F_{\eta}^a(1,1) - S_{BC}(1)g_{\eta\eta}(1,1), \\
    & F_{\xi}^g(1,\eta) = T_{BC}(\eta) - S_{BC}(\eta)\left\{
    g_{\eta}(1,\eta) - \left[g(1,-1)-F(1,-1) \right]\rho_0'(\eta)  
    - \left[g_{\eta}(1,1) - F_{\eta}^a(1,1) \right]\upsilon_1'(\eta) \right. \notag \\
    &\hspace{1.8in} \left.
    -\lambda_B\left[g_{\eta}(1,-1)-F_{\eta}^a(1,-1) \right]\upsilon_0'(\eta)
    \right\}, \\
    & F_{\eta}^g(\xi,1) = T_{CD}(\xi) - S_{CD}(\xi)\left\{
    g_{\xi}(\xi,1) - \left[g(-1,1) - F(-1,1) \right]\rho_0'(\xi)
    - \left[g_{\xi}(1,1) - F_{\xi}^a(1,1) \right]\upsilon_1'(\xi) \right. \notag \\
    &\hspace{1.5in} \left.
    - \lambda_D\left[g_{\xi}(-1,1) - F_{\xi}^a(-1,1) \right]\upsilon_0'(\xi)
    - \lambda_C\left[g_{\xi\xi}(1,1) - F_{\xi\xi}^g(1,1) \right]\omega_1'(\xi)
    \right\}.
  \end{align}
\end{subequations}

The final form for $V(\xi,\eta)$ is then given by
\begin{equation}\label{eq_64}
  V(\xi,\eta) = g(\xi,\eta) - Pg(\xi,\eta) + PF^g(\xi,\eta)
\end{equation}
where $g(\xi,\eta)$ is a free function, $Pg(\xi,\eta)$ is given by~\eqref{eq_60},
and $PF^g(\xi,\eta)$ is given by~\eqref{eq_62}.
In this expression $F(-1,-1)=\lim_{\xi\rightarrow -1}F(\xi,-1)=u_A$, and
$F_{\eta}(-1,1)$ and $F_{\xi}(1,-1)$ are given by~\eqref{eq_44b}
and~\eqref{eq_21c}. $F_{\xi}^a(-1,1)$ and $F_{\eta}^a(1,-1)$ are given by~\eqref{eq_45a}
and~\eqref{eq_22a}. $F_{\xi}^a(1,1)$ and $F_{\eta}^a(1,1)$ are given
in~\eqref{eq_46}.

\begin{theorem}
  $V(\xi,\eta)$ given by~\eqref{eq_64} satisfies the Dirichlet conditions~\eqref{eq_56a}
  and~\eqref{eq_56d} and the Neumann conditions~\eqref{eq_18a} and~\eqref{eq_42a},
  for any $g(\xi,\eta)$ therein that is sufficiently differentiable.
\end{theorem}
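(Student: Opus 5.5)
The plan is to verify the four conditions by direct evaluation of \eqref{eq_64}, using only the endpoint values of the $C^2$ Hermite polynomials in Table~\ref{tab_2} and the compatibility relations \eqref{eq_22}, \eqref{eq_45} and \eqref{eq_46}. The guiding observation is that $PF^g$ has exactly the same structure as $Pg$ in \eqref{eq_60}. The two differ only in that the data coefficients $F^g_{\xi}(1,\eta)$, $F^g_{\eta}(\xi,1)$, $F^g_{\xi\eta}(1,1)$ and $F^g_{\xi\xi}(1,1)$ are built from $g$. Hence on each edge the $g$-terms of $g-Pg$ cancel as in a standard TFC constrained expression. What remains is to check that the traces of these $g$-dependent coefficients reproduce the intended vertex data.

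\textbf{Dirichlet conditions \eqref{eq_56a} and \eqref{eq_56d}.} Set $\xi=-1$ and use $\rho_0(-1)=1$ and $\rho_1(-1)=\upsilon_0(-1)=\upsilon_1(-1)=\omega_1(-1)=0$. In $g-Pg$, the term $g(-1,\eta)$ cancels. The remaining terms $g(-1,-1)\rho_0(\eta)$ and $g_{\eta}(-1,1)\upsilon_1(\eta)$ cancel against the bracket multiplying $\rho_0(\xi)$. In $PF^g$ the same reduction leaves
\[
F(-1,\eta) + \left[F^g_{\eta}(-1,1)-F_{\eta}(-1,1)\right]\upsilon_1(\eta),
\]
so \eqref{eq_56a} reduces to showing $F^g_{\eta}(-1,1)=F_{\eta}(-1,1)$. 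Using $\rho_0'(-1)=\upsilon_1'(-1)=\omega_1'(-1)=0$ and $\upsilon_0'(-1)=1$, one finds
\[
F^g_{\eta}(-1,1)=T_{CD}(-1)-S_{CD}(-1)\left\{(1-\lambda_D)g_{\xi}(-1,1)+\lambda_D F^a_{\xi}(-1,1)\right\}.
\]
I would split into two cases:
\begin{itemize}
\item If $\lambda_D=1$, this equals $F_{\eta}(-1,1)$ by the definition \eqref{eq_45a}.
\item If $\lambda_D=0$, the boundaries are orthogonal at $D$, so $S_{CD}(-1)=0$, and \eqref{eq_45b} gives $F_{\eta}(-1,1)=T_{CD}(-1)$.
\end{itemize}
The edge $\eta=-1$ is handled symmetrically. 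There the requirement is $F^g_{\xi}(1,-1)=F_{\xi}(1,-1)$, which follows from \eqref{eq_22a}/\eqref{eq_22b} with the flag $\lambda_B$.

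\textbf{Neumann condition \eqref{eq_18a}.} Differentiate \eqref{eq_64} in $\xi$ at $\xi=1$. Only $\upsilon_1'(1)=1$ survives among the $\xi$-polynomials. The $g$-terms then cancel as before, and the $F$-brackets cancel, giving $V_{\xi}(1,\eta)=F^g_{\xi}(1,\eta)$. Next compute the trace at $\xi=1$:
\[
V(1,\eta)=g(1,\eta)-\left[g(1,-1)\rho_0+\lambda_Bg_{\eta}(1,-1)\upsilon_0+g_{\eta}(1,1)\upsilon_1\right]
+F(1,-1)\rho_0+\lambda_BF^a_{\eta}(1,-1)\upsilon_0+F^g_{\eta}(1,1)\upsilon_1 .
\]
Here I will use that $F(\xi,-1)$ at $\xi=1$ equals $F(1,-1)$. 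The key sub-claim is $F^g_{\eta}(1,1)=F^a_{\eta}(1,1)$. Evaluating $F^g_{\eta}(\xi,1)$ at $\xi=1$ with $\upsilon_1'(1)=1$ and $\rho_0'(1)=\upsilon_0'(1)=\omega_1'(1)=0$ gives $T_{CD}(1)-S_{CD}(1)F^a_{\xi}(1,1)$. This equals $F^a_{\eta}(1,1)$ by the algebra of \eqref{eq_46}, since \eqref{eq_46} is precisely the solution of \eqref{eq_18a} and \eqref{eq_42a} at $C$.

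Once the sub-claim holds, differentiating in $\eta$ shows that $V_{\eta}(1,\eta)$ coincides with the braces in $F^g_{\xi}(1,\eta)$. Therefore
\[
V_{\xi}(1,\eta)+S_{BC}(\eta)V_{\eta}(1,\eta)=T_{BC}(\eta).
\]

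\textbf{Neumann condition \eqref{eq_42a}.} This is the mirror argument: differentiate \eqref{eq_64} in $\eta$ at $\eta=1$ to get $V_{\eta}(\xi,1)=F^g_{\eta}(\xi,1)$. The trace $V(\xi,1)$ then contains $F^g_{\xi}(1,1)$ and $\lambda_CF^g_{\xi\xi}(1,1)$. One must check that $F^g_{\xi}(1,1)=T_{BC}(1)-S_{BC}(1)F^a_{\eta}(1,1)=F^a_{\xi}(1,1)$, again via \eqref{eq_46}. One must also check that the $\omega_1$ term in $V_{\xi}(\xi,1)$ carries exactly $-\lambda_C[g_{\xi\xi}(1,1)-F^g_{\xi\xi}(1,1)]$, as appears in $F^g_{\eta}$.

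\textbf{Main obstacle.} I expect the main obstacle to be the bookkeeping at the corner $C$. The cross terms $F^g_{\xi\eta}(1,1)\upsilon_1(\eta)\upsilon_1(\xi)$ and the $\omega_1$ term have to cancel consistently in both one-sided derivatives. This also requires confirming that the typo-prone vertex value $F(1,1)$ enters consistently in both edge profiles, namely as $\rho_1$-weighted terms absent from both \eqref{eq_51} and \eqref{eq_53}. I would handle this by writing the $\eta$-derivative at $\eta=1$ term by term, checking that each $\upsilon_1'(1)=1$ contribution pairs with its subtracted bracket.
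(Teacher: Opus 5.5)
Your structure matches the paper's, and several pieces are right. The Dirichlet reduction to $F^g_{\eta}(-1,1)=F_{\eta}(-1,1)$ and $F^g_{\xi}(1,-1)=F_{\xi}(1,-1)$, with the $\lambda_D$, $\lambda_B$ case splits, is correct. So are the trace formulas and the corner values $F^g_{\eta}(1,1)=F^a_{\eta}(1,1)$, $F^g_{\xi}(1,1)=F^a_{\xi}(1,1)$ via \eqref{eq_46}. The gap is the sentence ``the $F$-brackets cancel, giving $V_{\xi}(1,\eta)=F^g_{\xi}(1,\eta)$.'' Differentiating \eqref{eq_62} in $\xi$ at $\xi=1$ actually gives
\[
\partial_\xi PF^g(1,\eta)=F^g_{\xi}(1,\eta)+\Big[\partial_\xi F^g_{\eta}(\xi,1)\big|_{\xi=1}-F^g_{\xi\eta}(1,1)\Big]\upsilon_1(\eta).
\]
The bracket does not cancel formally. $F^g_{\eta}(\xi,1)$ is the $g$-dependent function built from \eqref{eq_42b}, while $F^g_{\xi\eta}(1,1)$ was built from \eqref{eq_47a}. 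The identity $\partial_\xi F^g_{\eta}(\xi,1)|_{\xi=1}=F^g_{\xi\eta}(1,1)$ is exactly what the paper's proof singles out, and it is the only non-formal step in the theorem.

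Using $\rho_0''(1)=\upsilon_0''(1)=\upsilon_1''(1)=0$ and $\omega_1''(1)=1$, one gets
\[
\partial_\xi F^g_{\eta}(\xi,1)\big|_{\xi=1}=T_{CD}'(1)-S_{CD}'(1)F^a_{\xi}(1,1)-S_{CD}(1)\big[(1-\lambda_C)g_{\xi\xi}(1,1)+\lambda_C F^g_{\xi\xi}(1,1)\big].
\]
If $\lambda_C=1$, this equals $F^g_{\xi\eta}(1,1)$ precisely because $S_{CD}(1)F^g_{\xi\xi}(1,1)=S_{BC}(1)g_{\eta\eta}(1,1)+R_C$, which is the $g$-version of \eqref{eq_48a}. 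If $\lambda_C=0$, then $S_{BC}(1)=S_{CD}(1)=0$ and you need the data constraint \eqref{eq_48b}. So your stated toolkit \eqref{eq_22}, \eqref{eq_45}, \eqref{eq_46} is insufficient. You also need \eqref{eq_47a}--\eqref{eq_48b} and the specific definition of $F^g_{\xi\xi}(1,1)$; this is the whole reason the $\omega_1$ term exists.

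Your obstacle paragraph looks for the $\omega_1$ check in the wrong places. The $\omega_1'$ term in $V_{\xi}(\xi,1)$ comes out of the trace automatically. The $\eta$-derivative at $\eta=1$ needed for \eqref{eq_42a}, $\partial_\eta F^g_{\xi}(1,\eta)|_{\eta=1}=F^g_{\xi\eta}(1,1)$, holds directly with no case split. The delicate derivative is the $\xi$-derivative at $\xi=1$ needed for \eqref{eq_18a}, which you asserted rather than checked.
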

\begin{proof}
  To verify~\eqref{eq_56a} and~\eqref{eq_56d}, one only needs to notice that
  $F_{\eta}^g(-1,1)=F_{\eta}(-1,1)$ and $F_{\xi}^g(1,-1)=F_{\xi}(1,-1)$,
  due to~\eqref{eq_45} and~\eqref{eq_22}. Consequently
  $PF^g(-1,\eta)=F(-1,\eta)$ and $PF^g(\xi,-1) = F(\xi,-1)$.

  To verify~\eqref{eq_42a}, one notes that $F_{\xi,\eta}^g(1,1) = F_{\xi\eta}^g(1,1)$.
  Hence $PF_{\eta}^g(\xi,1)=F_{\eta}^g(\xi,1)$ and $V_{\eta}(\xi,1)=F_{\eta}^g(\xi,1)$.
  On the other hand,
  \begin{align*}
    V_{\xi}(\xi,1) =&\ g_{\xi}(\xi,1) - \left[g(-1,1)-F(-1,1) \right]\rho_0'(\xi)
    -\left[g_{\xi}(1,1)-F_{\xi}^a(1,1) \right]\upsilon_1'(\xi) \notag \\
    &
    - \lambda_D\left[g_{\xi}(-1,1)-F_{\xi}^a(-1,1) \right]\upsilon_0'(\xi)
    -\lambda_C\left[g_{\xi\xi}(1,1) - F_{\xi\xi}^g(1,1) \right]\omega_1'(\xi).
  \end{align*}
  Therefore~\eqref{eq_42a} holds.

  To verify~\eqref{eq_18a}, one notes that $F_{\eta,\xi}^g(1,1) = F_{\xi\eta}^g(1,1)$.
  Hence $PF_{\xi}^g(1,\eta) = F_{\xi}^g(1,\eta)$ and $V_{\xi}(1,\eta)=F_{\xi}^g(1,\eta)$.
  On the other hand,
  \begin{align*}
    V_{\eta}(1,\eta) =&\ g_{\eta}(1,\eta)-\left[g(1,-1)-F(1,-1) \right]\rho_0'(\eta)
    -\left[g_{\eta}(1,1)-F_{\eta}^a(1,1) \right]\upsilon_1'(\eta) \notag \\
    &\qquad\qquad
    -\lambda_B\left[g_{\eta}(1,-1)-F_{\eta}^a(1,-1) \right]\upsilon_0'(\eta).
  \end{align*}
  Therefore~\eqref{eq_18a} holds.
\end{proof}


\begin{remark}
  When the domain involves more than two Neumann boundaries, the compatibility
  constraints as discussed above exist at any vertex where two Neumann boundaries
  intersect, and those constraints discussed in Section~\ref{sec_231} exist at
  any vertex where a Neumann boundary and a Dirichlet boundary intersect.
  The field function that exactly satisfies these conditions can be formulated
  analogously using the four-step procedure described in Section~\ref{sec_231}.
\end{remark}


\subsection{Enforcing Robin Boundary Conditions}
\label{sec_24}

We next look into how to enforce Robin boundary conditions (RBC) exactly
on the general quadrilateral domain $\Omega$ as shown in Figure~\ref{fg_1}(a).
The formulation for Robin condition is largely similar to that
for the Neumann condition, apart from the complication caused by
the unknown function value on the Robin boundary.
Here we assume that the domain involves a combination of
Dirichlet and Robin type boundaries,
with at least one Robin boundary and one Dirichlet boundary.
The cases when a Robin boundary only intersects with Dirichlet boundaries
and when two Robin boundaries intersect with each other
are discussed individually below.

\subsubsection{When Robin Boundary Only Intersects with Dirichlet Boundary}
\label{sec_241}

For this case we focus on the setting in which the domain has
a single Robin boundary,
with the rest being Dirichlet types.
Without loss of generality, we assume that the Robin condition is imposed
on $\overline{BC}$. The goal is to formulate the
field function $u(\mbs x)$, for $\mbs x\in\Omega=\overline{ABCD}$,
which satisfies the following conditions,
\begin{subequations}\label{eq_65}
  \begin{align}
    & \left.u\right|_{\mbs x\in \overline{AB}} = u_{AB}(\mbs x), \label{eq_65a} \\
    & \left.\mbs n\cdot \nabla u\right|_{\mbs x\in \overline{BC}}
    + \alpha_{BC}\left.u\right|_{\mbs x\in\overline{BC}}
    = u_{rBC}(\mbs x),
    \label{eq_65b}\\
    & \left. u\right|_{\mbs x\in \overline{CD}} = u_{CD}(\mbs x),
     \label{eq_65c} \\
    & \left.u\right|_{\mbs x\in \overline{AD}} = u_{AD}(\mbs x), \label{eq_65d}
  \end{align}
\end{subequations}
where $\alpha_{BC}$ is a prescribed constant, and $u_{rBC}(\mbs x)$ denotes
the prescribed Robin boundary function.

By leveraging the map $\mbs x(\xi,\eta)$, we transform $u(\mbs x)$ 
into $V(\xi,\eta)$ according to~\eqref{eq_12}, and the conditions~\eqref{eq_65a}
and~\eqref{eq_65c}--\eqref{eq_65d} into~\eqref{eq_14a} and~\eqref{eq_14c}--\eqref{eq_14d}
according to~\eqref{eq_13a} and \eqref{eq_13c}--\eqref{eq_13d}.
The Robin condition~\eqref{eq_65b} is transformed into
\begin{subequations}\label{eq_66}
  \begin{align}
    & V_{\xi}(1,\eta) + S_{BC}(\eta)V_{\eta}(1,\eta)
    + \alpha_{BC} W_{BC}(\eta) V(1,\eta) = T_{rBC}(\eta),
    \quad \eta\in[-1,1], \label{eq_66a} \\
    \text{or}\ &
    V_{\xi}(1,\eta) = T_{rBC}(\eta) - S_{BC}(\eta)V_{\eta}(1,\eta)
    - \alpha_{BC} W_{BC}(\eta) V(1,\eta) = F_{\xi}(1,\eta), \label{eq_66b}
  \end{align}
\end{subequations}
where $S_{BC}(\eta)$ and $K_{xBC}(\eta)$ are defined in~\eqref{eq_19}, and
\begin{align}\label{eq_67}
  & W_{BC}(\eta) = \frac{1}{K_{xBC}(\eta)}, \quad
  T_{rBC}(\eta) = F_{rBC}(\eta)W_{BC}(\eta), \quad
  F_{rBC}(\eta) = u_{rBC}(\mbs x(1,\eta)).
\end{align}
These conditions must be compatible at the shared vertices.

The compatibility between the Robin condition~\eqref{eq_66} and
the Dirichlet condition~\eqref{eq_14a} at vertex $B$
results in~\eqref{eq_21a} and~\eqref{eq_21c}, together with
\begin{subequations}\label{eq_68}
  \begin{align}
    &V_{\eta}(1,-1) =  \frac{1}{S_{BC}(-1)}\left[ T_{rBC}(-1) - F_{\xi}(1,-1) 
    -\alpha_{BC}W_{BC}(-1)F(1,-1) \right] \notag \\
    &\hspace{0.55in}  =F_{\eta}^a(1,-1),
    &
     \text{if}\ \overline{AB}\notperp\overline{BC}\ \text{at}\ B, \label{eq_68a}  \\
  &F_{\xi}(1,-1) + \alpha_{BC}W_{BC}(-1)F(1,-1) - T_{rBC}(-1) = 0, & 
  \text{if}\ \overline{AB}\perp\overline{BC}\ \text{at}\ B, \label{eq_68b}
  \end{align}
\end{subequations}
by noting that $S_{BC}(-1)=0$ when $\overline{AB}\perp\overline{BC}$ at $B$.
The compatibility between~\eqref{eq_66} and~\eqref{eq_14c} at vertex $C$
results in~\eqref{eq_21b} and~\eqref{eq_21d}, together with
\begin{subequations}\label{eq_69}
  \begin{align}
    &V_{\eta}(1,1) =  \frac{1}{S_{BC}(1)}\left[ T_{rBC}(1) - F_{\xi}(1,1)
      - \alpha_{BC}W_{BC}(1)F(1,1) \right]
    =F_{\eta}^a(1,1),
    &
     \text{if}\ \overline{CD}\notperp\overline{BC}\ \text{at}\ C, \label{eq_69a} \\
  &F_{\xi}(1,1) + \alpha_{BC}W_{BC}(1)F(1,1) - T_{rBC}(1) = 0, & 
  \text{if}\ \overline{CD}\perp\overline{BC}\ \text{at}\ C. \label{eq_69b}
  \end{align}
\end{subequations}

The conditions~\eqref{eq_14a}, \eqref{eq_14c}, \eqref{eq_14d}, \eqref{eq_66},
\eqref{eq_68a}, \eqref{eq_69a}, and~\eqref{eq_21a}--\eqref{eq_21b}
constitute the constraints that the function $V(\xi,\eta)$
to be formulated must satisfy.

We follow the four-step procedure, to first introduce the same
transfinite interpolation as in~\eqref{eq_30}, where
$F_{\eta}(1,-1)$ and $F_{\eta}(1,1)$ are given by~\eqref{eq_n27},
with $F_{\eta}^a(1,-1)$ and $F_{\eta}^a(1,1)$ therein now given
by~\eqref{eq_68a} and~\eqref{eq_69a}, and $F_{\xi}(1,\eta)$ is now given
by~\eqref{eq_66b}.
The preliminary form for $V(\xi,\eta)$ is given by~\eqref{eq_31},
in which $Pg(\xi,\eta)$ is given by~\eqref{eq_32}.
This preliminary form has the following properties on $\overline{BC}$,
\begin{subequations}\label{eq_70}
  \begin{align}
    V(1,\eta) =&\ g(1,\eta) - \left[g(1,-1)-F(1,-1) \right]\varphi_0(\eta)
    - \left[g(1,1)-F(1,1) \right]\varphi_1(\eta) \notag \\
    &\ 
    - \lambda_B\left[g_{\eta}(1,-1)-F_{\eta}(1,-1) \right]\psi_0(\eta)
    - \lambda_C\left[g_{\eta}(1,1)-F_{\eta}(1,1) \right]\psi_1(\eta), \\
    V_{\eta}(1,\eta) =&\ g_{\eta}(1,\eta) - \left[g(1,-1)-F(1,-1) \right]\varphi_0'(\eta)
    - \left[g(1,1)-F(1,1) \right]\varphi_1'(\eta) \notag \\
    &\ 
    - \lambda_B\left[g_{\eta}(1,-1)-F_{\eta}(1,-1) \right]\psi_0'(\eta)
    - \lambda_C\left[g_{\eta}(1,1)-F_{\eta}(1,1) \right]\psi_1'(\eta).
  \end{align}
\end{subequations}

In light of~\eqref{eq_70} and \eqref{eq_n27},
we update $F_{\xi}(1,\eta)$ in~\eqref{eq_66b} by
\begin{align}\label{eq_71}
  F_{\xi}^g(1,\eta) =&\ T_{rBC}(\eta) - S_{BC}(\eta)\left\{
  g_{\eta}(1,\eta) - \left[g(1,-1)-F(1,-1) \right]\varphi_0'(\eta)
  - \left[g(1,1)-F(1,1) \right]\varphi_1'(\eta) \right. \notag \\
  &\qquad \left.
  - \lambda_B\left[g_{\eta}(1,-1)-F_{\eta}^a(1,-1) \right]\psi_0'(\eta)
  - \lambda_C\left[g_{\eta}(1,1)-F_{\eta}^a(1,1) \right]\psi_1'(\eta)  \right\} \notag \\
  &
  -\alpha_{BC}W_{BC}(\eta)\left\{
  g(1,\eta) - \left[g(1,-1)-F(1,-1) \right]\varphi_0(\eta)
  - \left[g(1,1)-F(1,1) \right]\varphi_1(\eta) \right. \notag \\
  &\qquad \left.
  - \lambda_B\left[g_{\eta}(1,-1)-F_{\eta}^a(1,-1) \right]\psi_0(\eta)
    - \lambda_C\left[g_{\eta}(1,1)-F_{\eta}^a(1,1) \right]\psi_1(\eta)
  \right\}.
\end{align}
Therefore, the updated transfinite interpolation is given by~\eqref{eq_34b},
in which $F_{\xi}^g(1,\eta)$ is now given by~\eqref{eq_71},
and $F_{\eta}^a(1,-1)$ and $F_{\eta}^a(1,1)$ are now given
by~\eqref{eq_68a} and~\eqref{eq_69a}.
The final $V(\xi,\eta)$ has the same form as in~\eqref{eq_64}.

\begin{theorem}
  $V(\xi,\eta)$ given by~\eqref{eq_64}, with $F_{\xi}^g(1,\eta)$, $F_{\eta}^a(1,-1)$
  and $F_{\eta}^a(1,1)$ therein given by~\eqref{eq_71},~\eqref{eq_68a}
  and~\eqref{eq_69a}, satisfies the Dirichlet conditions~\eqref{eq_14a} and
  \eqref{eq_14c}--\eqref{eq_14d} and the Robin condition~\eqref{eq_66a},
  for any $g(\xi,\eta)$ therein that is sufficiently differentiable.
\end{theorem}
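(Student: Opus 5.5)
The plan is to verify each condition directly, by evaluating the final form $V=g-Pg+PF^g$ and its first derivatives on the relevant sides of $\Omega_{st}$. The tools are the interpolation properties of the $C^1$ Hermite polynomials in Table~\ref{tab_1} and $\phi_0(\pm1)$, $\phi_1(\pm1)$. Here $PF^g$ is the interpolant~\eqref{eq_34b} with $F^g_{\xi}(1,\eta)$ replaced by~\eqref{eq_71}, and $Pg$ is~\eqref{eq_32}. This is the Robin analogue of the form~\eqref{eq_35}, which I take to be what the statement means by ``the same form''. The argument mirrors the proof of the theorem for~\eqref{eq_35}. The one new feature is that $V(1,\eta)$ itself now enters the boundary operator.

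\emph{Dirichlet conditions.} On $\xi=-1$ only the $\varphi_0(\xi)$ terms survive, so $g$ cancels against $-Pg$. The $PF^g$ terms then reduce to $F(-1,\eta)$ by the vertex compatibilities $F(-1,\mp1)=u_A,u_D$, giving~\eqref{eq_14d}. On $\eta=\pm1$ the terms carrying $\psi_0(\eta)$ or $\psi_1(\eta)$ vanish and $\varphi_{0,1}(\eta)$ select a single row. The key point is that the coefficient of $\psi_1(\xi)$ then becomes $F^g_{\xi}(1,\mp1)-F_{\xi}(1,\mp1)$, and the corresponding $g$-terms cancel identically. So I must show $F^g_{\xi}(1,-1)=F_{\xi}(1,-1)$, and similarly at $\eta=1$.

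Evaluating~\eqref{eq_71} at $\eta=-1$, the first brace equals $F(1,-1)$ when $\lambda_B=1$, since $\varphi_0(-1)=1$ and the other Hermite values vanish. The second brace equals $\lambda_B F^a_{\eta}(1,-1)+(1-\lambda_B)g_{\eta}(1,-1)$. Two cases follow:
\begin{itemize}
\item If $\lambda_B=1$, the definition~\eqref{eq_68a} of $F^a_{\eta}(1,-1)$ gives exactly $F^g_{\xi}(1,-1)=F_{\xi}(1,-1)$.
\item If $\lambda_B=0$, then $S_{BC}(-1)=0$ and the data constraint~\eqref{eq_68b} gives the same identity.
\end{itemize}
The vertex $C$ is handled identically with~\eqref{eq_69}, yielding~\eqref{eq_14a} and~\eqref{eq_14c}.

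\emph{Robin condition.} At $\xi=1$, the $\varphi_1(\xi)$ terms give
\begin{align*}
V(1,\eta)=\ &g(1,\eta)-[g(1,-1)-F(1,-1)]\varphi_0(\eta)-[g(1,1)-F(1,1)]\varphi_1(\eta)\\
&-\lambda_B[g_\eta(1,-1)-F^a_\eta(1,-1)]\psi_0(\eta)-\lambda_C[g_\eta(1,1)-F^a_\eta(1,1)]\psi_1(\eta),
\end{align*}
as in~\eqref{eq_70}. Differentiating in $\eta$ gives $V_\eta(1,\eta)$. These two expressions are exactly the braces appearing in~\eqref{eq_71}.

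For $V_\xi(1,\eta)$, only $\psi_1'(1)=1$ survives among the $\xi$-blending derivatives. The $P_2$-type terms contribute $g_\xi(1,\mp1)$ and $F_\xi(1,\mp1)$ weighted by $\varphi_{0,1}(\eta)$, and these cancel against the $P_1P_2$ correction terms. The result is $V_\xi(1,\eta)=F^g_\xi(1,\eta)$. Substituting into~\eqref{eq_71} gives
\[
V_\xi(1,\eta)+S_{BC}(\eta)V_\eta(1,\eta)+\alpha_{BC}W_{BC}(\eta)V(1,\eta)=T_{rBC}(\eta),
\]
which is~\eqref{eq_66a}, for every sufficiently smooth $g$.

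The step I expect to be the main obstacle is the bookkeeping in the corner identity $F^g_\xi(1,\pm1)=F_\xi(1,\pm1)$. It requires checking that the Robin term $\alpha_{BC}W_{BC}V(1,\eta)$ evaluates to $\alpha_{BC}W_{BC}(\pm1)F(1,\pm1)$ independently of $g$. It also requires treating the orthogonal case $\lambda=0$ through~\eqref{eq_68b} and~\eqref{eq_69b} rather than through~\eqref{eq_68a} and~\eqref{eq_69a}. Everything else is a routine use of Table~\ref{tab_1}.
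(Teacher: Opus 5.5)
Your proposal is correct and takes the same approach as the paper, whose proof is just ``By verification''. You carry out that verification explicitly: the Dirichlet sides via cancellation of $g-Pg$ together with the corner identity $F^g_{\xi}(1,\pm1)=F_{\xi}(1,\pm1)$, and the Robin side via $V_{\xi}(1,\eta)=F^g_{\xi}(1,\eta)$ together with~\eqref{eq_70}.

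One slip to fix: when you evaluate~\eqref{eq_71} at $\eta=-1$ you have swapped the two braces. The brace multiplying $S_{BC}$ (the $V_\eta$ brace) equals $(1-\lambda_B)g_{\eta}(1,-1)+\lambda_B F^a_{\eta}(1,-1)$. The brace multiplying $\alpha_{BC}W_{BC}$ (the $V$ brace) equals $F(1,-1)$ for either value of $\lambda_B$, so the qualifier ``when $\lambda_B=1$'' is unnecessary. Your case analysis via~\eqref{eq_68a} and~\eqref{eq_68b} already uses the correct assignment, so the conclusion stands.
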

\begin{proof}
  By verification.
\end{proof}

\begin{remark}
  When two Robin boundaries are imposed on opposite sides of the quadrilateral
  domain, with
  the rest being Dirichlet boundaries, the general form for $V(\xi,\eta)$
  that exactly satisfies these conditions
  can be formulated analogously by following the four-step procedure.
\end{remark}

\subsubsection{When Two Robin Boundaries Intersect at a Vertex}
\label{sec_242}

For this case we focus on the setting in which the Robin conditions are imposed
on two adjacent boundaries with the rest being Dirichlet boundaries.
Without loss of generality we assume that the Robin conditions are
imposed on the boundaries $\overline{BC}$ and $\overline{CD}$.

Specifically, we seek a field function $u(\mbs x)$, for $\mbs x\in\Omega=\overline{ABCD}$,
which satisfies the following boundary conditions,
\begin{subequations}\label{eq_72}
  \begin{align}
    & \left.u\right|_{\mbs x\in \overline{AB}} = u_{AB}(\mbs x), \label{eq_72a} \\
    & \left.\mbs n\cdot \nabla u\right|_{\mbs x\in \overline{BC}}
    + \alpha_{BC}\left. u\right|_{\mbs x\in \overline{BC}} = u_{rBC}(\mbs x),
    \label{eq_72b}\\
    & \left.\mbs n\cdot \nabla u\right|_{\mbs x\in \overline{CD}}
    +\alpha_{CD}\left. u\right|_{\mbs x\in \overline{CD}} = u_{rCD}(\mbs x),
     \label{eq_72c} \\
    & \left.u\right|_{\mbs x\in \overline{AD}} = u_{AD}(\mbs x), \label{eq_72d}
  \end{align}
\end{subequations}
where $\alpha_{BC}$ and $\alpha_{CD}$ are prescribed constants,
and $u_{rBC}(\mbs x)$ and $u_{rCD}(\mbs x)$ are prescribed Robin
boundary functions.

Employing the map $\mbs x(\xi,\eta)$,
we transform $u(\mbs x)$ into $V(\xi,\eta)$ according to~\eqref{eq_12},
and the Dirichlet conditions~\eqref{eq_72a} and~\eqref{eq_72d}
into~\eqref{eq_14a} and~\eqref{eq_14d} based on~\eqref{eq_13a}
and~\eqref{eq_13d}.
The Robin condition~\eqref{eq_72b} is accordingly
transformed into~\eqref{eq_66}.
The Robin condition~\eqref{eq_72c} is transformed into
\begin{subequations}\label{eq_73}
  \begin{align}
    &
    V_{\eta}(\xi,1) + S_{CD}(\xi)V_{\xi}(\xi,1) + \alpha_{CD}W_{CD}(\xi)V(\xi,1)
    = T_{rCD}(\xi), \quad \xi\in[-1,1], \label{eq_73a} \\
    \text{or}\ &
    V_{\eta}(\xi,1) = T_{rCD}(\xi) - S_{CD}(\xi)V_{\xi}(\xi,1)
    + \alpha_{CD}W_{CD}(\xi)V(\xi,1) = F_{\eta}(\xi,1),
    \label{eq_73b}
  \end{align}
\end{subequations}
where 
\begin{align}
  & W_{CD}(\xi) = \frac{1}{K_{yCD}(\xi)}, \quad
  T_{rCD}(\xi) = W_{CD}(\xi)F_{rCD}(\xi) = W_{CD}(\xi)u_{rCD}(\mbs x(\xi,1)),
\end{align}
and $S_{CD}(\xi)$ and $K_{yCD}(\xi)$ are defined in~\eqref{eq_43}.
The objective here is to formulate $V(\xi,\eta)$ to exactly
satisfy the conditions~\eqref{eq_14a}, \eqref{eq_14d}, \eqref{eq_66}
and~\eqref{eq_73}.


The boundary conditions must be compatible at the shared vertices.
The Robin condition~\eqref{eq_66} on $\overline{BC}$ and the Dirichlet
condition~\eqref{eq_14a} on $\overline{AB}$ should be compatible
at vertex $B$, leading to
the conditions~\eqref{eq_21a}, \eqref{eq_21c},
and~\eqref{eq_68}. Similarly, at vertex $D$ the Robin condition~\eqref{eq_73}
on $\overline{CD}$ and the Dirichlet condition~\eqref{eq_14d} on $\overline{AD}$
should be compatible with each other, inducing the
conditions~\eqref{eq_44a} and \eqref{eq_44b}, together with
\begin{subequations}\label{eq_75}
  \begin{align}
    &V_{\xi}(-1,1) = \frac{1}{S_{CD}(-1)}\left[
      T_{rCD}(-1)-F_{\eta}(-1,1) - \alpha_{CD}W_{CD}(-1)F(-1,1)
      \right] \notag \\
    &\hspace{0.55in}
    = F_{\xi}^a(-1,1),
    & \text{if}\ \overline{AD}\notperp\overline{CD}\ \text{at}\ D, \label{eq_75a} \\
    & F_{\eta}(-1,1) + \alpha_{CD}W_{CD}(-1)F(-1,1) - T_{rCD}(-1)=0,
    & \text{if}\ \overline{AD}\perp\overline{CD}\ \text{at}\ D, \label{eq_75b}
  \end{align}
\end{subequations}
by noting that $S_{CD}(-1)=0$ if $\overline{AD}\perp\overline{CD}$ at $C$.

The Robin conditions~\eqref{eq_66} and~\eqref{eq_73} should be compatible
at the common vertex $C$. This leads to
\begin{subequations}
  \begin{align}
    & V_{\xi}(1,1) + S_{BC}(1)V_{\eta}(1,1) = T_{rBC}(1)
    - \alpha_{BC}W_{BC}(1)V(1,1), \\
    & S_{CD}(1)V_{\xi}(1,1) + V_{\eta}(1,1) = T_{rCD}(1) - \alpha_{CD}W_{CD}(1)V(1,1).
  \end{align}
\end{subequations}
It follows that
\begin{subequations}\label{eq_77}
  \begin{align}
    & V_{\xi}(1,1) = F_{\xi}^a(1,1) - F_{\xi}^b(1,1)V(1,1) = F_{\xi}(1,1), \label{eq_77a} \\
    & V_{\eta}(1,1) = F_{\eta}^a(1,1) - F_{\eta}^b(1,1)V(1,1) = F_{\eta}(1,1), \label{eq_77b}
  \end{align}
\end{subequations}
where
\begin{subequations}\label{eq_78}
\begin{align}
  &
  \begin{bmatrix} F_{\xi}^a(1,1)  \\
    F_{\eta}^a(1,1)  \end{bmatrix}
  = \frac{1}{1-S_{BC}(1)S_{CD}(1)}\begin{bmatrix}
    T_{BC}(1)-S_{BC}(1)T_{CD}(1)  \\
    T_{CD}(1)-S_{CD}(1)T_{BC}(1)
  \end{bmatrix}, \\
  &
  \begin{bmatrix}  F_{\xi}^b(1,1) \\[3pt]
    F_{\eta}^b(1,1) \end{bmatrix}
  = \frac{1}{1-S_{BC}(1)S_{CD}(1)}\begin{bmatrix}
    \alpha_{BC}W_{BC}(1) - S_{BC}(1)\alpha_{CD}W_{CD}(1) \\
    \alpha_{CD}W_{CD}(1) - S_{CD}(1)\alpha_{BC}W_{BC}(1)
  \end{bmatrix}.
\end{align}
\end{subequations}
Differentiating~\eqref{eq_66b} w.r.t.~$\eta$ and evaluating it at vertex $C$,
we get
\begin{align}\label{eq_79}
  & V_{\xi\eta}(1,1) = Q_{BC}^a(1) - S_{BC}(1)V_{\eta\eta}(1,1) + Q_{BC}^b(1)V(1,1)
  = F_{\xi\eta}(1,1),
\end{align}
where \eqref{eq_77b} has been used, and
\begin{subequations}
\begin{align}
  & Q_{BC}^a(1) = T_{BC}'(1) - \left[S_{BC}'(1) + \alpha_{BC}W_{BC}(1) \right]F_{\eta}^a(1,1), \\
  &
  Q_{BC}^b(1) = \left[S_{BC}'(1)+\alpha_{BC}W_{BC}(1) \right]F_{\eta}^b(1,1)
  -\alpha_{BC}W_{BC}'(1).
\end{align}
\end{subequations}
Similarly, differentiating~\eqref{eq_73b} w.r.t.~$\xi$
and evaluating it at vertex $C$ result in
\begin{align}\label{eq_81}
  V_{\eta\xi}(1,1) = Q_{CD}^a(1,1) - S_{CD}(1)V_{\xi\xi}(1,1) + Q_{CD}^b(1)V(1,1)
  =F_{\eta\xi}(1,1),
\end{align}
where~\eqref{eq_77a} has been used and
\begin{subequations}
\begin{align}
  & Q_{CD}^a(1) = T_{CD}'(1) - \left[S_{CD}'(1) + \alpha_{CD}W_{CD}(1) \right]F_{\xi}^a(1,1), \\
  &
  Q_{CD}^b(1) = \left[S_{CD}'(1)+\alpha_{CD}W_{CD}(1) \right]F_{\xi}^b(1,1)
  -\alpha_{CD}W_{CD}'(1).
\end{align}
\end{subequations}
Combining~\eqref{eq_79} and~\eqref{eq_81} and requiring that
$V_{\xi\eta}(1,1)=V_{\eta\xi}(1,1)$, we have the following constraints,
\begin{subequations}\label{eq_83}
  \begin{align}
    V_{\xi\xi}(1,1) =&\
    \frac{S_{BC}(1)}{S_{CD}(1)}V_{\eta\eta}(1,1)
    - \frac{1}{S_{CD}(1)}\left[Q_{BC}^b(1) - Q_{CD}^b(1) \right]V(1,1) \notag \\
    & -\frac{1}{S_{CD}(1)}\left[Q_{BC}^a(1)-Q_{CD}^a(1) \right] = F_{\xi\xi}(1,1),
    \qquad \text{if}\ \overline{BC}\notperp\overline{CD}\ \text{at}\ C; \label{eq_83a} \\
    V(1,1) =&\ -\frac{Q_{BC}^a(1)-Q_{CD}^a(1)}{Q_{BC}^b(1)-Q_{CD}^b(1)} = F^a(1,1),
    \qquad \text{if}\ \overline{BC}\perp\overline{CD}\ \text{at}\ C\
    \text{and}\ Q_{BC}^b(1)\neq Q_{CD}^b(1); \label{eq_83b} \\
    Q_{BC}^a(1) =&\ Q_{CD}^a(1),
    \hspace{1.79in} \text{if}\ \overline{BC}\perp\overline{CD}\ \text{at}\ C\
    \text{and}\ Q_{BC}^b(1)= Q_{CD}^b(1). \label{eq_83c}
  \end{align}
\end{subequations}
Equation~\eqref{eq_83c} is a constraint on the prescribed Robin
boundary data $u_{rBC}$ and $u_{rCD}$ when $\overline{BC}\perp\overline{CD}$ at
vertex $C$ and $Q_{BC}^b(1)= Q_{CD}^b(1)$, while otherwise~\eqref{eq_83b}
is a constraint on the value $V(1,1)$ and~\eqref{eq_83a} imposes a
relation on $V_{\xi\xi}(1,1)$, $V_{\eta\eta}(1,1)$ and $V(1,1)$.

The equations~\eqref{eq_14a}, \eqref{eq_14d}, \eqref{eq_66b}, \eqref{eq_73b},
\eqref{eq_21a},  \eqref{eq_68a}, \eqref{eq_44a},
\eqref{eq_75a}, \eqref{eq_77}, \eqref{eq_83a}--\eqref{eq_83b},
and~\eqref{eq_79} constitute the set of constraints
the function $V(\xi,\eta)$ to be formulated must satisfy.
In addition to the flags $\lambda_B$, $\lambda_C$ and $\lambda_D$
introduced previously, we define another constant $\gamma_C$ to flag
whether $Q_{BC}^b(1)=Q_{CD}^b(1)$,
\begin{align}
  \gamma_C = \left\{
  \begin{array}{ll}
    0, & \text{if}\ Q_{BC}^b(1)=Q_{CD}^b(1), \\
    1, & \text{if}\ Q_{BC}^b(1)\neq Q_{CD}^b(1).
  \end{array}
  \right.
\end{align}

We follow the four-step procedure described in Section~\ref{sec_231}
to formulate $V(\xi,\eta)$. To handle the conditions~\eqref{eq_21a},
\eqref{eq_68a}, \eqref{eq_44a}, \eqref{eq_75a}, \eqref{eq_77a}--\eqref{eq_77b},
and~\eqref{eq_83a}--\eqref{eq_83b},
we introduce
\begin{subequations}\label{eq_85}
\begin{align}
  F(1,\eta) =&\ F(1,-1)\rho_0(\eta) + \lambda_BF_{\eta}(1,-1)\upsilon_0(\eta)
  + (1-\lambda_C)\gamma_C F(1,1)\rho_1(\eta) + F_{\eta}(1,1)\upsilon(\eta), \\
  F(\xi,1) =&\ F(-1,1)\rho_0(\xi) + \lambda_DF_{\xi}(-1,1)\upsilon_0(\xi)
  + (1-\lambda_C)\gamma_C F(1,1)\rho_1(\xi) + F_{\xi}(1,1)\upsilon_1(\xi) \notag \\
  & + \lambda_C F_{\xi\xi}(1,1)\omega_1(\xi),
\end{align}
\end{subequations}
where $F_{\eta}(1,1)$, $F_{\xi}(1,1)$ and $F_{\xi\xi}(1,1)$ are
defined in~\eqref{eq_77} and~\eqref{eq_83a}, and
\begin{align}
  F_{\eta}(1,-1)=F_{\eta}^a(1,-1), \quad F_{\xi}(-1,1) = F_{\xi}^a(-1,1), \quad
  F(1,1) = F^a(1,1),
\end{align}
with $F_{\eta}^a(1,1)$, $F_{\xi}^a(1,1)$ and $F^a(1,1)$ defined
in~\eqref{eq_68a}, \eqref{eq_75a} and~\eqref{eq_83b}.
Then the construction problem becomes the following: find $V(\xi,\eta)$ such that
\begin{subequations}\label{eq_87}
  \begin{align}
    & V(-1,\eta) = F(-1,\eta), \label{eq_87a} \\
    & V(1,\eta) = F(1,\eta)
    = F(1,-1)\rho_0(\eta) + \lambda_BF_{\eta}(1,-1)\upsilon_0(\eta)
    + (1-\lambda_C)\gamma_C F(1,1)\rho_1(\eta) \notag \\
    &\hspace{1.2in} + F_{\eta}(1,1)\upsilon(\eta),
    \label{eq_87b} \\
    & V_{\xi}(1,\eta) = F_{\xi}(1,\eta), \label{eq_87c} \\
    & V(\xi,-1) = F(\xi,-1), \label{eq_87d} \\
    & V(\xi,1) = F(\xi,1)
    = F(-1,1)\rho_0(\xi) + \lambda_DF_{\xi}(-1,1)\upsilon_0(\xi)
    + (1-\lambda_C)\gamma_C F(1,1)\rho_1(\xi) \notag \\
    &\hspace{1.2in}
    + F_{\xi}(1,1)\upsilon_1(\xi)
    + \lambda_C F_{\xi\xi}(1,1)\omega_1(\xi),
    \label{eq_87e} \\
    & V_{\eta}(\xi,1) = F_{\eta}(\xi,1), \label{eq_87f} \\
    & V_{\xi\eta}(1,1) = F_{\xi\eta}(1,1), \label{eq_87g}
  \end{align}
\end{subequations}
where $F_{\xi}(1,\eta)$, $F_{\eta}(\xi,1)$ and $F_{\xi\eta}(1,1)$ are defined
in~\eqref{eq_66b}, \eqref{eq_73b}, and~\eqref{eq_79}, respectively.

We define the transfinite interpolation
\begin{align}
  PF(\xi,\eta)
  =&\ F(-1,\eta)\rho_0(\xi) + F_{\xi}(1,\eta)\upsilon_1(\xi)
  + F(\xi,-1)\rho_0(\eta) + F_{\eta}(\xi,1)\upsilon_1(\eta) \notag \\
  & -\left[F(-1,-1)\rho_0(\eta) + F_{\eta}(-1,1)\upsilon_1(\eta) \right]\rho_0(\xi)
  -\left[F_{\xi}(1,-1)\rho_0(\eta) + F_{\xi\eta}(1,1)\upsilon_1(\eta) \right]\upsilon_1(\xi) \notag \\
  & + \left[
    \lambda_DF_{\xi}(-1,1)\upsilon_0(\xi) + \lambda_C F_{\xi\xi}(1,1)\omega_1(\xi)
    + (1-\lambda_C)\gamma_C F(1,1)\rho_1(\xi)
    \right]\rho_1(\eta) \notag \\
  &
  + \lambda_B F_{\eta}(1,-1)\upsilon_0(\eta)\rho_1(\xi).
\end{align}
One can verify that the function $V(\xi,\eta)=PF(\xi,\eta)$ satisfies
the conditions in~\eqref{eq_87} exactly.

The preliminary general form for $V(\xi,\eta)$  is then given by,
\begin{align}\label{eq_89}
  V(\xi,\eta) = g(\xi,\eta) - Pg(\xi,\eta) + PF(\xi,\eta),
\end{align}
where $g(\xi,\eta)$ is a free (arbitrary) function, and
\begin{align}\label{eq_90}
  Pg(\xi,\eta) =&\ g(-1,\eta)\rho_0(\xi) + g_{\xi}(1,\eta)\upsilon_1(\xi)
  + g(\xi,-1)\rho_0(\eta) + g_{\eta}(\xi,1)\upsilon_1(\eta) \notag \\
  & -\left[g(-1,-1)\rho_0(\eta) + g_{\eta}(-1,1)\upsilon_1(\eta) \right]\rho_0(\xi)
  -\left[g_{\xi}(1,-1)\rho_0(\eta) + g_{\xi\eta}(1,1)\upsilon_1(\eta) \right]\upsilon_1(\xi) \notag \\
  & + \left[
    \lambda_D g_{\xi}(-1,1)\upsilon_0(\xi) + \lambda_C g_{\xi\xi}(1,1)\omega_1(\xi)
    + (1-\lambda_C)\gamma_C g(1,1)\rho_1(\xi)
    \right]\rho_1(\eta) \notag \\
  &
  + \lambda_B g_{\eta}(1,-1)\upsilon_0(\eta)\rho_1(\xi).
\end{align}

The modified transfinite interpolation is,
\begin{align}\label{eq_91}
  PF^g(\xi,\eta)
  =&\ F(-1,\eta)\rho_0(\xi) + F_{\xi}^g(1,\eta)\upsilon_1(\xi)
  + F(\xi,-1)\rho_0(\eta) + F_{\eta}^g(\xi,1)\upsilon_1(\eta) \notag \\
  & -\left[F(-1,-1)\rho_0(\eta) + F_{\eta}(-1,1)\upsilon_1(\eta) \right]\rho_0(\xi)
  -\left[F_{\xi}(1,-1)\rho_0(\eta)
    + F_{\xi\eta}^g(1,1)\upsilon_1(\eta) \right]\upsilon_1(\xi) \notag \\
  & + \left[
    \lambda_DF_{\xi}^a(-1,1)\upsilon_0(\xi) + \lambda_C F_{\xi\xi}^g(1,1)\omega_1(\xi)
    + (1-\lambda_C)\gamma_C F^a(1,1)\rho_1(\xi)
    \right]\rho_1(\eta) \notag \\
  &
  + \lambda_B F_{\eta}^a(1,-1)\upsilon_0(\eta)\rho_1(\xi).
\end{align}
Here $F_{\xi}^a(-1,1)$ and $F_{\eta}^a(1,-1)$ are given
in~\eqref{eq_75a} and~\eqref{eq_68a}, and $F^a(1,1)$ is given in~\eqref{eq_83b}.
In addition,
\begin{subequations}\label{eq_92}
  \begin{align}
    F_{\xi\eta}^g(1,1) =&\ Q_{BC}^a(1,1) - S_{BC}(1)g_{\eta\eta}(1,1) \notag \\
    & + Q_{BC}^b(1)\left[
      \left(1-(1-\lambda_C)\gamma_C \right)g(1,1)
      + (1-\lambda_C)\gamma_C F^a(1,1)
      \right]; \label{eq_92a} \\
    F_{\xi\xi}^g(1,1) =&\ \frac{S_{BC}(1)}{S_{CD}(1)}g_{\eta\eta}(1,1)
    - \frac{1}{S_{CD}(1)}\left[Q_{BC}^b(1) - Q_{CD}^b(1) \right]
    \left[\left(1-(1-\lambda_C)\gamma_C \right)g(1,1)\right. \notag \\
     & \left. + (1-\lambda_C)\gamma_C F^a(1,1)
      \right]
    -\frac{1}{S_{CD}(1)}\left[Q_{BC}^a(1)-Q_{CD}^a(1) \right]; \label{eq_92b} \\
    F_{\xi}^g(1,\eta) =&\ T_{BC}(\eta) - S_{BC}(\eta)V_{\eta}^g(1,\eta)
    -\alpha_{BC}W_{BC}(\eta)V^g(1,\eta); \label{eq_92c} \\
    F_{\eta}^g(\xi,1) =&\ T_{CD}(\xi) - S_{CD}V_{\xi}^g(\xi,1)
    -\alpha_{CD}W_{CD}(\xi)V^g(\xi,1); \label{eq_92d} \\
    V^g(1,\eta) =&\ g(1,\eta) - \left[g(1,-1)-F(1,-1) \right]\rho_0(\eta)
    - \left[g_{\eta}(1,1)-F_{\eta}^g(1,1) \right]\upsilon_1(\eta) \notag \\
    & - \lambda_B\left[g_{\eta}(1,-1)-F_{\eta}^a(1,-1) \right]\upsilon_0(\eta)
    -(1-\lambda_C)\gamma_C\left[g(1,1)-F^a(1,1) \right]\rho_1(\eta); \label{eq_92e} \\
    V_{\eta}^g(1,\eta) =&\ g_{\eta}(1,\eta) - \left[g(1,-1)-F(1,-1) \right]\rho_0'(\eta)
    - \left[g_{\eta}(1,1)-F_{\eta}^g(1,1) \right]\upsilon_1'(\eta) \notag \\
    & - \lambda_B\left[g_{\eta}(1,-1)-F_{\eta}^a(1,-1) \right]\upsilon_0'(\eta)
    -(1-\lambda_C)\gamma_C\left[g(1,1)-F^a(1,1) \right]\rho_1'(\eta); \label{eq_92f} \\
    V^g(\xi,1)=&\ g(\xi,1) - \left[g(-1,1)-F(-1,1) \right]\rho_0(\xi)
    -\left[g_{\xi}(1,1)-F_{\xi}^g(1,1) \right]\upsilon_1(\xi) \notag \\
    & -\lambda_D\left[g_{\xi}(-1,1)-F_{\xi}^a(-1,1) \right]\upsilon_0(\xi)
    -\lambda_C\left[g_{\xi\xi}(1,1)-F_{\xi\xi}^g(1,1) \right]\omega_1(\xi) \notag \\
    & -(1-\lambda_C)\gamma_C\left[g(1,1)-F^a(1,1) \right]\rho_1(\xi); \label{eq_92g} \\
    V_{\xi}^g(\xi,1)=&\ g_{\xi}(\xi,1) - \left[g(-1,1)-F(-1,1) \right]\rho_0'(\xi)
    -\left[g_{\xi}(1,1)-F_{\xi}^g(1,1) \right]\upsilon_1'(\xi) \notag \\
    & -\lambda_D\left[g_{\xi}(-1,1)-F_{\xi}^a(-1,1) \right]\upsilon_0'(\xi)
    -\lambda_C\left[g_{\xi\xi}(1,1)-F_{\xi\xi}^g(1,1) \right]\omega_1'(\xi) \notag \\
    & -(1-\lambda_C)\gamma_C\left[g(1,1)-F^a(1,1) \right]\rho_1'(\xi); \label{eq_92h} \\
    F_{\eta}^g(1,1) =&\ F_{\eta}^a(1,1) - F_{\eta}^b(1,1)\left[
      \left(1-(1-\lambda_C)\gamma_C \right)g(1,1)
      + (1-\lambda_C)\gamma_C F^a(1,1)
      \right]; \label{eq_92i} \\
    F_{\xi}^g(1,1) =&\ F_{\xi}^a(1,1) - F_{\xi}^b(1,1)\left[
      \left(1-(1-\lambda_C)\gamma_C \right)g(1,1)
      + (1-\lambda_C)\gamma_C F^a(1,1)
      \right]. \label{eq_92j}
  \end{align}
\end{subequations}
This gives rise to the final form for $V(\xi,\eta)$,
\begin{align}\label{eq_93}
  V(\xi,\eta) = g(\xi,\eta) - Pg(\xi,\eta) + PF^g(\xi,\eta),
\end{align}
where $g(\xi,\eta)$ is a free (arbitrary) function,
$Pg(\xi,\eta)$ is given by~\eqref{eq_90},
and $PF^g(\xi,\eta)$ is given by~\eqref{eq_91}.

\begin{theorem}
  $V(\xi,\eta)$ given by~\eqref{eq_93} satisfies
  the Dirichlet conditions~\eqref{eq_14a} and~\eqref{eq_14d}
  and the Robin conditions~\eqref{eq_66a} and~\eqref{eq_73a},
  for any $g(\xi,\eta)$ therein that is sufficiently differentiable,
\end{theorem}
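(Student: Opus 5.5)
The proof is a direct verification that mirrors the argument for the Neumann case with two intersecting boundaries, using the interpolation properties in Table~\ref{tab_2}. The key observation is that the difference $g-Pg$ in~\eqref{eq_93} removes exactly the traces of $g$ that are prescribed, namely the quantities appearing in~\eqref{eq_87}. Therefore $V$ inherits these traces from $PF^g$ in~\eqref{eq_91}. Writing $c_C=(1-\lambda_C)\gamma_C$, I would first record the traces of $V$ on $\xi=\pm1$ and $\eta=\pm1$ by evaluating~\eqref{eq_90} and~\eqref{eq_91} with $\rho_0(1)=\rho_1(-1)=0$, $\upsilon_i=\omega_i=0$ at both endpoints, and so on.

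\textbf{Dirichlet conditions.} For~\eqref{eq_14a} and~\eqref{eq_14d}, set $\eta=-1$ (resp.\ $\xi=-1$). Then all terms carrying $\rho_1(\eta)$, $\upsilon_1(\eta)$ and $\upsilon_0(\eta)$ vanish. It remains to show that the surviving corner terms of $PF^g$ and $Pg$ cancel consistently. For $\eta=-1$ one needs
\[
F_{\xi}^g(1,-1)=F_{\xi}(1,-1), \qquad g_\xi(1,-1)\ \text{matching in } Pg .
\]
By~\eqref{eq_92c}, $F_\xi^g(1,-1)=T_{rBC}(-1)-S_{BC}(-1)V^g_\eta(1,-1)-\alpha_{BC}W_{BC}(-1)V^g(1,-1)$. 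From~\eqref{eq_92e}--\eqref{eq_92f} together with the Hermite properties,
\[
V^g(1,-1)=F(1,-1), \qquad V^g_\eta(1,-1)=\lambda_BF^a_\eta(1,-1)+(1-\lambda_B)g_\eta(1,-1).
\]
Using~\eqref{eq_68a} when $\lambda_B=1$, and~\eqref{eq_68b} with $S_{BC}(-1)=0$ when $\lambda_B=0$, this yields $F_\xi(1,-1)$. The case $\xi=-1$ is symmetric and uses~\eqref{eq_75}.

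\textbf{Robin conditions.} For~\eqref{eq_66a}, I would show that $V(1,\eta)=V^g(1,\eta)$, $V_\eta(1,\eta)=V^g_\eta(1,\eta)$ and $V_\xi(1,\eta)=F^g_\xi(1,\eta)$. Then~\eqref{eq_92c} gives~\eqref{eq_66a} immediately, and~\eqref{eq_73a} follows in the same way from~\eqref{eq_92d},~\eqref{eq_92g},~\eqref{eq_92h}. The identities for $V(1,\eta)$ and $V_\eta(1,\eta)$ come from setting $\xi=1$ in~\eqref{eq_93}: only the $\rho_1(\xi)$ terms survive. Here one must check that the corner coefficient used in $PF^g$ at $(1,1)$, namely $F^g_\eta(\xi,1)$ evaluated via $\partial_\xi$ at $\xi=1$, agrees with $F^g_\eta(1,1)$ of~\eqref{eq_92i}. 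This in turn requires
\[
V(1,1)=(1-c_C)\,g(1,1)+c_C F^a(1,1),
\]
which follows from the $\rho_1\rho_1$ term.

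\textbf{Main obstacle: the mixed-derivative corner.} For $V_\xi(1,\eta)$, differentiate~\eqref{eq_93} in $\xi$ at $\xi=1$. Only the $\upsilon_1(\xi)$ terms survive, plus the $\omega_1$ and $\rho_1$ derivative terms, which vanish at $\xi=1$. This leaves
\[
V_\xi(1,\eta)=g_\xi(1,\eta)-g_\xi(1,\eta)+F^g_\xi(1,\eta)+\bigl[\text{corner terms in }\upsilon_1(\eta)\bigr].
\]
The corner terms cancel only if $\partial_\xi F^g_\eta(\xi,1)|_{\xi=1}=F^g_{\xi\eta}(1,1)$, and this is the step I expect to be hardest. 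To establish it, I would differentiate~\eqref{eq_92d} in $\xi$ at $\xi=1$, substituting $V^g_{\xi\xi}(1,1)=\lambda_CF^g_{\xi\xi}(1,1)+(1-\lambda_C)g_{\xi\xi}(1,1)$ and $V^g_\xi(1,1)=F^g_\xi(1,1)$. This should reproduce~\eqref{eq_81} with $V(1,1)$ replaced as above. Then:
\begin{itemize}[label={}]
\item When $\lambda_C=1$, the definition~\eqref{eq_92b} is exactly the relation~\eqref{eq_83a} that equates this to~\eqref{eq_92a}.
\item When $\lambda_C=0$, we have $S_{BC}(1)=S_{CD}(1)=0$. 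If $\gamma_C=1$, then $V(1,1)=F^a(1,1)$ and~\eqref{eq_83b} gives equality.
\item If $\gamma_C=0$, then~\eqref{eq_83c} gives equality.
\end{itemize}
The analogous identity with $\partial_\eta F^g_\xi(1,\eta)|_{\eta=1}$ handles the $\eta=1$ side, which completes the verification.
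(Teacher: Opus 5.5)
Your proposal is correct and follows the paper's proof. The paper likewise reduces the verification to corner-consistency relations for $F^g_\xi(1,\eta)$ and $F^g_\eta(\xi,1)$: their values at $B$, $D$ and $C$, and their cross-derivatives at $C$, checked case by case on the orthogonality flags. You actually carry out that case analysis, which the paper only asserts.

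One step is stated too loosely: the agreement of $F^g_\eta(\xi,1)|_{\xi=1}$ with \eqref{eq_92i}, and of $F^g_\xi(1,\eta)|_{\eta=1}$ with \eqref{eq_92j}. This is a plain evaluation, with no $\partial_\xi$ involved. Besides $V^g(1,1)=(1-c_C)g(1,1)+c_CF^a(1,1)$, it also needs $V^g_\xi(1,1)=F^g_\xi(1,1)$ and $V^g_\eta(1,1)=F^g_\eta(1,1)$. It further needs the fact that \eqref{eq_78} solves the $2\times 2$ corner system, e.g.\ $F^a_\eta(1,1)=T_{rCD}(1)-S_{CD}(1)F^a_\xi(1,1)$ and $F^b_\eta(1,1)=\alpha_{CD}W_{CD}(1)-S_{CD}(1)F^b_\xi(1,1)$.
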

\begin{proof}
  By verification. The verification process relies on the following relations,
  but is otherwise straightforward albeit a little cumbersome.
  These relations are,
  \begin{subequations}
    \begin{align}
      & \lim_{\xi\rightarrow -1}F_{\eta}^g(\xi,1)= F_{\eta}^g(-1,1)
      = F_{\eta}(-1,1)=\lim_{\eta\rightarrow 1}F_{\eta}(-1,\eta), \\
      & \lim_{\eta\rightarrow -1}F_{\xi}^g(1,\eta)= F_{\xi}^g(1,-1)
      = F_{\xi}(1,-1) = \lim_{\xi\rightarrow 1}F_{\xi}(\xi,-1), \\
      & \lim_{\eta\rightarrow 1}F_{\xi}^g(1,\eta) = F_{\xi}^g(1,1), 
      \\
      & \lim_{\xi\rightarrow 1}F_{\eta}^g(\xi,1) = F_{\eta}^g(1,1), \\
      & \lim_{\xi\rightarrow 1}F_{\eta,\xi}^g(\xi,1)= F_{\eta,\xi}^g(1,1) = F_{\xi\eta}^g(1,1),\\
      & \lim_{\eta\rightarrow 1}F_{\xi,\eta}^g(1,\eta) = F_{\xi,\eta}^g(1,1)
      = F_{\xi\eta}^g(1,1),
    \end{align}
  \end{subequations}
  where $F_{\xi}^g(1,\eta)$ and $F_{\eta}^g(\xi,1)$ are defined
  in~\eqref{eq_92c} and~\eqref{eq_92d},
  $F_{\eta}^g(1,1)$ and $F_{\xi}^g(1,1)$ are defined in~\eqref{eq_92i}
  and~\eqref{eq_92j}, and $F_{\xi\eta}^g(1,1)$ is defined in~\eqref{eq_92a}.
  These relations can be verified to be true by considering
  different cases such as whether adjacent boundaries are orthogonal to
  each other or not at a vertex.
\end{proof}


\begin{remark}
  When the domain involves more than two Robin boundaries, at
  every vertex where two Robin boundaries meet,
  the compatibility constraints analogous to the aforementioned ones will apply.
  At every vertex where a Robin boundary and a Dirichlet boundary meet,
  the compatibility constraints analogous to
  those discussed in Section~\ref{sec_241} will apply. The field function
  satisfying these boundary conditions can be formulated in an
  analogous way based on the four-step procedure.
\end{remark}

\begin{remark}
  When the domain involves a combination of Dirichlet, Neumann,
  and Robin boundaries, at every vertex where a Robin boundary
  and a Neumann boundary meet, the compatibility constraints
  as discussed above for two Robin boundaries will apply. In
  this case, the Neumann boundary involved in can be
  treated as a Robin one with the Robin coefficient $\alpha$
  set to zero.
\end{remark}


\subsection{Enforcing Dirichlet/Neumann/Robin BCs Exactly for Solving PDEs by
Extreme Learning Machine}
\label{sec_25}

Let us now assume that the function formulated in
Sections~\ref{sec_22},~\ref{sec_23} and~\ref{sec_24} represents
the unknown solution field to some given PDE. Therefore, the
Dirichlet, Neumann and Robin boundary conditions involved in
the PDE problem will be automatically and exactly satisfied.
Since the free function $g(\xi,\eta)$ can be arbitrary,
one can choose a function space or use some nonlinear representation
such as artificial neural networks for $g(\xi,\eta)$ to satisfy
the PDE, 
thus giving rise to a specific numerical method.
It should be noted that, regardless of the representation or the numerical
method for computing $g(\xi,\eta)$,
the Dirichlet/Neumann/Robin boundary conditions involved in
the problem, by formulation, are exactly satisfied.

In this paper we employ the physics informed approach, and
a type of randomized neural networks known
as extreme learning machines (ELMs), for representing
the free function $g(\xi,\eta)$ to compute the PDE solution.
We refer to~\cite{DongL2021,DongY2022rm,NiD2023,WangD2024,DongY2022}
for more details on ELM for scientific machine learning.
%
In the following discussion we use a second-order
linear boundary value problem (BVP)
to illustrate the ELM technique together with
the current method for BC enforcement.
A discussion on nonlinear problems with the current method
is provided in a remark at the end of this section.

Specifically, we consider a general quadrilateral domain $\Omega=\overline{ABCD}$
as illustrated in Figure~\ref{fg_1}(a) and the following BVP on this domain,
\begin{subequations}\label{eq_95}
  \begin{align}
    \mathcal L u(\mbs x) &= f(\mbs x), \label{eq_95a} \\
    \left. \mathcal B u(\mbs x)\right|_{\mbs x\in\partial\Omega} &= f_b(\mbs x), \label{eq_95b}
  \end{align}
\end{subequations}
where $\mathcal L$ is a second-order linear differential operator, $u(\mbs x)$
is the unknown field to be solved for, and $f(\mbs x)$ and $f_b(\mbs x)$
represent prescribed source terms. $\mathcal B$ is the boundary operator,
and $\mathcal Bu(\mbs x)$ represents a set of Dirichlet, Neumann, or Robin
type conditions imposed on different domain boundaries.
Since $u(\mbs x)$ needs to satisfy the second-order PDE,
we assume in this section that each boundary curve of the domain
($\mbs x_{AB}(\xi)$, $\mbs x_{BC}(\eta)$, $\mbs x_{CD}(\xi)$, $\mbs x_{AD}(\eta)$)
should be sufficiently differentiable.

Since the Dirichlet/Neumann/Robin conditions of~\eqref{eq_95b}
are exactly enforced by formulation, we only need to focus on
the PDE~\eqref{eq_95a}.
Employing the map $\mbs x(\xi,\eta)$, we transform~\eqref{eq_95a} into
\begin{align}\label{eq_96}
  & \mathcal L V(\xi,\eta) = f(\mbs x(\xi,\eta)) = f_a(\xi,\eta),
  \quad (\xi,\eta)\in\Omega_{st},
\end{align}
where $V(\xi,\eta)$ is the transformed field function and
is related to $u(\mbs x)$ by~\eqref{eq_12}.
It should be noted that $\mathcal L$ is a differential operator defined on
the physical domain (with respect to $\mbs x=(x,y)$), while $V(\xi,\eta)$
is formulated in terms of the standard domain (with respect to $(\xi,\eta)$); see
Remark~\ref{rem_29} below for a discussion on the computation of associated terms.
The formulations of $V(\xi,\eta)$ 
from previous sections
all have the following form,
\begin{align}\label{eq_97}
  V(\xi,\eta) = g(\xi,\eta) - Pg(\xi,\eta) + PF^g(\xi,\eta).
\end{align}
We rewrite the transfinite interpolation therein into two components,
\begin{align}
PF^g(\xi,\eta)=PF^{gb}(\xi,\eta) + PF^a(\xi,\eta),
\end{align}
where $PF^{gb}(\xi,\eta)$ denotes all the terms in $PF^g(\xi,\eta)$
that involve the free function $g(\xi,\eta)$, and $PF^a(\xi,\eta)$
denotes the rest of the terms. Note that $PF^{gb}$ is linear with
respect to $g$, and that $PF^{gb}=0$ if the domain involves only
 Dirichlet boundaries.
Equation~\eqref{eq_96}  then becomes
\begin{align}\label{eq_98}
  & \mathcal L V^{gb}(\xi,\eta)  = f_a(\xi,\eta) - \mathcal L (PF^a)(\xi,\eta),
\end{align}
where
\begin{align} \label{eq_99}
  V^{gb}(\xi,\eta) = g(\xi,\eta) - Pg(\xi,\eta) + PF^{gb}(\xi,\eta).
\end{align}


To represent $g(\xi,\eta)$,
we employ a randomized feedforward neural network, whose structure is characterized
by an architectural vector $\mbs m=[m_0, m_1, \dots, m_L]$.
Here $(L+1)$ (with $L\geqslant 2$)
is the depth of the network, and $m_i$ denotes the number of nodes in
the $i$-th layer. Layer $0$, with $m_0=2$, represents the input $(\xi,\eta)$,
and the last layer, with $m_L=1$, represents the output $g(\xi,\eta)$.
Those layers in between are the hidden layers.
Following the ELM convention~\cite{DongL2021,DongL2021bip}, we
assign the hidden-layer coefficients (weights/biases) by random values
generated on the interval $[-R_m,R_m]$, where $R_m$ is a user-prescribed
constant, from a uniform distribution. Once the hidden-layer coefficients
are randomly assigned, they are fixed throughout the computation.
In addition, we require that the output layer contains no activation,
or equivalently with the activation function $\sigma(x)=x$,
and has zero bias. In ELM the hidden-layer coefficients
are randomly assigned and non-trainable, and only the output-layer coefficients
are trained~\cite{HuangZS2006,DongL2021}.


Then the NN logic of the output layer yields the following relation,
\begin{align}\label{eq_101}
  g(\xi,\eta) = \sum_{j=1}^M \beta_j\varphi_j(\xi,\eta) = \bm\Phi(\xi,\eta)\bm\beta.
\end{align}
Here $M=m_{L-1}$ is the width of the last hidden layer,
$\bm\Phi(\xi,\eta)=(\varphi_1,\dots,\varphi_M)$ denotes the set of output fields
of the last hidden layer, and $\bm\beta=(\beta_1,\dots,\beta_M)^T$
denotes the output-layer coefficients, which constitute
the set of trainable parameters in ELM.

We train the ELM network to solve equation~\eqref{eq_98}
based on a physics informed approach. By choosing a set of $Q$ collocation
points from the interior of the standard domain, 
$(\xi_i,\eta_i)\in\Omega_{st}$ for $1\leqslant i\leqslant Q$, and
enforcing equation~\eqref{eq_98} on these collocation points, we have
\begin{align}\label{eq_102}
  \left[\left.\mathcal L\bm\Phi\right|_{(\xi_i,\eta_i)}
    - \left.\mathcal L(P\bm\Phi)\right|_{(\xi_i,\eta_i)}
    +\left.\mathcal L(PF^{\bm\Phi b})\right|_{(\xi_i,\eta_i)}\right]\bm\beta
  = f_a(\xi_i,\eta_i) - \left.\mathcal L(PF^a) \right|_{(\xi_i,\eta_i)},
  \quad 1\leqslant i\leqslant Q,
\end{align}
where we have used~\eqref{eq_101}.
In this equation $P\bm\Phi = (P\varphi_1,\dots,P\varphi_M)$, and $P\varphi_i(\xi,\eta)$ is
defined in the same manner as $Pg(\xi,\eta)$.
$PF^{\bm\Phi b}=(PF^{\varphi_1 b},\dots, PF^{\varphi_M b})$, and
$PF^{\varphi_i b}(\xi,\eta)$ is defined in the same manner as $PF^{gb}(\xi,\eta)$.
Equation~\eqref{eq_102} constitutes a rectangular system of linear algebraic
equations about $\bm\beta$, with $Q$ equations and $M$ unknowns.
We seek a least squares solution and solve this system by
the linear least squares method~\cite{Bjorck1996}.
The output-layer coefficients of the ELM network are
then set by this least squares solution for $\bm\beta$,
completing the NN training. The final solution field
to the boundary value problem~\eqref{eq_95} is computed based on~\eqref{eq_12}
and~\eqref{eq_97}.


\begin{remark}\label{rem_29}
  When implementing ELM for~\eqref{eq_102}, one encounters
  derivatives like $\frac{\partial \psi}{\partial x}$,
  $\frac{\partial \psi}{\partial y}$, $\frac{\partial^2 \psi}{\partial x^2}$, and
  $\frac{\partial^2 \psi}{\partial y^2}$,
  where $\psi$ is a function defined on the standard domain $\Omega_{st}$,
  i.e.~$\psi = \psi(\xi,\eta)$. These terms can be computed using
  the Jacobian matrix $\mbs J(\xi,\eta)$ defined in~\eqref{eq_19} as follows,
  \begin{subequations}
    \begin{align}
      & \begin{bmatrix}\psi_x & \psi_y \end{bmatrix}
      = \begin{bmatrix}\psi_{\xi} & \psi_{\eta} \end{bmatrix} \mbs J^{-1}(\xi,\eta),
      \quad \mbs C=\begin{bmatrix}\psi_{x} & \psi_{y} \end{bmatrix}
      \frac{\partial\mbs J}{\partial\xi}, \quad
      \mbs D=\begin{bmatrix}\psi_{x} & \psi_{y} \end{bmatrix}
      \frac{\partial\mbs J}{\partial\eta},
      \\
      & \begin{bmatrix} \psi_{xx} & \psi_{xy} \\ \psi_{xy} & \psi_{yy} \end{bmatrix}
      = \mbs J^{-T}\left(\begin{bmatrix} \psi_{\xi\xi} & \psi_{\xi\eta} \\
        \psi_{\xi\eta} & \psi_{\eta\eta}\end{bmatrix}
      - \begin{bmatrix} \mbs C^T & \mbs D^T \end{bmatrix}
      \right)\mbs J^{-1},
    \end{align}
  \end{subequations}
  where the terms $\psi_{\xi}$, $\psi_{\eta}$, $\psi_{\xi\xi}$, $\psi_{\xi\eta}$
  and $\psi_{\eta\eta}$ can be computed either directly or by automatic
  differentiations of the neural network.
  
\end{remark}


\begin{remark}\label{rem_210}
  After the NN training is complete,
  when using the form~\eqref{eq_97} to evaluate the solution field on
  the boundary test points, we have observed from numerical simulations
  that the cancellation error, due to subtraction of nearly equal real numbers
  in the terms like $(g-Pg)$,
  can be notable
  at isolated boundary points for some problems. For example,
  when evaluating the boundary-condition
  errors using the numerically attained solution, this can lead
  to errors on the order around $10^{-10}\sim 10^{-9}$ at isolated boundary
  points, instead of the error levels such as $10^{-16}$ or lower for
  the other boundary points. We find that a combination of
  the following two measures in implementation can reduce
  the cancellation error significantly:
  \begin{itemize}
  \item Restructure the computation for terms like $(g-Pg)$ and $PF$.
    For example, implementing the terms in~\eqref{eq_15}--\eqref{eq_16b}
    using the following
    equivalent forms essentially eliminates the cancellation error,
    \begin{align*}
      g(\xi,\eta)-Pg(\xi,\eta) =&\
      \left[g(\xi,\eta) - g(\xi,-1)\phi_0(\eta) - g(\xi,1)\phi_1(\eta) \right] \\
      & -\left[g(-1,\eta) - g(-1,-1)\phi_0(\eta) - g(-1,1)\phi_1(\eta) \right]\phi_0(\xi)
       \\
      & -\left[g(1,\eta) - g(1,-1)\phi_0(\eta) - g(1,1)\phi_1(\eta) \right]\phi_1(\xi), \\
      PF(\xi,\eta)=&\
      \left[F(\xi,-1)\phi_0(\eta) + F(\xi,1)\phi_1(\eta) \right] \\
      &+ \left[F(-1,\eta) - F(-1,-1)\phi_0(\eta) - F(-1,1)\phi_1(\eta) \right]\phi_0(\xi) \\
      &+ \left[F(1,\eta) - F(1,-1)\phi_0(\eta) - F(1,1)\phi_1(\eta) \right]\phi_1(\xi).
    \end{align*}


  \item Introduce a small number of collocation points on the Dirichlet boundaries
    and enforce $g(\xi,\eta)=0$ at those points when training the neural network.
    Because the $V(\xi,\eta)$ form, with arbitrary $g(\xi,\eta)$ therein,
    satisfies the BC~\eqref{eq_95b} mathematically, we have
    only employed the PDE~\eqref{eq_95a}
    for NN training to determine $g(\xi,\eta)$. The $g(\xi,\eta)$
    determined in such a way is necessarily not unique.
    In practice, 
    the function values for $g(\xi,\eta)$ determined in this way can have large magnitudes,
    exacerbating the aforementioned cancellation error issue.
    We observe that, by additionally introducing a small number
    (e.g.~$3$ or $5$) of collocation points on each of the Dirichlet boundary
    and enforcing $g(\xi,\eta)=0$ on these points during NN training,
    the resultant function values for $g(\xi,\eta)$ will generally involve
    much smaller magnitudes.
    This can notably improve the cancellation error.
    This measure leads to the following system of equations,
    \begin{align}\label{eq_103}
      \bm\Phi(\xi_j,\eta_j)\bm\beta = 0, \quad (\xi_j,\eta_j)\in\partial\Omega_{d}, \quad
      1\leqslant j\leqslant Q_{db},
    \end{align}
    where $\partial\Omega_d$ denotes the Dirichlet boundary of the domain,
    and $Q_{db}$ denotes the number of collocation points on the Dirichlet boundaries.
    As such, the final algebraic system for computing $\bm\beta$ 
    consists of~\eqref{eq_102} and~\eqref{eq_103}. The least squares solution
    to this augmented system provides the trained output-layer coefficients
    of the ELM network.
    
  \end{itemize}
  We have incorporated these measures into our implementation of the current method
  in this work.
  
\end{remark}


\begin{remark}
  Boundary value problems consisting of~\eqref{eq_95b}
  (for Dirichlet, Neumann, or Robin conditions) and  a nonlinear PDE,
  \begin{align}\label{eq_104}
    \mathcal Lu(\mbs x) + \mathcal N(u) = f(\mbs x),
  \end{align}
  where $\mathcal N$ denotes a nonlinear operator, can be solved 
  using the current method and ELM in an analogous fashion.
  By employing the mapping function and the formulation~\eqref{eq_97}
  and enforcing~\eqref{eq_104} on the chosen collocation points, we get
  \begin{align}
    &\mathcal L V^{gb}(\xi_i,\eta_i) +
    \mathcal N\left(V^{gb}(\xi_i,\eta_i)+PF^a(\xi_i,\eta_i)\right)
    = f_a(\xi_i,\eta_i) - \mathcal L(PF^a)(\xi_i,\eta_i),
    \quad (\xi,\eta_i)\in\Omega_{st}, \\
    & \quad
    1\leqslant i\leqslant Q, \notag
  \end{align}
  where $V^{gb}$ is given by~\eqref{eq_99} and~\eqref{eq_101}.
  This is a nonlinear algebraic system about the trainable parameters $\bm\beta$,
  with $Q$ equations and $M$ unknowns.
  We seek a least squares solution and solve this system by
  the nonlinear least squares (Gauss-Newton) method~\cite{Bjorck1996}, specifically by
  the NLLSQ-perturb (Nonlinear least squares with perturbations) algorithm
  from~\cite{DongL2021}. The output-layer coefficients are
  then updated by the least squares solution for $\bm\beta$ to complete
  the NN training.
  
\end{remark}


\section{Numerical Tests}
\label{sec_tests}

\begin{figure}
  \centerline{
    \includegraphics[width=1.1in]{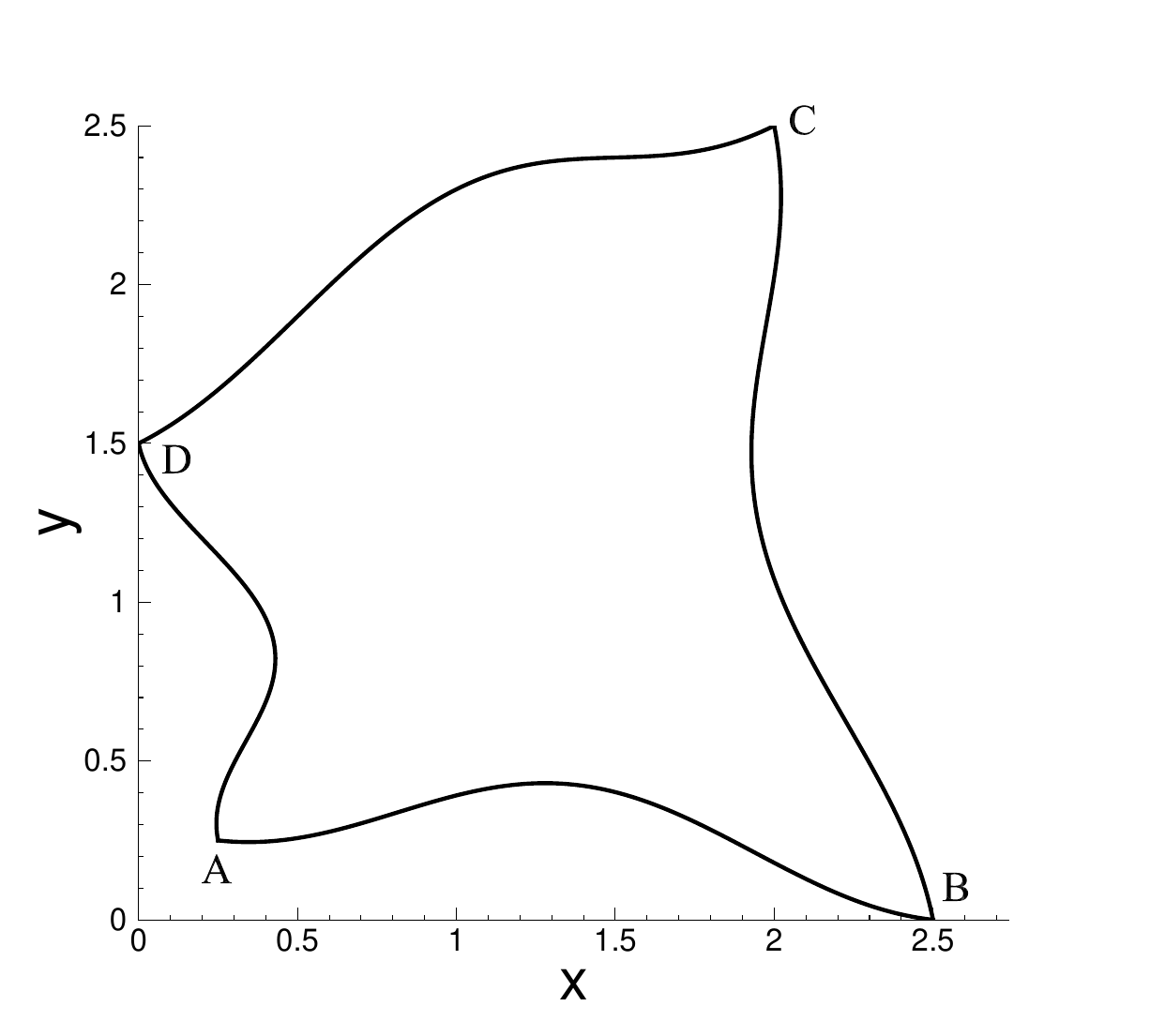}(a)
    \includegraphics[width=1.1in]{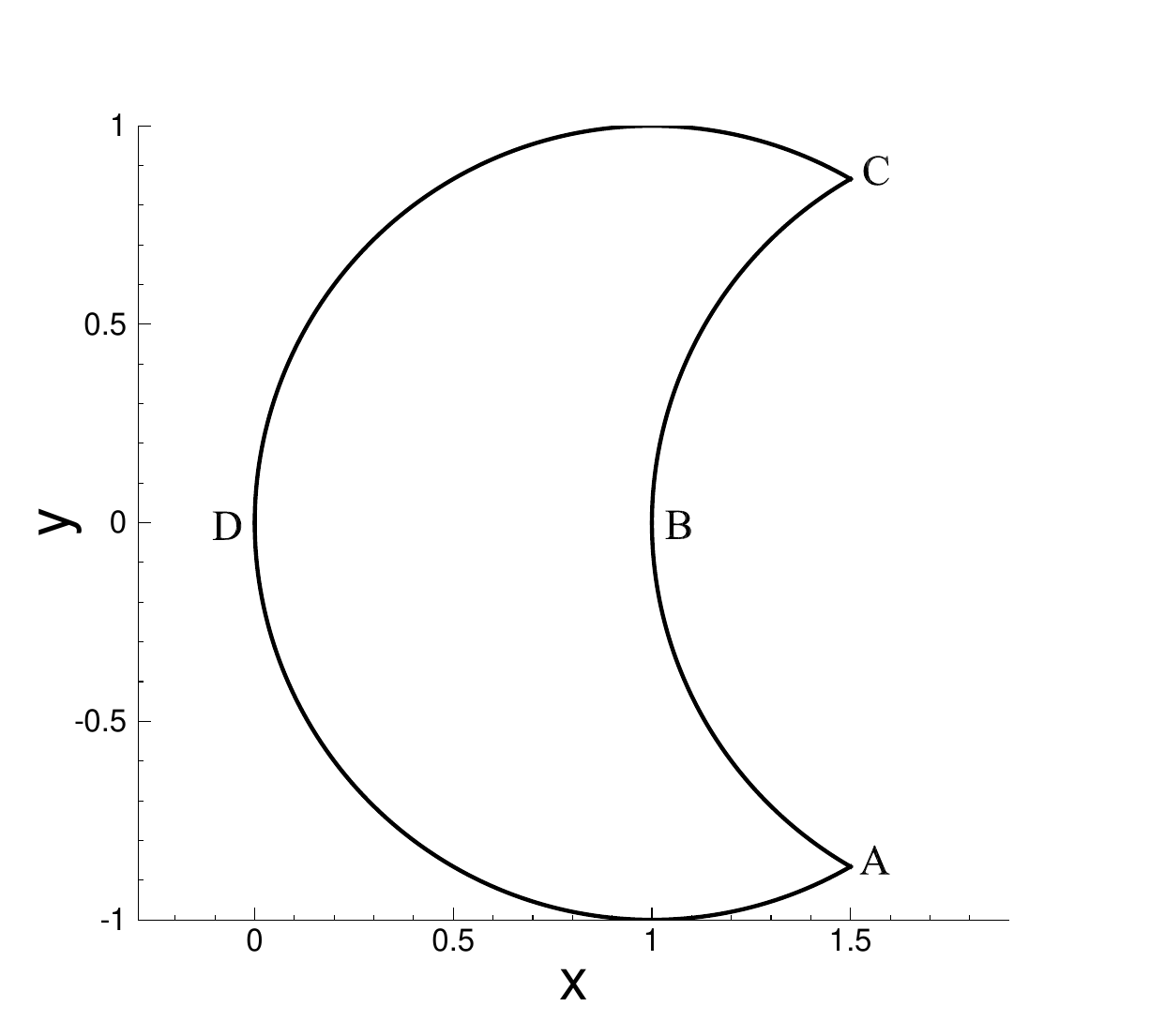}(b)
    \includegraphics[width=1.1in]{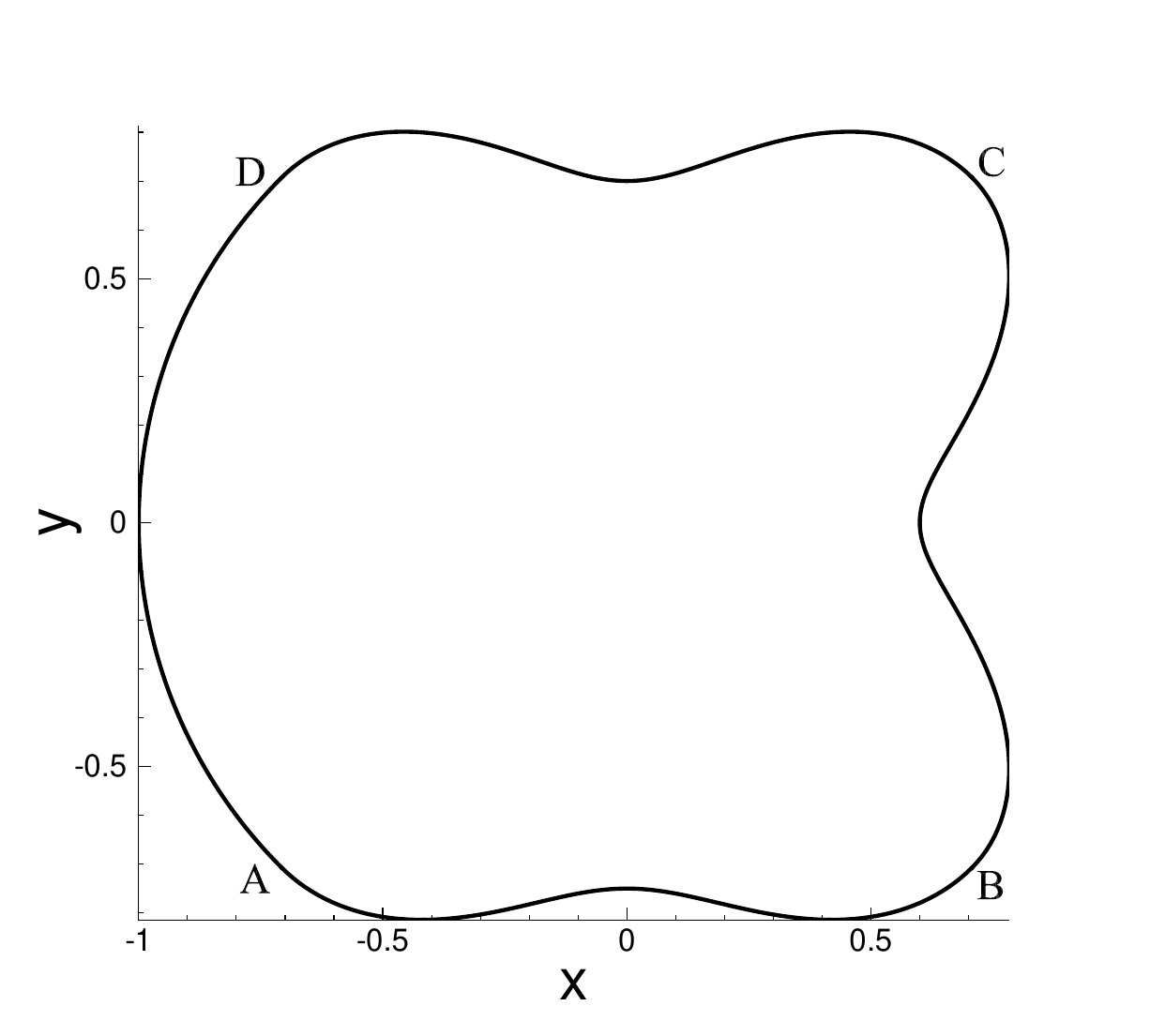}(c)
    \includegraphics[width=1.1in]{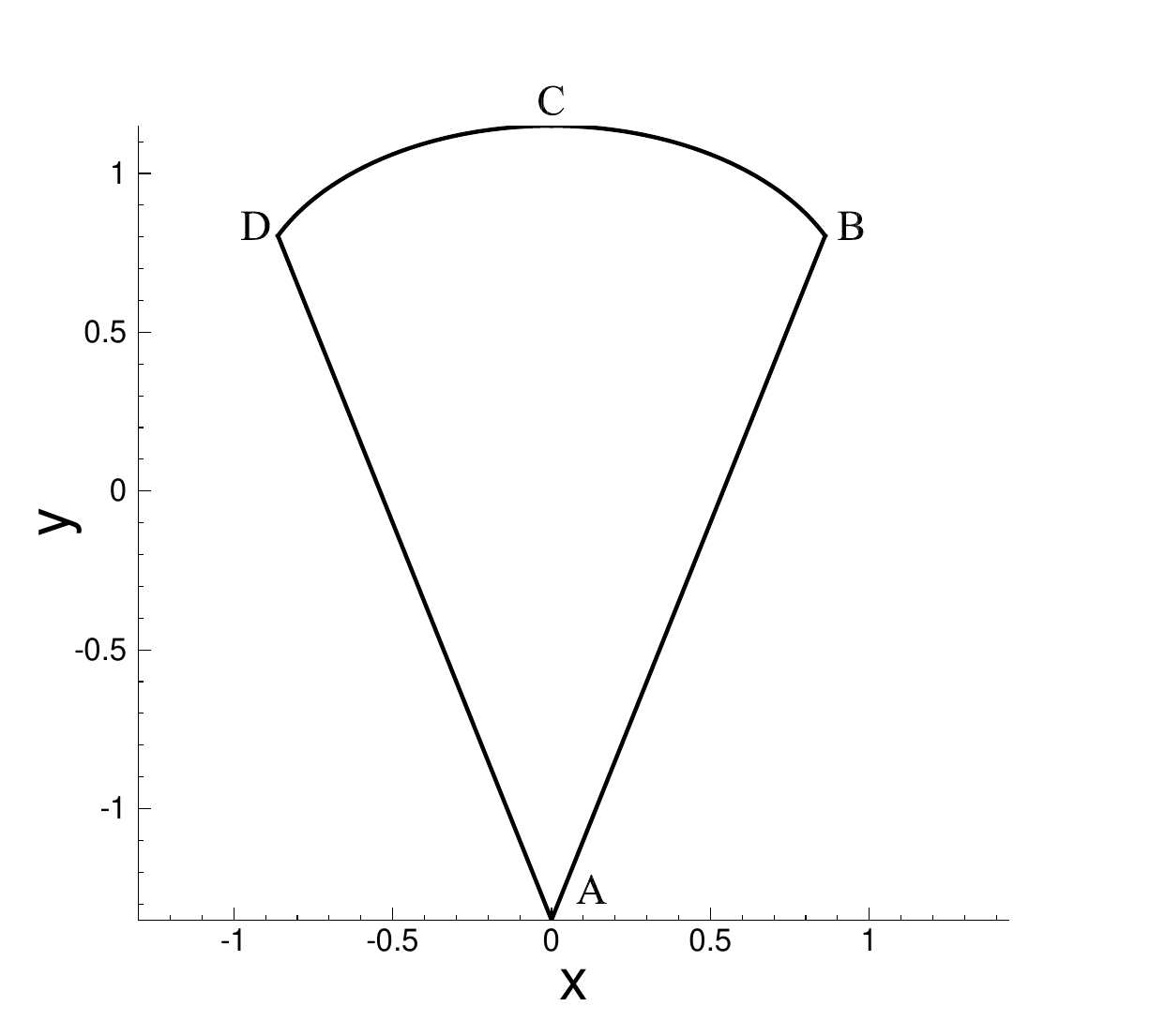}(d)
    \includegraphics[width=1.1in]{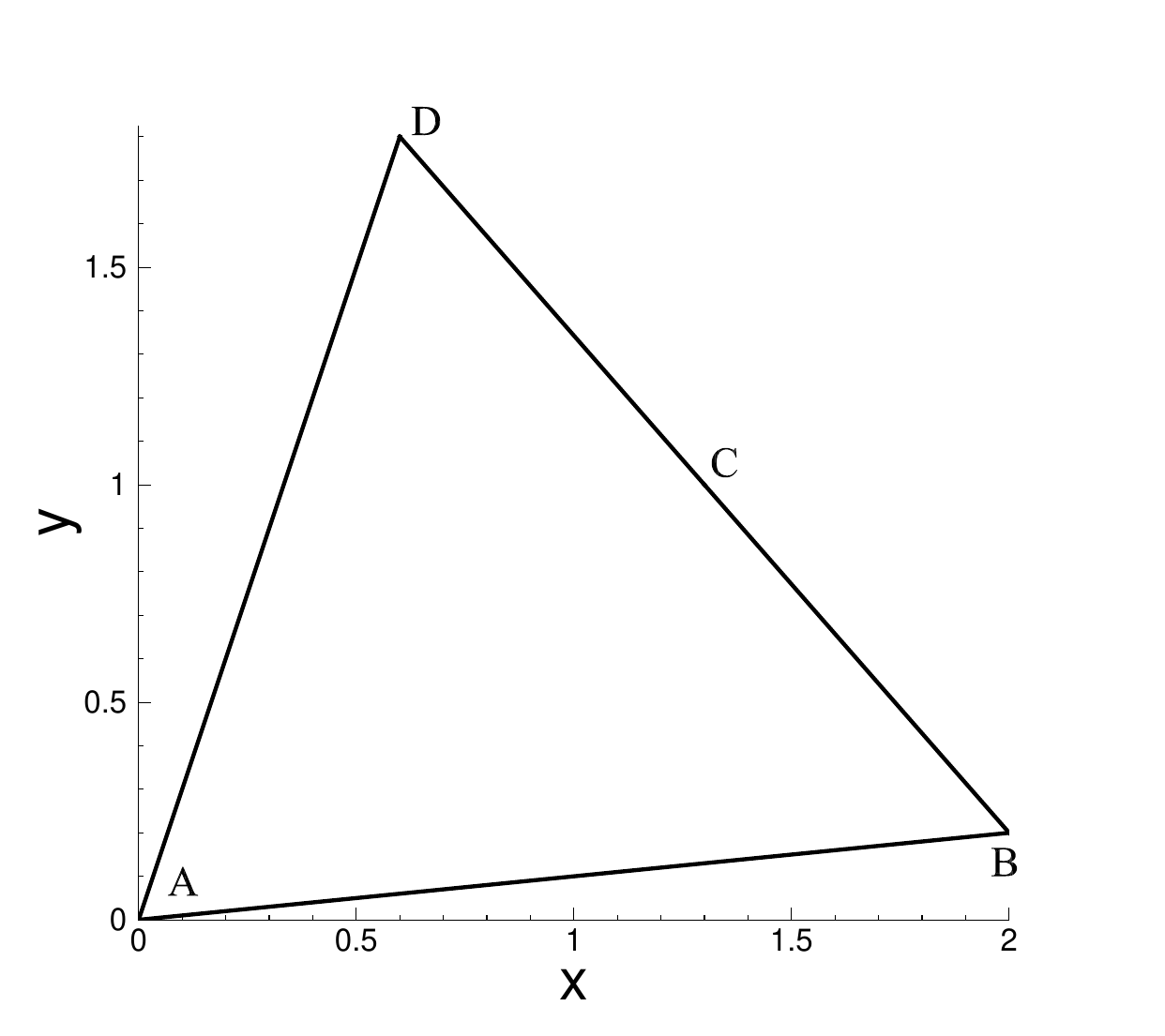}(e)
  }
  \centerline{
    \includegraphics[width=1.1in]{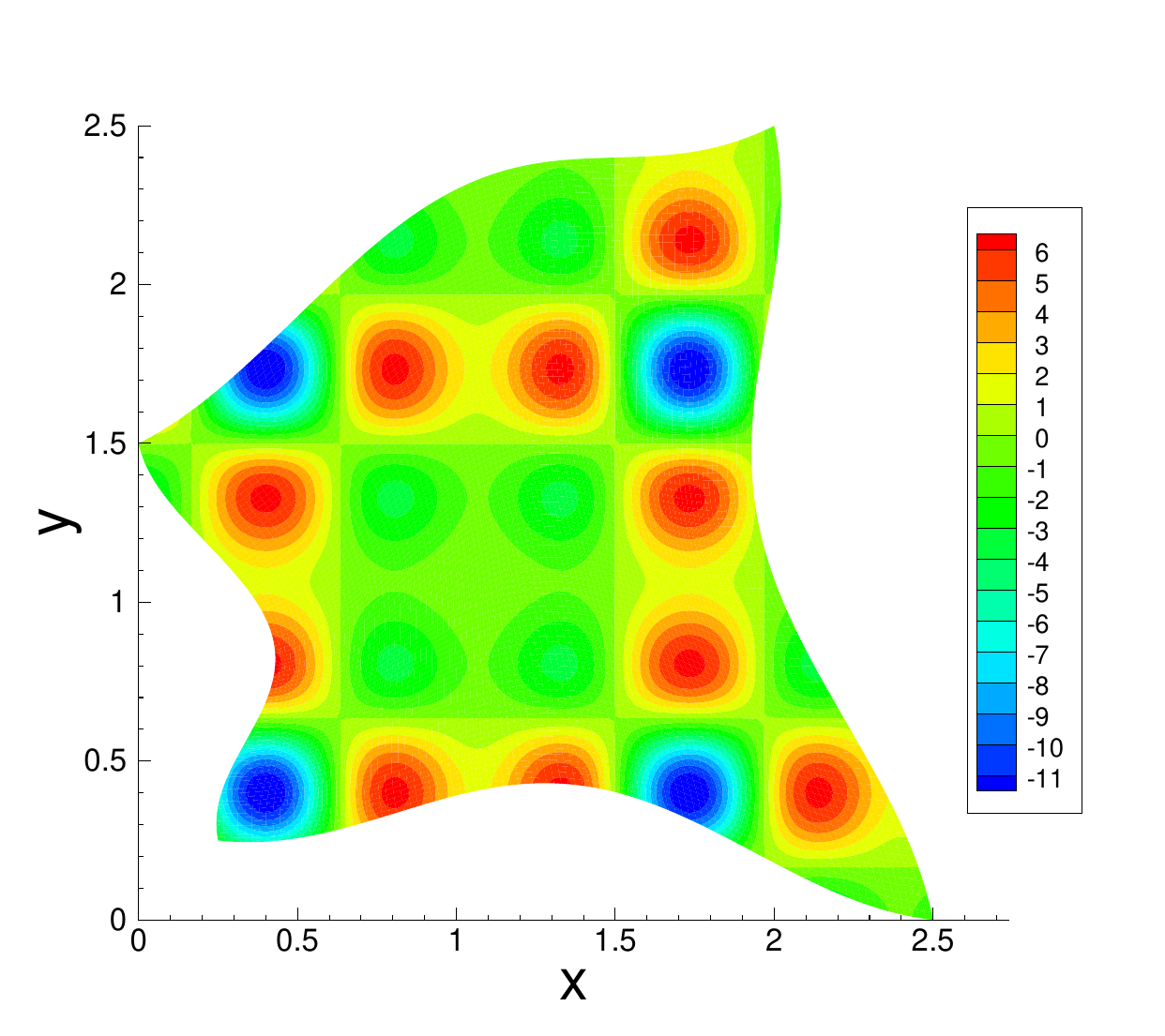}(f)
    \includegraphics[width=1.1in]{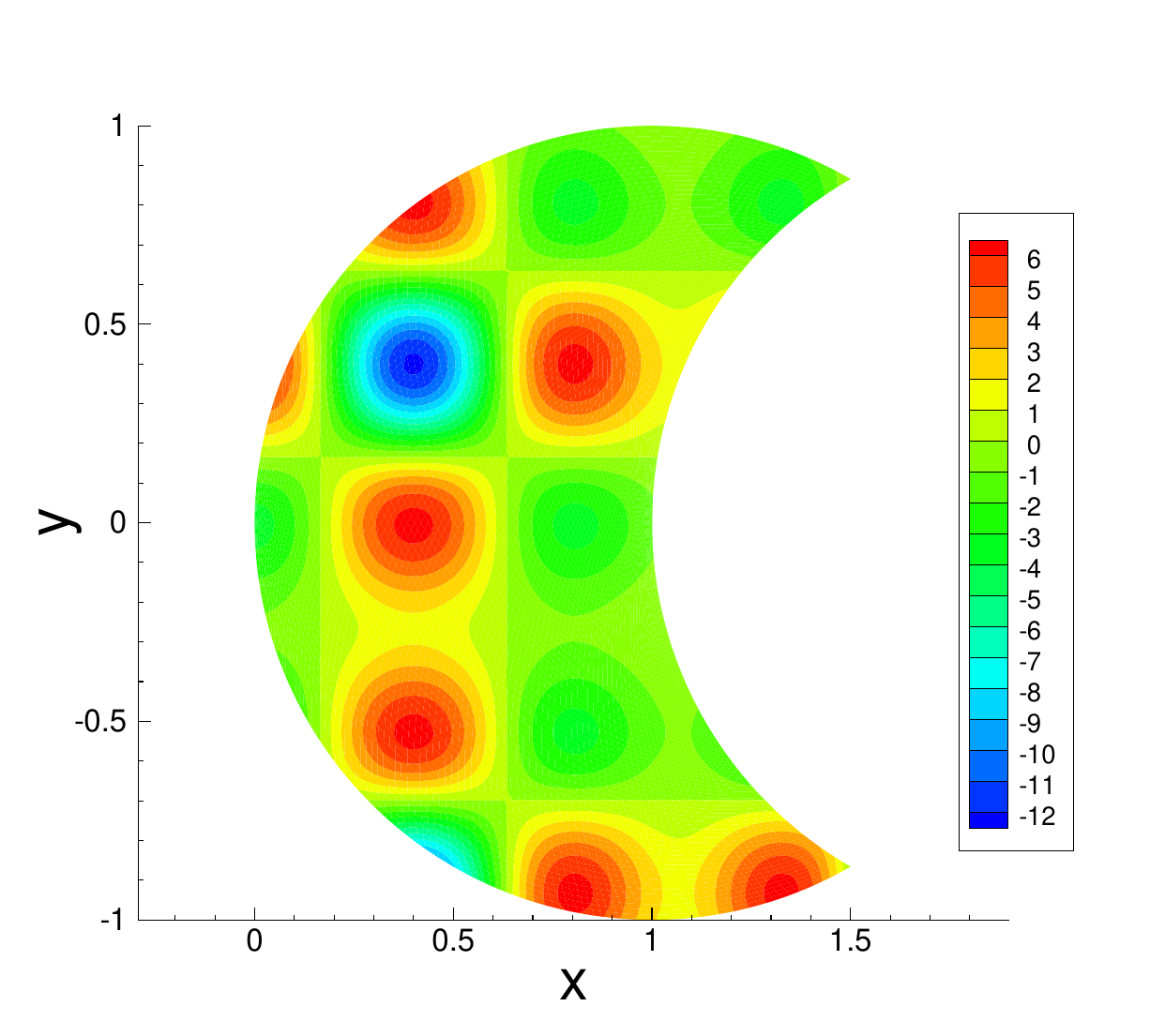}(g)
    \includegraphics[width=1.1in]{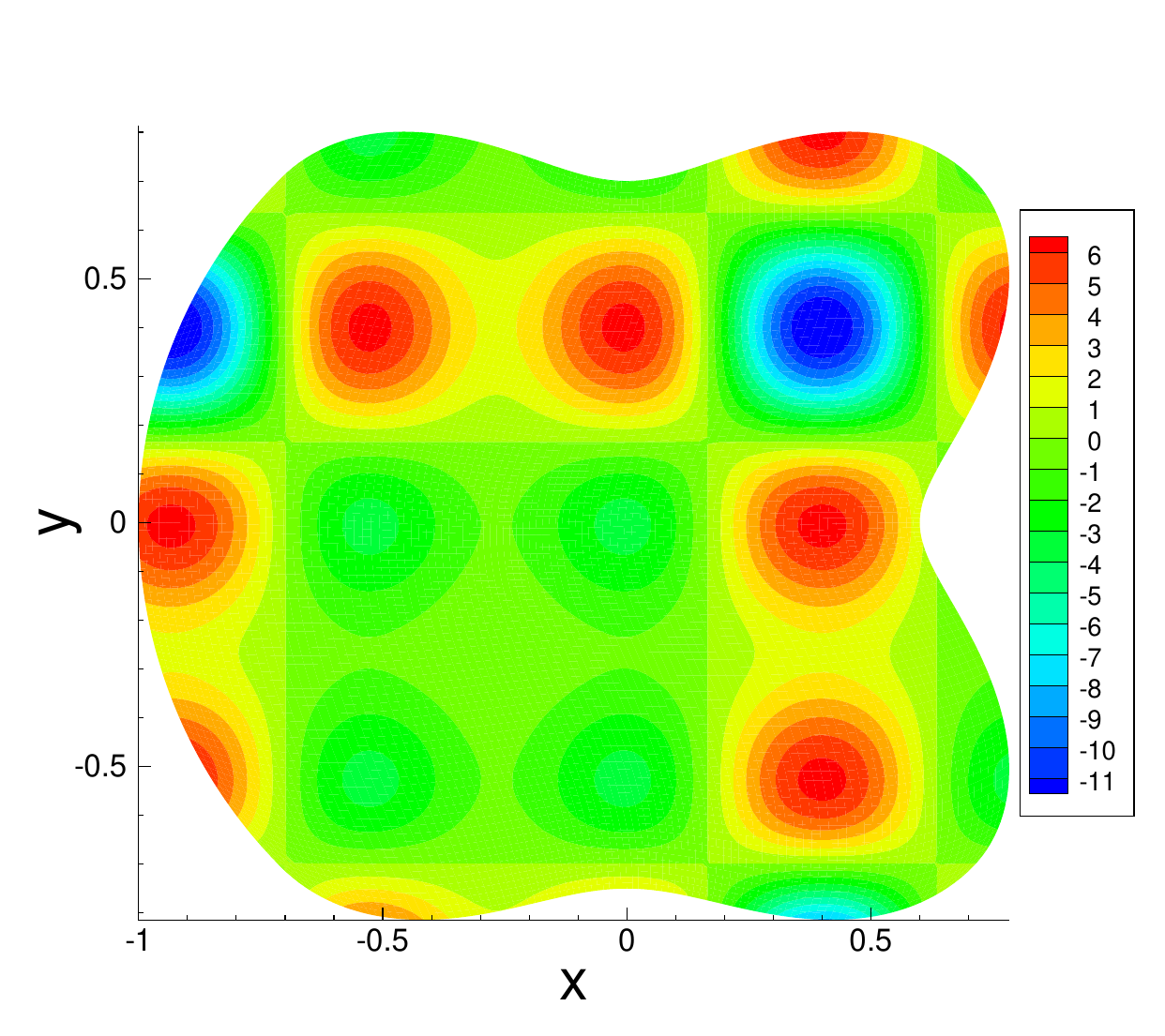}(h)
    \includegraphics[width=1.1in]{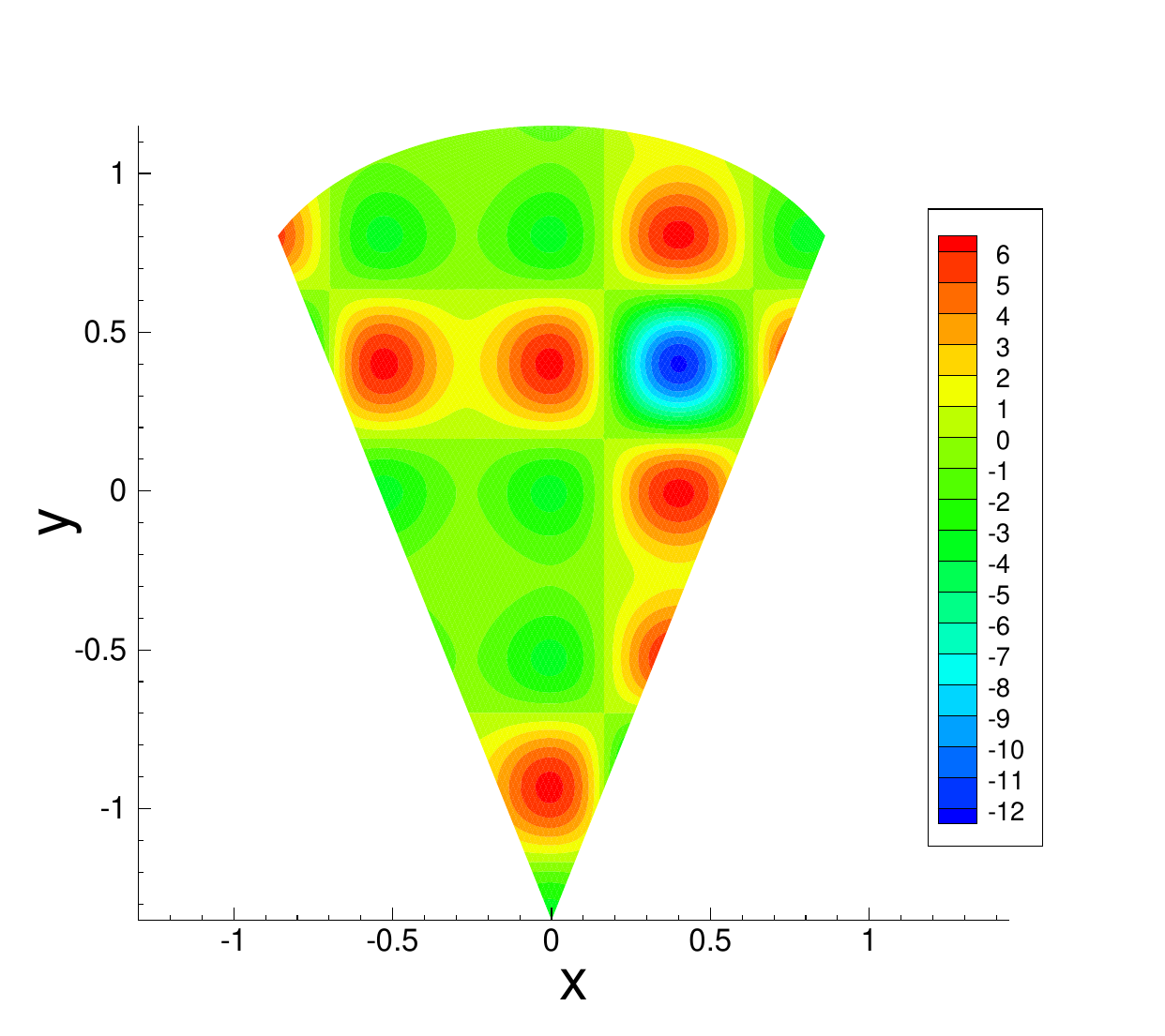}(i)
    \includegraphics[width=1.1in]{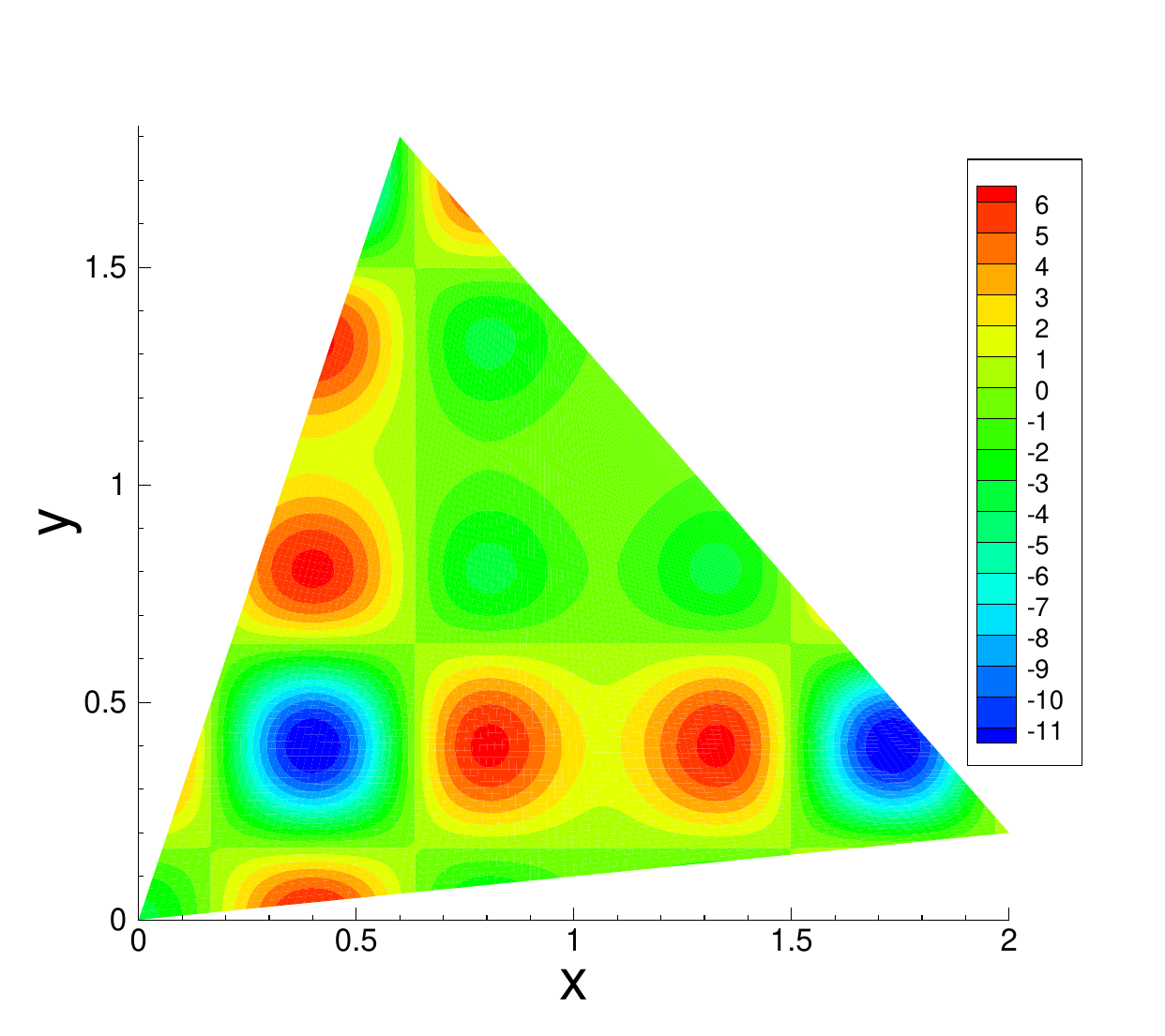}(j)
  }
  \caption{Helmholtz equation: Domain geometries (top row) and the exact solutions (bottom
    row), (a,f) domain \#1, (b,g) domain \#2, (c,h) domain \#3, (d,i) domain \#4,
    and (e,j) domain \#5.
  }
  \label{fg_2}
\end{figure}

We next present several numerical examples to
test the effectiveness of the method from the previous section
for enforcing Dirichlet/Neumann/Robin boundary conditions
(DBCs/NBCs/RBCs),
using linear and nonlinear PDEs over a number of domains with complex boundary
geometries. These include the 2D Helmholtz equation and the nonlinear Helmholtz equation,
which are time-independent, and the heat conduction equation over space-time domains
with deforming or moving boundaries.
The domains involve Dirichlet boundaries, or a combination of Dirichlet boundaries
with Neumann or Robin boundaries.

We define the maximum and root-mean-squares (rms) solution
errors ($e_{max}$ and $e_{rms}$) as follows,
\begin{align}
  e_{max} = \left\{\ \left|u(\mbs x_i)-u_{ex}(\mbs x_i) \right|  \ \right\}_{i=1}^{N_{v}}, \quad
  e_{rms} = \sqrt{\frac{1}{N_v}\sum_{i=1}^{N_v} \left|u(\mbs x_i)-u_{ex}(\mbs x_i) \right|^2 },
\end{align}
where $u(\mbs x)$ and $u_{ex}(\mbs x)$ denote the NN numerical solution 
and the exact solution, respectively, $\mbs x_i$ denotes the test points, and
$N_v$ is the number of test points. By choosing the test points over the entire domain $\Omega$
or on a specific boundary, we can define the maximum and rms solution errors over the domain
($e_{max}^{\Omega}$, $e_{rms}^{\Omega}$) or on the boundaries (e.g.~$e_{max}^{\overline{AB}}$,
$e_{rms}^{\overline{AB}}$, etc).
In addition, we define the maximum/rms boundary-condition errors (see equation~\eqref{eq_95b}),
\begin{align}
  \varepsilon_{max} = \left\{\ \left|\mathcal B u(\mbs x_i)-f_b(\mbs x_i) \right|  \ \right\}_{i=1}^{N_{b}},
  \quad
  \varepsilon_{rms} = \sqrt{\frac{1}{N_b}\sum_{i=1}^{N_b} \left|\mathcal Bu(\mbs x_i)-f_b(\mbs x_i) \right|^2 },
\end{align}
where $\mbs x_i$ denotes the boundary test points and $N_b$ is the number of such points.
By choosing the boundary test points from a specific boundary, we can define
the BC errors on specific boundaries (e.g.~$\varepsilon_{max}^{\overline{AB}}$,
$\varepsilon_{rms}^{\overline{AB}}$ etc), which can be the errors for Dirichlet, Neumann
or Robin conditions imposed there.
Unless otherwise specified, we employ $N_v=101\times 101$ test points (uniform grid points in the standard
domain $\Omega_{st}$) for computing $e_{max}^{\Omega}$ and $e_{rms}^{\Omega}$,
and $N_b=101$ or $N_v=101$ test points (uniform grids on each edge of $\Omega_{st}$) for computing
the boundary-condition errors or the boundary solution errors
($\varepsilon_{max}^{\overline{AB}}$, $\varepsilon_{rms}^{\overline{AB}}$, $e_{max}^{\overline{AB}}$,
$e_{rms}^{\overline{AB}}$, etc).

In all the numerical simulations of this section, we employ an ELM network
architecture $\mbs m=[2, M, 1]$ for representing the free function $g(\xi,\eta)$,
where $M$ is the number of hidden-layer nodes,
with the Gaussian activation function $\sigma(x)=e^{-x^2}$.
The hidden-layer coefficients are assigned to uniform random values generated on $[-R_m,R_m]$,
with the constant $R_m$ determined by the differential evolution algorithm from~\cite{DongY2022rm}.
We employ $N_c=(Q\times Q + Q_{db}$) collocation points for training the ELM.
Here $Q$ is the number of collocation points (uniform grid points)
along each direction in the interior of the standard domain, and
$Q_{db}$ is the number of points on the Dirichlet boundaries for enforcing
the condition $g(\xi,\eta)=0$ as discussed in Remark~\ref{rem_210}.
Unless otherwise specified, we employ $Q_{db}=16$  if the domain has all Dirichlet boundaries
($3$ uniform grid points on each boundary plus $4$ vertices),
$Q_{db}=19$ if the domain has a Neumann or Robin boundary with the rest being Dirichlet
boundaries ($5$ uniform points on each Dirichlet boundary plus $4$ vertices),
and $Q_{db}=23$ if the domain has two Neumann boundaries with the rest being Dirichlet
boundaries ($10$ uniform grid points on each Dirichlet boundary plus $3$ vertices).
The values for $R_m$, $Q$ and $M$ will be provided in the following discussions.
We employ~\eqref{eq_9} for the domain mapping in the numerical simulations,
unless otherwise noted.
Our implementation of the method and the neural network
is in Python, based on the Tensorflow and Keras libraries.

\subsection{Helmholtz Equation}
\label{sec_31}

In the first test we consider the five domains depicted in Figure~\ref{fg_2} (top row) and
investigate the boundary value problem with
the Helmholtz equation on these domains,
\begin{subequations}
  \begin{align}
    & \frac{\partial^2u}{\partial x^2} + \frac{\partial^2u}{\partial y^2}
    - 100 u = f(x,y), \\
    & \left.\mathcal B u(x,y)\right|_{(x,y)\in\partial\Omega}=f_b(x,y),
  \end{align}
\end{subequations}
where $u(x,y)$ is the unknown field to be computed,
$f$ and $f_b$ are the source terms, and the boundary operator $\mathcal B$
denotes Dirichlet, Neumann, or Robin conditions on different boundaries.
The specific geometric parameters for these domains (boundary curves, vertices)
are provided in the appendix (Section~\ref{sec_geom}).
We choose the source terms appropriately such that the problem has
the following exact solution for different boundary conditions,
\begin{align}
  & u(x,y) = -\left[2\cos\left(\frac32\pi x + \frac25\pi \right)
    +\frac32\cos\left(3\pi x - \frac{\pi}{5}\right)
    \right]
  \left[2\cos\left(\frac32\pi y + \frac25\pi \right)
    +\frac32\cos\left(3\pi y - \frac{\pi}{5}\right)
    \right].
\end{align}
Distributions of the exact solution on different domains are shown in
Figure~\ref{fg_2} (bottom row).

\begin{figure}
  \centerline{
    \includegraphics[width=1.1in]{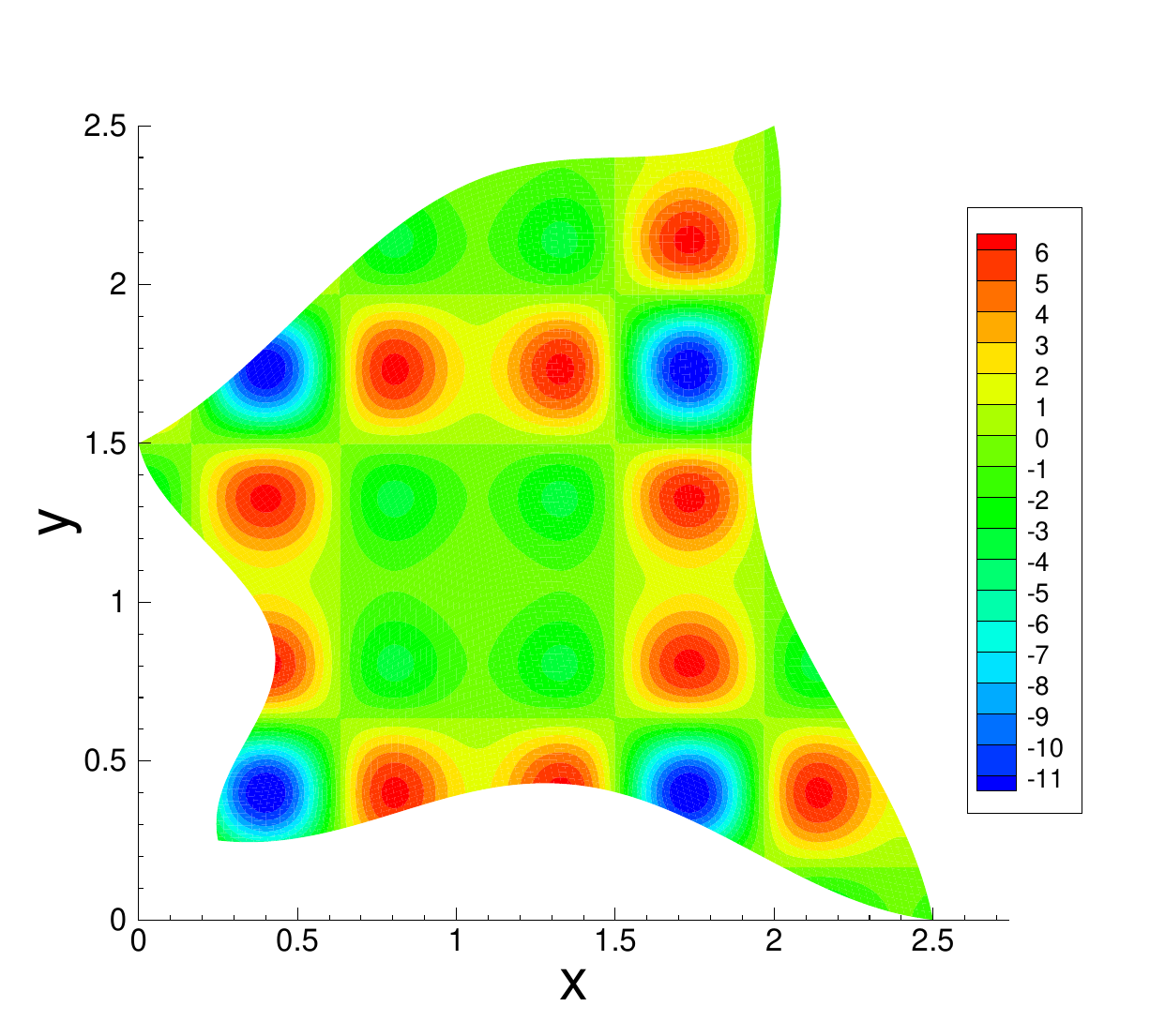}(a)
    \includegraphics[width=1.1in]{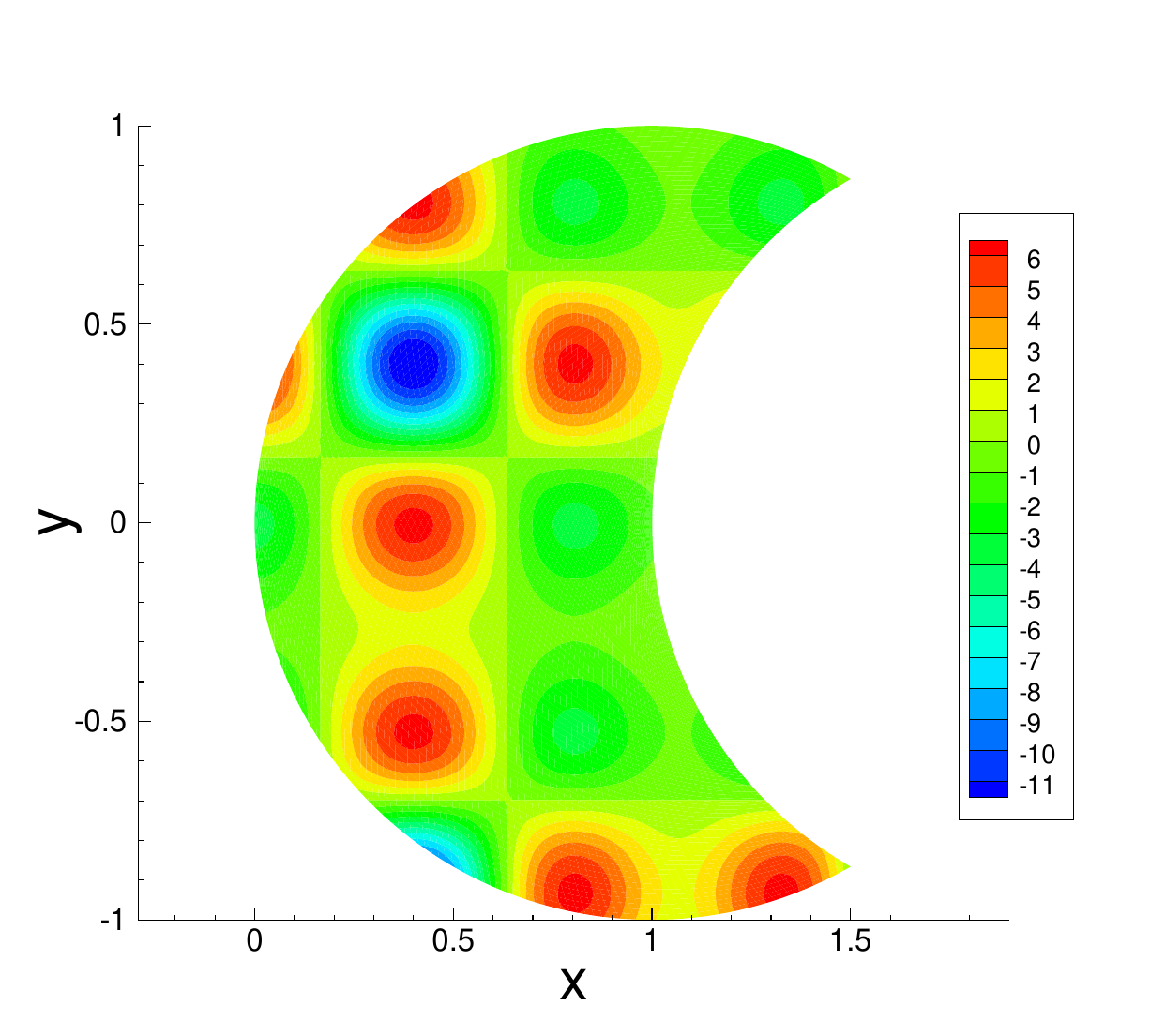}(b)
    \includegraphics[width=1.1in]{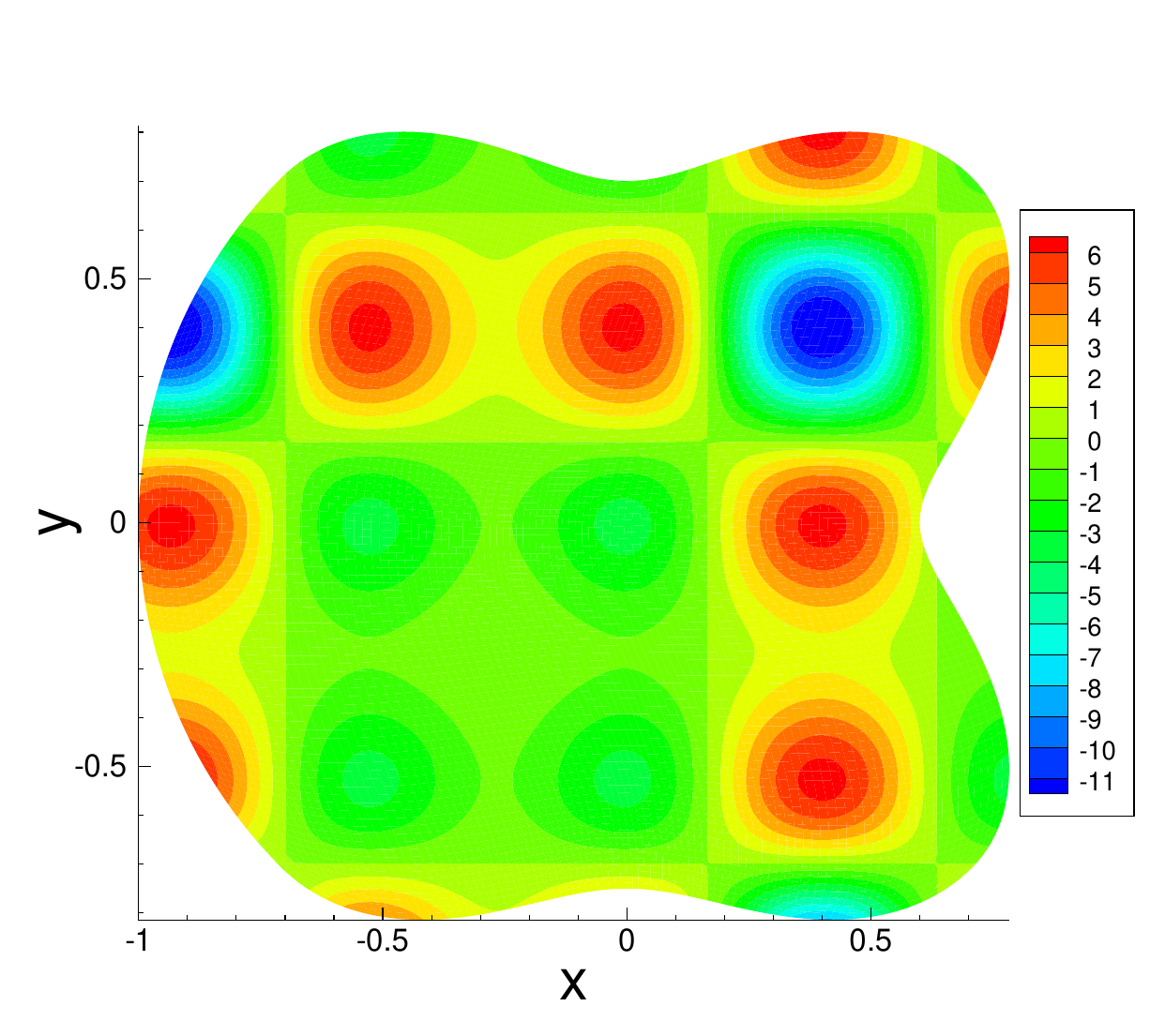}(c)
    \includegraphics[width=1.1in]{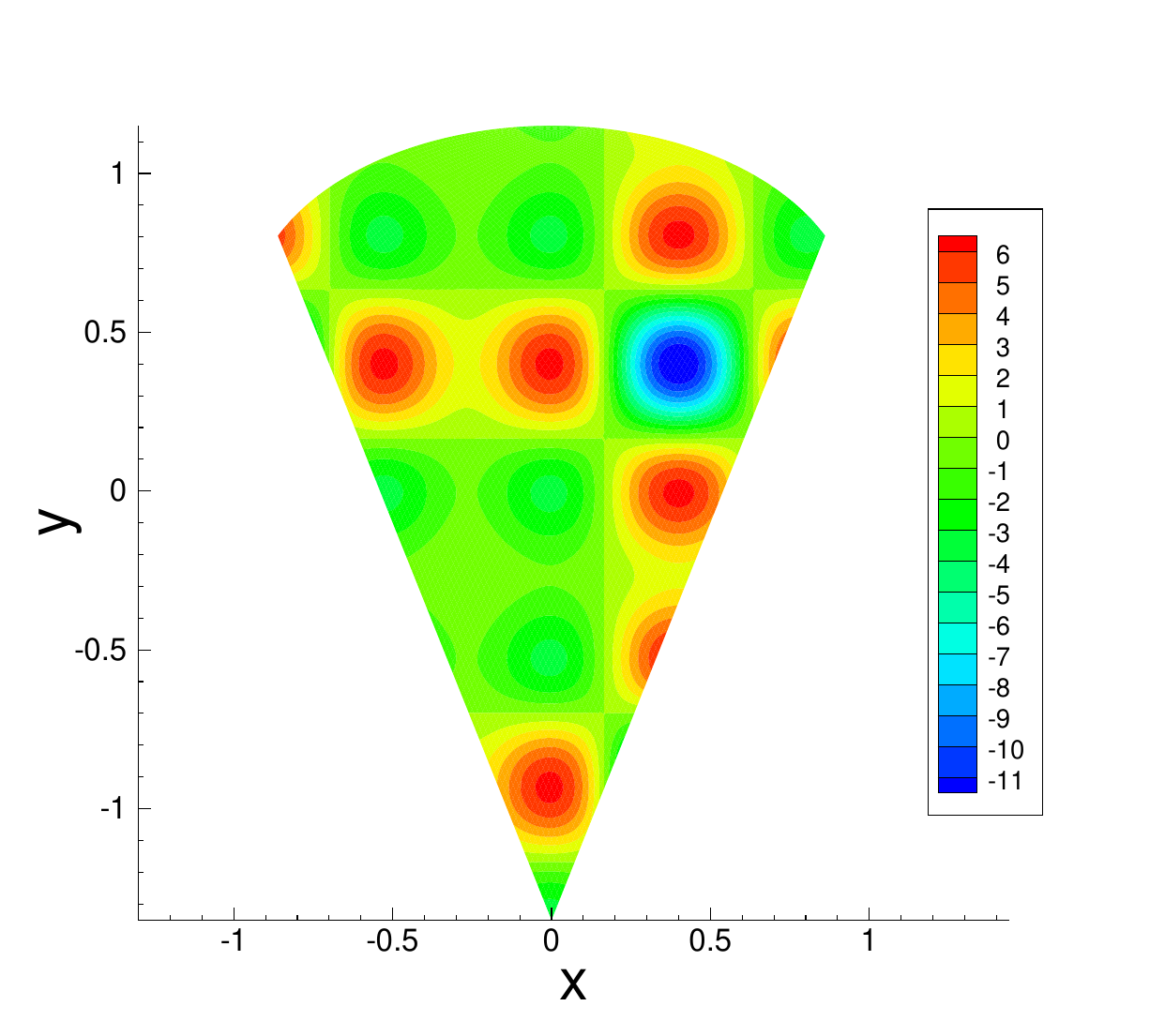}(d)
    \includegraphics[width=1.1in]{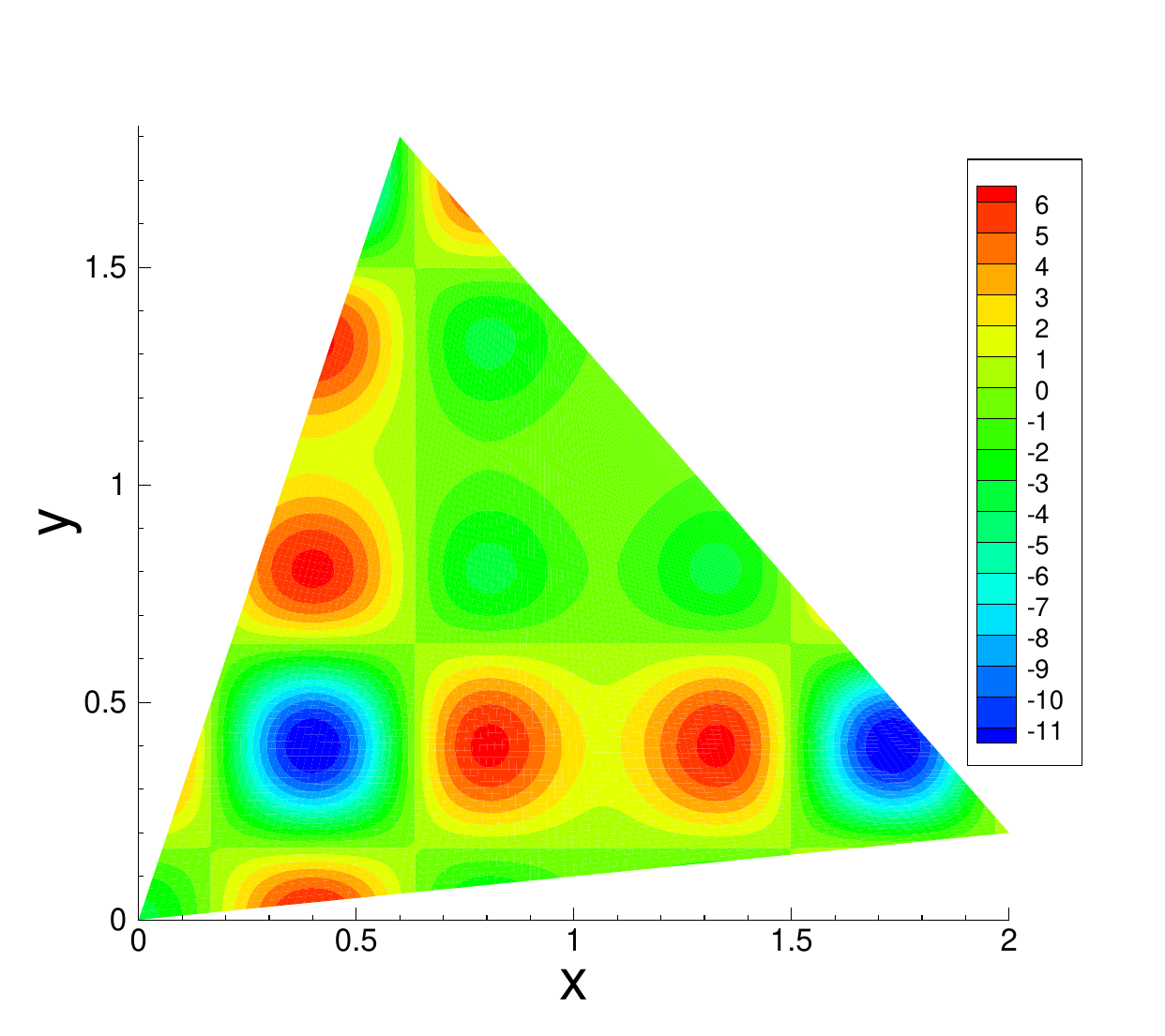}(e)
  }
  \centerline{
    \includegraphics[width=1.1in]{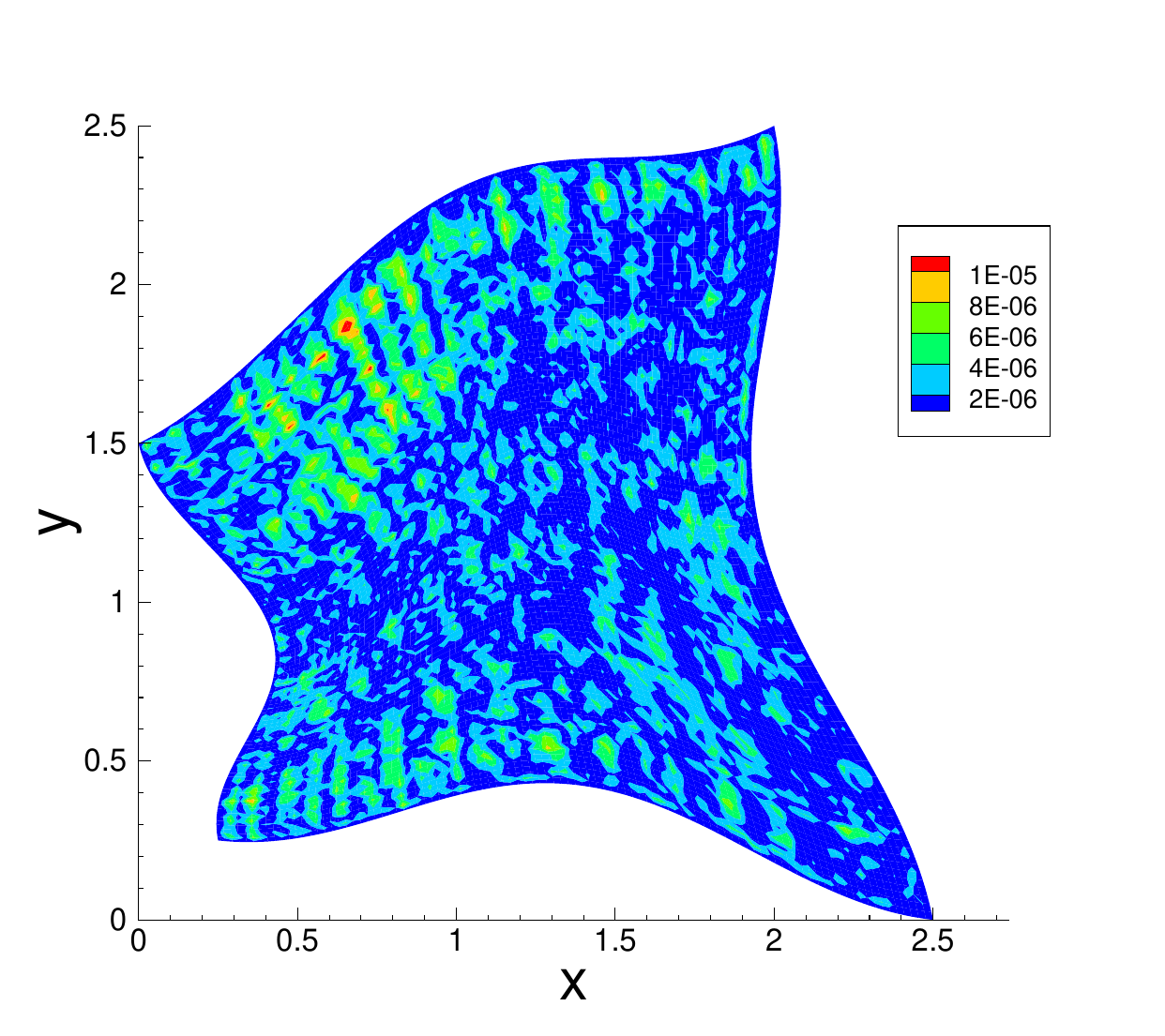}(a)
    \includegraphics[width=1.1in]{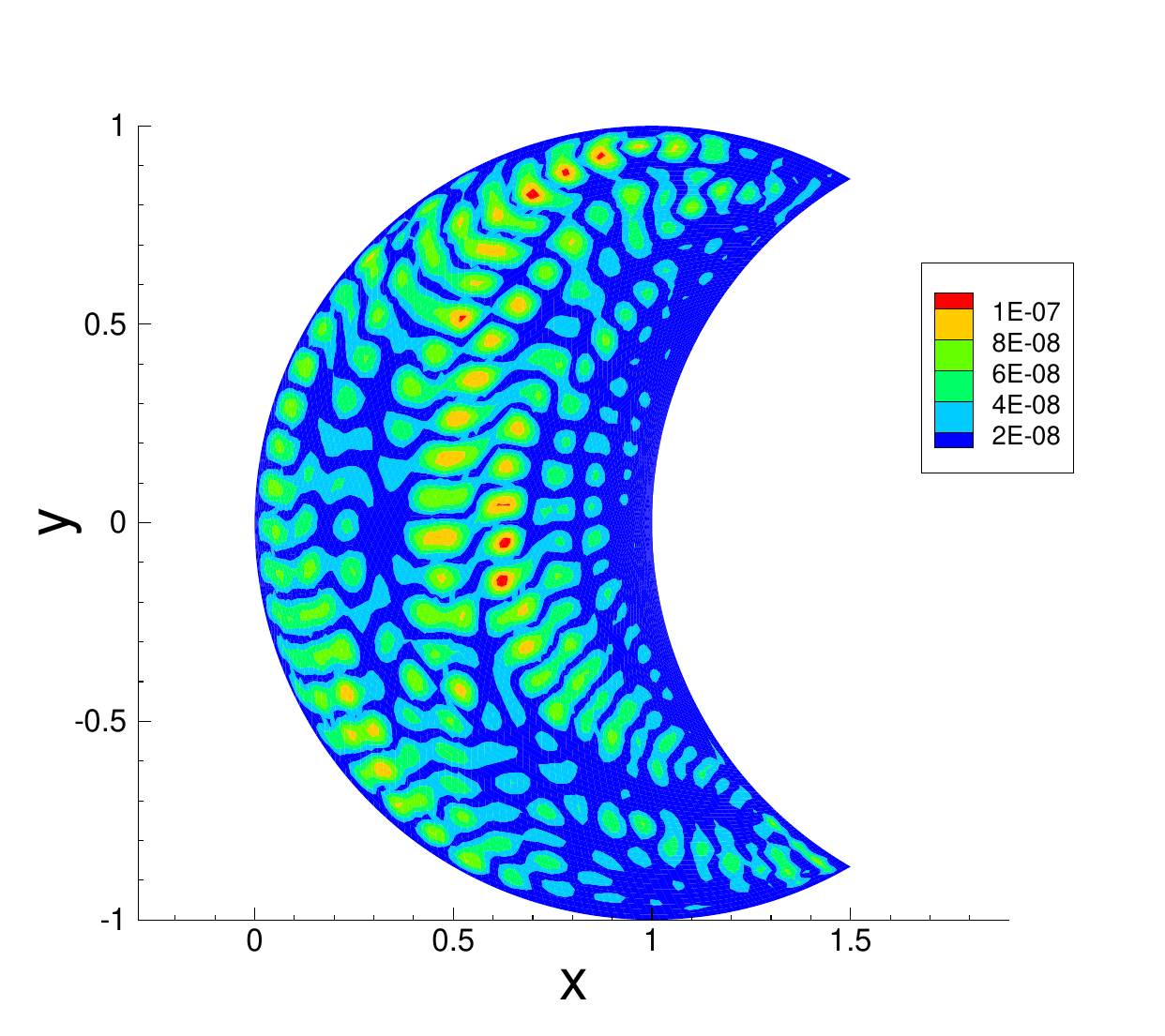}(b)
    \includegraphics[width=1.1in]{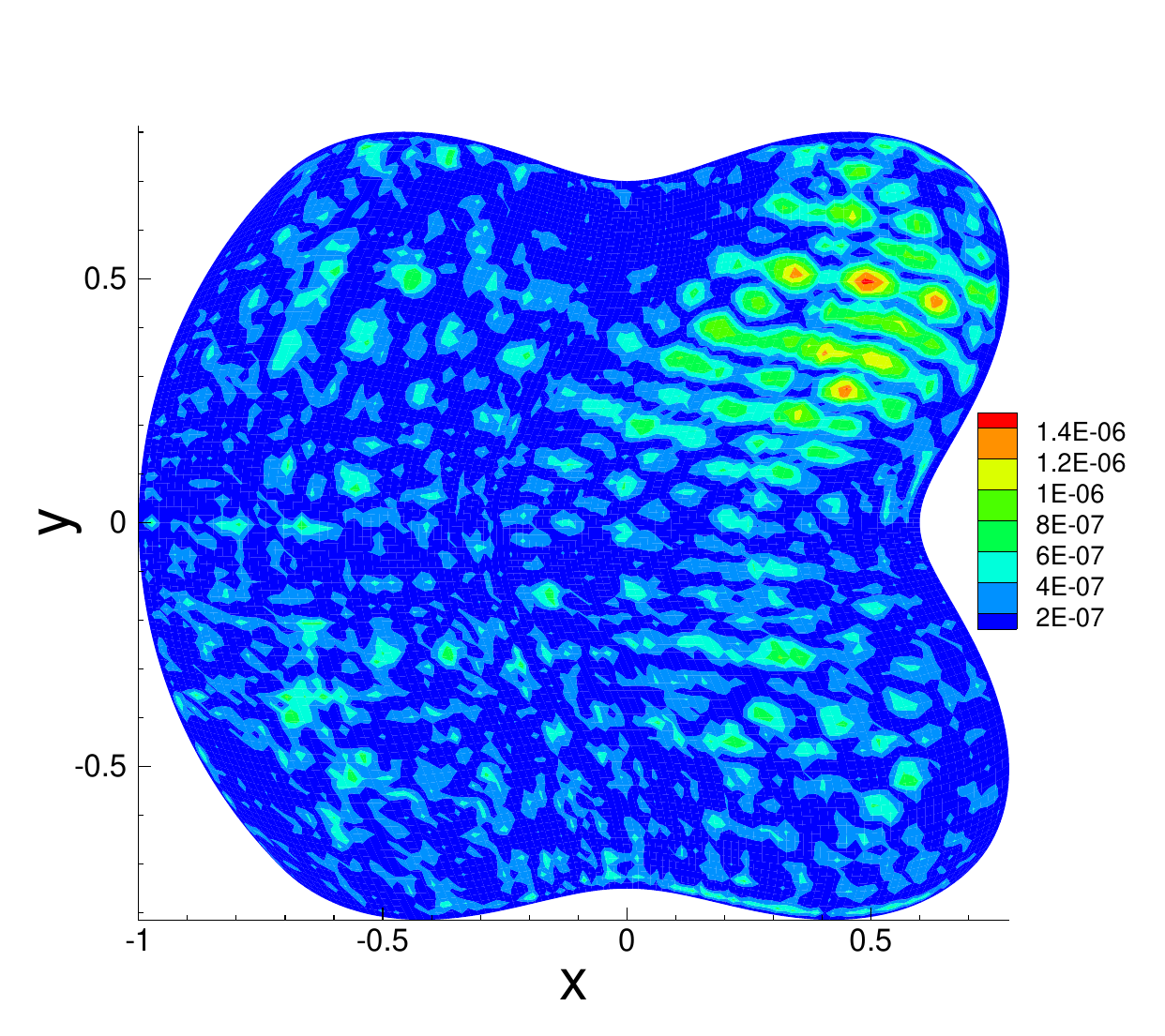}(c)
    \includegraphics[width=1.1in]{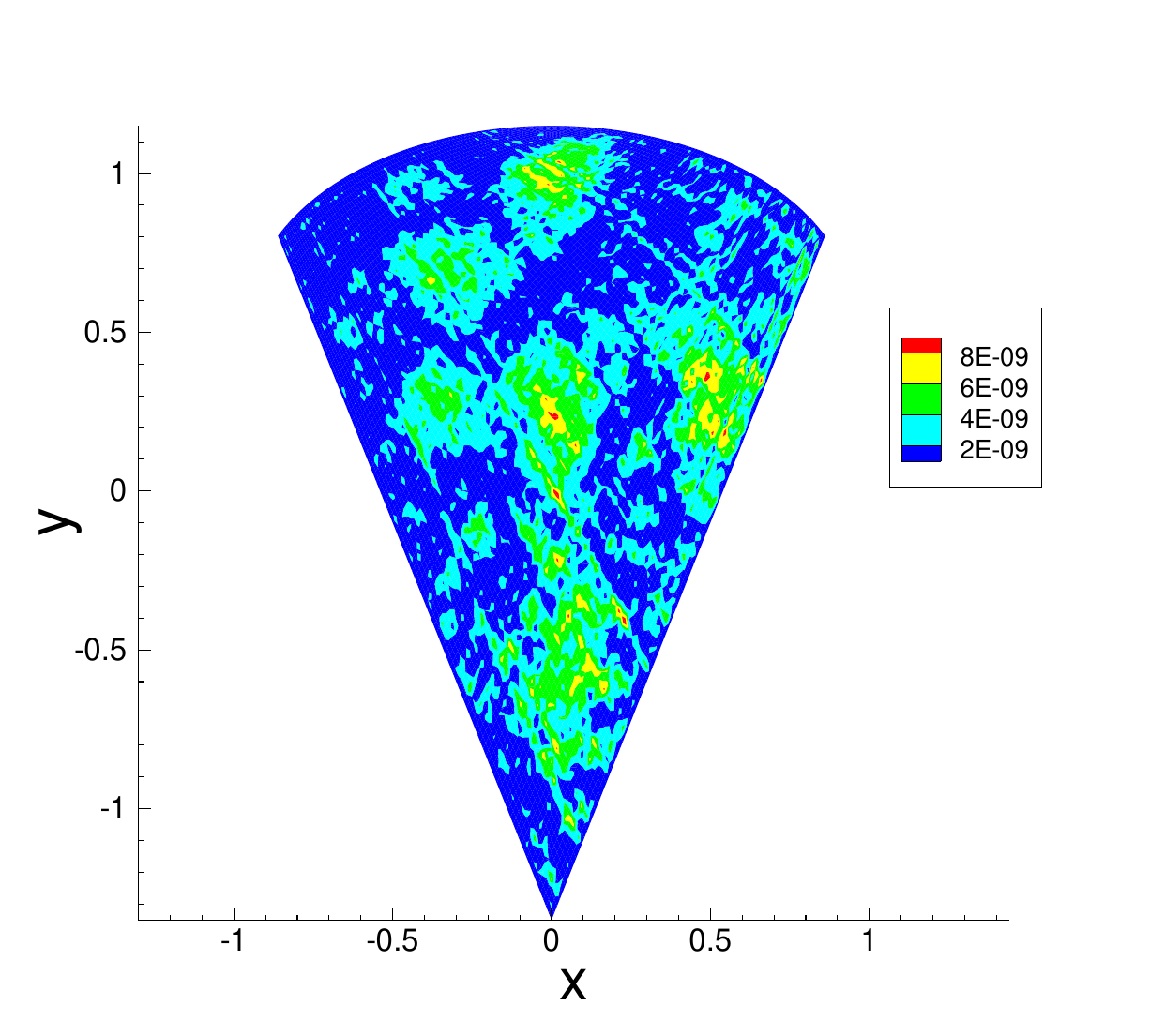}(d)
    \includegraphics[width=1.1in]{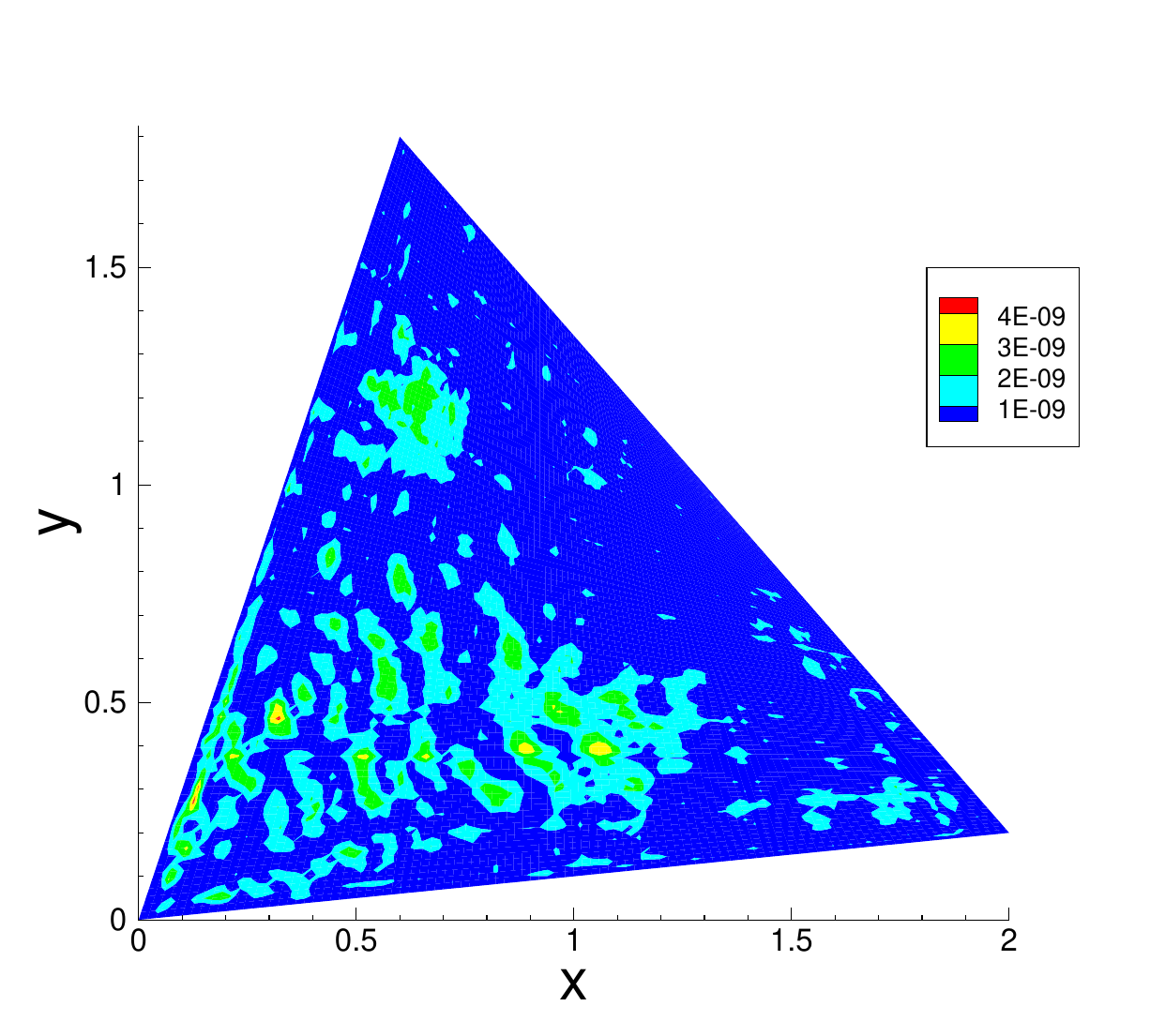}(e)
  }
  \caption{Helmholtz equation (Dirichlet BCs on all boundaries):
    Distributions of the NN solutions (top row) and their point-wise absolute
    errors (bottom row) on
    the five domains. Simulation parameters: Domain \#1, $R_m=4.62$, $Q=70$, $M=800$;
    Domain \#2, $R_m=4.0$, $Q=65$, $M=800$; Domain \#3, $R_m=4.57$, $Q=65$, $M=800$;
    Domain \#4, $R_m=3.53$, $Q=60$, $M=800$; Domain \#5, $R_m=4.17$, $Q=65$, $M=800$. 
  }
  \label{fg_n3}
\end{figure}

\begin{table}
  \centering
  \begin{tabular}{l|l|l|l|l|l}
    \hline
     & domain \#1 & domain \#2 & domain \#3 & domain \#4 & domain \#5 \\ \hline
    max-error (domain) & $1.167E-5$ & $1.115E-7$ & $1.485E-6$ & $1.026E-8$ & $4.350E-9$  \\[3pt]
    rms-error (domain) & $2.752E-6$ & $3.071E-8$ & $1.034E-7$ & $2.727E-9$ & $8.668E-10$  \\[3pt] \hline
    max DBC-error ($\overline{AB}$) & $8.882E-16$ & $4.441E-16$ & $8.882E-16$ & $8.882E-16$ & $4.441E-16$  \\[3pt]
    rms DBC-error ($\overline{AB}$) & $1.051E-16$ & $5.747E-17$ & $1.034E-16$ & $2.271E-16$ & $1.696E-16$  \\[3pt] \hline
    max DBC-error ($\overline{BC}$) & $2.220E-16$ & $0.0$ & $8.882E-16$ & $4.441E-16$ & $8.882E-16$  \\[3pt]
    rms DBC-error ($\overline{BC}$) & $5.747E-17$ & $0.0$ & $1.371E-16$ & $8.999E-17$ & $1.134E-16$  \\[3pt] \hline
    max DBC-error ($\overline{CD}$) & $1.110E-16$ & $8.882E-16$ & $4.441E-16$ & $4.441E-16$ & $2.220E-16$  \\[3pt]
    rms DBC-error ($\overline{CD}$) & $1.105E-17$ & $2.772E-16$ & $6.944E-17$ & $1.426E-16$ & $4.438E-17$  \\[3pt] \hline
    max DBC-error ($\overline{AD}$) & $8.882E-16$ & $2.220E-16$ & $3.469E-18$ & $4.441E-16$ & $4.441E-16$  \\[3pt]
    rms DBC-error ($\overline{AD}$) & $2.131E-16$ & $2.209E-17$ & $3.452E-19$ & $1.361E-16$ & $1.099E-16$  \\[3pt]
    \hline
  \end{tabular}
  \caption{Helmholtz equation (Dirichlet BC on all boundaries):
    maximum and rms NN-solution errors over
    the domain ($e_{max}^{\Omega}$, $e_{rms}^{\Omega}$),
    and the maximum and rms DBC errors on
    the four boundaries ($\varepsilon_{max}^{\overline{AB}}$, $\varepsilon_{rms}^{\overline{AB}}$,
     $\varepsilon_{max}^{\overline{BC}}$, $\varepsilon_{rms}^{\overline{BC}}$,
    $\varepsilon_{max}^{\overline{CD}}$, $\varepsilon_{rms}^{\overline{CD}}$,
    $\varepsilon_{max}^{\overline{AD}}$, $\varepsilon_{rms}^{\overline{AD}}$).
    Simulation parameters follow those of Figure~\ref{fg_n3}.
  }
  \label{tab_3}
\end{table}

We first consider Dirichlet conditions for all boundaries of these domains.
Distributions of the ELM solution over the five domains are shown in Figure~\ref{fg_n3}
(top row), and their point-wise absolute errors are also included (bottom row).
The values for the simulation parameters $R_m$, $Q$ and $M$ are provided in
the figure caption. The error levels of the ELM solution differ on different domains,
with the maximum error generally ranging from $10^{-9}$ to $10^{-5}$ in these simulations.

The boundary-condition errors of the ELM solution for the five domains
are illustrated in Table~\ref{tab_3}.
This table lists the maximum and rms DBC errors ($\varepsilon_{max}$, $\varepsilon_{rms}$)
on different boundaries ($\overline{AB}$, $\overline{BC}$,
$\overline{CD}$, $\overline{AD}$), together with the maximum/rms NN solution error
over the domains. The simulation parameters for this table follow those
of Figure~\ref{fg_n3}.
It is evident that the current method has enforced the Dirichlet BCs on these
domain geometries to the machine accuracy.


\begin{figure}
  \centerline{
    \includegraphics[width=1.5in]{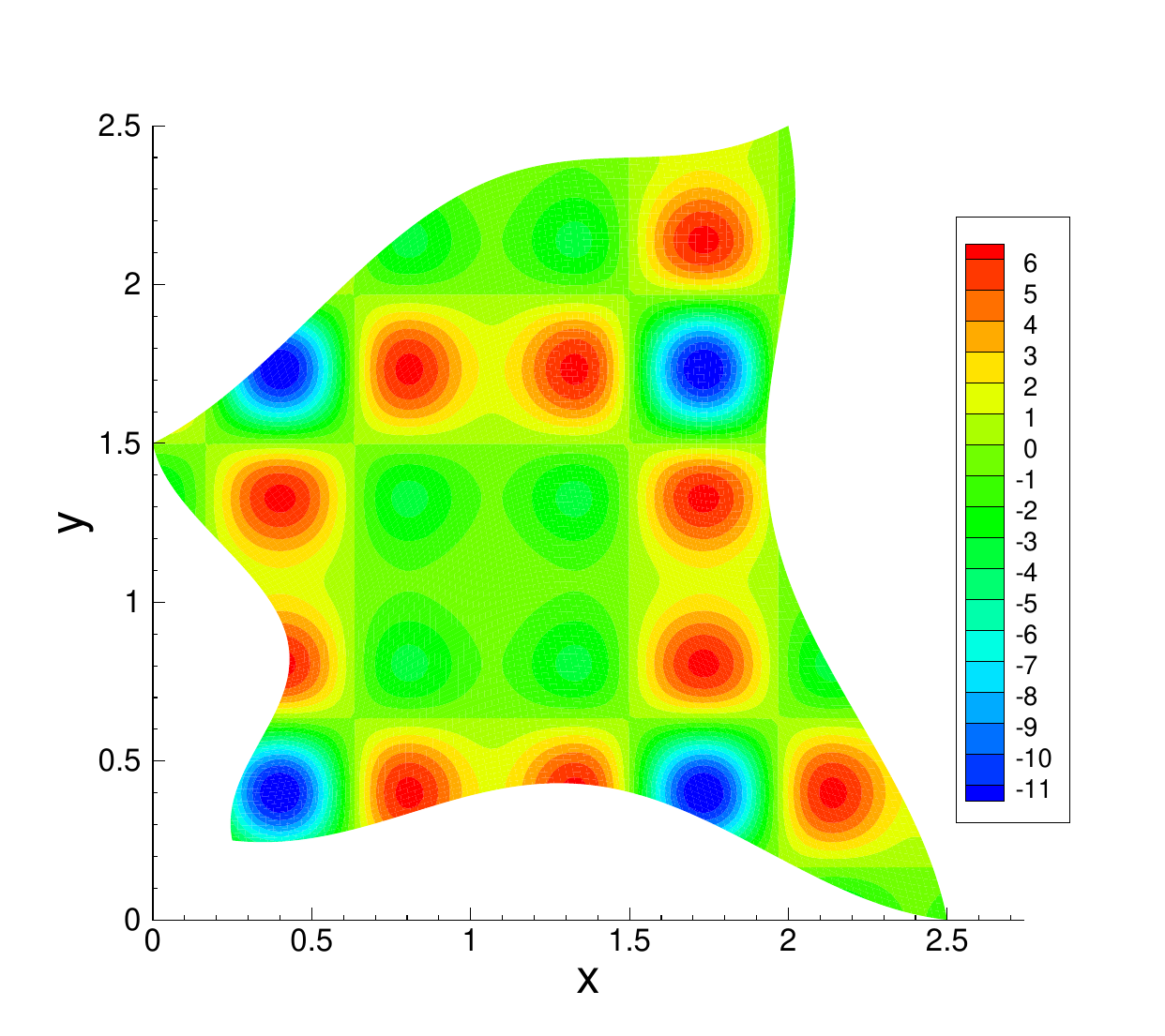}(a)
    \includegraphics[width=1.5in]{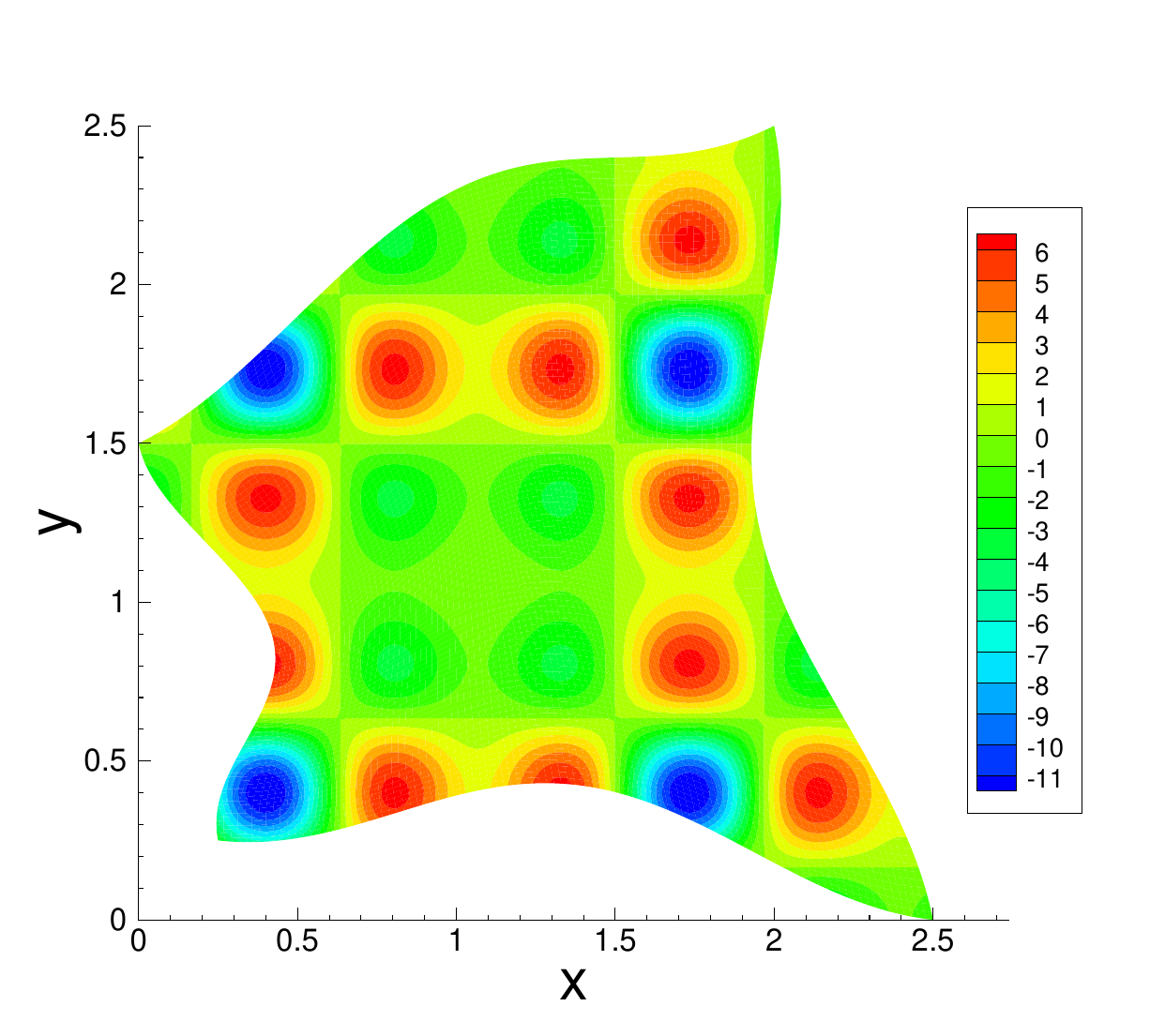}(b)
    \includegraphics[width=1.5in]{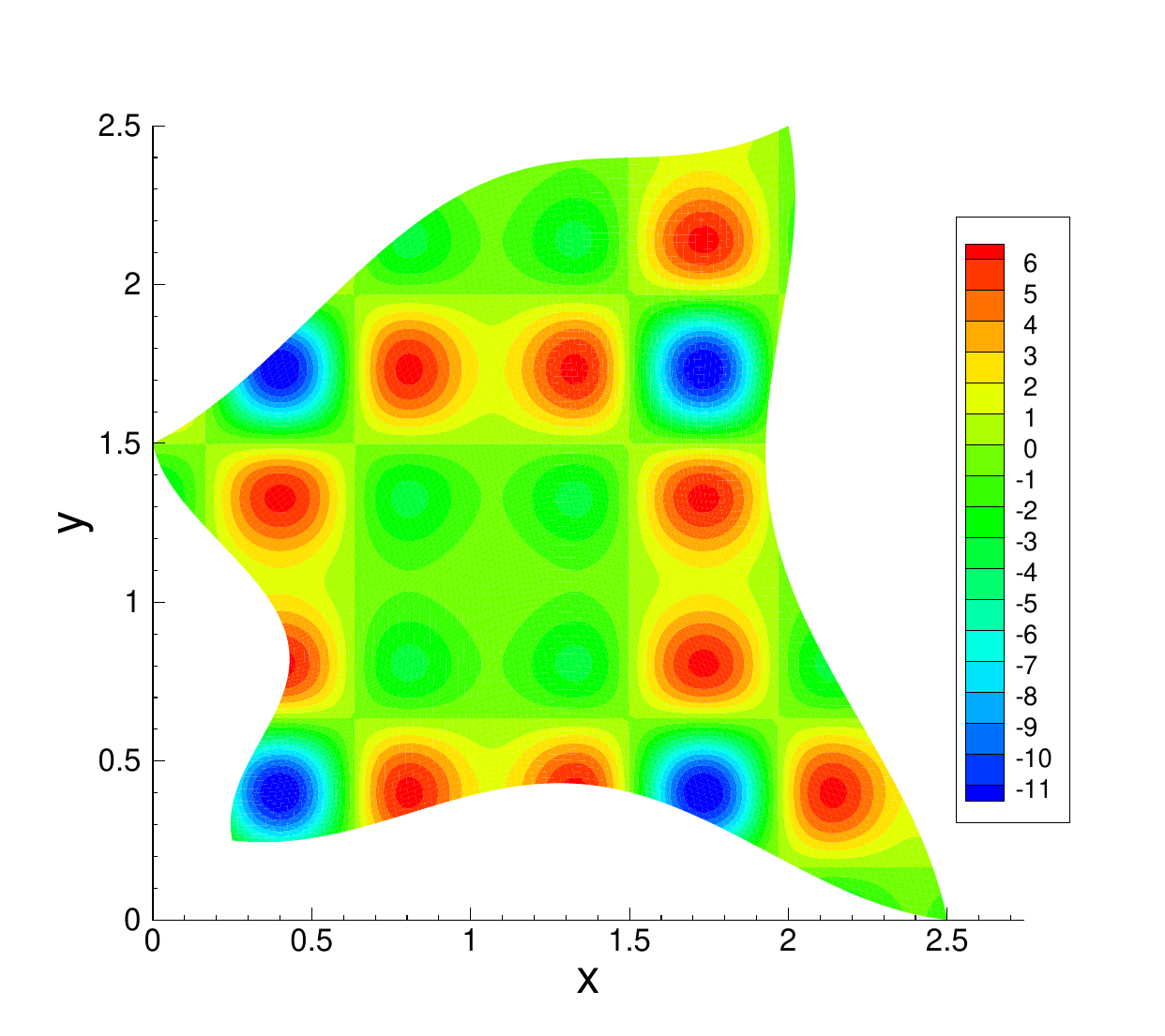}(c)
  }
  \centerline{
    \includegraphics[width=1.5in]{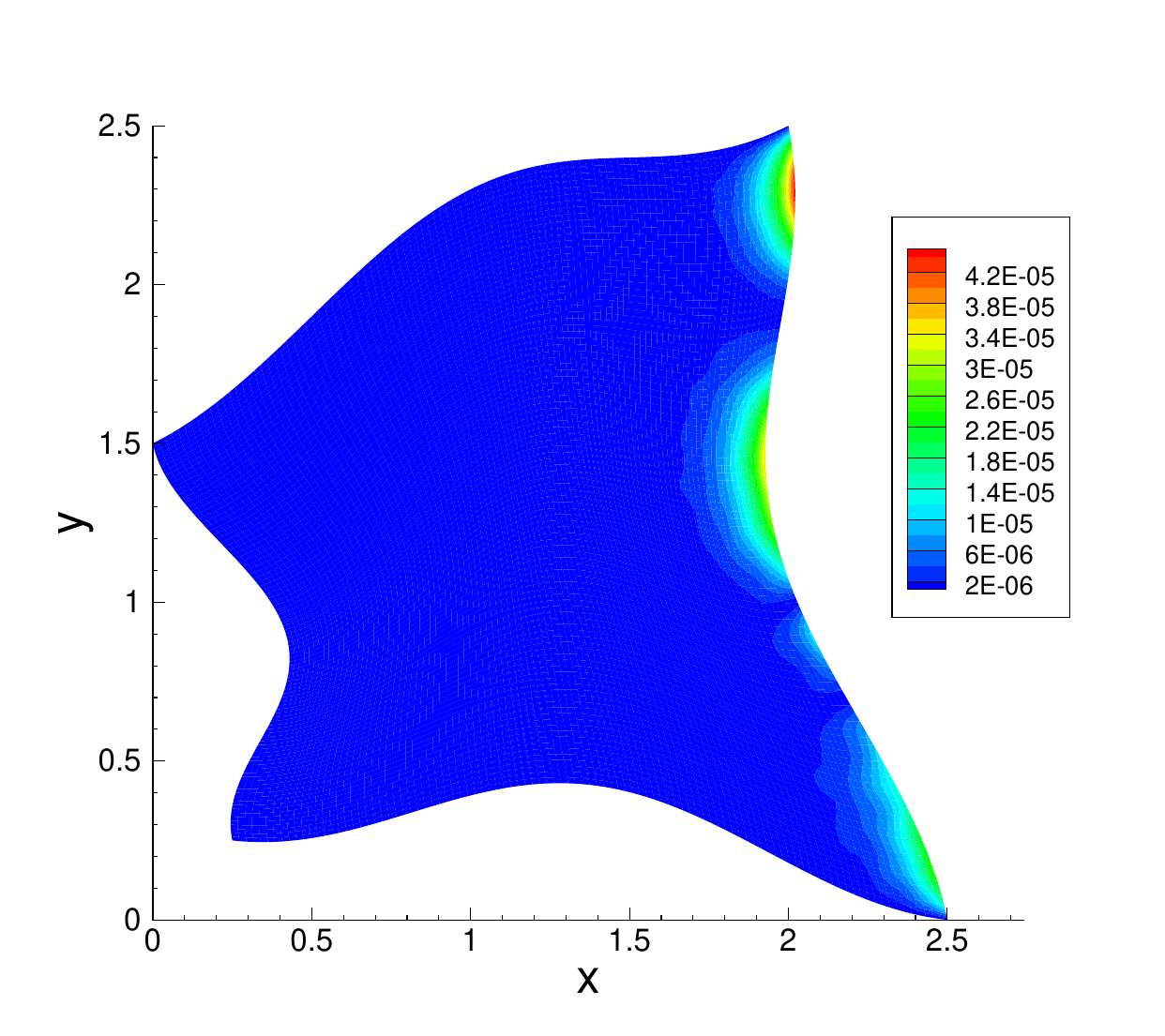}(d)
    \includegraphics[width=1.5in]{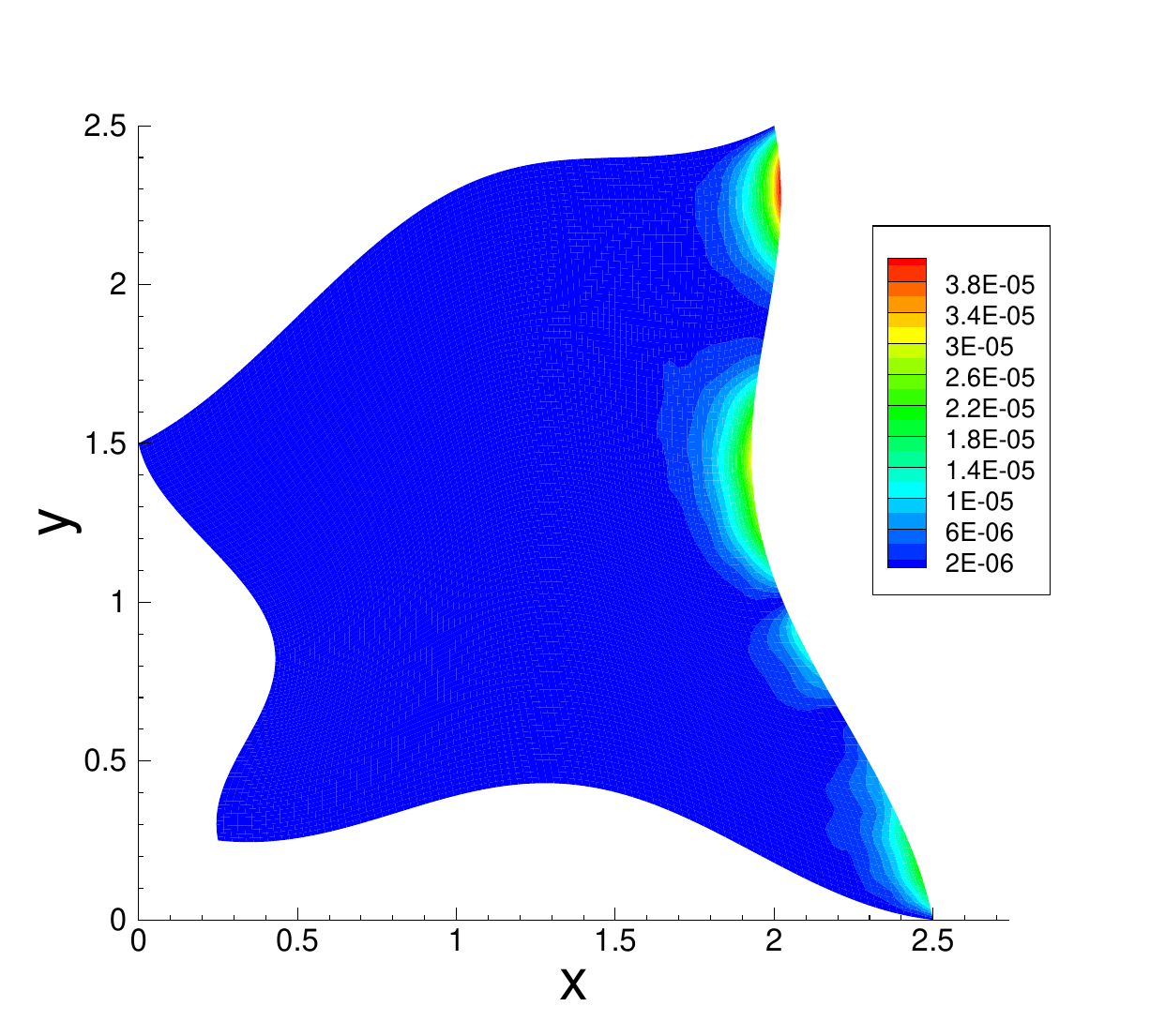}(e)
    \includegraphics[width=1.5in]{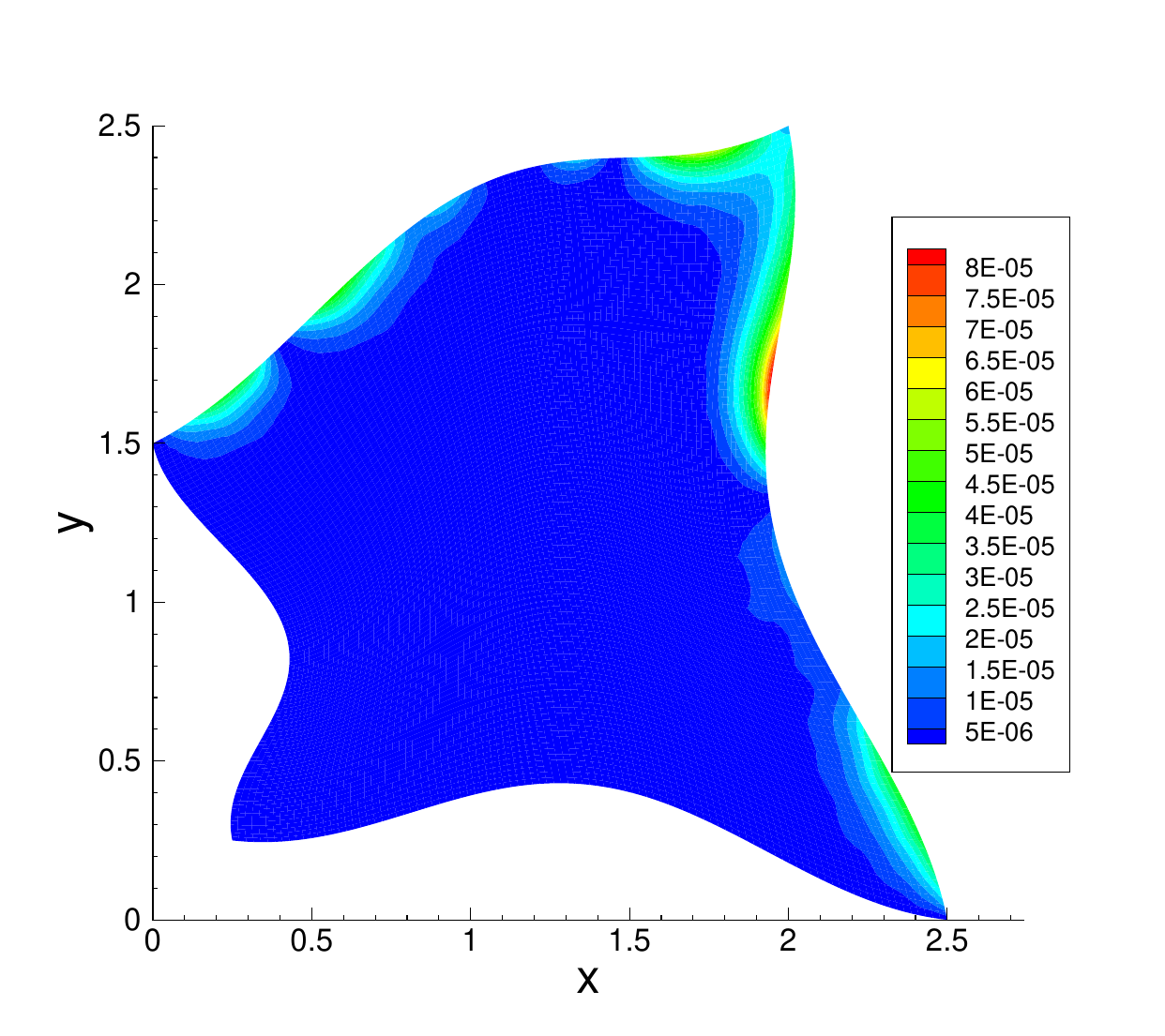}(f)
  }
  \caption{Helmholtz equation (Neumann or Robin BC on boundaries):
    Distributions of the NN solutions (top row) and their point-wise absolute
    errors (bottom row)  on domain \#1.
    (a,d) Case \#1: NBC on $\overline{BC}$, and DBCs on the other boundaries.
    (b,e) Case \#2: RBC ($\alpha=1.0$) on $\overline{BC}$, and DBCs on the other boundaries.
    (c,f) Case \#3: NBCs on $\overline{BC}$ and $\overline{CD}$,
    and DBCs on the other boundaries.
    Simulation parameters: $R_m=5.0$, $Q=70$, $M=950$ for all cases.
  }
  \label{fg_n4}
\end{figure}

\begin{table}
  \centering
  \begin{tabular}{l|c|c|c}
    \hline
     & Case \#1 & Case \#2 &  Case \#3 \\ \hline
    max solution-error (domain) & $4.536E-5$ & $4.217E-5$ & $8.295E-5$  \\[3pt]
    rms solution-error (domain) & $3.775E-6$ & $3.420E-16$ & $8.818E-6$  \\[3pt] \hline
    max DBC-error ($\overline{AB}$) & $0.0$ & $0.0$ & $0.0$  \\[3pt]
    rms DBC-error ($\overline{AB}$) & $0.0$ & $0.0$ & $0.0$  \\[3pt] \hline
    max NBC- or RBC-error ($\overline{BC}$) & $1.421E-14$ & $1.421E-14$ & $1.421E-14$  \\[3pt]
    rms NBC- or RBC-error ($\overline{BC}$) & $3.614E-15$ & $4.180E-15$ & $4.154E-15$  \\[3pt] \hline
    max DBC- or NBC-error ($\overline{CD}$) & $0.0$ & $0.0$ &  $1.421E-14$ \\[3pt]
    rms DBC- or NBC-error ($\overline{CD}$) & $0.0$ & $0.0$ & $3.065E-15$ \\[3pt] \hline
    max DBC-error ($\overline{AD}$) & $8.882E-16$ & $8.882E-16$ & $1.776E-15$  \\[3pt]
    rms DBC-error ($\overline{AD}$) & $2.947E-16$ & $2.947E-16$ & $5.738E-16$ \\[3pt]
    \hline
  \end{tabular}
  \caption{Helmholtz equation on domain \#1 (Neumann or Robin BCs):
    maximum and rms NN-solution errors over
    the domain, and the maximum and rms boundary-condition (DBC, NBC, RBC) errors on
    the boundaries.
    Different cases correspond to those in Figure~\ref{fg_n4}.
  }
  \label{tab_4}
\end{table}

We next consider Neumann and Robin conditions on the domain.
Figure~\ref{fg_n4} and Table~\ref{tab_4} illustrate the ELM results
obtained for a combination of Dirichlet conditions with Neumann or Robin
conditions. Figure~\ref{fg_n4} shows the NN solutions (top row) and
their point-wise errors (bottom row) for three cases.
In case \#1 (plots (a,d)), Neumann condition is imposed on the
boundary $\overline{BC}$ and Dirichlet conditions are imposed on the
other boundaries. In case \#2 (plots (b,e)), we impose
the Robin condition~\eqref{eq_65b} with $\alpha_{BC}=1$ on
the $\overline{BC}$, with the rest being Dirichlet boundaries.
In case \#3 (plots (c,f)), we impose Neumann conditions on
the boundaries $\overline{BC}$ and $\overline{CD}$, and Dirichlet
conditions on the other boundaries.
The simulation parameter values are specified in the figure caption.
The plots indicate that the largest solution errors generally occur
on or near the Neumann or Robin boundaries, while the
NN solution error is generally much smaller on the Dirichlet boundaries
and in the interior of the domain. The maximum solution error over the domain
is on the order of $10^{-5}$ for these cases.

Table~\ref{tab_4} lists the maximum and rms boundary-condition errors
for different boundaries of these three cases ($\varepsilon_{max}$,
$\varepsilon_{rms}$), together with the maximum/rms
solution errors over the domain ($e_{max}^{\Omega}$, $e_{rms}^{\Omega}$).
Note that the error on $\overline{BC}$ stands for the RBC error for case \#2
and the NBC error for cases \#1 and \#3, and that
the error on $\overline{CD}$ stands for the NBC error for case \#3 and
the DBC error for cases \#1 and \#2.
The data show  that the current method has enforced
the Dirichlet, Neumann, and Robin boundary conditions to the machine
accuracy.

\subsection{Nonlinear Helmholtz Equation}
\label{sec_32}

\begin{figure}
  \centerline{
    \includegraphics[width=1.4in]{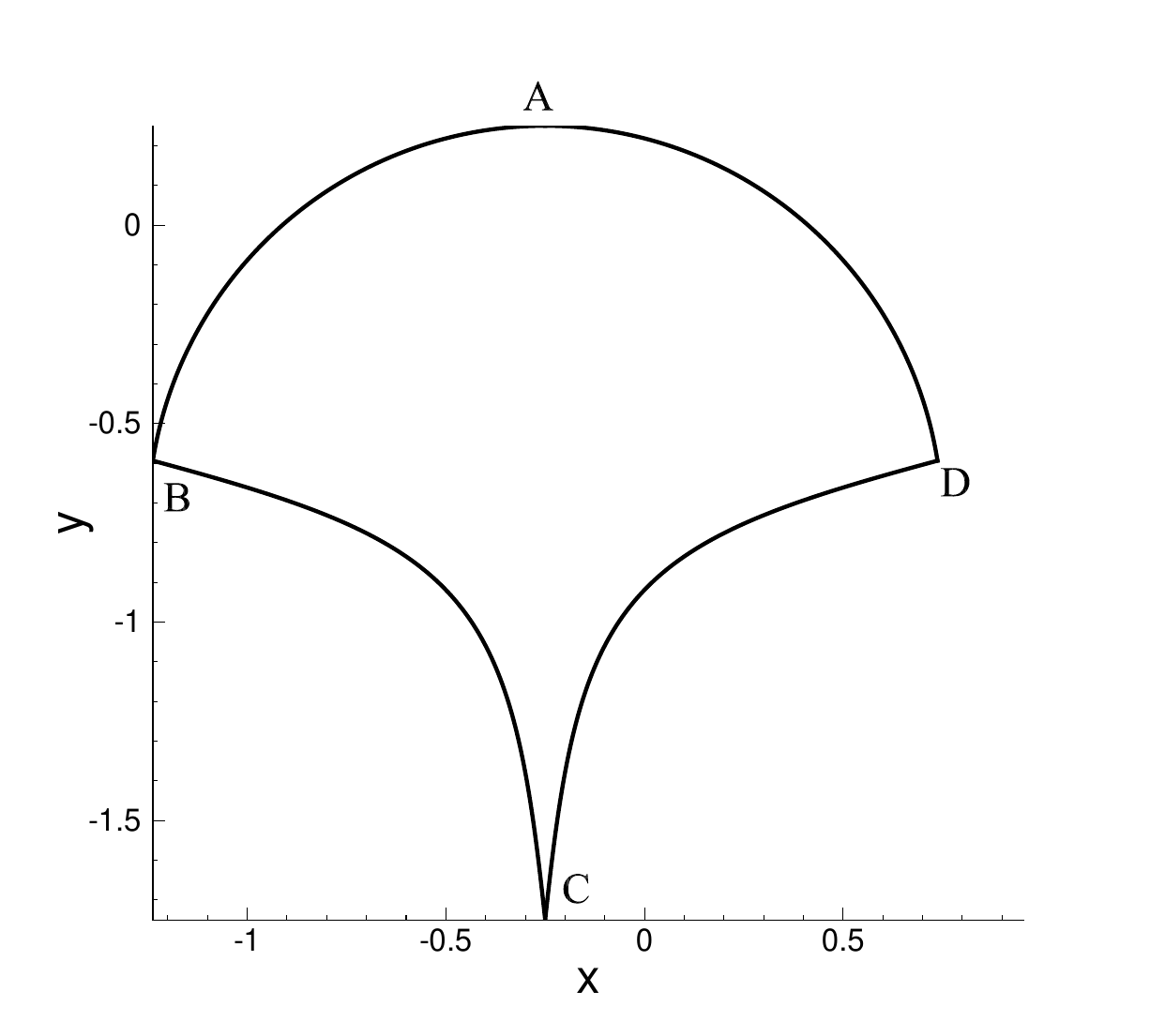}(a)
    \includegraphics[width=1.4in]{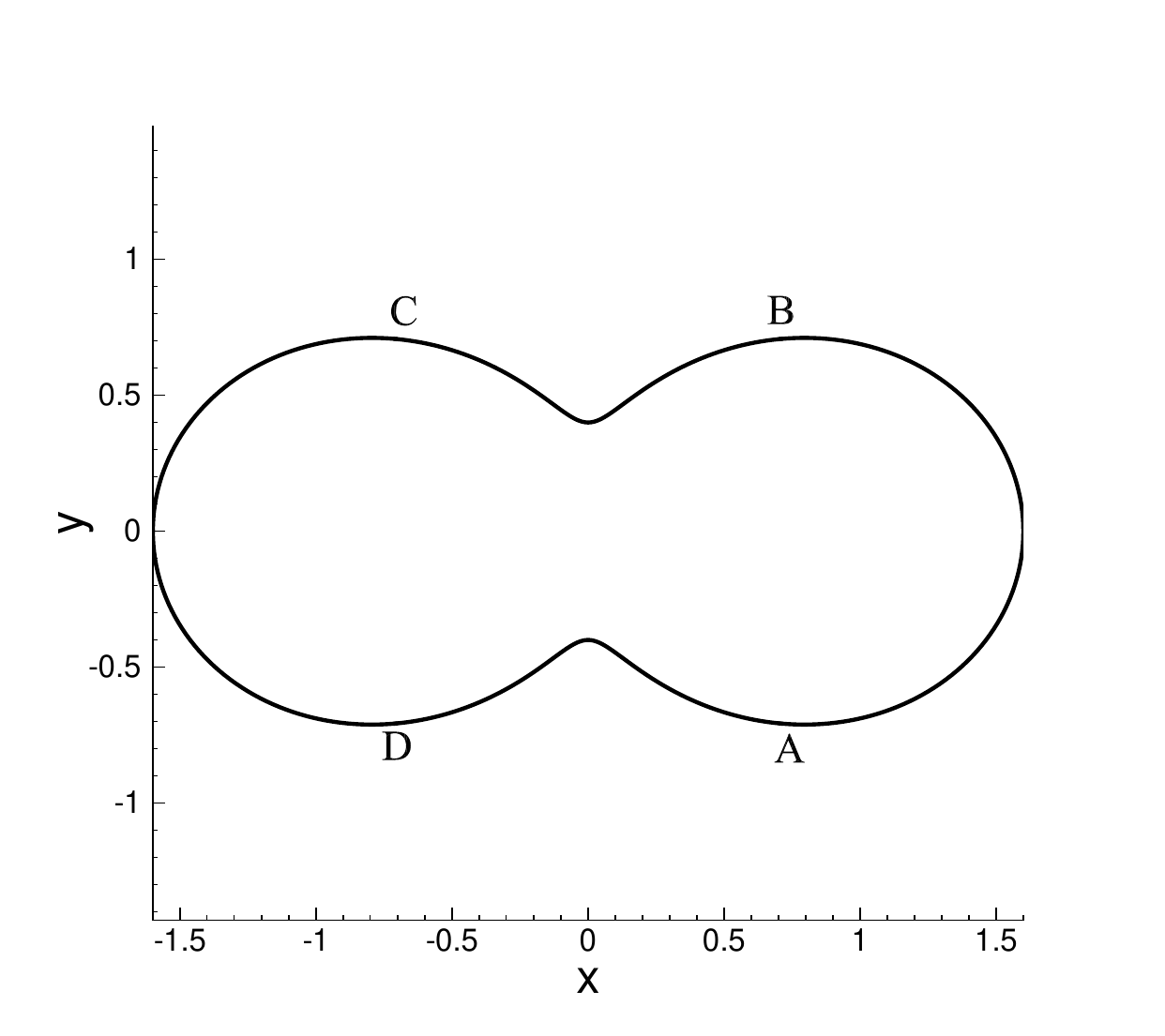}(b)
    \includegraphics[width=1.4in]{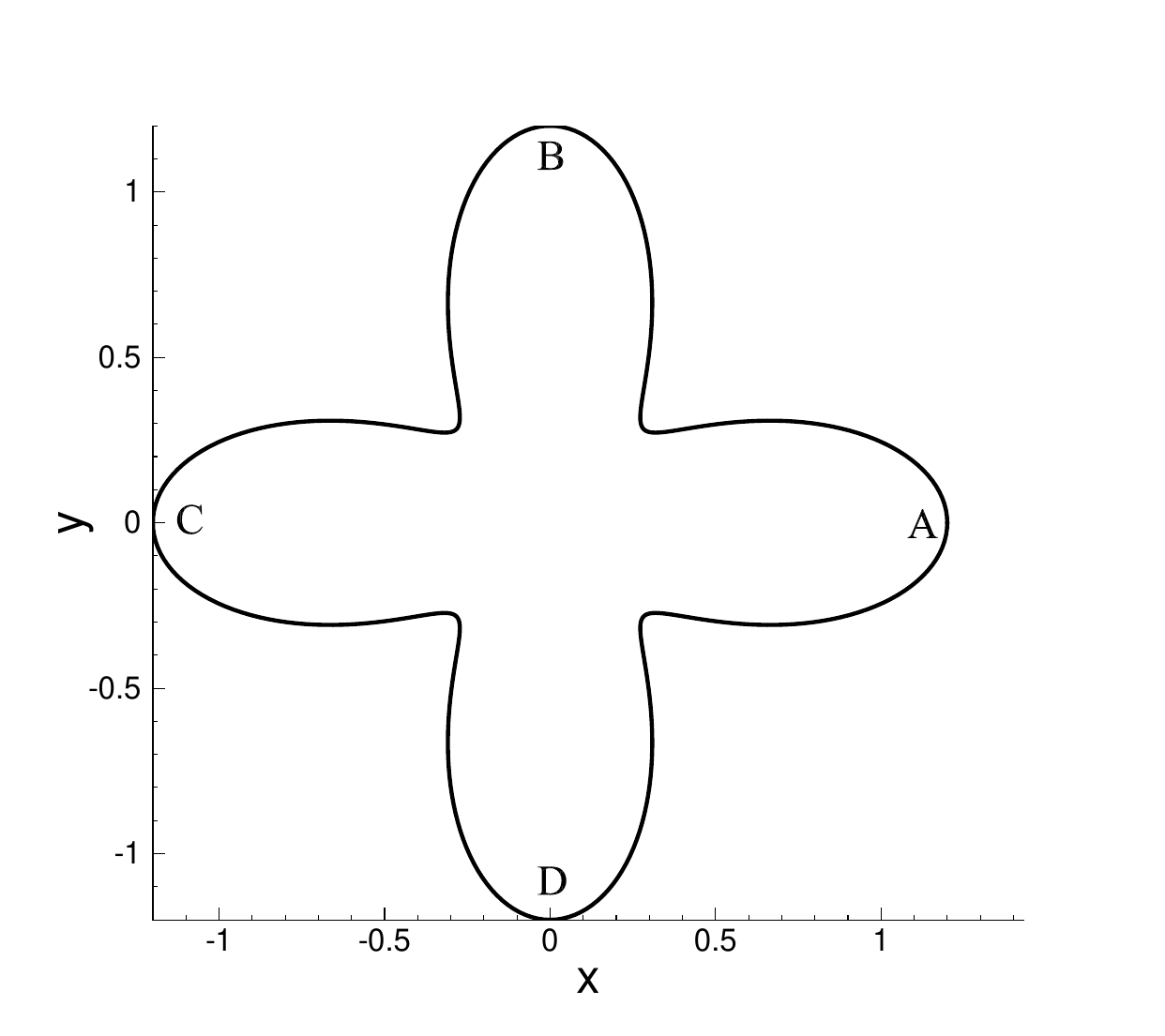}(c)
    \includegraphics[width=1.4in]{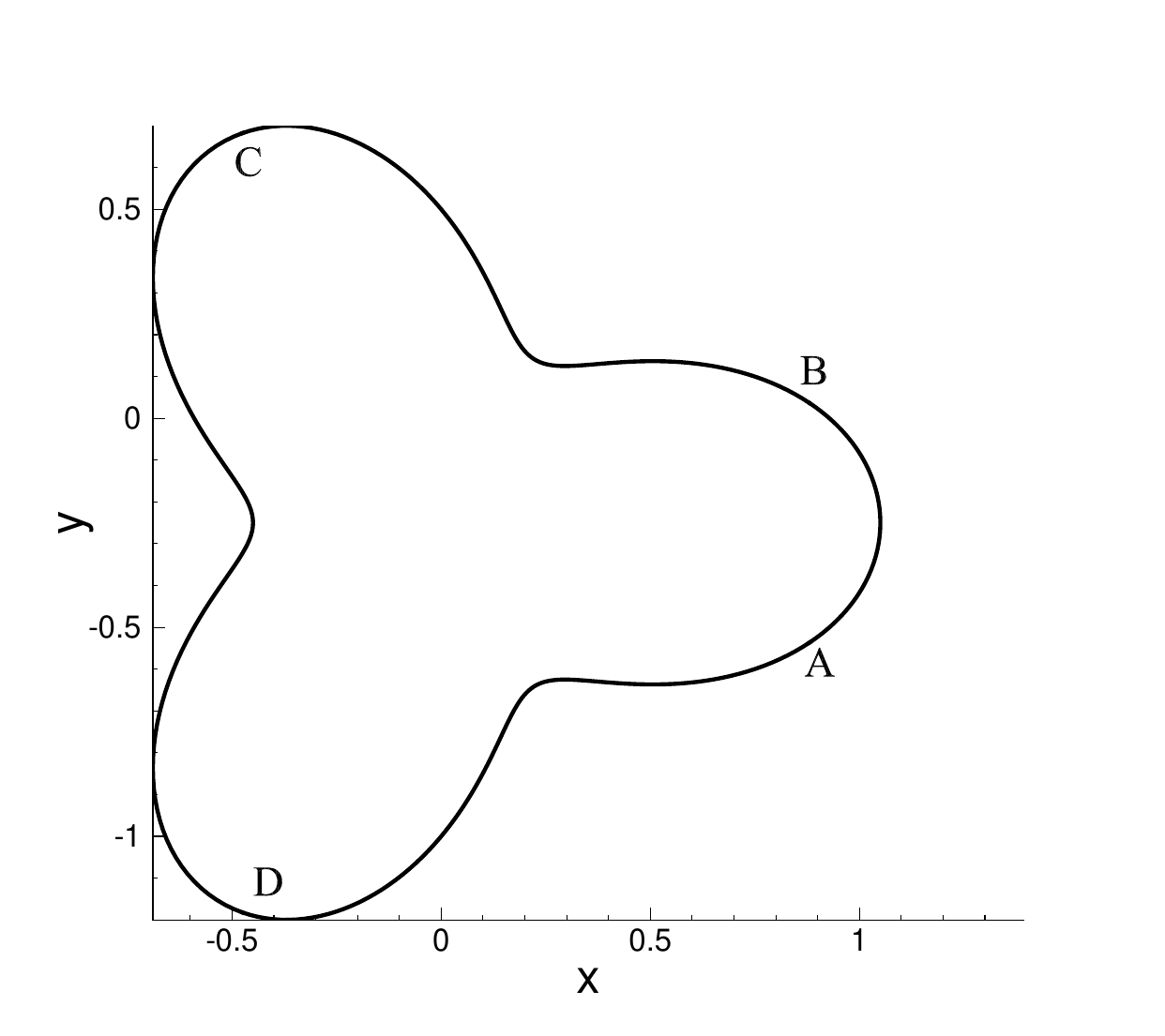}(d)
  }
  \centerline{
    \includegraphics[width=1.4in]{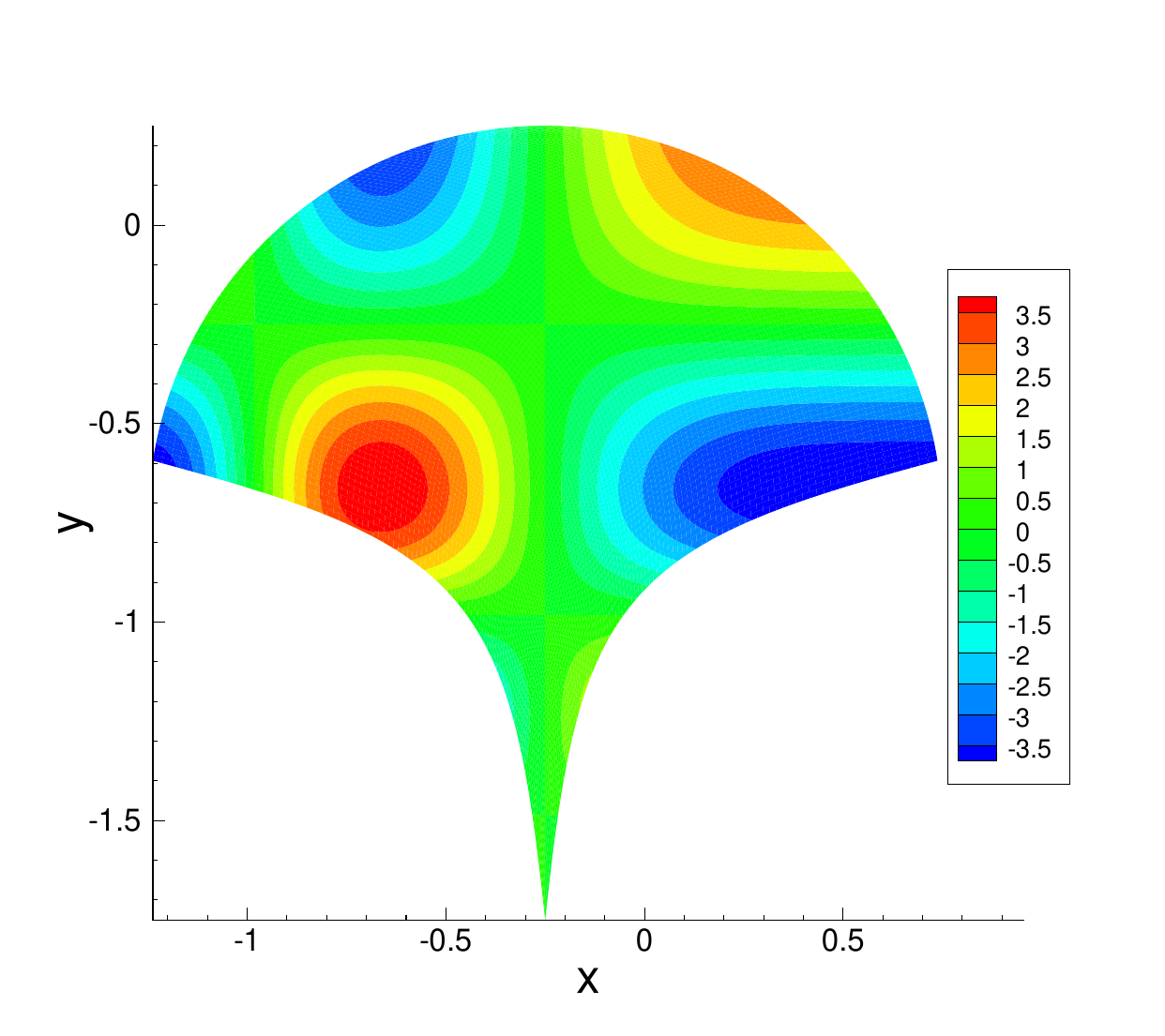}(e)
    \includegraphics[width=1.4in]{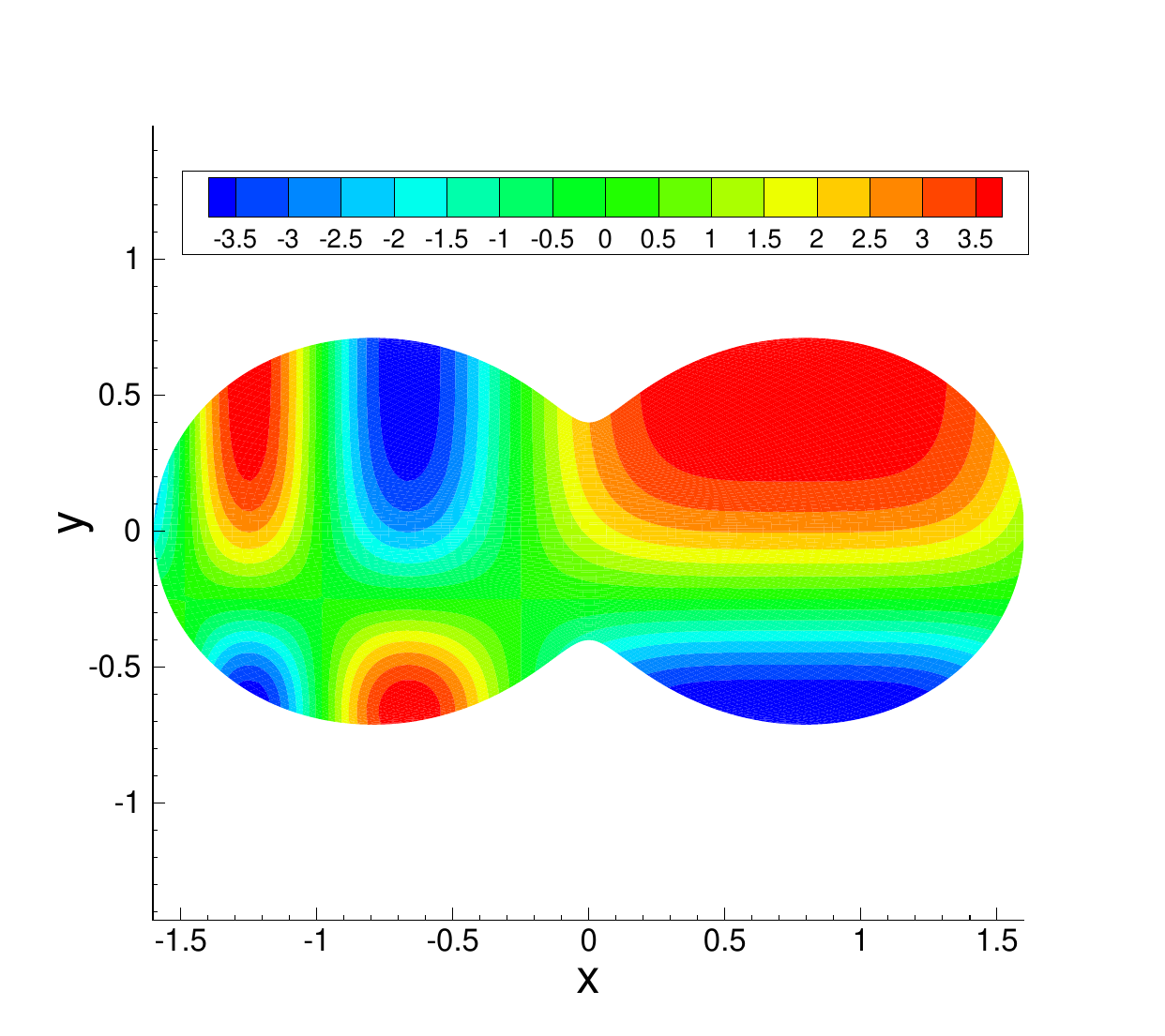}(f)
    \includegraphics[width=1.4in]{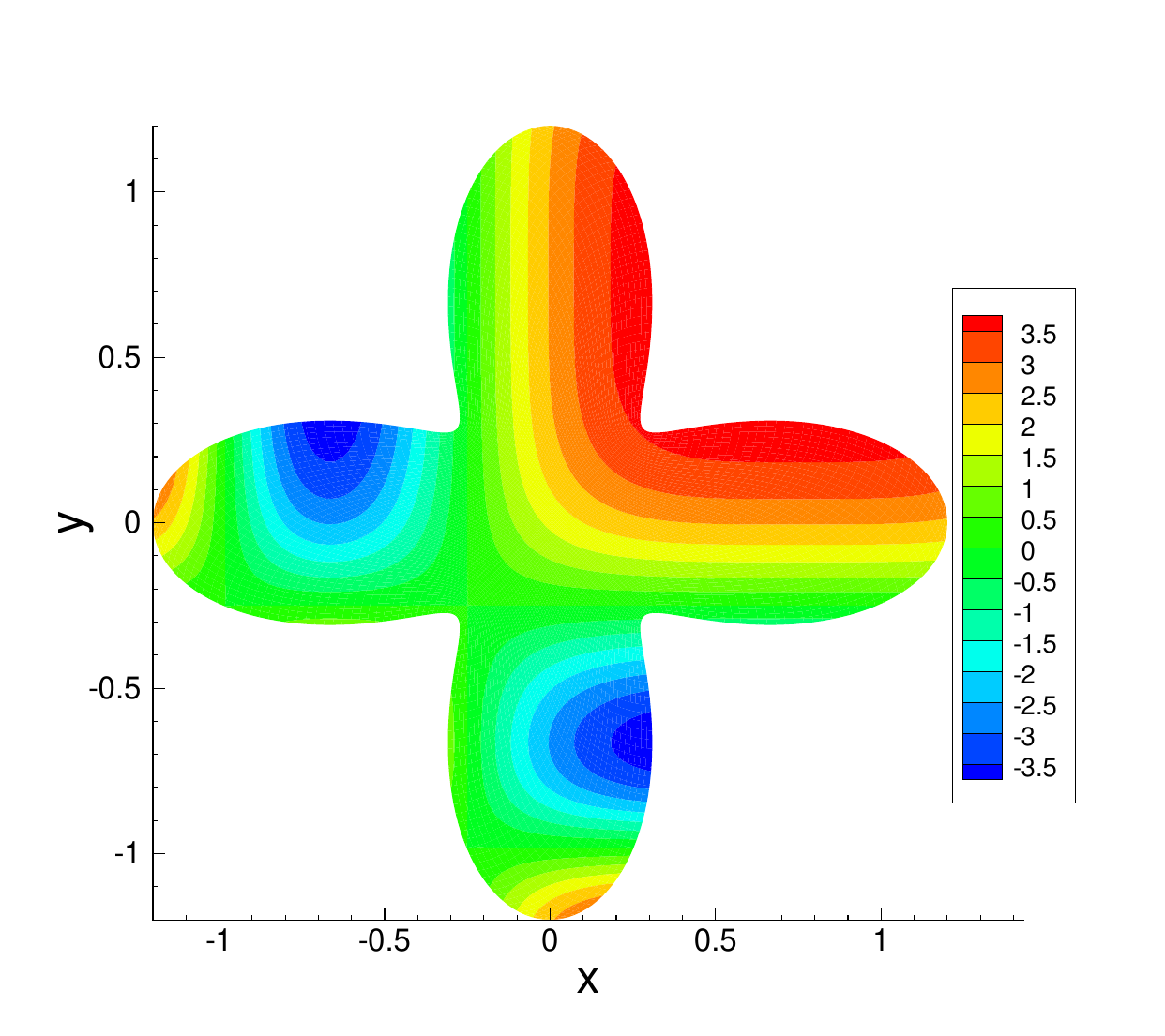}(g)
    \includegraphics[width=1.4in]{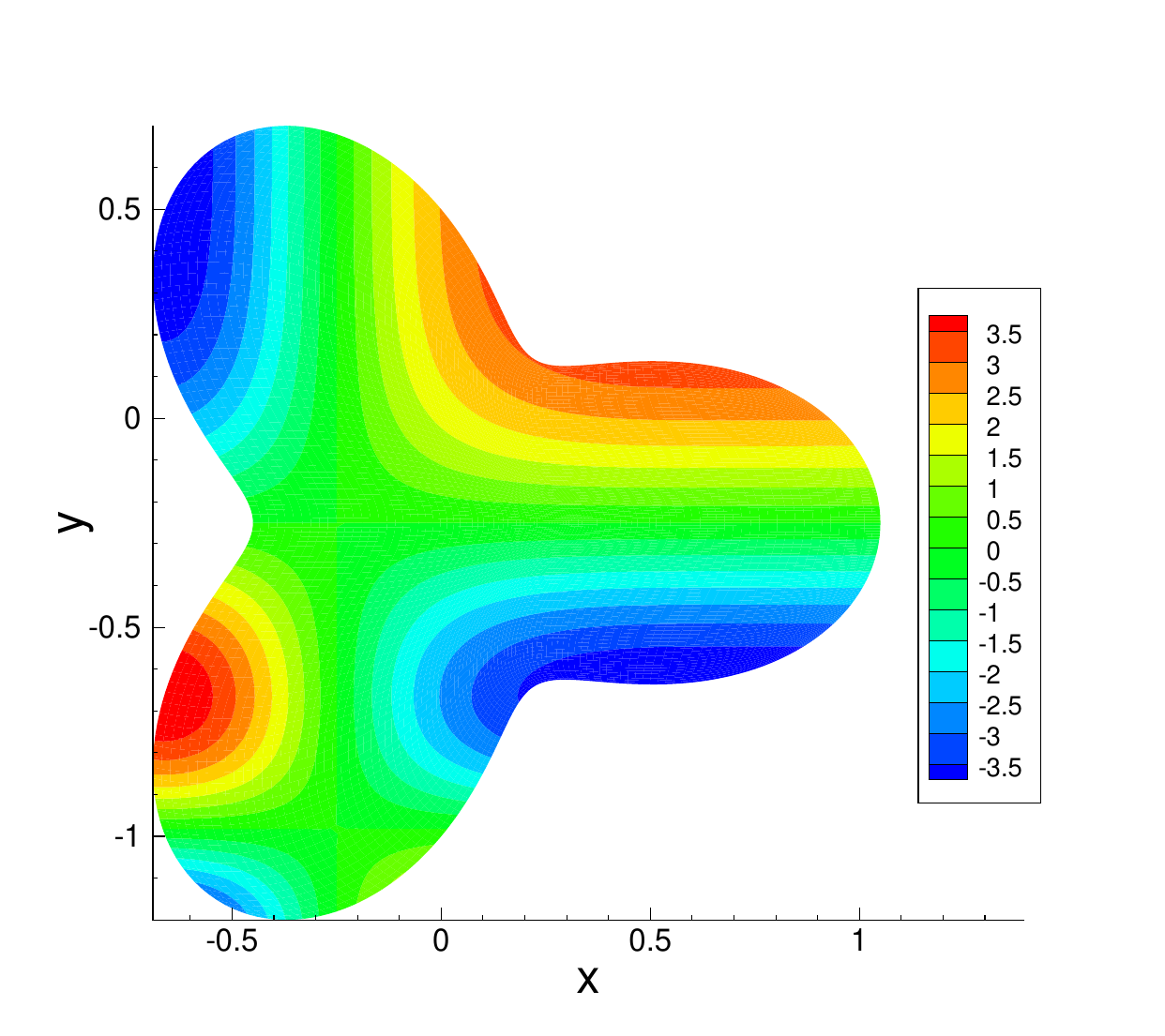}(h)
  }
  \caption{Nonlinear Helmholtz equation: Domain geometries (top row)  and
    the exact solutions (bottom row), (a,e) domain \#1, (b,f) domain \#2,
    (c,g) domain \#3, and (d,h) domain \#4.
  }
  \label{fg_5}
\end{figure}

We next investigate the boundary value problem with
the 2D nonlinear Helmholtz equation on the four domains $\Omega$
as depicted in Figure~\ref{fg_5} (top row),
\begin{subequations}
  \begin{align}
    & \frac{\partial^2u}{\partial x^2} + \frac{\partial^2u}{\partial y^2}
    - 20 u + 10\cos(2u) = f(x,y), \\
    & \left.\mathcal B u(x,y)\right|_{(x,y)\in\partial\Omega}=f_b(x,y),
  \end{align}
\end{subequations}
where $u(x,y)$ is the field function to be computed, $f$ and $f_b$
are source terms for the PDE and the boundary conditions, and
$\mathcal B$ again denotes the Dirichlet, Neumann or Robin
boundary conditions.
The geometric parameters for these domains are specified in
the appendix (Section~\ref{sec_geom}).
We set the source terms appropriately for different boundary conditions such
that the problem has the following exact solution,
\begin{align}
  & u(x,y) = 4\cos\left[\frac{\pi}{2}\left(x-\frac34 \right)^2 \right]
  \cos\left[\frac{\pi}{2}\left(y-\frac34 \right)^2 \right].
\end{align}
Figure~\ref{fg_5} (bottom row) shows distributions of the
exact solution over these domains.

For domains \#1 and \#4 (Figures~\ref{fg_5}a and~\ref{fg_5}d),
we employ the function in~\eqref{eq_9}
to map the problem domain $\Omega$ to the standard domain $\Omega_{st}$.
This function, however, fails to produce
a univalent map for domains \#2 and \#3.
In the following simulations we employ the following
mapping function for domains \#2 and \#3,
\begin{align}
  \mbs x(\xi,\eta) =&\ \mbs x_{AD}(\eta)\varpi_0(\xi) + \mbs x_{BC}(\eta)\varpi_2(\xi)
  + \mbs x_{AB}(\xi)\varpi_0(\eta) + \mbs x_{CD}(\xi)\varpi_2(\eta)
  + \mbs x_{I}\varpi_1(\xi)\varpi_1(\eta) \notag \\
  &\ - \left[\mbs x_A\varpi_0(\eta) + \mbs x_B\varpi_2(\xi) \right]\varpi_0(\eta)
  -\left[\mbs x_D\varpi_0(\xi) + \mbs x_C\varpi_2(\xi) \right]\varpi_2(\eta).
\end{align}
Here $\mbs x_I=(0,0)$ denotes the geometric center of the domains \#2 and \#3,
and $\varpi_i(\xi)$ ($i=0,1,2$) denote the Lagrange polynomials defined on
the points $\{-1,\ 0,\ 1 \}$, as given by
\begin{align}
  \varpi_0(\xi) = \frac12\xi(\xi-1), \quad
  \varpi_1(\xi) = (1+\xi)(1-\xi), \quad
  \varpi_2(\xi) = \frac12\xi(\xi+1), \quad \xi\in[-1,1].
\end{align}

\begin{figure}
  \centerline{
    \includegraphics[width=1.4in]{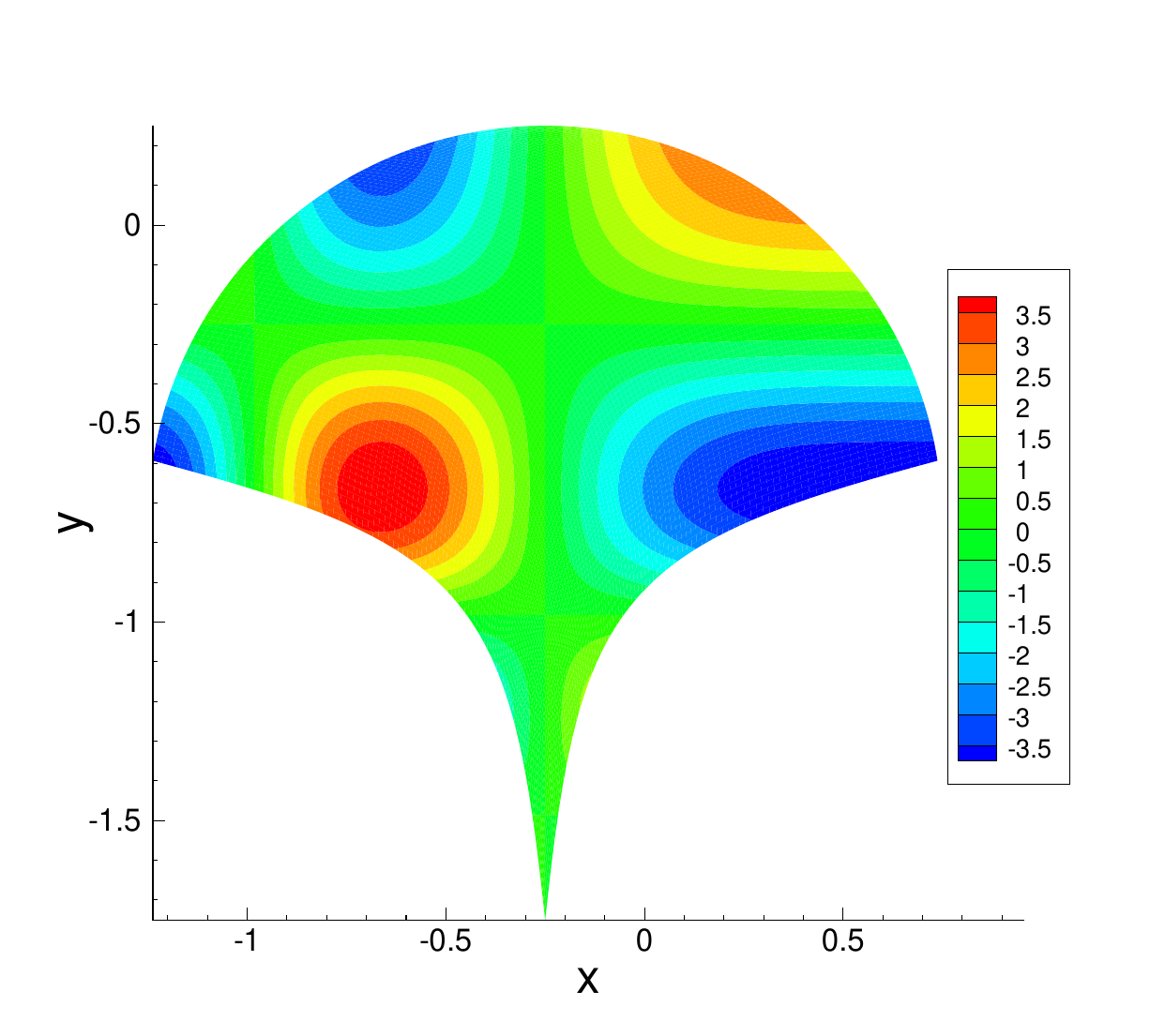}(a)
    \includegraphics[width=1.4in]{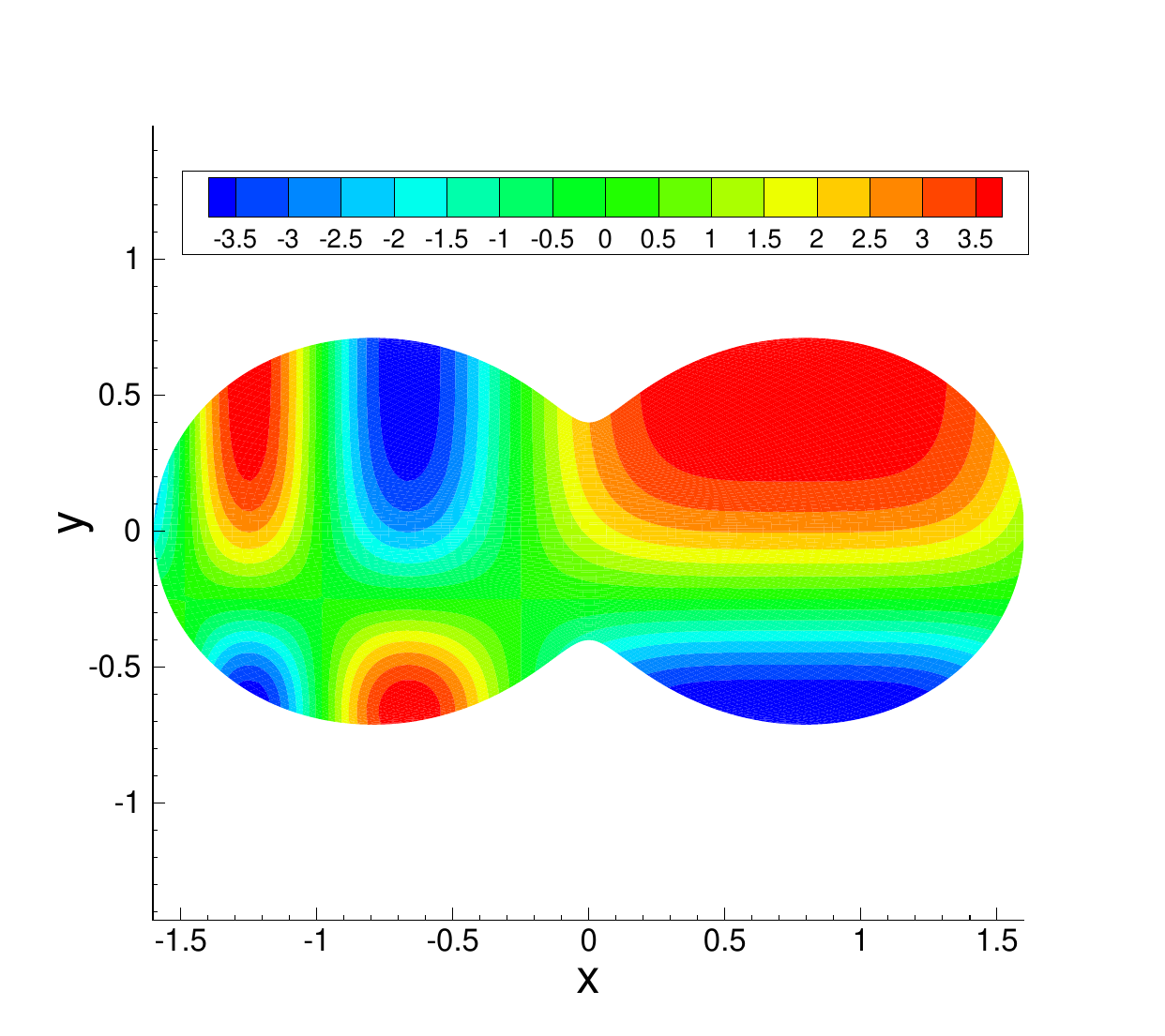}(b)
    \includegraphics[width=1.4in]{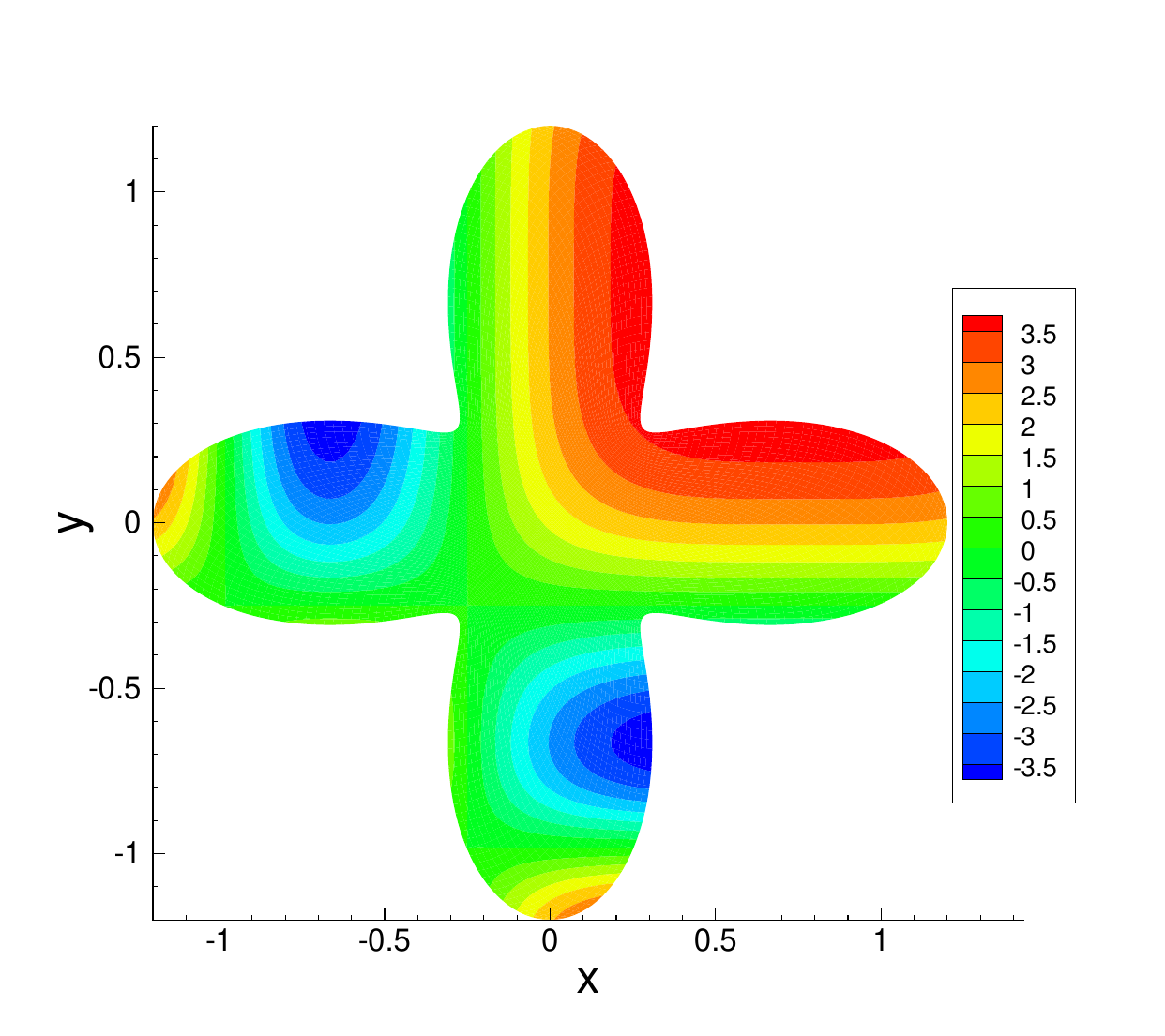}(c)
    \includegraphics[width=1.4in]{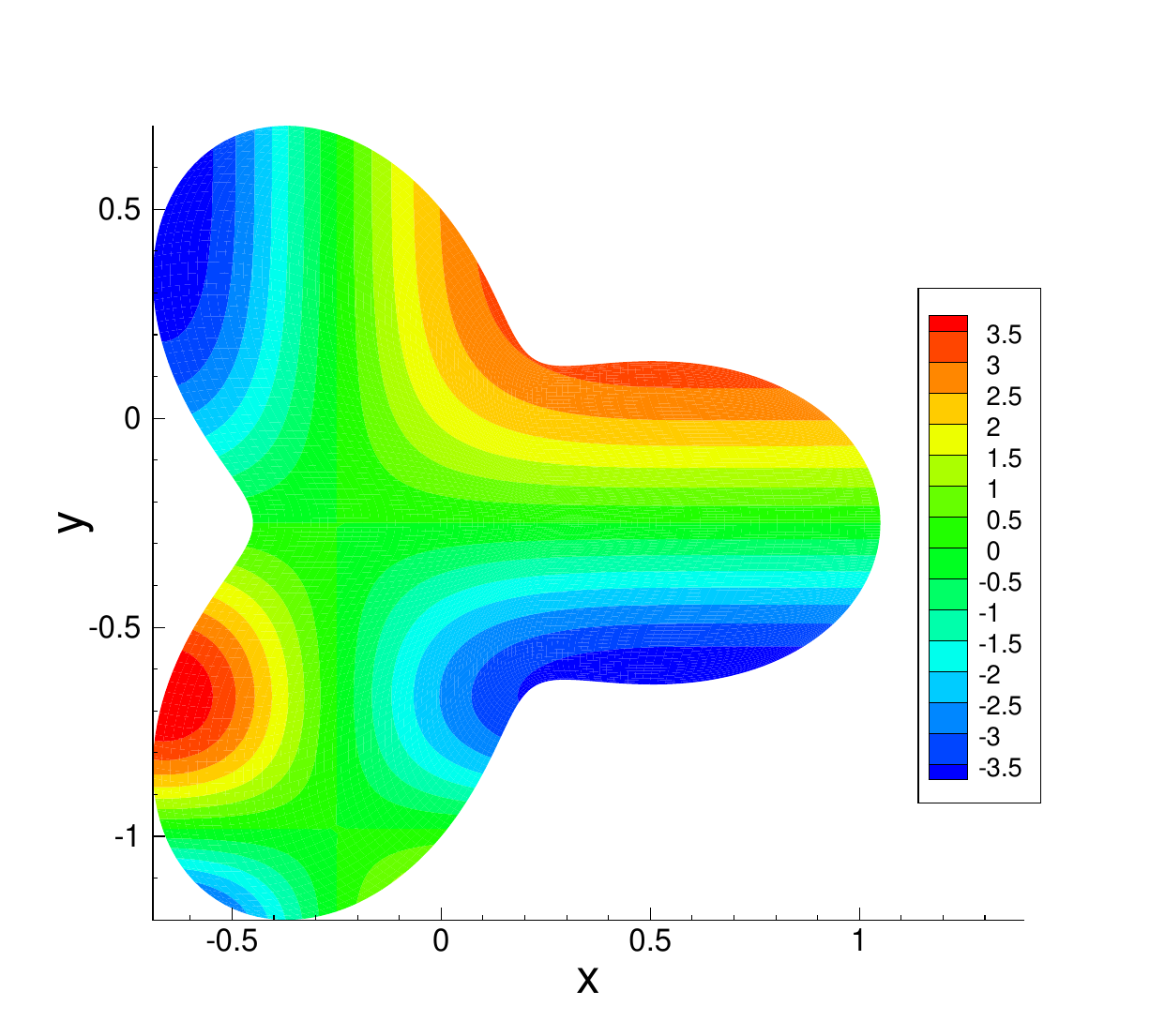}(d)
  }
  \centerline{
    \includegraphics[width=1.4in]{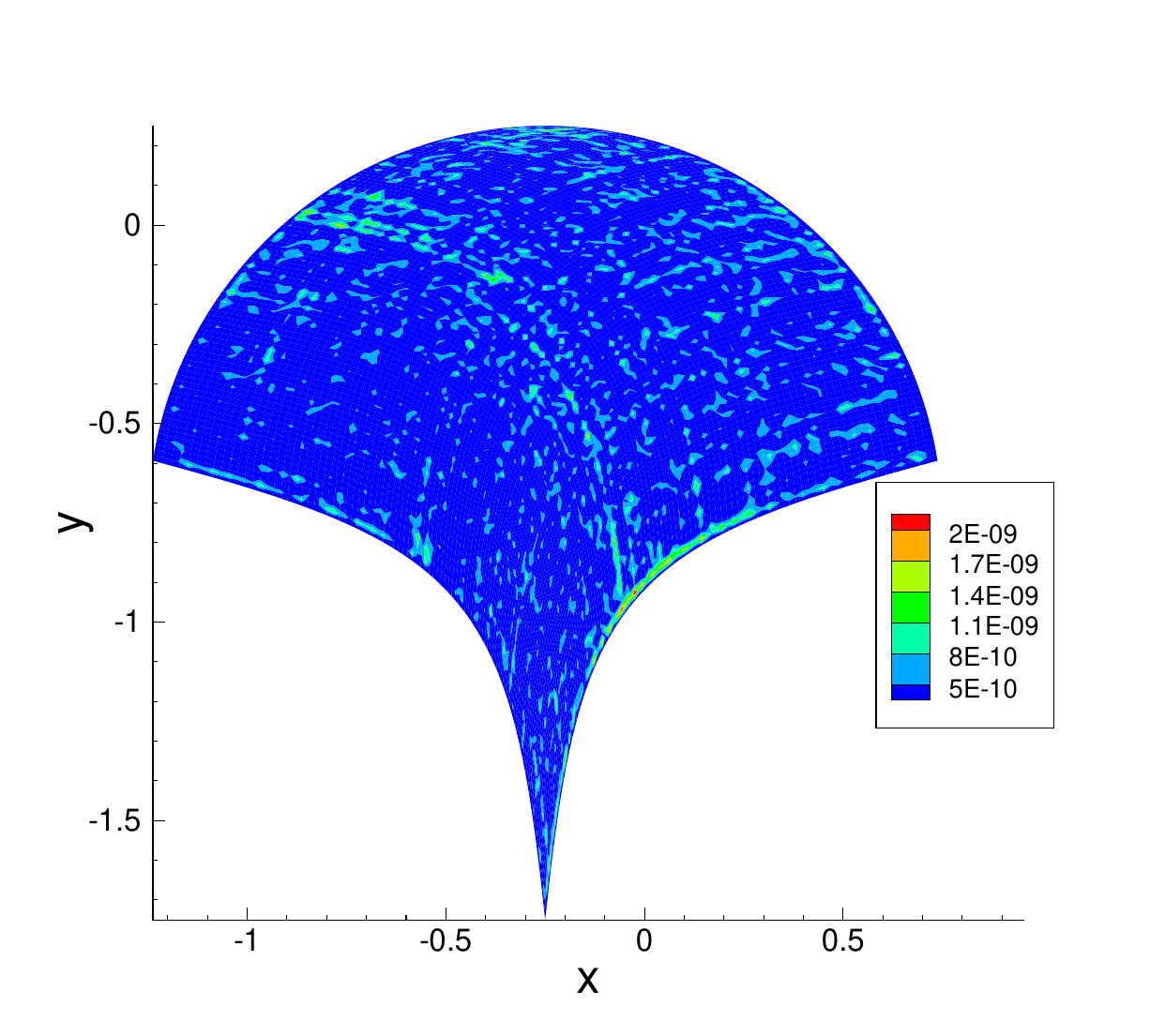}(e)
    \includegraphics[width=1.4in]{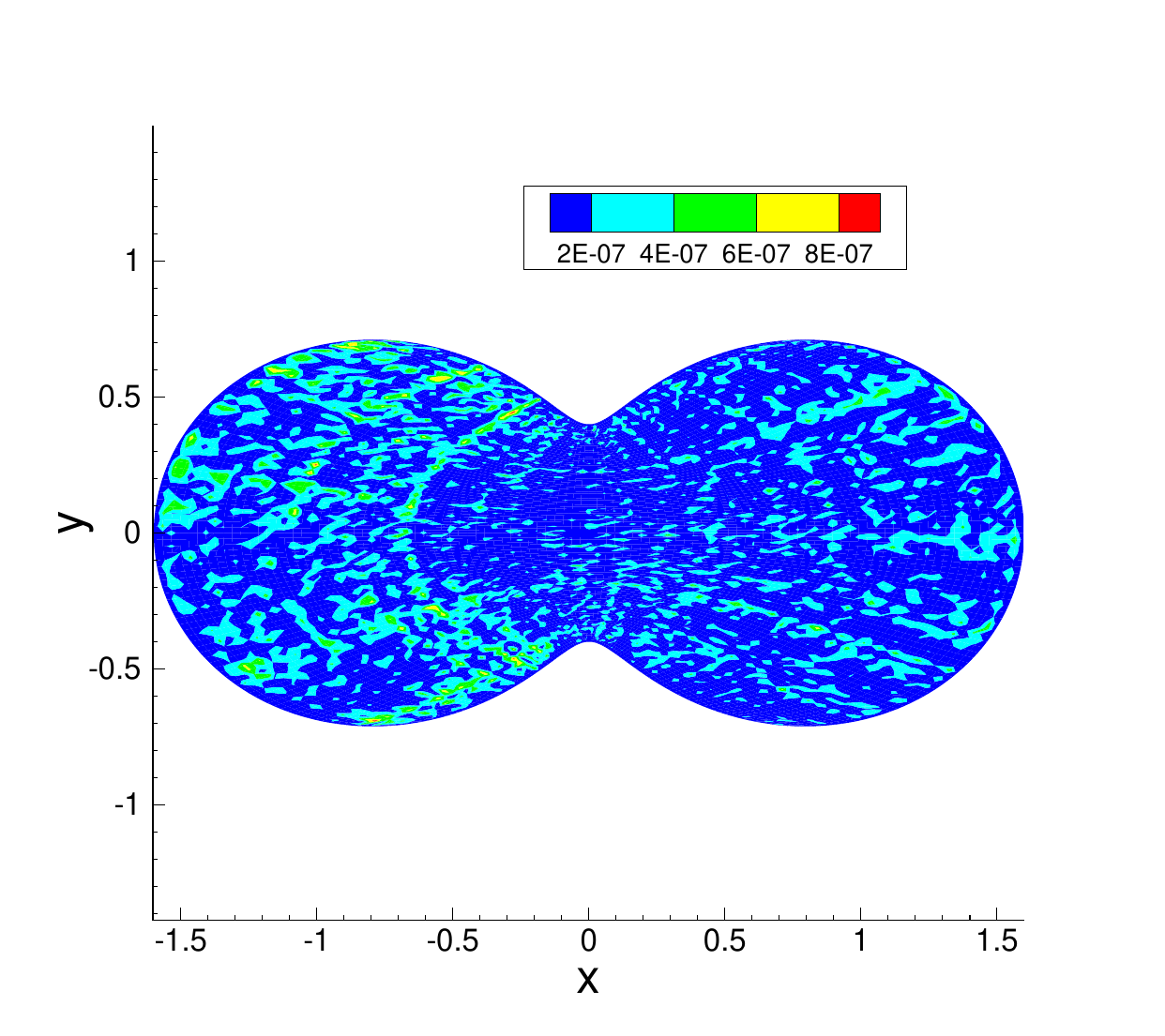}(f)
    \includegraphics[width=1.4in]{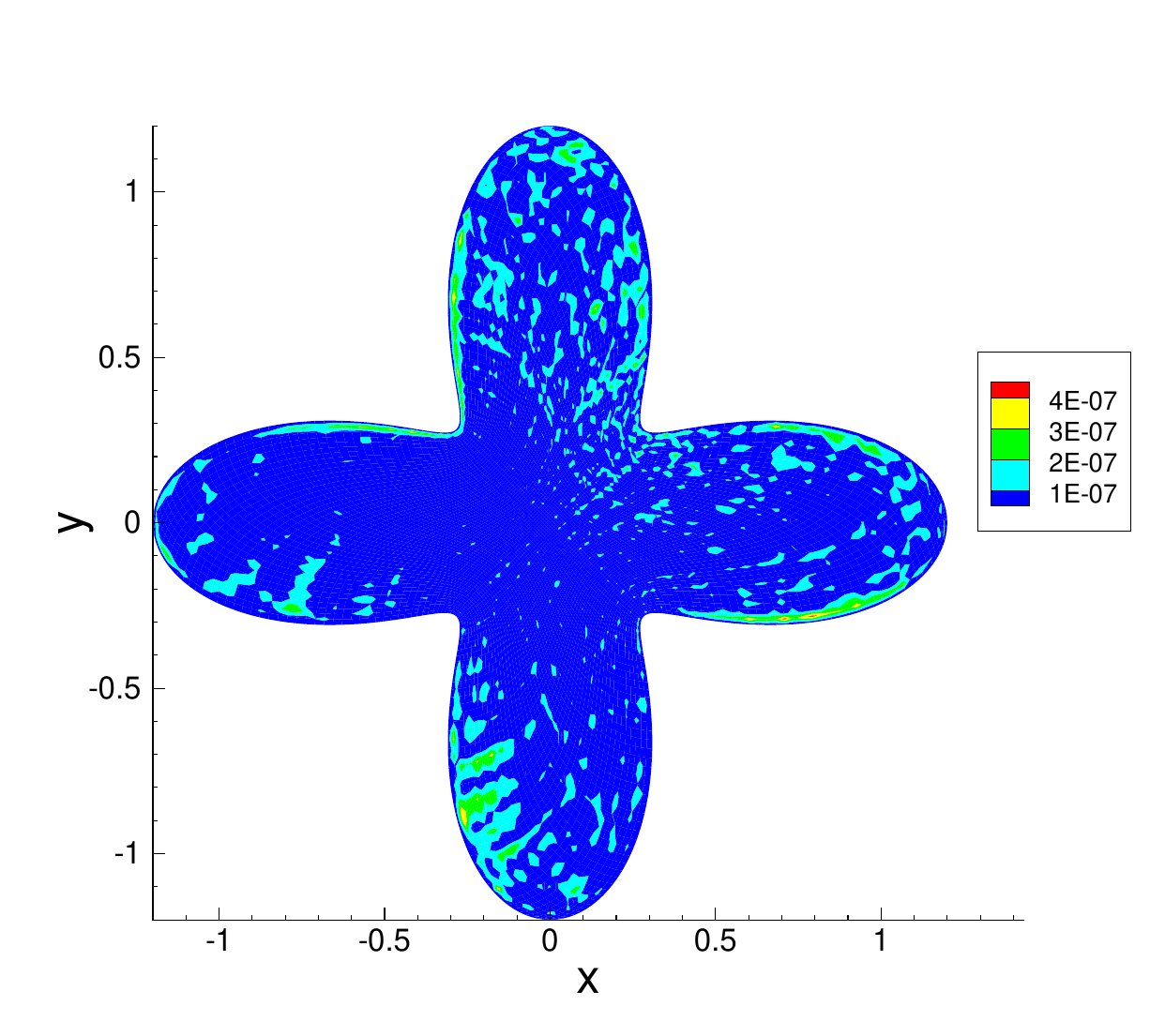}(g)
    \includegraphics[width=1.4in]{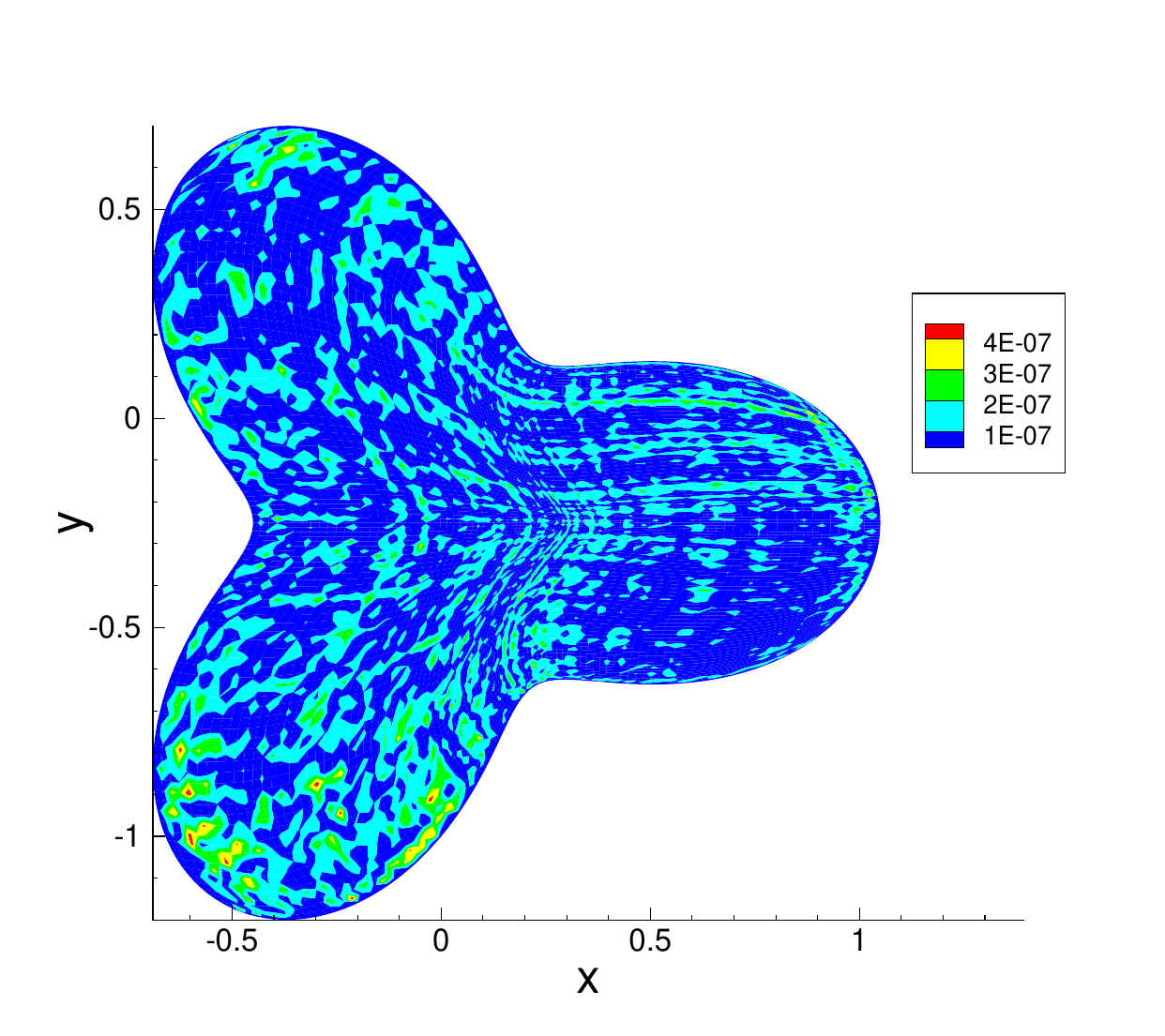}(h)
  }
  \caption{Nonlinear Helmholtz equation (Dirichlet BC on all boundaries):
    Distributions of the NN solutions (top row)
    and their point-wise absolute errors (bottom row) on the four domains.
    Simulation parameters: (a,e) $R_m=4.0$, $Q=50$, $M=1000$.
    (b,f) $R_m=4.5$, $Q=55$, $M=1000$. (c,g,d,h) $R_m=5.0$, $Q=60$, $M=1000$.
    $Q_{db}=8$ (see Remark~\ref{rem_210}) for all domains.
  }
  \label{fg_o3}
\end{figure}

\begin{table}
  \centering
  \begin{tabular}{l|l|l|l|l}
    \hline
     & domain \#1 & domain \#2 & domain \#3 & domain \#4 \\ \hline
    max-error (domain) & $2.544E-9$ & $8.948E-7$ & $4.475E-7$ & $4.808E-7$  \\[3pt]
    rms-error (domain) & $4.359E-10$ & $2.099E-7$ & $7.637E-8$ & $1.118E-7$ \\[3pt] \hline
    max DBC-error ($\overline{AB}$) & $0.0$ & $0.0$ & $0.0$ & $0.0$ \\[3pt]
    rms DBC-error ($\overline{AB}$) & $0.0$ & $0.0$ & $0.0$ & $0.0$ \\[3pt] \hline
    max DBC-error ($\overline{BC}$) & $8.882E-16$ & $4.441E-16$ & $5.551E-16$ & $4.441E-16$ \\[3pt]
    rms DBC-error ($\overline{BC}$) & $2.303E-16$ & $1.662E-16$ & $2.075E-16$ & $1.615E-16$ \\[3pt] \hline
    max DBC-error ($\overline{CD}$) & $0.0$ & $0.0$ & $0.0$ & $0.0$ \\[3pt]
    rms DBC-error ($\overline{CD}$) & $0.0$ & $0.0$ & $0.0$ & $0.0$ \\[3pt] \hline
    max DBC-error ($\overline{AD}$) & $4.441E-16$ & $4.441E-16$ & $4.996E-16$ & $4.441E-16$ \\[3pt]
    rms DBC-error ($\overline{AD}$) & $2.169E-16$ & $1.171E-16$ & $1.995E-16$ & $2.014E-16$ \\[3pt]
    \hline
  \end{tabular}
  \caption{Nonlinear Helmholtz equation (Dirichlet BC on all boundaries):
    maximum and rms NN-solution errors over
    the domain, and the maximum and rms DBC errors on
    the four boundaries. Simulation parameters follow those of Figure~\ref{fg_o3}.
  }
  \label{tab_5}
\end{table}

The ELM simulation results with Dirichlet conditions on all domain
boundaries are illustrated in Figure~\ref{fg_o3} and Table~\ref{tab_5} for different domains.
Figure~\ref{fg_o3} shows distributions of the NN solutions (top row) and
their point-wise absolute errors (bottom row) for these four domains.
These results are obtained using the simulation parameter values as given
in the figure caption. The results indicate that the current method has captured the solution
accurately on these domain geometries, with the maximum ELM
error on the order of $10^{-7}$ or $10^{-9}$ for different domains.

Table \ref{tab_5} is an assessment of the boundary-condition errors on different
boundaries, as well as the solution errors, for these five domains.
The DBC error is exactly zero on some boundaries ($\overline{AB}$ and $\overline{CD}$),
and has a maximum on the order of $10^{-16}$ on the other boundaries
($\overline{BC}$ and $\overline{AD}$).
Our method has evidently enforced the boundary condition to the machine
accuracy on these complex boundaries.


\begin{figure}
  \centerline{
    \includegraphics[width=1.4in]{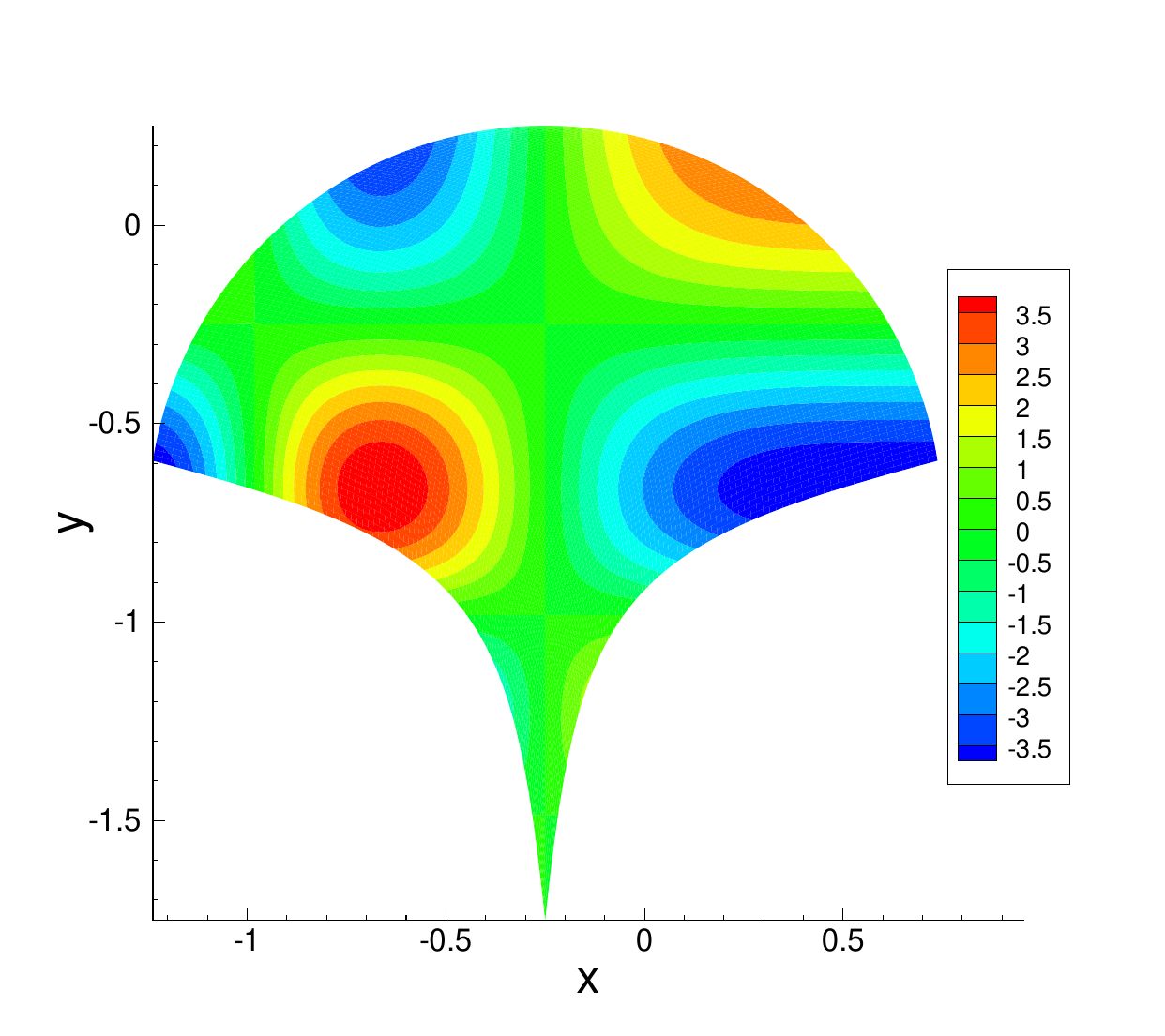}(a)
    \includegraphics[width=1.4in]{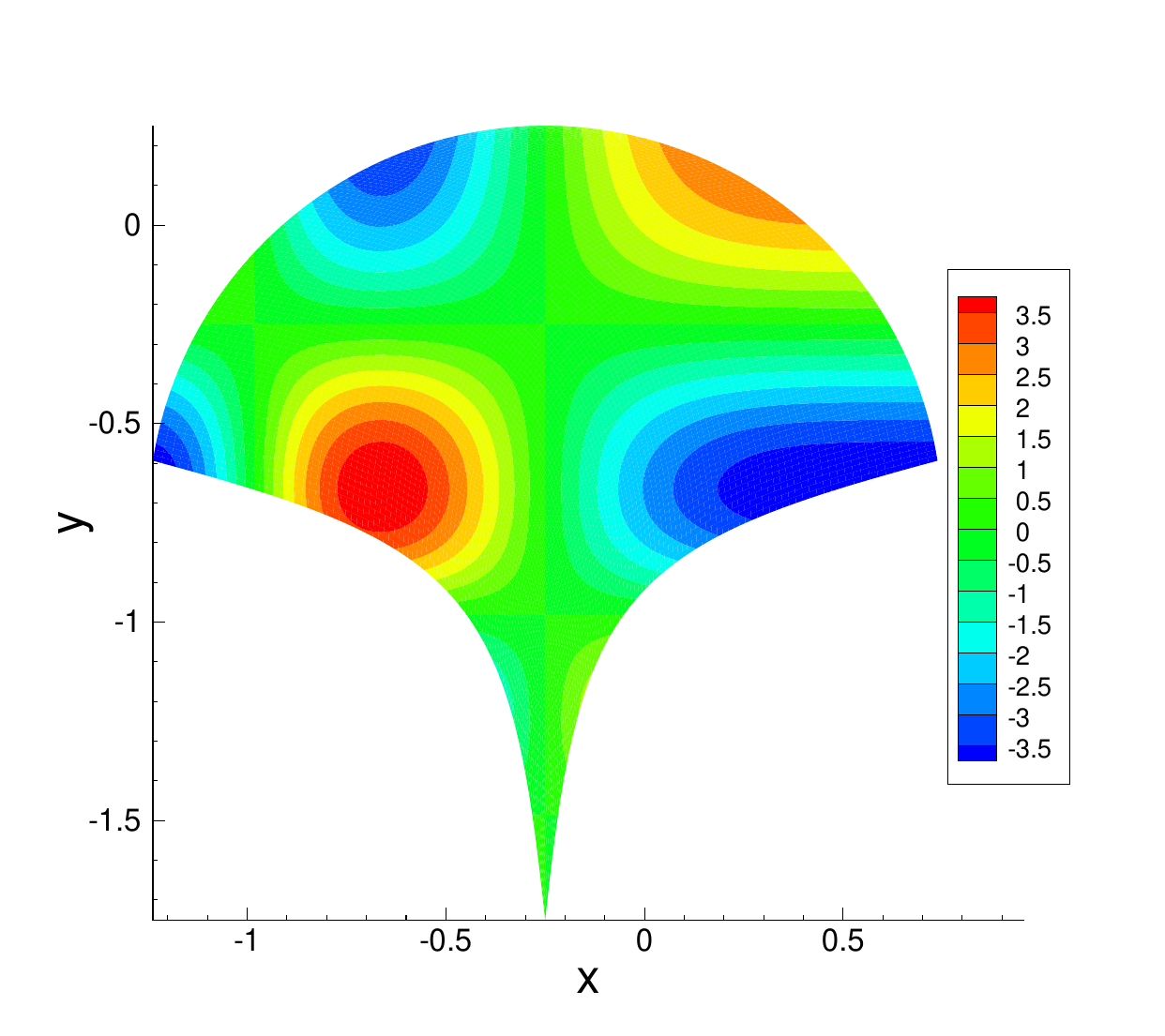}(b)
    \includegraphics[width=1.4in]{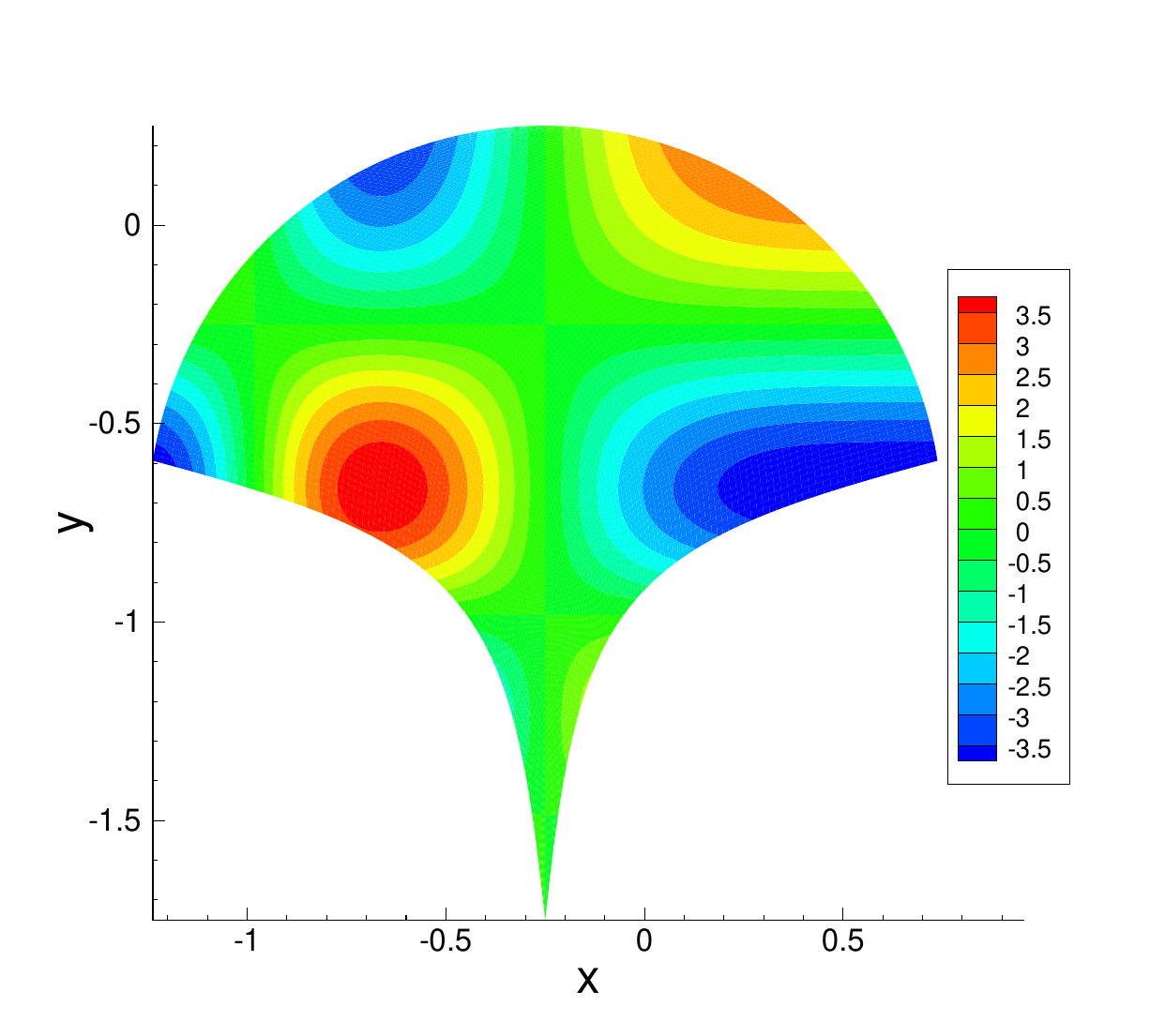}(c)
  }
  \centerline{
    \includegraphics[width=1.4in]{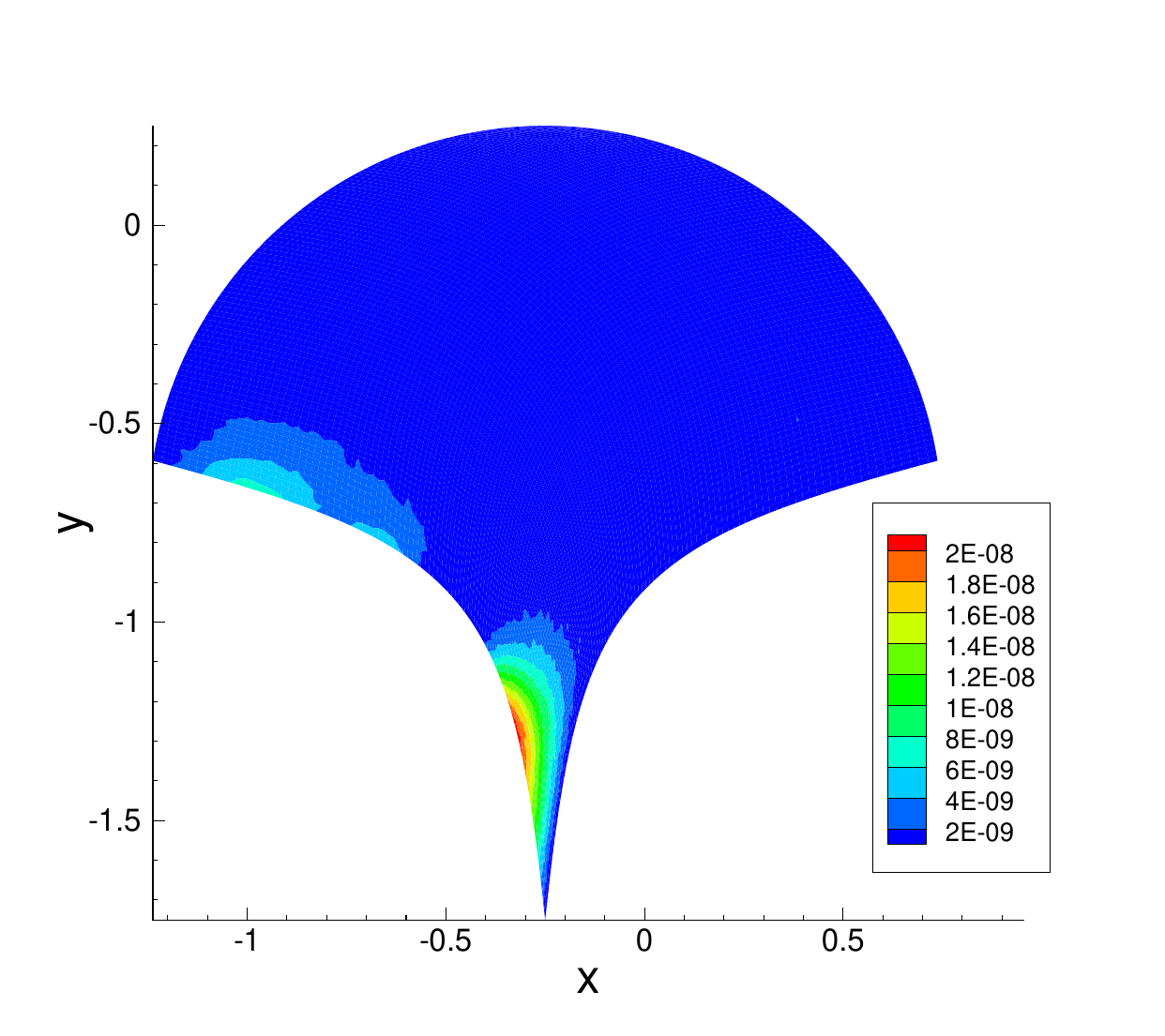}(d)
    \includegraphics[width=1.4in]{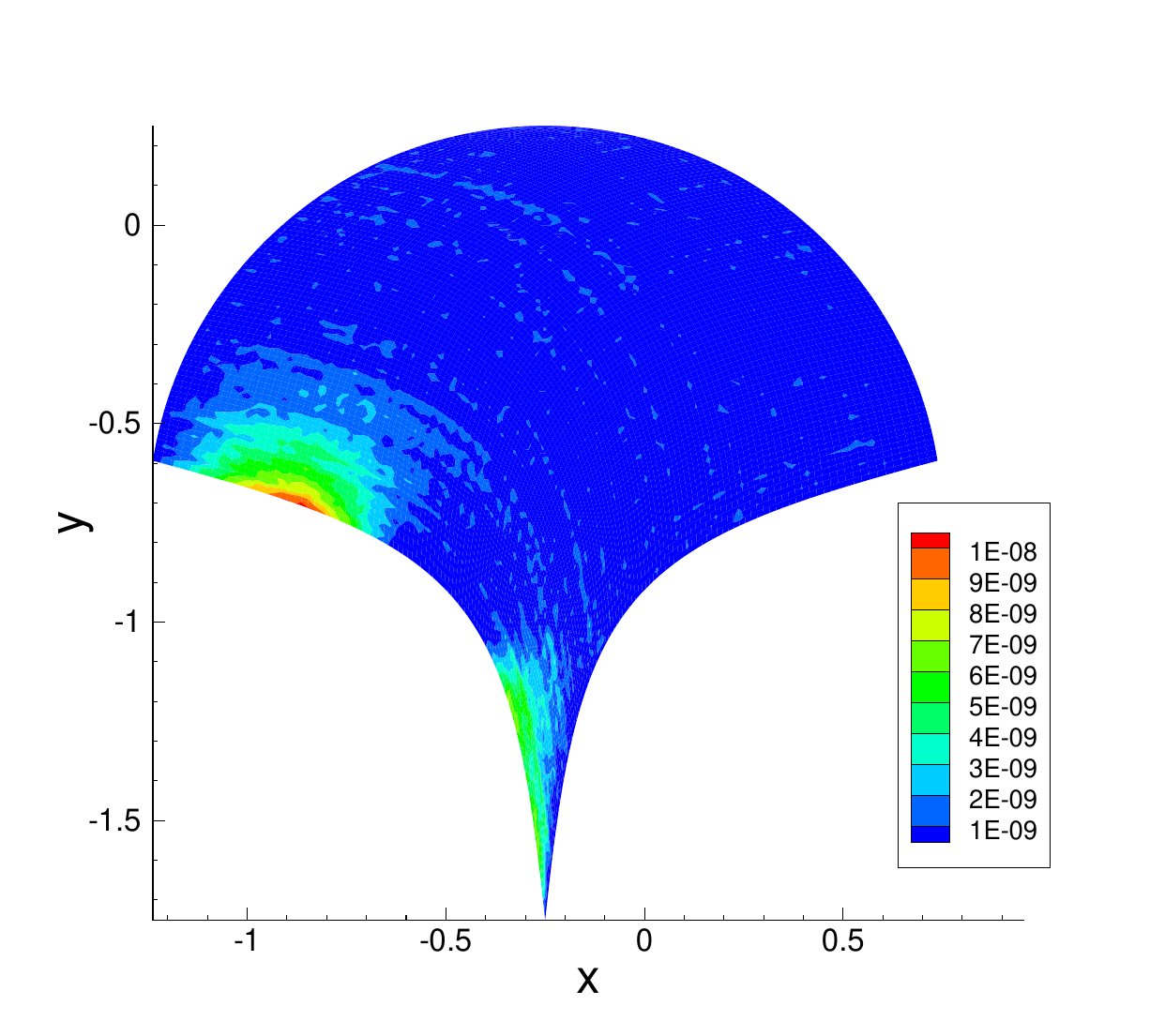}(e)
    \includegraphics[width=1.4in]{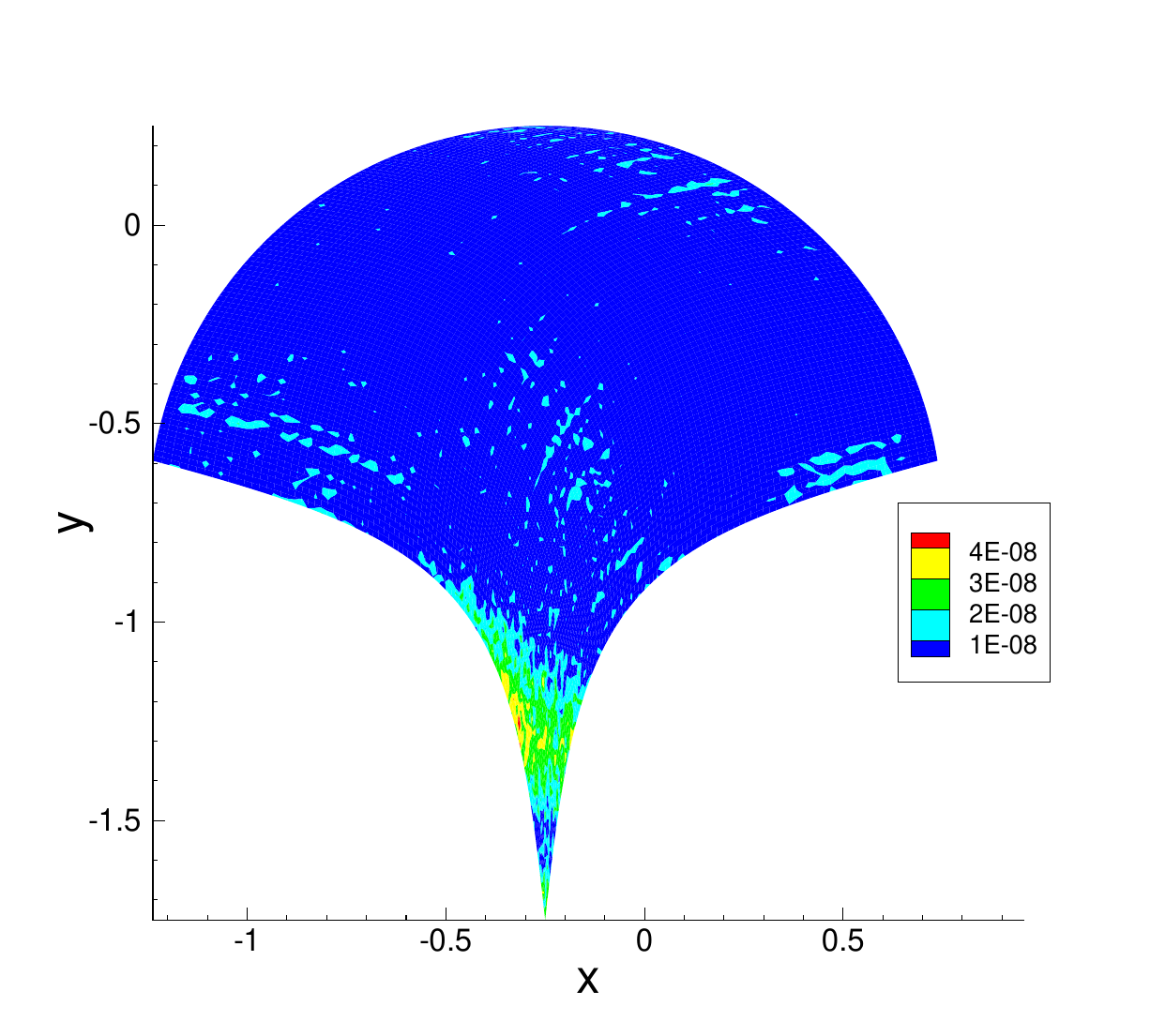}(f)
  }
  \caption{Nonlinear Helmholtz equation (Neumann or Robin BCs):
    Distributions of the NN solutions (top row) and their point-wise errors (bottom row)
    on domain \#1.
    (a,d) Case \#1: Neumann condition on $\overline{BC}$ and Dirichlet condition
    on the other boundaries.
    (b,e) Case \#2: Robin condition on $\overline{BC}$ and Dirichlet condition on the
    other boundaries.
    (c,f) Case \#3: Neumann condition on $\overline{BC}$ and $\overline{CD}$, and Dirichlet condition
    on the other boundaries.
    Simulation parameters: (a,d) $R_m=4.0$, $Q=60$, $M=1000$. (b,e) $R_m=3.5$, $Q=60$, $M=1000$.
    (c,f) $R_m=2.5$, $Q=60$, $M=1000$.
  }
  \label{fg_o4}
\end{figure}

\begin{table}
  \centering
  \begin{tabular}{l|c|c|c}
    \hline
     & Case \#1 & Case \#2 &  Case \#3 \\ \hline
    max solution-error (domain) & $2.133E-8$ & $1.045E-8$ & $4.709E-8$  \\[3pt]
    rms solution-error (domain) & $2.498E-9$ & $1.367E-9$ & $7.885E-9$  \\[3pt] \hline
    max DBC-error ($\overline{AB}$) & $0.0$ & $1.388E-16$ & $0.0$  \\[3pt]
    rms DBC-error ($\overline{AB}$) & $0.0$ & $6.418E-17$ & $0.0$  \\[3pt] \hline
    max NBC- or RBC-error ($\overline{BC}$) & $5.329E-15$ & $6.217E-15$ & $7.105E-15$  \\[3pt]
    rms NBC- or RBC-error ($\overline{BC}$) & $1.540E-15$ & $2.198E-15$ & $1.994E-15$ \\[3pt] \hline
    max DBC- or NBC-error ($\overline{CD}$) & $0.0$ & $1.110E-16$ & $6.217E-15$ \\[3pt]
    rms DBC- or NBC-error ($\overline{CD}$) & $0.0$ & $4.893E-17$ & $1.702E-15$ \\[3pt] \hline
    max DBC-error ($\overline{AD}$) & $6.661E-16$ & $6.661E-16$ & $4.441E-16$  \\[3pt]
    rms DBC-error ($\overline{AD}$) & $2.534E-16$ & $2.534E-16$ & $1.524E-16$ \\[3pt]
    \hline
  \end{tabular}
  \caption{Nonlinear Helmholtz equation on domain \#1 (Neumann or Robin BCs):
    maximum and rms NN-solution errors over
    the domain, and the maximum and rms boundary-condition (DBC, NBC, RBC) errors on
    different boundaries.
    The three cases correspond to those in Figure~\ref{fg_o4} for different
    types of boundary conditions.
  }
  \label{tab_6}
\end{table}

In Figure~\ref{fg_o4} and Table~\ref{tab_6} we demonstrate the ELM simulation results
obtained with Neumann or Robin boundary conditions.
Figure~\ref{fg_o4} shows the ELM solution and its point-wise absolute error
on domain \#1 for three cases: (i) Neumann condition imposed on $\overline{BC}$
and Dirichlet conditions imposed on the rest of the boundaries (plots (a,d)), (ii)
Robin condition with $\alpha_{BC}=1$ imposed on $\overline{BC}$
and Dirichlet conditions imposed on the rest of the boundaries (plots (b,e)),
and (iii) Neumann conditions imposed on $\overline{BC}$ and $\overline{CD}$
and Dirichlet conditions imposed on the rest of the boundaries.
The simulation parameter values for each case are provided in the figure caption.
It is observed that the ELM solution is highly accurate,
with the maximum errors on the order of $10^{-8}$ for all cases.

Table~\ref{tab_6} demonstrates the accuracy of the current method for
enforcing different types of boundary conditions.
Here we list the boundary-condition errors ($\varepsilon_{max}$, $\varepsilon_{rms}$)
on different boundaries for the three cases in Figure~\ref{fg_o4}.
The maximum boundary-condition error is on the order of $10^{-15}$
or $10^{-16}$, and on some boundaries it is exactly zero.
These results demonstrate that our method has enforced the Dirichlet, Neumann,
and Robin conditions to the machine accuracy for this nonlinear problem.

\subsection{Heat Conduction on Moving/Deforming Domains}
\label{sec_33}

\begin{figure}
  \centerline{
    \includegraphics[width=1.in]{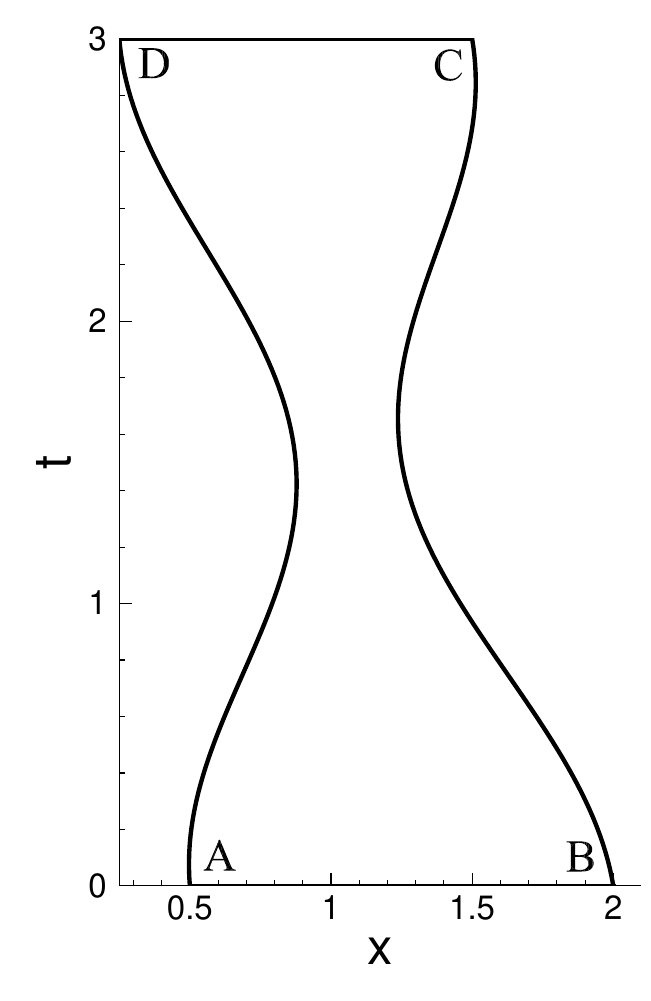}(a)\quad
    \includegraphics[width=1.in]{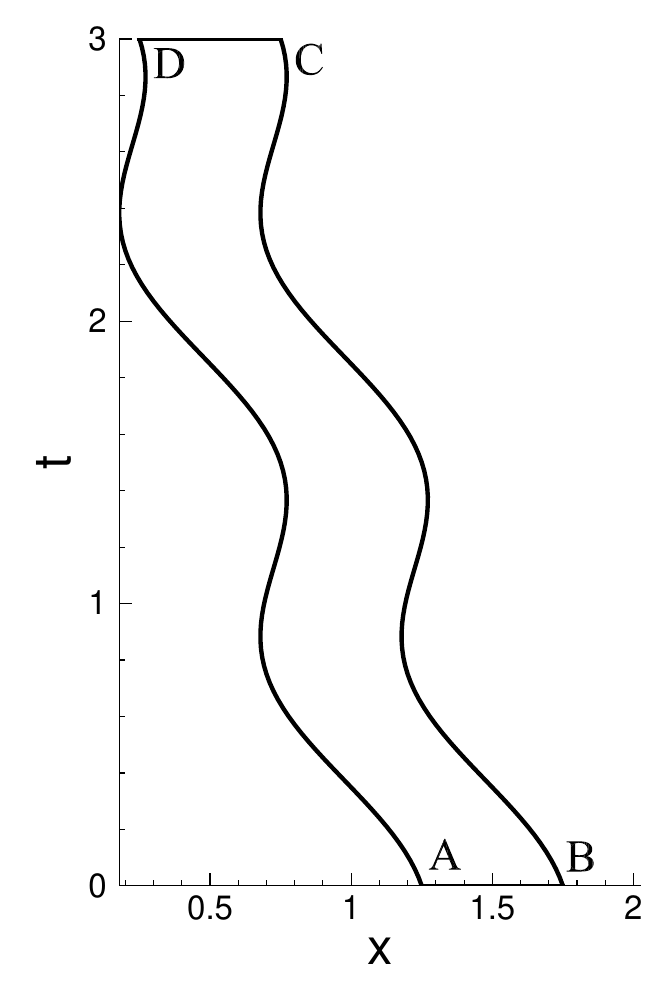}(b)\quad
    \includegraphics[width=1.in]{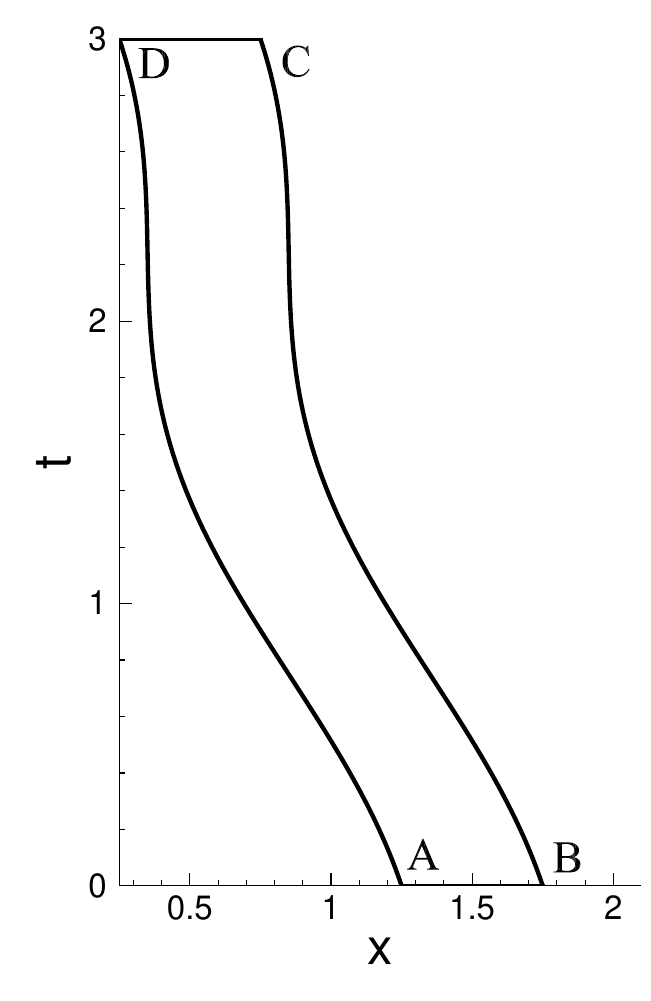}(c)
  }
  \centerline{
    \includegraphics[width=1.1in]{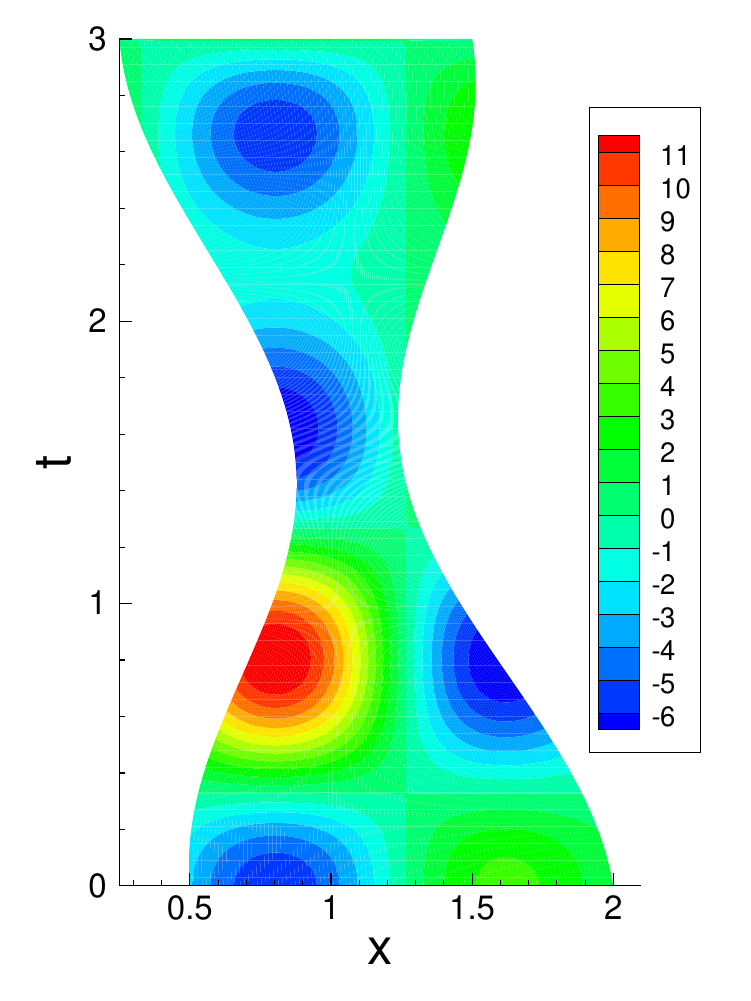}(d)
    \includegraphics[width=1.1in]{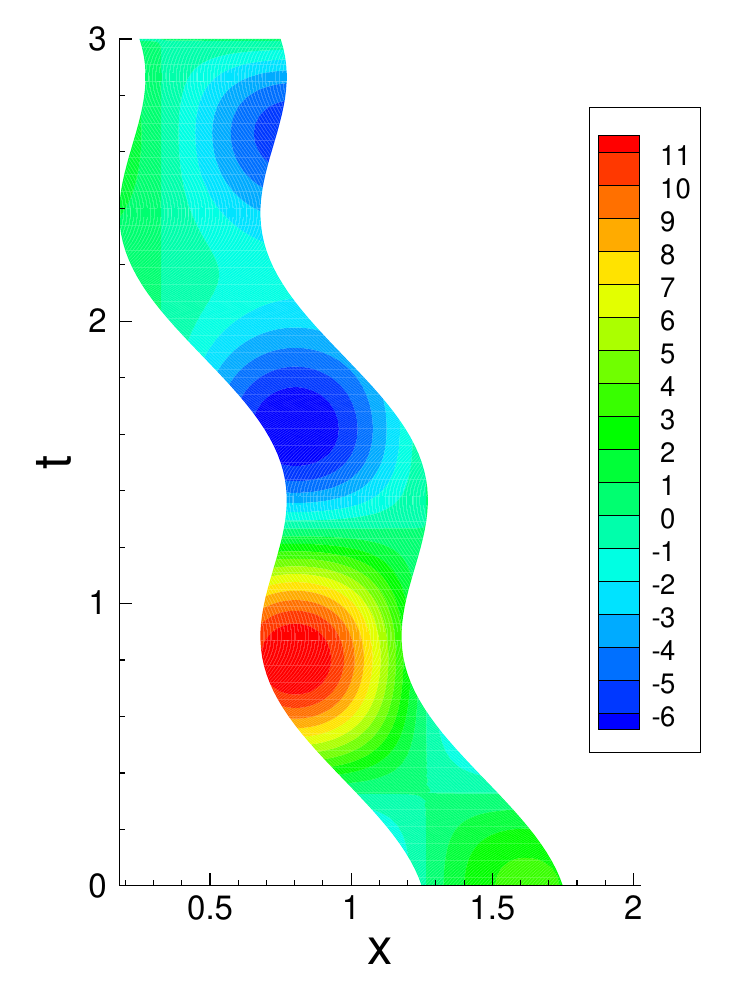}(e)
    \includegraphics[width=1.1in]{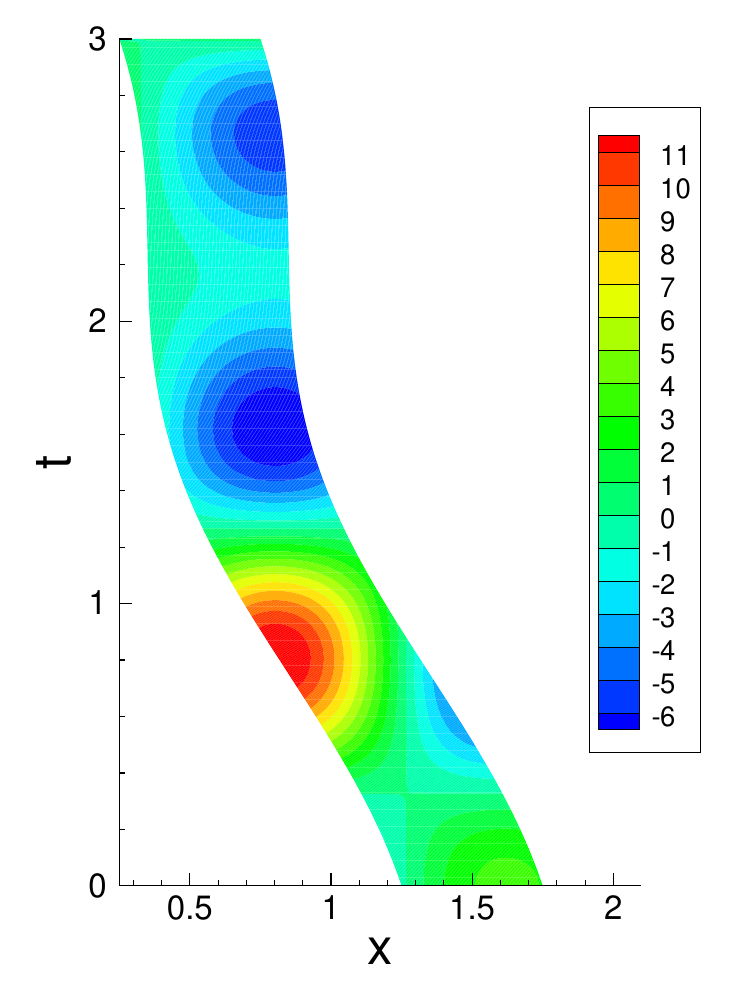}(f)
  }
  \caption{Heat equation on deforming/moving domains:
    space-time domain geometries (top row) and the exact
    solutions (bottom row). (a,d) Domain \#1 (deforming spatial domain);
    (b,e) Domain \#2 (moving spatial domain); (c,f) Domain \# (
    another moving spatial domain).
  }
  \label{fg_8}
\end{figure}

In the next example we investigate the heat conduction problem on
a spatial domain that deforms or moves over time.
Specifically, we consider a time-dependent domain in 1D,
$\Omega(t)=[a(t), b(t)]$, and the heat conduction equation on $\Omega(t)$,
\begin{subequations}\label{eq_114}
  \begin{align}
    & \frac{\partial u}{\partial t} - \nu \frac{\partial^2 u}{\partial x^2}
    = f(x,t), \quad x\in\Omega(t)=[a(t), b(t)], \quad t\in[0,t_f], \label{eq_114a} \\
    & u(a(t),t) = u_a(t), \quad t\in[0,t_f], \label{eq_114b} \\
    & u(b(t),t) = u_b(t), \quad t\in[0,t_f], \label{eq_114c} \\
    & u(x,0) = u_{in}(x), \quad x\in[a(0), b(0)] = [x_a, x_b]. \label{eq_114d}
  \end{align}
\end{subequations}
In the above equations, $u(x,t)$ is the field function to be computed,
$\nu=0.005$ is the diffusion coefficient (thermal diffusivity), $t_f$
denotes the time horizon of the problem, and
$f(x,t)$ is a source term. $u_a(t)$ and $u_b(t)$ are
prescribed boundary conditions, and $u_{in}(x)$ denotes the initial condition.
$[x_a,x_b]$ denotes the initial domain (at $t=0$).
It is assumed that the prescribed boundary and initial conditions
are compatible, namely, $u_{in}(x_a)=u_a(0)$ and $u_{in}(x_b)=u_b(0)$.
We choose the source term $f(x,t)$ and the boundary/initial conditions
appropriately such that this problem has the following exact solution,
\begin{align}\label{eq_115}
  u(x,t) =&\ \left[
    2\cos(0.75\pi x + 0.42\pi) + 1.5\cos(1.5\pi x - 0.22\pi)
    \right] \cdot \left[
    2\cos(0.75\pi y + 0.42\pi) \right. \notag \\
    &\ \left. + 1.5\cos(1.5\pi y - 0.22\pi)
    \right].
\end{align}

We consider the following three specific domains for this problem:
\begin{itemize}
\item Domain \#1 is defined by
  \begin{subequations}
    \begin{align}
      & a(t) = x_a(1-t/t_f) + x_d(t/t_f)
      +0.25\left[1-\cos(2\pi t/t_f) \right], \\
      & b(t) = x_b(1-t/t_f) + x_c(t/t_f)
      -0.25\left[1-\cos(2\pi t/t_f) \right],
    \end{align}
  \end{subequations}
  with the parameter values
  \begin{align}
    & x_a=0.5, \quad x_b=2.0, \quad x_c=1.5, \quad x_d=0.25, \quad t_f=3.0.
  \end{align}
  This is a deforming spatial domain, and is shown in Figure~\ref{fg_8}(a)
  as a space-time domain.

\item Domain \#2 is defined by
  \begin{subequations}
    \begin{align}
      & a(t) = x_a(1-t/t_f) + x_d(t/t_f)
      +0.15\left[ \cos(4\pi t/t_f) -1 \right], \\
      & b(t) = a(t) + (x_b-x_a),
    \end{align}
  \end{subequations}
  with the parameter values
  \begin{align}\label{eq_119}
    & x_a=1.25, \quad x_b=1.75, \quad x_d=0.25, \quad t_f=3.0.
  \end{align}
  This is a moving spatial domain, and is shown in Figure~\ref{fg_8}(b)
  as a space-time domain.

\item Domain \#3 is defined by
  \begin{subequations}
    \begin{align}
      & a(t) = x_a(1-t/t_f) + x_d(t/t_f)
      - 0.15\left[1 - \cos(2\pi t/t_f) \right], \\
      & b(t) = a(t) + (x_b-x_a),
    \end{align}
  \end{subequations}
  with the same parameter values as given in~\eqref{eq_119}.
  This is another moving spatial domain and is shown in Figure~\ref{fg_8}(c).
  
\end{itemize}
Distributions of the exact solution~\eqref{eq_115} over these domains
are included in Figure~\ref{fg_8} (bottom row).

\begin{figure}
  \centerline{
    \includegraphics[height=1.5in]{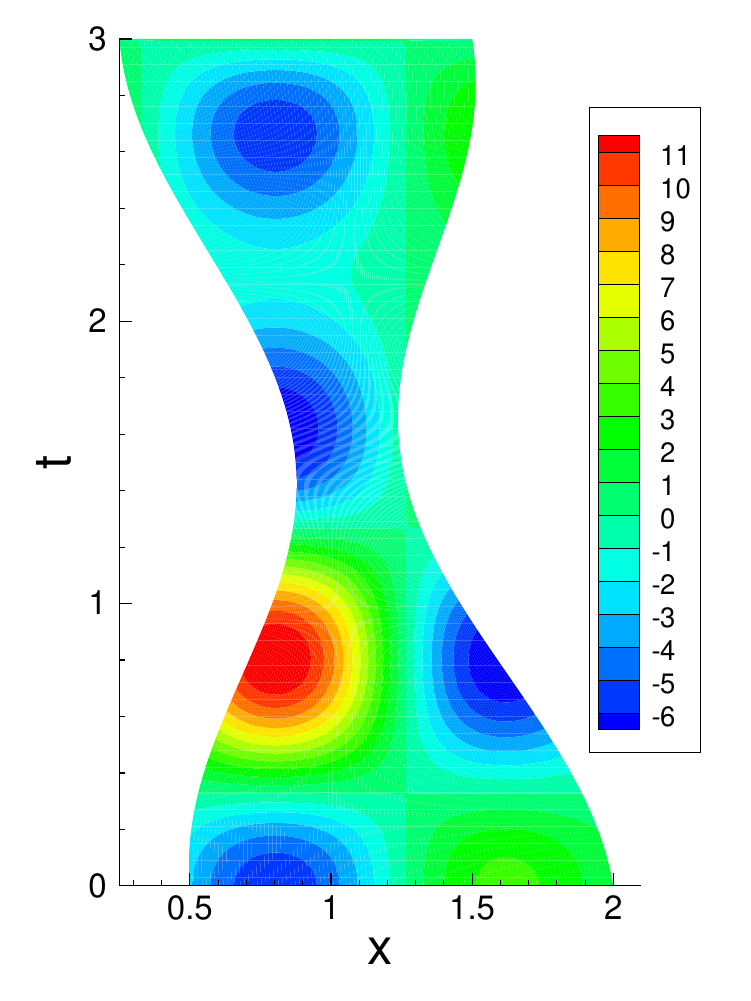}(a)
    \includegraphics[height=1.5in]{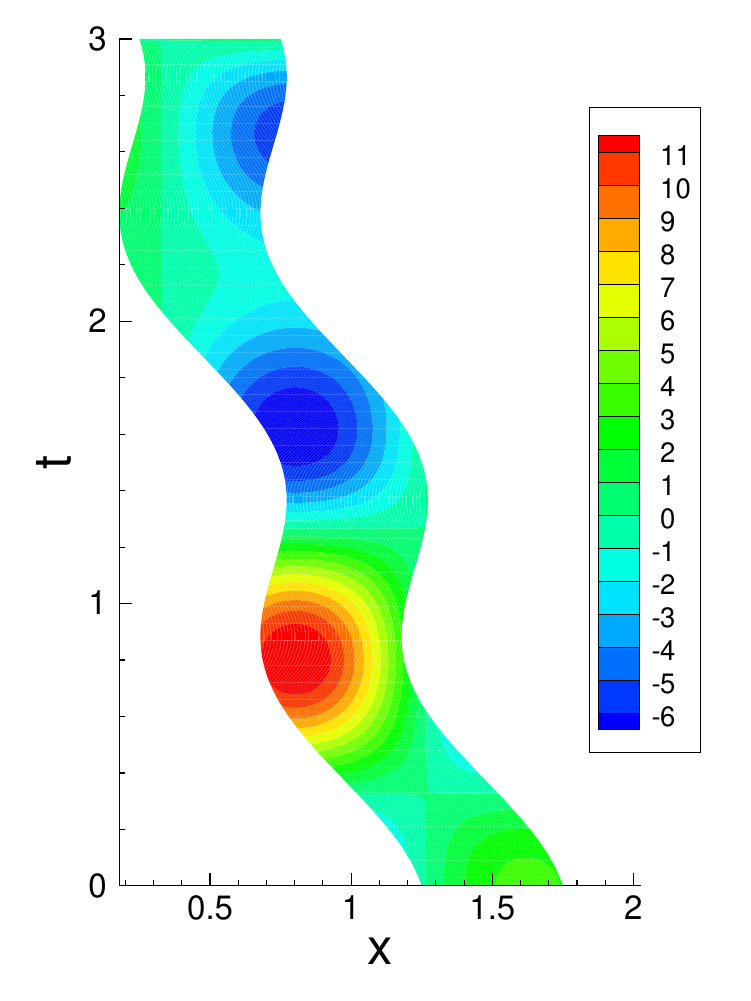}(b)
    \includegraphics[height=1.5in]{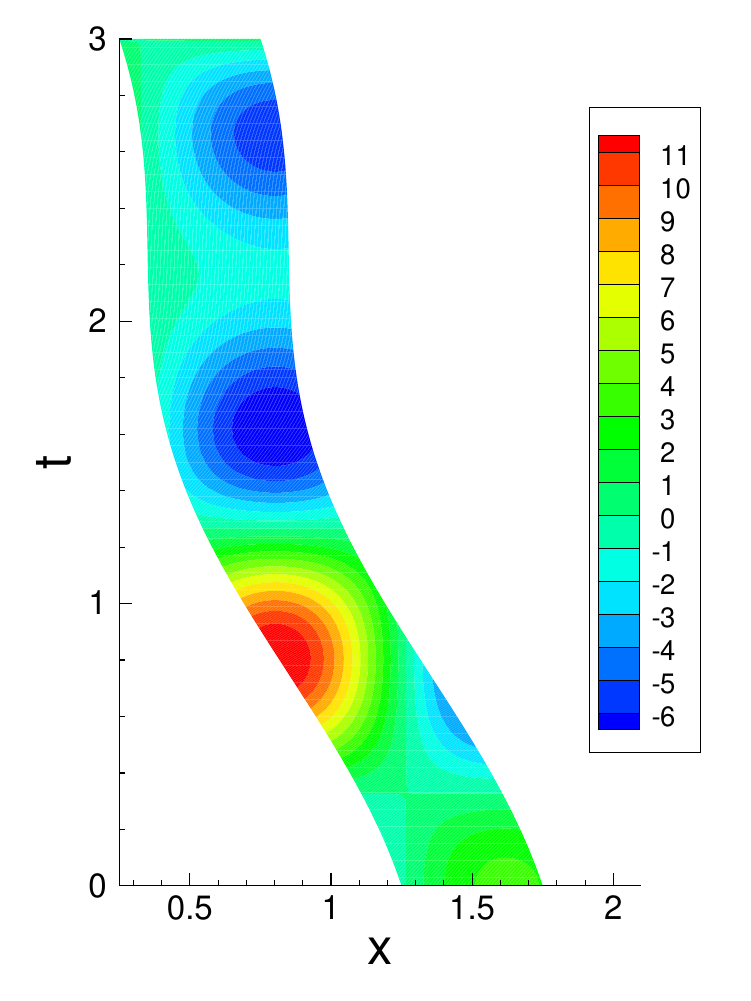}(c)
  }
  \centerline{
    \includegraphics[height=1.5in]{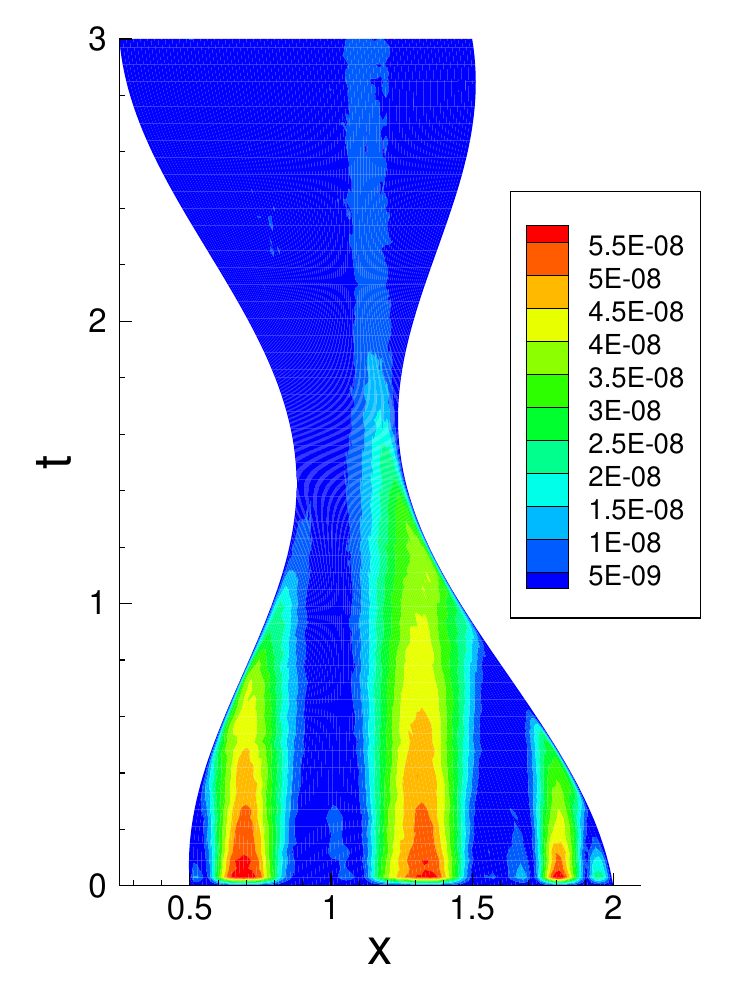}(d)
    \includegraphics[height=1.5in]{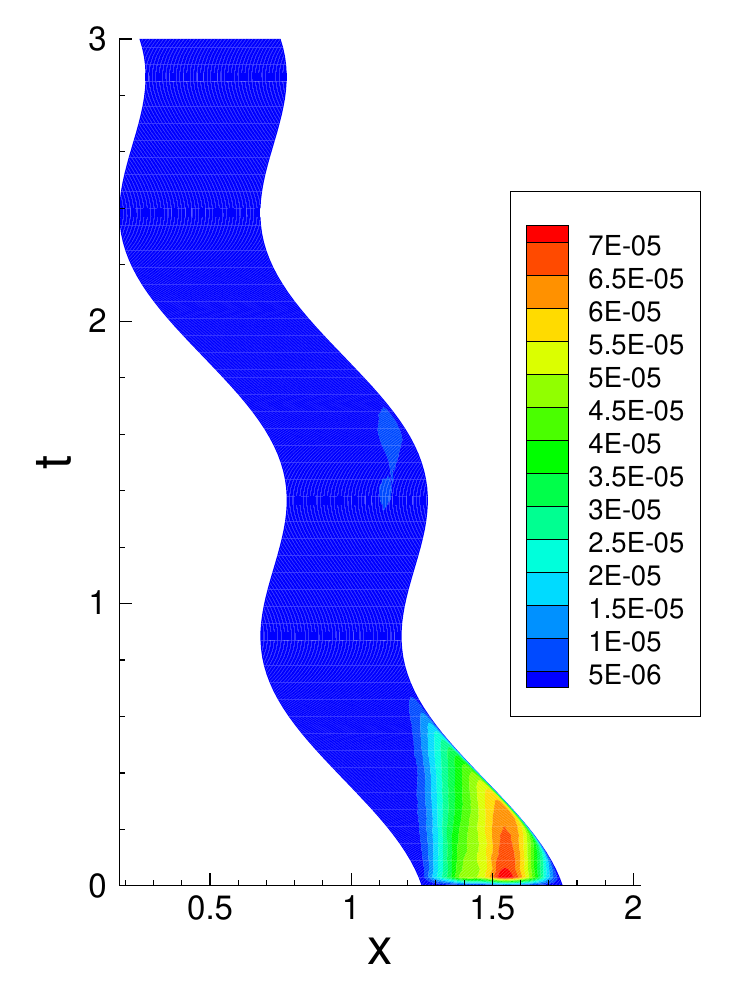}(e)
    \includegraphics[height=1.5in]{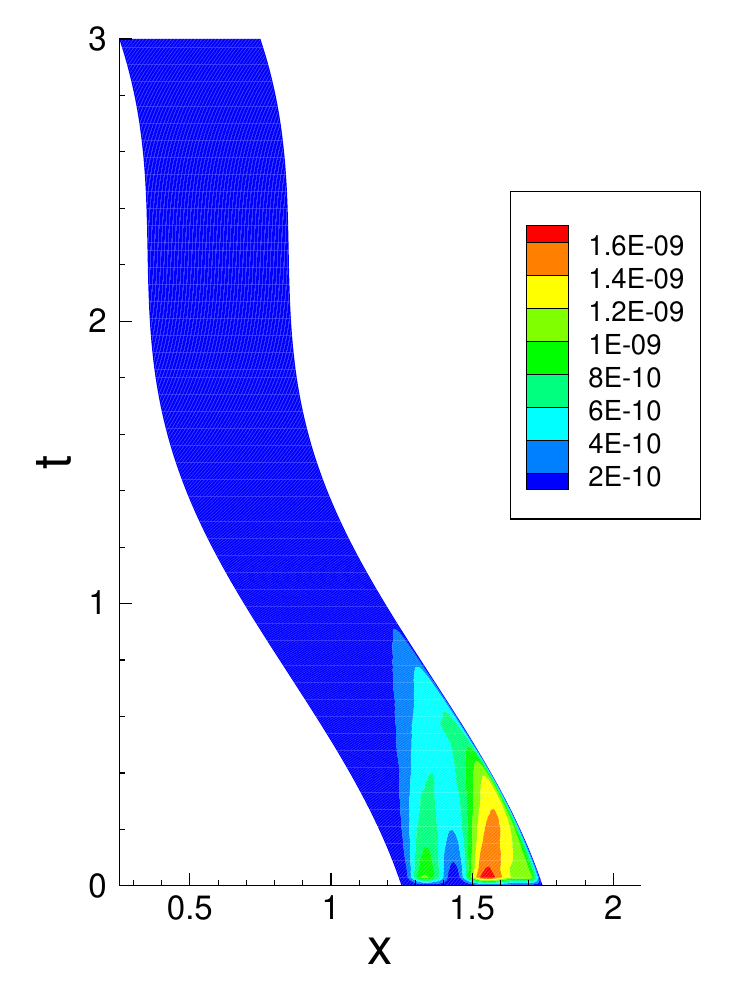}(f)
  }
  \caption{Heat equation on deforming/moving domains:
    Distributions of the NN solutions (top row),
    and their point-wise absolute errors (bottom row) for
    domains \#1 (a,d), \#2 (b,e) and \#3 (c,f).
    Simulation parameters: Domains \#1 and \#3,
    $R_m=4.0$, $Q=100$, $Q_{db}=7$, $M=1000$;
    Domains \#2,  $R_m=5.5$, $Q=100$, $Q_{db}=7$, $M=1000$.
  }
  \label{fg_9}
\end{figure}

\begin{table}
  \centering
  \begin{tabular}{l|l|l|l}
    \hline
     & domain \#1 & domain \#2 & domain \#3 \\ \hline
    max solution-error (domain) & $6.083E-8$ & $7.503E-5$ & $1.736E-9$  \\[3pt]
    rms solution-error (domain) & $1.712E-8$ & $1.401E-6$ & $3.208E-10$ \\[3pt] \hline
    max BC-error ($\overline{AB}$) & $0.0$ & $0.0$ & $0.0$ \\[3pt]
    rms BC-error ($\overline{AB}$) & $0.0$ & $0.0$ & $0.0$ \\[3pt] \hline
    max BC-error ($\overline{BC}$) & $8.882E-16$ & $8.882E-16$ & $8.882E-16$ \\[3pt]
    rms BC-error ($\overline{BC}$) & $1.983E-16$ & $3.436E-16$ & $3.743E-16$ \\[3pt] \hline
    max solution-error ($\overline{CD}$) & $6.537E-9$ & $3.875E-6$ & $3.662E-11$ \\[3pt]
    rms solution-error ($\overline{CD}$) & $2.765E-9$ & $1.960E-6$ & $1.711E-11$ \\[3pt] \hline
    max BC-error ($\overline{AD}$) & $1.776E-15$ & $2.220E-16$ & $4.441E-16$ \\[3pt]
    rms BC-error ($\overline{AD}$) & $4.842E-16$ & $7.842E-17$ & $5.925E-17$ \\[3pt]
    \hline
  \end{tabular}
  \caption{Heat equation on deforming/moving domains:
    the maximum and rms NN solution errors over
    the space-time domain and on the boundary $\overline{CD}$,
    and the maximum and rms boundary/initial
    condition errors on $\overline{AB}$, $\overline{BC}$, and $\overline{AD}$.
    Note that Dirichlet conditions are imposed on $\overline{AB}$, $\overline{BC}$
    and $\overline{AD}$ of the space-time domain, and that no boundary
    condition is imposed on $\overline{CD}$.
    Simulation parameters follow those of Figure~\ref{fg_9}.
  }
  \label{tab_7}
\end{table}

We solve this problem by a space-time approach and treat the time
variable $t$ on the same footing as the space variable $x$.
The boundary conditions~\eqref{eq_114b}--\eqref{eq_114c} and the initial
condition~\eqref{eq_114d} all become Dirichlet type conditions imposed
on the boundaries $\overline{AB}$, $\overline{BC}$
and $\overline{AD}$ of the space-time domain
$D=\overline{ABCD}=[a(t),b(t)]\times [0,t_f]$ (see Figure~\ref{fg_8}).
No condition is imposed on the boundary $\overline{CD}$.
These boundary conditions are enforced exactly using the method
from Section~\ref{sec_22}, with a modification by removing the Dirichlet
condition on $\overline{CD}$ from the formulation therein.

Figure~\ref{fg_9} shows distributions of the ELM solutions (top row)
and their point-wise absolute errors (bottom row) in the space-time plane
for these three domains. The simulation parameter values are
provided in the figure caption. The NN solutions on domains \#1 and \#3
are more accurate, with the maximum errors on the order of $10^{-8}$
and $10^{-9}$, respectively, compared with that on domain \#2,
with the maximum error around $10^{-5}$.

Table~\ref{tab_7} demonstrates the accuracy of the current method for
enforcing the boundary conditions with this problem. Here we list
the maximum/rms boundary-condition errors on $\overline{AB}$, $\overline{BC}$
and $\overline{AD}$ of the space-time domain, together with the
NN solution errors on $\overline{CD}$ and over the entire domain.
It is evident that our method has enforced the boundary conditions
to the machine accuracy for this problem with moving/deforming boundaries.



\section{Concluding Remarks}
\label{sec_summary}


We have developed a systematic method for
enforcing exactly the Dirichlet, Neumann, and Robin type boundary
conditions on general quadrilateral domains with arbitrary
curved boundaries. The method consists of two components,
an exact mapping of general quadrilateral domains
to the standard domain and a four-step procedure to systematically formulate the
general forms of trial functions that exactly satisfy the
imposed Dirichlet/Neumann/Robin  conditions,
both utilizing transfinite interpolations and
TFC constrained expressions in their construction. 

The constructed general forms of trial functions satisfying
the imposed boundary conditions are in parametric forms, expressed
with respect to the standard domain. When only
Dirichlet boundaries are involved, the formulation is conceptually straightforward
by leveraging the domain mapping.
When Neumann or Robin type boundaries are present, the
formulation becomes significantly more challenging.
We formulate the general forms
of trial functions for exact BC enforcement
through a procedure consisting of four steps:
(i) Identify the set of variables that
the transformed boundary conditions 
and the compatibility constraints induced by these conditions are imposed on;
(ii) Construct the transfinite interpolation for the types of identified variables;
(iii) Formulate the preliminary TFC constrained expression
based on this transfinite interpolation;
(iv) Update terms of the transfinite interpolation by corresponding
terms involving the free function from the preliminary TFC expression, thus
giving rise to the final TFC form as the constructed trial function.

When Neumann (or Robin) boundaries are present, employing the four-step procedure,
we have analyzed and presented in detail the formulation for two types of
situations: (i) when a Neumann (or Robin) boundary only intersects
with Dirichlet boundaries, and (ii) when two Neumann (or Robin) boundaries
intersect with each other. 
When the quadrilateral domain involves a combination of Dirichlet, Neumann,
and Robin boundaries and if multiple Neumann or Robin boundaries
are present, the formulation either falls into
or can be constructed based on the two aforementioned situations. 
In this case, at every vertex where two Neumann (or Robin) boundaries meet or
where a Neumann (or Robin) boundary and a Dirichlet boundary meet,
the compatibility constraints  and the corresponding constructions 
for handling such constraints analyzed herein will apply. The overall formulation can be
constructed analogously based on the four-step procedure.


The presented method for exact BC enforcement
has been implemented together with the extreme learning technique for physics-informed
 machine learning.
Extensive numerical experiments are conducted for several linear or nonlinear,
stationary or dynamic, boundary/initial value problems on a variety of
domains with complex geometries. 
The numerical results demonstrate that the current method has enforced
the Dirichlet, Neumann, and Robin conditions to machine accuracy
on curved domain boundaries. 


How to enforce Dirichlet/Neumann/Robin type conditions exactly on complex
domain geometries is a crucial issue to scientific machine learning.
The method developed in this work is but a preliminary
step toward achieving this goal.
It has been noted that the current formulations
are based on quadrilateral domains, and as such they inevitably
inherit many associated limitations.
Overcoming these limitations to advance the technique further
defines the goal for future research endeavors.




\section{Appendix: Geometric Domain  Parameters}
\label{sec_geom}

\noindent\underline{\bf Section~\ref{sec_31}: Helmholtz Equation}

\vspace{5pt}
\noindent\underline{Domain \#1:}\\
Vertices: $\mbs x_A=(0.25, 0.25)$, $\mbs x_B=(2.5, 0.0)$,
$\mbs x_C=(2.0, 2.5)$, $\mbs x_D=(0.0, 1.5)$.

\noindent Edges:
\begin{subequations}
\begin{align}
  & \mbs x_{AB}(\xi) = \mbs x_A\phi_0(\xi) + \mbs x_B\phi_1(\xi) - (0,h(\xi)),
  \quad \xi\in[-1,1], \\ 
  & \mbs x_{BC}(\eta) = \mbs x_B\phi_0(\eta) + \mbs x_C(\eta) + (h(\eta),0),
  \quad \eta\in[-1,1], \\
  & \mbs x_{CD}(\xi) = \mbs x_D\phi_0(\xi) + \mbs x_C\phi_1(\xi) - (0,h(\xi)),
  \quad \xi\in[-1,1], \\
  & \mbs x_{AD}(\eta) = \mbs x_A\phi_0(\eta) + \mbs x_D\phi_1(\eta) - (h(\eta),0),
  \quad \eta\in[-1,1],
\end{align}
\end{subequations}
where $\phi_0(\xi)$ and $\phi_1(\xi)$ are defined in~\eqref{eq_7}, and
$h(\xi)=-0.15\left[1+\cos(\pi \xi) \right]$ for $\xi\in[-1,1]$.

\vspace{5pt}
\noindent\underline{Domain \#2:}\\
This domain is formed by a unit circle centered at $(1,0)$, subtracting
a second unit circle centered at $(2,0)$.

\vspace{3pt}
\noindent Vertices: $\mbs x_A=\left(\frac12, -\frac{\sqrt{3}}{2}\right)$,
$\mbs x_B=(1, 0)$,
$\mbs x_C=\left(\frac12, \frac{\sqrt{3}}{2}\right)$, $\mbs x_D=(0,0)$.

\noindent Edges:
\begin{subequations}
\begin{align}
  & \mbs x_{AB}(\xi) = (2,0) + (\cos\theta_{AB}(\xi), \sin\theta_{AB}(\xi)),
  \quad \theta_{AB}(\xi) = -\frac{2\pi}{3}\phi_0(\xi) -\pi\phi_1(\xi), 
  \quad \xi\in[-1,1]; \\ 
  & \mbs x_{BC}(\eta) = (2,0) + (\cos\theta_{BC}(\eta), \sin\theta_{BC}(\eta)),
  \quad \theta_{BC}(\eta) = \pi\phi_0(\eta) + \frac{2\pi}{3}\phi_1(\eta),
  \quad \eta\in[-1,1]; \\
  & \mbs x_{CD}(\xi) = (1,0) + (\cos\theta_{CD}(\xi), \sin\theta_{CD}(\xi)),
  \quad \theta_{CD}(\xi) = \pi \phi_0(\xi) + \frac{\pi}{3}\phi_1(\xi),
  \quad \xi\in[-1,1]; \\
  & \mbs x_{AD}(\eta) = (1,0) + (\cos\theta_{AD}(\eta), \sin\theta_{AD}(\eta)),
  \quad \theta_{AD}(\eta) = -\frac{\pi}{3}\phi_0(\eta) - \pi \phi_1(\eta),
  \quad \eta\in[-1,1].
\end{align}
\end{subequations}

\vspace{5pt}
\noindent\underline{Domain \#3:}

\vspace{3pt}
\noindent Vertices: $\mbs x_A=\left(-\frac{\sqrt{2}}{2}, -\frac{\sqrt{2}}{2}\right)$,
$\mbs x_B=(\frac{\sqrt{2}}{2}, -\frac{\sqrt{2}}{2})$,
$\mbs x_C=\left(\frac{\sqrt{2}}{2}, \frac{\sqrt{2}}{2}\right)$,
$\mbs x_D=(-\frac{\sqrt{2}}{2}, \frac{\sqrt{2}}{2})$.

\noindent Edges:
\begin{subequations}
\begin{align}
  & \mbs x_{AB}(\xi) = \left[1-\frac{a_{AB}}{2}\left(1+\cos(\pi \xi)\right) \right]
  (\cos\theta_{AB}(\xi), \sin\theta_{AB}(\xi)), \notag \\
  &\qquad 
  \quad \theta_{AB}(\xi) = -\frac{3\pi}{4}\phi_0(\xi) -\frac{\pi}{4}\phi_1(\xi),
  \quad a_{AB} = 0.25,
  \quad \xi\in[-1,1]; \\ 
  & \mbs x_{BC}(\eta) = \left[1-\frac{a_{BC}}{2}\left(1+\cos(\pi \eta)\right) \right]
  (\cos\theta_{BC}(\eta), \sin\theta_{BC}(\eta)), \notag \\
  &\qquad 
  \quad \theta_{BC}(\eta) = -\frac{\pi}{4}\phi_0(\eta) +\frac{\pi}{4}\phi_1(\eta),
  \quad a_{BC} = 0.4,
  \quad \eta\in[-1,1]; \\
  & \mbs x_{CD}(\xi) = \left[1-\frac{a_{CD}}{2}\left(1+\cos(\pi \xi)\right) \right]
  (\cos\theta_{CD}(\xi), \sin\theta_{CD}(\xi)), \notag \\
  &\qquad 
  \quad \theta_{CD}(\xi) = \frac{3\pi}{4}\phi_0(\xi) + \frac{\pi}{4}\phi_1(\xi),
  \quad a_{CD} = 0.3,
  \quad \xi\in[-1,1]; \\
  & \mbs x_{AD}(\eta) = (\cos\theta_{AD}(\eta), \sin\theta_{AD}(\eta)),
  \quad \theta_{AD}(\eta) = \frac{5\pi}{4}\phi_0(\eta) + \frac{3\pi}{4} \phi_1(\eta),
  \quad \eta\in[-1,1].
\end{align}
\end{subequations}

\vspace{5pt}
\noindent\underline{Domain \#4:}\\
This domain is formed by two straight sides ($\overline{AB}$ and $\overline{AD}$)
and an elliptic arc ($\overline{BCD}$).

\vspace{3pt}
\noindent Vertices:
\begin{equation} \left\{
  \begin{split}
&\mbs x_A=\left(0,-1.35\right), \quad
\mbs x_B=\left(0.95\cos\left(\frac{5\pi}{36} \right), 0.55+0.6\sin\left(\frac{5\pi}{36} \right) \right), \\
& \mbs x_C=\left(0,1.15\right), \quad
\mbs x_D=\left(-0.95\cos\left(\frac{5\pi}{36} \right), 0.55+0.6\sin\left(\frac{5\pi}{36} \right) \right).
\end{split}
\right.
\end{equation}

\noindent Edges:
\begin{subequations}
\begin{align}
  & \mbs x_{AB}(\xi) = \mbs x_A\phi_0(\xi) + \mbs x_B\phi_1(\xi),
  \quad \xi\in[-1,1]; \\ 
  & \mbs x_{BC}(\eta) = (0.95\cos\theta_{BC}(\eta), 0.55+0.6\sin\theta_{BC}(\eta)), \
  \theta_{BC}(\eta) = \frac{5\pi}{36}\phi_0(\eta) +\frac{\pi}{2}\phi_1(\eta),
  \ \eta\in[-1,1]; \\
  & \mbs x_{CD}(\xi) = (0.95\cos\theta_{CD}(\xi), 0.55+0.6\sin\theta_{CD}(\xi)), \
  \theta_{CD}(\xi) = \frac{31\pi}{36}\phi_0(\xi) + \frac{\pi}{2}\phi_1(\xi),
  \ \xi\in[-1,1]; \\
  & \mbs x_{AD}(\eta) = \mbs x_A\phi_0(\eta) + \mbs x_D\phi_1(\eta),
  \quad \eta\in[-1,1].
\end{align}
\end{subequations}

\vspace{5pt}
\noindent\underline{Domain \#5:}\\
This is a triangle with vertices at $A$, $B$ and $D$. $C$ is the mid-point
of $\overline{BD}$.

\vspace{3pt}
\noindent Vertices: $\mbs x_A=(0,0)$,
$\mbs x_B=(2,0.2)$,
$\mbs x_C=(1.3, 1)$,
$\mbs x_D=(0.6,1.8)$.

\noindent Edges:
\begin{subequations}
\begin{align}
  & \mbs x_{AB}(\xi) = \mbs x_A\phi_0(\xi) + \mbs x_B\phi_1(\xi),
  \quad \xi\in[-1,1]; \\ 
  & \mbs x_{BC}(\eta) = \mbs x_B\phi_0(\eta) + \mbs x_C\phi_1(\eta),
  \quad \eta\in[-1,1]; \\
  & \mbs x_{CD}(\xi) = \mbs x_D\phi_0(\xi) + \mbs x_C\phi_1(\xi),
  \quad \xi\in[-1,1]; \\
  & \mbs x_{AD}(\eta) = \mbs x_A\phi_0(\eta) + \mbs x_D\phi_1(\eta),
  \quad \eta\in[-1,1].
\end{align}
\end{subequations}

\vspace{10pt}
\noindent\underline{\bf Section~\ref{sec_32}: Nonlinear Helmholtz Equation}

\vspace{8pt}
\noindent\underline{Domain \#1:}

\noindent Vertices:
\begin{equation}\left\{
  \begin{split}
&\mbs x_A=(-0.25,0.25), \quad
\mbs x_B=\left(\cos\left(\frac{19\pi}{20} \right)-0.25,\sin\left(\frac{19\pi}{20} \right)-0.75 \right), \\
& \mbs x_C=(-0.25,-1.75), \quad
\mbs x_D=\left(\cos\left(\frac{\pi}{20} \right)-0.25,\sin\left(\frac{\pi}{20} \right)-0.75 \right).
\end{split}
\right.
\end{equation}

\noindent Edges:
\begin{subequations}
\begin{align}
  & \mbs x_{AB}(\xi) = (-0.25,-0.75) +  \left(\cos\theta_{AB}(\xi), \sin\theta_{AB}(\xi) \right), \notag \\
  &\qquad\quad \theta_{AB}(\xi) = \frac{\pi}{2}\phi_0(\xi) + \frac{19\pi}{20}\phi_1(\xi),
  \quad \xi\in[-1,1]; \\ 
  & \mbs x_{BC}(\eta) = 2\left[
    \left(\cos\left(\frac{19\pi}{20} \right),\sin\left(\frac{19\pi}{20} \right) \right)
    \phi_0(\eta) + (0,-1)\phi_1(\eta)
    \right] - \left(\cos\theta_{BC}(\eta),\sin\theta_{BC}(\eta) \right) \notag \\
  &\qquad\qquad + (-0.25,-0.75),
  \quad \theta_{BC}(\eta) = \frac{19\pi}{20}\phi_0(\eta) + \frac{3\pi}{2}\phi_1(\eta),
  \quad \eta\in[-1,1]; \\
  & \mbs x_{CD}(\xi) = 2\left[
    \left(\cos\left(\frac{\pi}{20} \right),\sin\left(\frac{\pi}{20} \right) \right)
    \phi_0(\xi) + (0,-1)\phi_1(\xi)
    \right] - \left(\cos\theta_{CD}(\xi),\sin\theta_{CD}(\xi) \right) \notag \\
  &\qquad\qquad + (-0.25,-0.75),
  \quad \theta_{CD}(\xi) = \frac{\pi}{20}\phi_0(\xi) - \frac{\pi}{2}\phi_1(\xi),
  \quad \xi\in[-1,1]; \\
  & \mbs x_{AD}(\eta) = (-0.25,-0.75) +  \left(\cos\theta_{AD}(\eta), \sin\theta_{AD}(\eta) \right), \notag \\
  &\qquad\quad \theta_{AD}(\eta) = \frac{\pi}{2}\phi_0(\eta) + \frac{\pi}{20}\phi_1(\eta),
  \quad \eta\in[-1,1].
\end{align}
\end{subequations}

\vspace{5pt}
\noindent\underline{Domain \#2:}

\noindent Vertices:
$\mbs x_A=\left(\frac{\sqrt{2}}{2}, -\frac{\sqrt{2}}{2}\right)$,
$\mbs x_B=\left(\frac{\sqrt{2}}{2}, \frac{\sqrt{2}}{2}\right)$,
$\mbs x_C=\left(-\frac{\sqrt{2}}{2}, \frac{\sqrt{2}}{2}\right)$,
$\mbs x_D=\left(-\frac{\sqrt{2}}{2}, -\frac{\sqrt{2}}{2}\right)$.

\noindent Edges:
\begin{subequations}
\begin{align}
  & \mbs x_{AB}(\xi) = \left[1+0.6\cos(2\theta_{AB}(\xi))\right]\left(\cos\theta_{AB}(\xi),\sin\theta_{AB}(\xi)\right), \notag \\
  &\qquad\quad \theta_{AB}(\xi) = -\frac{\pi}{4}\phi_0(\xi) + \frac{\pi}{4}\phi_1(\xi),
  \quad \xi\in[-1,1]; \\ 
  & \mbs x_{BC}(\eta) = \left[1+0.6\cos(2\theta_{BC}(\eta))\right]\left(\cos\theta_{BC}(\eta),\sin\theta_{BC}(\eta)\right), \notag \\
  &\qquad\quad \theta_{BC}(\eta) = \frac{\pi}{4}\phi_0(\eta) + \frac{3\pi}{4}\phi_1(\eta),
  \quad \eta\in[-1,1]; \\
  & \mbs x_{CD}(\xi) = \left[1+0.6\cos(2\theta_{CD}(\xi))\right]\left(\cos\theta_{CD}(\xi),\sin\theta_{CD}(\xi)\right), \notag \\
  &\qquad\quad \theta_{CD}(\xi) = \frac{5\pi}{4}\phi_0(\xi) + \frac{3\pi}{4}\phi_1(\xi),
  \quad \xi\in[-1,1]; \\
  & \mbs x_{AD}(\eta) = \left[1+0.6\cos(2\theta_{AD}(\eta))\right]\left(\cos\theta_{AD}(\eta),\sin\theta_{AD}(\eta)\right), \notag \\
  &\qquad\quad \theta_{AD}(\eta) = -\frac{3\pi}{4}\phi_0(\eta) - \frac{\pi}{4}\phi_1(\eta),
  \quad \eta\in[-1,1].
\end{align}
\end{subequations}

\vspace{5pt}
\noindent\underline{Domain \#3:}

\noindent Vertices:
$\mbs x_A=\left(1.2,0\right)$,
$\mbs x_B=(0,1.2)$,
$\mbs x_C=\left(-1.2,0\right)$,
$\mbs x_D=(0,-1.2)$.

\noindent Edges:
\begin{subequations}
\begin{align}
  & \mbs x_{AB}(\xi) = \left[0.8+0.4\cos(4\theta_{AB}(\xi))\right]\left(\cos\theta_{AB}(\xi),\sin\theta_{AB}(\xi)\right), \notag \\
  &\qquad\quad \theta_{AB}(\xi) =  \frac{\pi}{2}\phi_1(\xi),
  \quad \xi\in[-1,1]; \\ 
  & \mbs x_{BC}(\eta) = \left[0.8+0.4\cos(4\theta_{BC}(\eta))\right]\left(\cos\theta_{BC}(\eta),\sin\theta_{BC}(\eta)\right), \notag \\
  &\qquad\quad \theta_{BC}(\eta) = \frac{\pi}{2}\phi_0(\eta) + \pi\phi_1(\eta),
  \quad \eta\in[-1,1]; \\
  & \mbs x_{CD}(\xi) = \left[0.8+0.4\cos(4\theta_{CD}(\xi))\right]\left(\cos\theta_{CD}(\xi),\sin\theta_{CD}(\xi)\right), \notag \\
  &\qquad\quad \theta_{CD}(\xi) = \frac{3\pi}{2}\phi_0(\xi) + \pi\phi_1(\xi),
  \quad \xi\in[-1,1]; \\
  & \mbs x_{AD}(\eta) = \left[0.8+0.4\cos(4\theta_{AD}(\eta))\right]\left(\cos\theta_{AD}(\eta),\sin\theta_{AD}(\eta)\right), \notag \\
  &\qquad\quad \theta_{AD}(\eta) = - \frac{\pi}{2}\phi_1(\eta),
  \quad \eta\in[-1,1].
\end{align}
\end{subequations}

\vspace{5pt}
\noindent\underline{Domain \#4:}

\noindent Vertices:
\begin{equation}\left\{
  \begin{split}
&\mbs x_A=\left(0.75+0.3\cos(3\theta_A) \right)\left(\cos\theta_A,\sin\theta_A \right), \quad
\mbs x_B= \left(0.75+0.3\cos(3\theta_B) \right)\left(\cos\theta_B,\sin\theta_B \right), \\
& \mbs x_C=\left(0.75+0.3\cos(3\theta_C) \right)\left(\cos\theta_C,\sin\theta_C \right),  \quad
\mbs x_D= \left(0.75+0.3\cos(3\theta_D) \right)\left(\cos\theta_D,\sin\theta_D \right),\\
& \theta_A=-\frac{\pi}{10}, \quad \theta_B=\frac{\pi}{10}, \quad
\theta_C = \frac{13\pi}{20}, \quad \theta_D=\frac{27\pi}{20}.
\end{split}
\right.
\end{equation}

\noindent Edges:
\begin{subequations}
\begin{align}
  & \mbs x_{AB}(\xi) = \left[0.75+0.3\cos(3\theta_{AB}(\xi))\right]\left(\cos\theta_{AB}(\xi),\sin\theta_{AB}(\xi)\right), \notag \\
  &\qquad\quad \theta_{AB}(\xi) = \theta_A\phi_0(\xi) + \theta_B\phi_1(\xi),
  \quad \xi\in[-1,1]; \\ 
  & \mbs x_{BC}(\eta) = \left[0.75+0.3\cos(3\theta_{BC}(\eta))\right]\left(\cos\theta_{BC}(\eta),\sin\theta_{BC}(\eta)\right), \notag \\
  &\qquad\quad \theta_{BC}(\eta) = \theta_B\phi_0(\eta) + \theta_C\phi_1(\eta),
  \quad \eta\in[-1,1]; \\
  & \mbs x_{CD}(\xi) = \left[0.75+0.3\cos(3\theta_{CD}(\xi))\right]\left(\cos\theta_{CD}(\xi),\sin\theta_{CD}(\xi)\right), \notag \\
  &\qquad\quad \theta_{CD}(\xi) = \theta_D\phi_0(\xi) + \theta_C\phi_1(\xi),
  \quad \xi\in[-1,1]; \\
  & \mbs x_{AD}(\eta) = \left[0.75+0.3\cos(3\theta_{AD}(\eta))\right]\left(\cos\theta_{AD}(\eta),\sin\theta_{AD}(\eta)\right), \notag \\
  &\qquad\quad \theta_{AD}(\eta) = \theta_A\phi_0(\eta) + (\theta_D-2\pi)\phi_1(\eta),
  \quad \eta\in[-1,1].
\end{align}
\end{subequations}

\bibliographystyle{plain}
\bibliography{reference,bc,pinn_bc,elm,elm1,elm2,dnn,dnn1,dnn2,dnn3,dnn4,ref1,ml}

\end{document}